\numberwithin{equation}{section}
\let\pa=\partial
\def\bbT{\mathbb{T}}
\newcommand{\beq}{\begin{equation}}
\newcommand{\eeq}{\end{equation}}
\newcommand{\ben}{\begin{eqnarray}}
\newcommand{\een}{\end{eqnarray}}
\newcommand{\beno}{\begin{eqnarray*}}
\newcommand{\eeno}{\end{eqnarray*}}
\newtheorem{Theorem}{Theorem}[section]
\newtheorem{Proposition}[Theorem]{Proposition}
\newtheorem{Lemma}[Theorem]{Lemma}
\newtheorem{Corollary}[Theorem]{Corollary}
\newtheorem{Remark}[Theorem]{Remark}
\newtheorem{Example}[Theorem]{Example}
\begin{document}
\begin{CJK*}{UTF8}{gkai}
\title[Classification of atmospheric traveling waves at cloud level]{Classification  of atmospheric traveling waves \\ at cloud level}

\author{Adrian Constantin}
\address{Faculty of Mathematics, University of Vienna, Oskar-Morgenstern-Platz 1, 1090 Vienna, Austria}
\email{adrian.constantin@univie.ac.at}

\author{Zhiwu Lin}
\address{School of Mathematical Sciences, Fudan University,  200433, Shanghai, P. R. China}
\email{zwlin@fudan.edu.cn}

\author{Hao Zhu}
\address{School of Mathematics, Nanjing University,  210093, Nanjing, Jiangsu, P. R. China \&
 Faculty of Mathematics, University of Vienna, Oskar-Morgenstern-Platz 1, 1090 Vienna, Austria}
\email{haozhu@nju.edu.cn\;\;\& hao.zhu@univie.ac.at}

\date{\today}

\maketitle

\date{\today}

\maketitle

\begin{abstract}
We classify within the  quasi-geostrophic framework all types of traveling waves  in zonal bands of the planetary atmosphere at cloud level according to their wave speeds. This classification pertains to waves of all amplitudes, going beyond the  small-amplitude  perturbative  regime. It provides
 a structurally robust criterion
for determining which traveling-wave profiles are dynamically possible and we show that
  each wave classification type  was observed on Jupiter or Saturn.
  Building on this classification, we also investigate the related rigidity issue for large-amplitude traveling waves and waves propagating near shear flows.
Our study offers a unified quantitative characterization of the intrinsic constraints for traveling waves in the quasi-geostrophic regime  of planetary atmospheric flow.
\end{abstract}

\section{Introduction}

The two-dimensional incompressible $\beta$-plane equation is widely used to study the dynamics of quasi-geostrophic flows that are confined to zonal bands and for which
the vertical motion is negligible. Such flows are predominant in the dynamics of the atmosphere of the gas giants (Jupiter and Saturn) and of the ice giants (Neptune and Uranus) at cloud level.
By approximating the Coriolis parameter as varying linearly with latitude, having the form $f_0+\beta y$ with
\begin{align}\label{def-f0-beta}
f_0=\frac{2\varOmega'R'}{U'}\,\sin\theta_0\quad\text{and}\quad \beta=\frac{2\varOmega'R'}{U'}\,\cos\theta_0\,,
\end{align}
the $\beta$-plane equation captures the essential behavior induced by sphericity combined with rotation;
here $\varOmega'$ is the constant rate of rotation of the spherical planet of radius $R'$ about its polar axis, $U'$ is the typical magnitude of
the horizontal velocity and $\theta_0 \in \big( -\frac{\pi}{2}\,,\,\frac{\pi}{2}\big)$ is the reference latitude of the flow region that does not comprise the poles
(see the discussion in \cite{csz2024a,Johnson2023} and Table \ref{v-par-JS}).
Rather than the more intricate Euler equation in rotating spherical coordinates, the $\beta$-plane equation is formulated in a Cartesian coordinate system, which simplifies the analysis but retains the effects of sphericity at leading order
for the bands of moderate meridional width typical for Jupiter and Saturn \cite{v}. The atmospheric flow at cloud level on Neptune and Uranus has also a zonal structure but the bands are much wider than those on
Jupiter and Saturn: both ice giants have only two major prograde jets, one per hemisphere, with retrograde equatorial jets \cite{sw}. Moreover, the latitudinal averaged zonal flow is stable on Neptune and Uranus
(see \cite{cg}), a feature which impedes the formation of long-lasting wave perturbations,  in marked contrast to the gas giants.

\begin{table}[h!]
\centering

\begin{tabular}{| c | c|c|c|}
\hline
  Planet &   $R'$ & $\Omega'$ &  $ U'$ \\
\hline
Jupiter&$69911$\;km&$1.76\times 10^{-4}$\;rad/s&$150$\;m/s\\
Saturn&$58232$\; km&$1.62\times 10^{-4}$\;rad/s&$150$\;m/s\\
\hline
\end{tabular}
\vspace{0.2cm}
\caption{Values of the relevant parameters for Jupiter and Saturn.}
\label{v-par-JS}
\end{table}

The banded appearance of Jupiter and Saturn is primarily caused by their rapid rotation. Both giant planets exhibit east-west jet streams along the boundaries of these zonal bands,
with speeds ranging up to 150 m$\,$s$^{-1}$ on Jupiter and up to 400 m$\,$s$^{-1}$ on Saturn. These jets dominate the dynamics, containing most of  the kinetic energy at the cloud
tops on Saturn and about 90\% on Jupiter \cite{read}, and lead to the formation of zonally propagating long-lived coherent shear flows.
A plethora of short-lived cloud structures arise as perturbations of these shear flows but some persistent dynamical features can also be observed. Typically these are traveling waves propagating zonally but occasionally vortices, such as Jupiter's Great Red Spot (GRS) and Saturn's polar vortices, also occur. While vortices that persist on large time scales require special
circumstances (for example, the GRS has a quite peculiar dynamics unlike that of terrestrial hurricanes \cite{cj1} and Saturn's polar vortices
are hot spots \cite{sa}), a wide range of traveling waves have been identified by telescope and spacecraft (see \cite{or} for the identification of a variety of waves detected on Jupiter by the Juno spacecraft).
Such waves are unstable perturbations of an underlying shear flow that grow and acquire specific dynamic characteristics which set
them apart from the original state of the flow (e.g. the circumpolar waves that form Saturn's hexagon -- see \cite{cj2}).

Coherent waves can only propagate azimuthally since the strong jets ensure that waves propagating in a direction tilted to the East-West direction are short-lived.
The velocity field of a traveling wave propagating zonally in a band $d_- \le y \le d_+$ at the speed $c$ has the form $(u(x-ct,y),v(x-ct,y))$ with $v=0$ on $y=d_\pm$. Genuine waves correspond to
flows which depend on the variable $(x-ct)$, while an incompressible shear flow is described by a velocity of the form $(U(y),0)$. Within the framework of linear theory (see \cite{lyz, mas}), shear flow perturbations corresponding to
the stream function $\psi(x,y,t)=\phi(y){\rm e}^{{\rm i}\alpha(x-ct)}$, where $\alpha>0$ is the wave-number in the $x$-direction and $c$ is the complex wave speed, are governed by the Rayleigh-Kuo equation
\begin{equation}\label{rk}
(U-c) (\alpha^2-\phi'') - (\beta-U'')\phi=0
\end{equation}
with the boundary conditions $\phi(d_-)=\phi(d_+)=0$. Equation \eqref{rk} has a singularity at a critical layer (a point $y_c$ with $U(y_c)=c$), since the coefficient of the highest-order derivative vanishes. In this case a perturbation analysis
shows the appearance of vortices near the critical layer. In the absence of critical layers, instability can only arise if the imaginary part of the speed $c$, giving the growth rate of the perturbation, does not vanish. This leads to
the condition that $\beta-U''$ vanishes at some level as being necessary for linear instability (see \cite{dr}). In light of these
classical results, we expect that genuine traveling waves can arise as coherent perturbations of a shear flow only under special circumstances.
This perspective is supported by recent rigidity results showing that, under suitable
conditions, certain shear flows do not admit any nontrivial traveling-wave perturbations \cite{Hamel2017,csz2024a}.

From the observational viewpoint, the four giant planets exhibit markedly different amounts of large-scale coherent wave activity. Several long-lived wave patterns have been observed on Jupiter and
Saturn (some will be described below), but Uranus is the most featureless of the giant planets
and Neptune's most notable visible flow patterns are its dark spots. The very wide zonal bands
on Neptune do not confine the dark spots tightly: for example, the largest, Neptune's Great Dark
Spot, first observed by Voyager 2 in 1989, drifted towards the equator and disappeared by 1994 (see \cite{Ingersoll2013}). On
the other hand, on Jupiter and Saturn one finds a profusion of coherent waves: Saturn's stationary
hexagon, meandering ribbons, Jupiter's chevrons and band-limited wave trains, as well as more intricate
wave patterns surrounding the GRS.

The main aim of this paper is to provide a {\it complete and rigorous
classification} of genuine traveling-wave solutions
arising as coherent flow patterns with possibly large meridional velocity components and thus not necessarily representing small perturbations of a shear flow in a zonal band.
 More precisely, we consider
 the $\beta$-plane equation
\begin{equation}\label{bpe}
\begin{cases}
&u_t + uu_x+vu_y - (f_0+\beta y)\, v = - {\frak P}_x\,,\\
&v_t + uv_x+vv_y + (f_0+\beta y) \, u = - {\frak P}_y\,,\\
&u_x + v_y=0
\end{cases}
\end{equation}
confined to a zonal band ${\frak D}_L=\{(x,y):\ x \in {\mathbb T}_L={\mathbb R}/_{L{\mathbb Z}}\,,\ y \in [d_-,d_+]\}$,
where $(u,v)$ is the horizontal velocity field, ${\frak P}$ is the pressure, and $f_0,\beta$ are defined in \eqref{def-f0-beta}.
 The associated  non-permeable boundary condition is
\begin{align}\label{bce}
v=0\quad \text{on}\quad  y = d_\pm\,.
\end{align}
We will prove the following classification theorem.

\begin{Theorem}\label{classification-of-wave-speed-for-a-genuinely-travelling-wave-beta-plane-intro}
Given $L>0$ and $d_+>d_-$, let
$$(u(x-ct,y),v(x-ct,y))\in C^2({\frak D}_L)$$
 be an $L$-periodic genuine traveling-wave solution to the $\beta$-plane equation
\eqref{bpe}-\eqref{bce}.

{\rm(i)} If $\beta>0$, then the wave speed $c$ must fall into one of the following four categories:

\begin{enumerate}
\item  $c$ is a generalized inflection value of $u$, i.e. $\{\beta-\Delta u=0\}\cap \{u=c\}\neq\emptyset$;

\item
 $c$ is a critical value of $u$, i.e. $\{\nabla u=0\}\cap \{u=c\}\neq\emptyset$;
 \item
    $c$ is  a maximum or minimum  of $u$;
    \item
    $c$ is outside $Ran (u)=[u_{\min},u_{\max}]$, specifically, $c\in [c_\beta^+, u_{\min})$ with
    \begin{align}\label{def-c+}
c^{+}_{\beta}=u_{\min}-\frac{\beta (d_+-d_-)^2}{2\pi^2}-\frac{(d_+-d_-)^2}{2\pi^2}\sqrt{\beta^2+{4\pi^2\beta\over (d_+-d_-)^2}(u_{\max}-u_{\min})}\,.
\end{align}
\end{enumerate}

{\rm(ii)} If $\beta=0$ ($f$-plane approximation), then $c$ must be a generalized inflection value of $u$.
\end{Theorem}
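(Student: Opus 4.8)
The plan is to reformulate the traveling-wave problem as an elliptic PDE for a stream function and then extract the classification from a maximum-principle/integral-identity analysis.

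First I would pass to the moving frame $\xi=x-ct$, so that the velocity $(u(\xi,y),v(\xi,y))$ is steady. Incompressibility $u_x+v_y=0$ gives a stream function $\psi$ with $u-c=\psi_y$, $v=-\psi_x$ (the shift by $c$ being the natural one in the moving frame), and $\psi$ is constant on $y=d_\pm$ by the boundary condition $v=0$. Eliminating the pressure by taking the curl of the first two equations of \eqref{bpe} shows that the absolute vorticity $\beta y + \Delta\psi$ (equivalently $\beta y + \omega$ with $\omega=v_x-u_y=-\Delta\psi$, up to sign conventions) is transported by the steady flow, hence is constant along streamlines: there is a function $F$ with
\begin{equation}\label{pp-semilinear}
\Delta\psi = -\,\beta y + F(\psi)\quad\text{on }{\frak D}_L,\qquad \psi=\text{const on }y=d_\pm,
\end{equation}
at least locally on any region where $\nabla\psi\neq 0$, with $u=c+F'(\psi)\cdot(\text{something})$—more precisely, differentiating $u=c+\psi_y$ and using \eqref{pp-semilinear} one gets that on each streamline $u$ is a function of $\psi$ alone, say $u=c+g(\psi)$ with $g(\psi)=\psi_y$ evaluated where $\psi_x=0$. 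The key structural fact is that along a streamline $\nabla u$ is parallel to $\nabla\psi$, so $u$ and $\psi$ have a common level-set structure away from critical points.

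Next, the heart of the argument: suppose $c$ is \emph{none} of (1)–(3), i.e. the level set $\{u=c\}$ contains no point where $\nabla u=0$, no point where $\beta-\Delta u=0$, and $c$ is not the max or min of $u$. I want to derive the quantitative exclusion (4), namely that then $c$ must lie below $u_{\min}$ and be bounded below by $c_\beta^+$. The mechanism is a Rayleigh-type identity: on the region where things are smooth, multiply a linearized/variational version of \eqref{pp-semilinear} by an appropriate test function and integrate by parts over ${\frak D}_L$ (using the periodicity in $\xi$ and the Dirichlet-type data on $y=d_\pm$) to obtain an identity of the schematic form
\begin{equation}\label{pp-rayleigh}
\int_{{\frak D}_L}\Big(|\nabla w|^2 + \frac{\beta-\Delta u}{\,u-c\,}\,|w|^2\Big)\,dx\,dy = 0
\end{equation}
for a suitable $w$ vanishing on the boundary (the continuous analogue of the Rayleigh–Kuo relation \eqref{rk}), valid precisely because $u-c$ does not vanish with a bad sign—this is where the hypotheses rule out (1), (2), (3). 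From \eqref{pp-rayleigh}, combined with the Poincaré inequality on the band (constant $\pi^2/(d_+-d_-)^2$ in the $y$-direction), one gets that $(\beta-\Delta u)/(u-c)$ cannot be too positive; tracking the sign of $u-c$ (it must be of one sign since $c\notin\{u_{\min},u_{\max}\}$ is excluded in case (3) would be handled separately, so here $u-c$ is signed) and bounding $\Delta u$ via the equation and the range $[u_{\min},u_{\max}]$ yields exactly the threshold $c_\beta^+$ in \eqref{def-c+}. The case $\beta=0$ is the degenerate endpoint: then \eqref{pp-rayleigh} forces $\Delta u=0$ on $\{u=c\}$ unless the set is empty, and emptiness of $\{u=c\}$ together with $\beta=0$ and the Poincaré bound collapses to force $\{\beta-\Delta u=0\}\cap\{u=c\}\neq\emptyset$, i.e. $c$ is a generalized inflection value—this is the content of (ii).

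The main obstacle I anticipate is making the reduction to \eqref{pp-semilinear} and the identity \eqref{pp-rayleigh} rigorous \emph{globally} on ${\frak D}_L$ rather than only on simply connected subdomains where $\nabla\psi\neq0$: the level sets of $\psi$ may be topologically complicated, critical points of $u$ (case (2)) and of $\psi$ can occur, and the representation $u=c+g(\psi)$ may fail to be single-valued across them. Handling this requires a careful stratification of ${\frak D}_L$ by the regular and critical values, an argument that the "bad" set $\{u=c\}\cap\{\nabla u=0\}$ being empty (case (2) excluded) plus $C^2$ regularity forces $\{u=c\}$ to be a finite union of smooth closed curves or arcs hitting the boundary, and then a limiting/approximation argument to justify integrating \eqref{pp-rayleigh} up to these curves. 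A secondary technical point is ensuring the boundary terms in the integration by parts genuinely vanish—this uses $\psi=\mathrm{const}$ on $y=d_\pm$ and periodicity in $\xi$, but one must check the chosen test function $w$ (built from $v=-\psi_\xi$ or from $\nabla u$) inherits these vanishing properties.
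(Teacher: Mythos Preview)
Your proposal identifies several of the right ingredients---the vorticity equation $(u-c)\Delta v+v(\beta-\Delta u)=0$ and the associated integral identity obtained by testing against $v$---but it has a genuine gap precisely where the main difficulty lies, and one explicit error.

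The error first: you write that once case (3) is excluded, ``$u-c$ must be of one sign.'' This is backwards. If $c\in(u_{\min},u_{\max})$---the generic interior case you must handle to rule out (1) and (2)---then $u-c$ necessarily changes sign, the set $S^0=\{u=c\}$ is nonempty, and the potential $(\beta-\Delta u)/(u-c)$ in your identity \eqref{pp-rayleigh} is singular along $S^0$. Your identity therefore cannot be integrated over the whole domain, and splitting into $S^\pm=\{u\gtrless c\}$ is forced. On $S^-$ the paper does use essentially your integral identity (in the $F=v/(u-c)$ formulation), exploiting the sign $\beta(u-c)<0$ to conclude $v\equiv0$ there; this part of your plan is fine. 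But on $S^+$ the sign goes the wrong way and the integral identity gives nothing.

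The gap: the crux of the proof is propagating $v=0$ from $S^-$ across the level set $\{u=c\}$ into $S^+$. This is a unique-continuation problem for $\Delta v+\tfrac{\beta-\Delta u}{u-c}\,v=0$ with a borderline singular potential behaving like $1/(y-y_{i_0})\notin L^{p}$ for any $p>1$ near each branch of $S^0$, so the classical unique-continuation results do not apply. Your proposal flags the level-set geometry as the ``main obstacle'' and gestures at ``a limiting/approximation argument,'' but this is exactly where a concrete mechanism is needed and none is supplied. The paper's mechanism is a boundary-adapted Carleman estimate on a small ball straddling $\{y=y_{i_0}\}$, combined with the Hardy inequality in the normal direction (available because $v$ vanishes on $S^0$) to absorb the singular potential. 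Without something of this strength, the argument does not close on $S^+$. Separately, the quantitative bound $c_\beta^+$ in case (4) is not derived here; it is imported from the prior paper \cite{csz2024a}, where it indeed comes from a Poincar\'e-type estimate as you suggest---so that portion of your plan is correct in spirit but is not the new content.
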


\if0
{\color{red}
Beyond the classification itself, we obtain rigidity of traveling waves with arbitrary amplitudes.
We use the classification of wave speeds, together with spectral information for the linearized $\beta$-plane operator around a monotone shear flow, to comprehensively characterize traveling waves and local dynamics near such flows. In particular, under the Rayleigh stability condition we show that
monotone shear flows are nonlinearly Lyapunov stable, but depending on $(\beta, L)$ they may
still admit nearby genuine traveling waves with wave speeds outside $Ran(u)$. We determine the critical parameter regime. In the critical parameter regime, such traveling waves act as nontrivial ``end states" for the dynamics, so that
the flow is Lyapunov stable but asymptotically unstable. This illustrates a subtle geophysical effect: the Coriolis force allows coherent traveling waves with speeds outside the zonal
velocity range, a phenomenon which cannot occur in the corresponding $\beta = 0$ model.} This paragraph needs more revised.
 \fi
In the context of Theorem \ref{classification-of-wave-speed-for-a-genuinely-travelling-wave-beta-plane-intro}, we denoted
 $$f_{\max}=\max_{(x,y)\in {\frak D}_L}\{f(x,y)\}\quad\text{and}\quad f_{\min}=\min_{(x,y)\in {\frak D}_L}\{f(x,y)\}\,$$
for a function $f \in C({\frak D}_L)$.
A genuine  traveling wave is a periodic flow propagating in the East-West direction and presenting meridional variations, whereas flows with
$v \equiv 0$ throughout ${\frak D}_L$ are termed shear flows -- by the third equation in \eqref{bpe}, shear flows depend only on the $y$-variable. The boundary condition \eqref{bce}
expresses the presence of zonal jets along it, confining the wave to the respective zonal band.
 Field data shows that the zonal jet systems of Jupiter and Saturn reach below the troposphere and have remained practically unchanged
since the beginning of detailed spacecraft observations (see \cite{gr}). In addition to these zonal jets, some peculiar flow patterns occur: unlike the majority of Jupiter's and Saturn's
wind jets, near 47$^\circ$N on Saturn, wavy perturbations of a jet produce a long-lived meandering ribbon (see Fig.\,\ref{sat-rib}) -- a similar ribbon on Jupiter at 30$^\circ$N was rather short-lived. Let
us also note that
$$\Gamma=v_x-u_y+ \beta y $$
is the total vorticity, comprising two distinct components: the relative vorticity $\gamma=v_x-u_y$ due to the fluid motion
and the planetary vorticity $\beta y$ due solely to the planet's rotation about its polar axis (this being the $\beta$-plane correspondent of the concepts in
spherical coordinates, discussed in \cite{cj}). By the third equation in \eqref{bpe}, expressing mass conservation, we have $\Gamma_y=\beta -\Delta u$, which provides another perspective for the interpretation of
generalized inflection points as stationary points of the variation of the total vorticity with latitude.

	\begin{figure}[h]
    \centering
	\includegraphics[scale = 0.25]{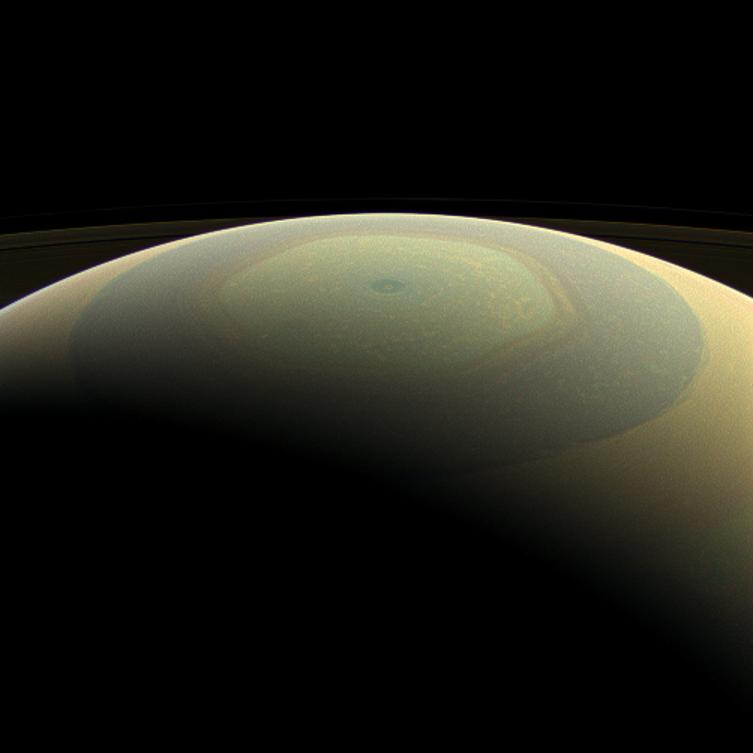}
	\caption{{\footnotesize Natural-color view of Saturn's hexagon, taken with the Cassini spacecraft wide-angle camera on 22 July 2013  (credit: NASA/JPL-Caltech/Space Science Institute). The
	stationary hexagonal band between 72$^\circ$N-78$^\circ$N, appearing somewhat yellow, has persisted dynamically since its discovery by Voyager 2 in 1981 \cite{cj2}. Only the hexagon's
	color changes seasonally, alternating between turquoise and yellow -- during Saturn's summer (lasting for about
7.5 Earth-years), sunlight triggers the formation of photochemical hazes, which give the planet's atmosphere a yellow hue.}}
	\label{sat-hex}
\end{figure}

By means of available data for wave patterns on Jupiter and Saturn observed by spacecraft or Earth-based telescopes and interpreted within the regime of the $\beta$-plane approximation, we show that each classification type of large-scale coherent waves from
Theorem \ref{classification-of-wave-speed-for-a-genuinely-travelling-wave-beta-plane-intro} occurs on the gas giants:
\begin{itemize}
\item Saturn's hexagon (see Fig.\,\ref{sat-hex}) illustrates type (i1) since for the stationary circumpolar wave
	(with speed $c=0$) embedded within the hexagonal strip near the center of the hexagonal band at 75$^\circ$N, we have $u=0$ and $\beta-u_{yy}=0$ at this latitude according to the data gathered in Fig.\,3 of \cite{tr}.

\item Saturn's ribbon (see Fig.\,\ref{sat-rib}) illustrates type (i2), with a wave speed of 150 m$\,$s$^{-1}$
matching the speed of the jet at 47$^\circ$N (see \cite{say}). Since 47$^\circ$N is a line of zero relative vorticity (see \cite{say}), taking the mean value of $v_x-u_y=0$ over a period in the
$x$-variable yields locations where $u_y=0$ at this latitude. Furthermore, $u=c$ implies $u_x=0$ along 47$^\circ$N, so that at these locations we find critical points of $u$.

\begin{figure}[h]
    \centering
	\includegraphics[scale = 0.45]{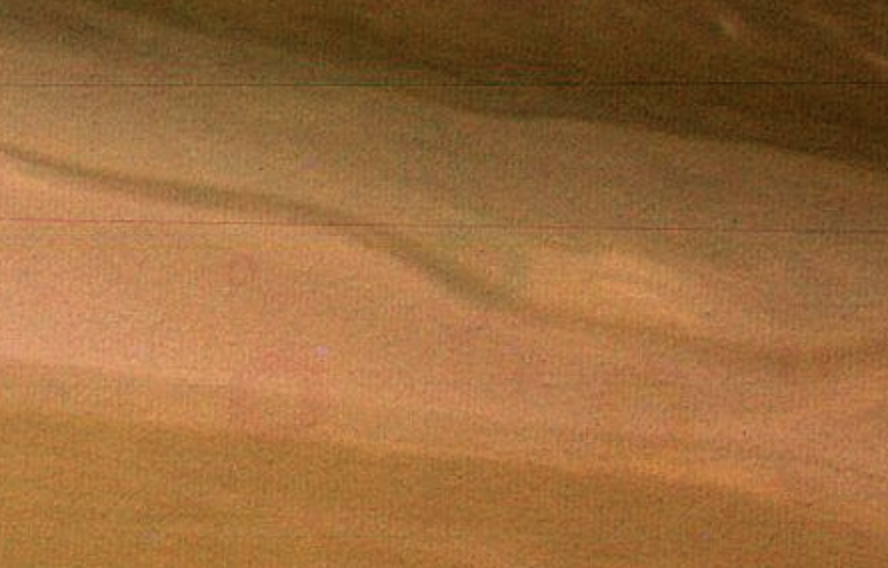}
	\caption{{\footnotesize Voyager 2 image of Saturn's ribbon, captured on
23 August 1981 (credit: NASA/JPL-Caltech/Space Science Institute). North is to the upper right and the wave is the dark meandering streak in
the light band -- in the image the eastward direction of wave propagation (along 47$^\circ$N)  is inclined downwards. Since its discovery, this wave has
been seen in every high-resolution image of Saturn that covers northern mid-latitudes \cite{gr}. The ribbon's oscillation amplitude is about 2 degrees of
latitude from peak to trough, matching the width of the jet it resides in.}}
	\label{sat-rib}
\end{figure}

\item Jupiter's ribbon at 30$^\circ$N and Jupiter's chevrons at 7.5$^\circ$S (see Fig.\,\ref{chevron}) are examples of type (i3) waves. Observations with the Hubble Space Telescope in
the period between 1994 and 2008 revealed that Jupiter's ribbon consists of
wavy oscillations within a band about $4^\circ$ of latitude wide, centered at 30$^\circ$N and propagating westwards at the maximal zonal velocity of 35 m$\,$s$^{-1}$ of the underlying jet (see \cite{cos}), so that in
this case $c=u_{\min}<0$. On the other hand, the eastward jovian jet at 7.5$^\circ$S presents chevron-shaped dark spots, consistent with a wave propagating eastwards at a speed equal  to the maximum zonal jet velocity
$u_{\max}$ of about 140 m$\,$s$^{-1}$ (see \cite{sm}).

\begin{figure}[h]
    \centering
	\includegraphics[scale = 0.5]{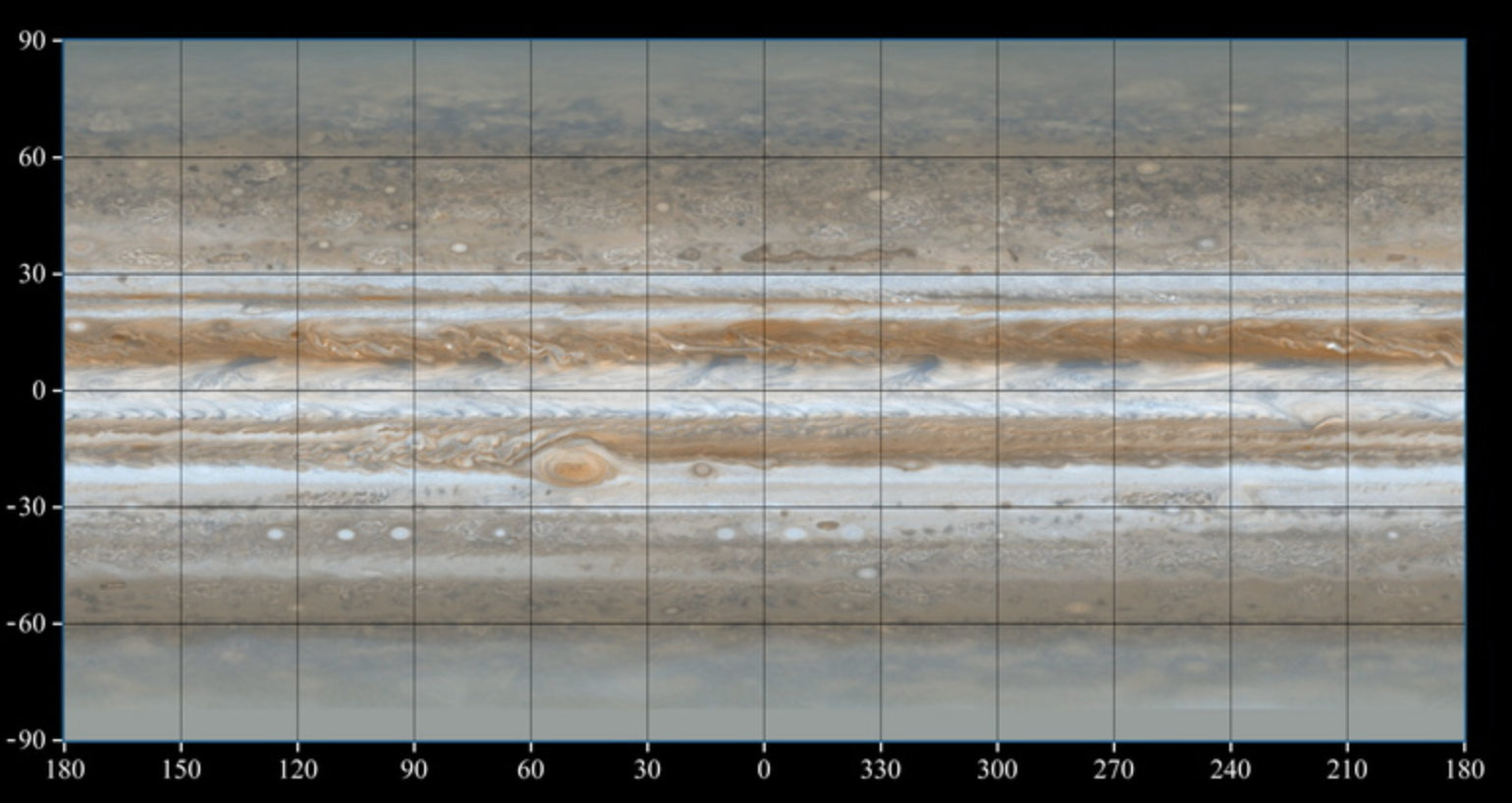}
	\caption{{\footnotesize Color map of Jupiter in cylindrical coordinates, obtained
from images taken by the camera onboard NASA's Cassini spacecraft
on 11--12 December 2000 (credit: NASA). Jupiter's GRS  is confined
to a zone delimited by a westward jet at 19.5$^\circ$S with speed 70 m$\,$s$^{-1}$ and an eastward jet at
26.5$^\circ$S with speed 50 m$\,$s$^{-1}$, chevrons propagating along the latitude 7.5$^\circ$S can be clearly distinguished eastwards of the GRS and Jupiter's ribbon
is the whitish wave pattern visible along the latitude 30$^\circ$N.}}
	\label{chevron}
\end{figure}

\item Saturn's ribbon at 42$^\circ$N was a type (i4) wave observed in the period 2011--2017 by the Cassini spacecraft mission. In December 2010 a violent storm erupted on
Saturn, visible at cloud level between 32$^\circ$N and 38$^\circ$N, lasted for six months and led to a wavy perturbation of the westward jet at 42$^\circ$N, propagating at a speed slightly less than
the minimal zonal jet velocity of about 54 m$\,$s$^{-1}$ (see \cite{gun}).
\item The wave dynamics of the zonal band on Jupiter between the westward jet at 19.5$^\circ$S and the eastward jet at 26.5$^\circ$S is quite intricate, comprising the GRS, a filamentary oscillation near its southern boundary
and a more intricate wave motion near its northern boundary (see Fig.\,\ref{chevron}). Within the $f$-plane approximation and with the origin of the coordinate system chosen at the center of the GRS, the nondimensional velocity field
within the GRS is of the stationary form
$$u(x,y)=- y\,\mu(\sqrt{x^2+y^2}) \quad \text{and}\quad v(x,y)=x\,\mu(\sqrt{x^2+y^2})\,,$$
where $\mu(s)=a -\sqrt{a^2 -b^2 s^k}$ with $a >b >0$ and $k >2$ (for the exact values we refer to \cite{cj1}). Since $u=\Delta u=0$ at the core of the GRS because $\mu'(0)=\mu''(0)=0$, this wave motion is of type (ii).
\end{itemize}
Note that, according to the recent survey of observed wave-like phenomena on Jupiter \cite{or}, the vast majority of  waves are found
at latitudes with strong prevailing eastward winds, heading in the same direction as
Jupiter's rotation. On Saturn the prevalence of waves in regions associated with prograde
motions of the mean zonal flow is also noticeable.

\if0
We now summarize several theorems in Table \ref{tab:beta-plane} that provide sufficient conditions for rigidity of  traveling waves governed by the $\beta$-plane equation. Some of the above results are also extended to the unbounded channel setting.

\begin{table}[ht]
  \centering
\begin{tabular}{|c|c|c|c|}
\hline
\diagbox{\makecell{intervals\\ in Fig.\,\ref{fig-beta-plane}}}{$\beta$-value} & $\beta=0$  & $\beta>0$ & $\beta<0$  \\
\hline
black & \makecell{Theorem 1.1\\ in \cite{Kalisch12}} & \makecell{Theorems 3.2 and \\3.11 in \cite{csz2024a}}& \makecell{Theorems 3.2 and\\ 3.11 in \cite{csz2024a}}\\
\hline
red & \makecell{Theorems \ref{beta=0-cor} (iii)} & Theorems \ref{beta=0-cor} (i) &Theorems \ref{beta=0-cor} (ii)\\
\hline
blue and red &
/
 & Theorem \ref{thm-generalization1} (i) &Theorem \ref{thm-generalization1} (ii)\\
\hline
\end{tabular}
\vspace{0.2cm}
\caption{Rigidity of traveling waves for the $\beta$-plane equation.}
  \label{tab:beta-plane}
\end{table}
\fi


\if0
 Moreover, we apply these sufficient conditions to prove  rigidity of traveling waves with arbitrary speeds near a class of shear flows, including monotone shear flows, Couette-Poiseuille flow and Bickley jet. For  Kolmogorov flow, on the other hand, we prove rigidity of nearby  traveling waves with non-vanishing speeds, while  construct nearby non-sheared steady flows. These shear flows are of physical importance and may have stagnation points which may cause technical difficulty.
These local rigidity plays an important role in the study of long time dynamics  near the shear flows \cite{LZ,LWZZ,Coti Zelati23}. This is a necessary condition for
 nonlinear inviscid damping to the  shear flows and thus is a first step towards the asymptotic stability.
 Conversely, the existence of nearby non-sheared traveling
waves means that  long
time dynamics near the shear flows may be richer, as they might be the `end state' of nearby evolutionary flows.
\fi

\if0
Here, we compare the existing results in the literatures with our results in this paper. It is proved in
\cite{dn2024} that any steady flow sufficiently close in $C^2$ to the shear flow $(v(y),0)$ is a shear flow
if $v(y)$ satisfies that $v(y_i)=0$ while $v''(y_i)\neq0$ for finitely many $y_i$. Theorem 2.2 generalizes
this result in two aspects: one is to generalize the steady flow to the traveling waves with arbitrary wave speed,
and the other is the traveling waves considering are not necessarily near shear flows.
\fi

Upper and lower bounds for the wave speeds of genuine traveling waves are established in Theorems 3.2 and 3.11 of \cite{csz2024a} for $\beta >0$  and in \cite{Kalisch12,Hamel2017} for $\beta=0$.
We summarize these results in the caption of Fig.\,\ref{fig-beta-plane}.

\begin{figure}[ht]
    \centering
	\includegraphics[scale = 0.5]{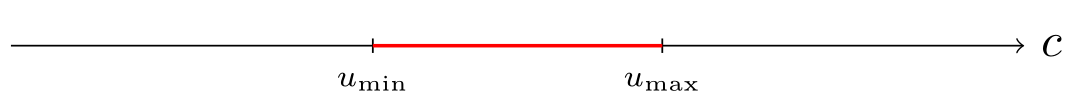}\quad\includegraphics[scale = 0.45]{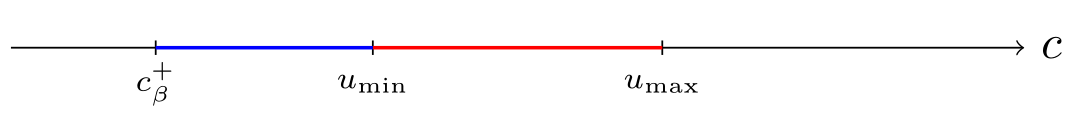}
	\caption{
For a  genuine traveling wave $(u(x-ct,y),v(x-ct,y))\in C^2({\frak D}_L)$, for $\beta=0$ the wave speed $c$ must belong to $Ran(u)$, depicted by the red interval, while for $\beta>0$
 the wave speed $c$ either belongs to $Ran(u)$ or to the interval $[c^{+}_{\beta},u_{\min})$, depicted in blue.
}
	\label{fig-beta-plane}
\end{figure}

In this paper, we provide by means of Theorem \ref{classification-of-wave-speed-for-a-genuinely-travelling-wave-beta-plane-intro} a complete classification of
genuine traveling waves within the framework of the $\beta$-plane approximation. Explicit mathematical examples of traveling waves  for each class described
in Theorem \ref{classification-of-wave-speed-for-a-genuinely-travelling-wave-beta-plane-intro} are given in Examples \ref{ex-generalized-inflection-values}-\ref{ex-beta=0-generalized-inflection-values}, in addition to the observations described above and illustrated in Figs.\,\ref{sat-hex}-\ref{chevron}.
To prove Theorem \ref{classification-of-wave-speed-for-a-genuinely-travelling-wave-beta-plane-intro},
we first analyze in detail which values of $c$, located within the range of the zonal velocity,  can arise as wave speeds of genuine traveling
waves (see Theorem \ref{wave-speed-inside-range}).
By combining this analysis with the case  when $c$ lies outside $Ran (u)$, we then establish the full classification of wave speeds, thereby proving Theorem \ref{classification-of-wave-speed-for-a-genuinely-travelling-wave-beta-plane-intro}.
 For a genuine traveling
wave $(u(x-ct,y),v(x-ct,y))$, the values of $c\in Ran (u)$ are of the following three types: a generalized inflection value of $u$, or
 a critical value of $u$, or
a maximum/minimum  of $u$ if $\beta>0$, while for $\beta=0$ the values of $c\in Ran (u)$ can only be generalized inflection values of $u$.
 The proof for $\beta>0$ proceeds  by contradiction and consists of two main parts. In the first part, we use the $F$-formulation (see Lemma \ref{F-formulation-lem}) introduced in \cite{csz2024a} and the sign of $\beta$ to demonstrate that
$v=0$ on the side $\{u<c\}$ of the level set
$\{u=c\}$. This also enables us to show that the level set $\{u=c\}$ consists of several straight segments parallel to the zonal direction. In the second part, we show that
$v=0$ on the side
$\{u>c\}$ of the level set $\{u=c\}$.
We pursue this as a
unique continuation problem for the $2$-dimensional elliptic equation $\Delta v+{\beta-\Delta u\over u-c}v=0$.
A natural approach is to reduce it to a $1$-dimensional setting and
to analyze it using classical ODE techniques, such as the Gr\"{o}nwall inequality or Fuchs-type methods.
One attempt is to fix an arbitrary $x_0$ and to analyze the resulting ODE, but this generates an inhomogeneous term $v_{xx}(x_0,\cdot)$ that is difficult to control. Alternatively, one can exploit the periodicity in
$x$ and examine the equation mode by mode in Fourier space. In this framework, the nonlinear potential term induces intricate interactions among different modes,
yielding a strongly coupled ODE system that is challenging to handle.
 Consequently, we abandon the $1$-dimensional reduction and instead adopt a $2$-dimensional approach.
 We employ the Carleman estimate \eqref{Carleman-estimate1}-\eqref{Carleman-estimate2} adapted to a boundary-aligned ball  (see Fig.\,\ref{fig-uni-conti}) around the level set $\{u=c\}$ to handle the Laplacian term. This localized estimate provides the necessary weighted
$L^2$
 bounds for
$w$ and
$\nabla w$, allowing the lower-order terms to be absorbed by the Laplacian term and yielding a quantitative unique continuation inequality in the ball, where $w\in C^2$ vanishes both near the center  and outside the ball.
A primary obstacle arises from the singular potential term
${\beta-\Delta u\over u-c}$, which  behaves like ${1\over y-y_{i_0}}\notin L^{>1}$ near a branch $\{y=y_{i_0}\}$ of the level set $\{u=c\}$, so that the unique continuation results available in the
research literature \cite{Schechter-Simon1980, Amrein-Berthier-Georgescu1981, Jerison-Kenig1985} cannot be invoked in this context.
The key observation is that the vanishing of the meridional velocity $v$ on the level set $\{u=c\}$ effectively weakens this singularity,
 enabling the application of the Hardy inequality in the
normal direction. This in turn allows the weighted $L^2$
 contribution of the singular potential to be controlled by that of the lower-order terms.
 Combined with the Carleman estimate, this provides us with the mechanism that ultimately establishes the quantitative unique continuation result across $\{y=y_{i_0}\}$.
We point out that our method establishes a 2-dimensional unique continuation mechanism
for elliptic equations with a borderline ${1\over y-y_{i_0}}$-type potential, when the solution vanishes on
the singular set.

Note that Fig.\;\ref{fig-beta-plane} may alternatively be viewed as illustrating a rigidity phenomenon: namely, for a   traveling wave $(u(x-ct,y),v(x-ct,y))\in C^2({\mathfrak D}_L)$, if the wave speed  $c$ is lying in the black intervals,
  then it must be a shear flow. This gives rise to the natural question whether there are sufficient conditions to ensure that a traveling wave with wave speed in the red and blue intervals of Fig.\;\ref{fig-beta-plane} must be a shear flow.
This question is challenging due to the singularity in the term ${1\over u-c}$ when the  wave speed $c$ lies inside the range of zonal velocity $u$. We present some sufficient conditions to ensure that:
(i) a traveling wave with wave speed in the red intervals of Fig.\;\ref{fig-beta-plane} (i.e. in the range of the zonal velocity) must be a shear flow (see Theorem \ref{beta=0-cor});
(ii) a traveling wave with wave speed in the blue and red intervals of Fig.\;\ref{fig-beta-plane} (i.e. with arbitrary wave speed)
 must be a shear flow (see Theorem \ref{thm-generalization1}). Note that no restrictions are imposed on the amplitudes of the traveling waves.
 The sufficient condition in Theorem \ref{thm-generalization1} is
 a narrow range of $\beta$, and the proof makes full use of the $F$-formulation introduced in \cite{csz2024a} combined with structural properties of the quasi-geostrophioc  equation.
 In particular, the argument establishing $v =0$ on the set $\{u > c\}$ differs from that in Theorem \ref{wave-speed-inside-range}, where the analysis is simplified by exploiting the sign of $\beta - \Delta u$.
 We also extend some of the classification and rigidity results for traveling waves
to an unbounded channel in Theorems \ref{beta-plane-unbounded-thm-wave-speeds-outside-range-zonal-velocity},  \ref{beta-plane-unbounded-channel-thm} and \ref{beta-plane-unbounded-channel-thm-rigidity}.

We also investigate whether the rigidity of  traveling waves (that is, the absence of genuine traveling
waves) holds, or if genuine traveling waves exist near some classes of shear flows.
The clarification of this issue plays an important role in the study of long time dynamics  near a shear flow \cite{LZ,LWZZ,Coti Zelati23}.
On the one hand, the rigidity  of nearby traveling waves is a necessary condition for
 nonlinear inviscid damping of the   shear flow  and thus is a first step towards the asymptotic stability.
 On the other hand, if  genuine nearby traveling waves do exist, then the  long
time dynamics near the shear flow is richer, as these waves might be `end states' of nearby evolutionary flows.

We study the rigidity issue for monotone shear flows in Section \ref{monotone-shear-flows}.
In an unbounded channel,
as an application of Theorem \ref{beta-plane-unbounded-channel-thm}, we give a simple description of the wave speeds of traveling waves near a monotone shear flow, from which, under the Rayleigh stability condition,
we infer the absence  of genuine nearby traveling waves (see Theorem \ref{Monotone-shear-flows-on-an-unbounded-channel-thm} and Remark \ref{rem-rigidity-unbounded-channel}).
 In a bounded channel, unlike the unbounded case, the situation is more complicated.
 Under the Rayleigh stability condition, we
  fully characterize the parameter regimes for $(\beta,L)$ that lead to the rigidity of traveling waves for arbitrary wave speeds versus those that admit genuine unidirectional traveling waves which are
$C^2$-close to the monotone shear flow.
 When the Rayleigh stability condition does not hold, we determine a parameter regime for $(\beta,L)$ that admits genuine unidirectional nearby traveling waves. Outside this regime, the wave speed of
 any genuine nearby traveling wave must be a generalized inflection value (see Theorem \ref{rigidity-near-monotone-shear-flow-arbitrary-wave-speed-thm}).
 To prove this for a wave speed locating outside the blue interval in Fig.\;\ref{fig-beta-plane}, we can apply Theorems \ref{classification-of-wave-speed-for-a-genuinely-travelling-wave-beta-plane} and \ref{beta=0-cor}. However,
in general, it is hard to determine whether the rigidity of traveling waves with wave speeds in the blue interval holds or not. Let us point out that our approach for
the rigidity part in Theorem \ref{rigidity-near-monotone-shear-flow-arbitrary-wave-speed-thm}
  differs from that in \cite{WZZ}, where the method depended heavily on the uniform $H^4$-bound of the $L^2$ normalized meridional velocity, which requires higher regularity. We avoid studying the limit of a sequence of traveling waves, paying
  instead attention to the intrinsic structure of the nonlinear equation for a specific traveling wave
  and taking into account the spectral properties of the linearized operator associated with the $\beta$-plane equation around the monotone shear flow. Basically, the spectral properties provide us a  control from below for the linear part by a uniform $H^1$  bound of the meridional velocity, while the nonlinear part turns out to provide a control from above  by a smaller  $H^1$  bound, which is an impossible set-up.
  For the existence part in Theorem \ref{rigidity-near-monotone-shear-flow-arbitrary-wave-speed-thm}, we can not directly apply the bifurcation lemma established in Lemma 2.5 of \cite{LWZZ}, since the shear-flow profile lacks sufficient regularity. Our approach is to perturb it to a nearby shear flow with higher regularity. An issue that arises  is that it is not clear whether the spectral condition in \cite{LWZZ} is satisfied for the perturbed shear flow. To address this, we
  establish in Lemma \ref{lambda1continuous-profile-lem} the continuous dependence of the principal eigenvalue of the singular Rayleigh-Kuo boundary value problem on the shear-flow profile in $C^2$.

In contrast to the unbounded channel, where rigidity (for arbitrary $c$) holds for all $(\beta, L)$ under the Rayleigh stability condition,
 the bounded channel admits nearby genuine traveling waves for $(\beta, L)$ lying in a critical parameter regime. For $(\beta, L)$ in this critical parameter regime,
we further  prove that the monotone shear flow is
nonlinearly Lyapunov stable.
Nevertheless, the nearby coherent traveling waves  act as nontrivial asymptotic states so that the shear flow is asymptotically unstable (see Theorem \ref{nonlinear-Lyapunov-stable-not-asymptotically-stable-monotone-shear-flow}).
This can be viewed as a subtle dynamical phenomenon arising from the inclusion of the Coriolis force, which is a hallmark of geophysical effects.
The mechanism leading to asymptotic instability is the appearance of genuine traveling waves with wave speeds outside the zonal velocity, a situation that can never occur when the Coriolis effect is neglected.

 Furthermore, by applying
Theorem \ref{thm-generalization1}, we obtain some rigidity results for traveling waves with arbitrary
wave speeds near the Couette-Poiseuille flow and near the Bickley jet for a certain $\beta$-range (see Propositions
\ref{Couette-Poiseuille-beta-plane} and \ref{bickley-beta-plane}), and then we generalize
them to a class of shear flows in Proposition \ref{beta-plane-general}. In the $f$-plane setting, as an application of the classification theorem, we also prove the rigidity of traveling waves
with $c\neq0$  near a Kolmogorov flow, and,  in contrast, we construct nearby
non-sheared  steady flows in Proposition \ref{inviscid-dynamical-structures-near-Kolmogorov-flow}.

The results of this paper complement and improve some recent findings, specific details being provided in remarks throughout the paper.
For the $2$-dimensional incompressible Euler equation  in a bounded periodic channel, the rigidity of steady flows
 with no horizontal stagnation points is proved in \cite{Kalisch12}. The sufficient  condition to ensure the rigidity of steady flows is weakened to flows with
 no stagnation points in \cite{Hamel2017} and with the laminar property in \cite{dn2024}. On the other hand,
 near the Couette flow, the rigidity of traveling waves with arbitrary speeds is proved in (velocity) $H^{5\over2}$
\cite{LZ}. Near the Poiseuille flow, the rigidity of traveling waves with arbitrary speeds is proved in $H^{>6}$  \cite{Coti Zelati23}, and
the rigidity of steady flows is proved in $C^2$ \cite{dn2024}.    See also
\cite{Hamel2019,PConstantin2021,Hamel2023,WZ2023,gxx2024,EHSX2024,dn2024} for  other rigidity results.

The rest of the paper is organized as follows. In Section 2 we recall the governing equations in
vorticity form. Section 3 contains the core classification results: we first analyze which values of
$c$ lying in $Ran (u)$ can serve as wave speeds of genuine traveling waves (Theorem \ref{wave-speed-inside-range}), and then
combine this with the case $c\notin Ran (u)$ to obtain the full classification of wave speeds (Theorem \ref{classification-of-wave-speed-for-a-genuinely-travelling-wave-beta-plane-intro}). We also discuss  sufficient conditions
for the rigidity of traveling waves with arbitrary amplitudes.
Section 4 extends some of the classification and rigidity results to unbounded channels. In Section 5 we fully characterize
the parameter regimes for $(\beta, L)$ that yield the following dynamics near a monotone shear flow: (i) the absence  of genuine traveling waves; (ii) the existence of genuine traveling waves; and (iii)
the constraint that  wave speed of any possible genuine  traveling wave must be a generalized inflection value. Section 6 is devoted to some further
applications  to specific non-monotone profiles.

\section{The governing equations}

It is convenient to write the $\beta$-plane equation \eqref{bpe} in the form
\begin{align}\label{Euler equation}
		\partial_{t}\vec{u}+(\vec{u}\cdot\nabla)\vec{u}=-\nabla P-\beta yJ\vec
{u}\,,\qquad  \nabla\cdot\vec{u}=0 \,,
	\end{align}
where $\beta \ge 0$ is the Coriolis parameter, $\vec{u}=(u,v)$ is the fluid velocity, $P$ is the modified pressure, and
$
J=%
\begin{pmatrix}
0 & -1\\
1 & 0
\end{pmatrix}
$
is the rotation matrix. We study the fluid flow described by \eqref{Euler equation} in the domain
\[D_L=\{(x,y): x\in\bbT_L,\;y\in[-d,d]\}\]	
with the non-permeable boundary condition
\begin{align}\label{boundary condition for euler}
v=0\quad \text{on}\quad  y=\pm d.
\end{align}
The non-dimensional boundary-value problem \eqref{Euler equation}-\eqref{boundary condition for euler} in $D_L$ with $d=\frac{d_+ - d_-}{2}$ is equivalent to the system \eqref{bpe}-\eqref{bce}, as one
can see by performing the translation $(x,y) \mapsto (x,y-y_0)$ with $y_0=\frac{d_++d_-}{2}$ the meridional distance from the Equator to the center of the zonal band ${\frak D}_L$ and by adjusting the fluid
pressure ${\frak P}$ to the modified pressure $P={\frak P} - (f_0+\beta y_0)\psi$, taking advantage of the existence of a stream function $\psi$ with
\[(u,v)=(-\pa_y\psi,\pa_x\psi),\]
ensured by the incompressibility condition $\nabla\cdot\vec{u}=0$.

The vorticity form of (\ref{Euler equation}) is the 2-dimensional quasi-geostrophic vorticity equation
\begin{equation}\label{vore}
\partial_{t}\gamma+(\vec{u}\cdot\nabla)\gamma+\beta v=\partial_{t}\gamma+\{\psi,\gamma\}+\beta v%
=0,
\end{equation}
where $\{\psi,\gamma\}=\partial_x\psi\partial_y\gamma-\partial_y\psi\partial_x\gamma$ is the Poisson bracket, and $\gamma=\partial_{x}v-\partial_{y}u=\Delta\psi$ is the vorticity of the flow. Alternatively,
we can write \eqref{vore} in the form
\begin{equation}\label{tvore}
\partial_{t}\Gamma+(\vec{u}\cdot\nabla)\Gamma=0\,,
\end{equation}
where $\Gamma=\gamma + \beta y$ is the total vorticity. Note that \eqref{tvore} expresses the conservation of total vorticity along a fluid trajectory.

\section{Classification and rigidity of traveling waves}

In this section, we present a comprehensive classification of the genuine traveling-wave solutions of the $\beta$-plane equation. Subsequently, without imposing any smallness constraint on the amplitude, we derive some sufficient conditions ensuring the rigidity of traveling waves with wave speeds lying in the red and blue intervals depicted in Fig.\,\ref{fig-beta-plane}.

\subsection{Classification of genuine traveling waves}
We first investigate which values within the range of the zonal velocity may occur as the wave speed of a genuine traveling wave.

\begin{Theorem}\label{wave-speed-inside-range}
Let $(u(x-ct,y),v(x-ct,y))\in C^2(D_L)$
 be a traveling-wave solution to the $\beta$-plane equation \eqref{Euler equation}-\eqref{boundary condition for euler}.
Assume that $\beta-\Delta u\neq0$ whenever $u=c$, and one of the following conditions

{\rm(i)} $\beta>0$, $c\in (u_{\min}, u_{\max})$ and $\nabla u\neq0$ whenever $u=c$,

{\rm(ii)} $\beta=0$ and $c\in Ran(u)$\\
holds.
Then $(u(x-ct,y),v(x-ct,y))$ is a shear flow.
\end{Theorem}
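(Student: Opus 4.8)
The plan is to show that $v\equiv0$ on $D_L$, so that the flow is a shear flow, by reading the problem as a unique continuation problem across the critical layer $\{u=c\}$. In the frame moving with speed $c$ the wave is steady; writing $\Psi=\psi+cy$ for the moving-frame stream function (so that $u-c=-\Psi_y$, $v=\Psi_X$ with $X=x-ct$, and $\Delta\Psi+\beta y=\Gamma$), the conservation law \eqref{tvore} for the total vorticity becomes $\{\Psi,\Delta\Psi+\beta y\}=0$. Hence $\nabla\Psi$ and $\nabla(\Delta\Psi+\beta y)$ are everywhere parallel, and since $\nabla\Psi=0$ forces simultaneously $v=0$ and $u=c$, there are no stagnation points on the open set $\{u\ne c\}$; there the $F$-formulation of Lemma~\ref{F-formulation-lem} yields $\Delta\Psi+\beta y=F(\Psi)$ for a suitable function $F$. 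Differentiating this relation in $X$ and in $y$ and eliminating $F'$ gives the elliptic equation
\[
\Delta v+\frac{\beta-\Delta u}{\,u-c\,}\,v=0\qquad\text{on }\{u\ne c\}.
\]
Since $\{u\ne c\}=\{u<c\}\sqcup\{u>c\}$, it suffices to prove $v\equiv0$ on each of these two open sets, the set $\{u=c\}$ then being reached by continuity.

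\textbf{Step 1: the subcritical side and the geometry of the critical layer.} On each connected component of $\{u<c\}$ one has $\Psi_y=c-u>0$, so $\Psi$ is strictly monotone in $y$ there and, in particular, has no critical points. Combining this with the $F$-formulation and the sign $\beta\ge0$ — this is precisely where the hypothesis on $\beta$ is used, along the lines of Theorems~3.2 and~3.11 of \cite{csz2024a} — I would deduce that $v\equiv0$ on $\{u<c\}$. Then $v=\psi_X\equiv0$ there forces $u_X=-v_y\equiv0$ on $\{u<c\}$, hence on $\overline{\{u<c\}}\supseteq\{u=c\}$; under hypothesis~(i), where $\nabla u\ne0$ on $\{u=c\}$, this gives $u_y\ne0$ on $\{u=c\}$, and the implicit function theorem shows that $\{u=c\}\cap\mathrm{int}(D_L)$ is a finite disjoint union of horizontal circles $\{y=y_i\}$ (some possibly lying on $y=\pm d$), across each of which $u-c$ vanishes to exactly first order. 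Thus $D_L\setminus\{u=c\}$ is a finite union of horizontal strips, $v\equiv0$ on those where $u<c$, and the problem reduces to proving $v\equiv0$ on each strip $S=\{a<y<b\}\times\bbT_L$ where $u>c$, on whose closure $v$ vanishes at $y=a$ and $y=b$ (partly by Step~1, partly by the boundary condition \eqref{boundary condition for euler}).

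\textbf{Step 2: unique continuation on the supercritical strips.} On such a strip $v$ solves $\Delta v+Wv=0$ with $W=\dfrac{\beta-\Delta u}{u-c}$ and $v=0$ on $\partial S$. The potential $W$ is singular along the critical layer: near a branch $\{y=y_{i_0}\}$ one has $u-c\sim u_y(y_{i_0})(y-y_{i_0})$ with $u_y(y_{i_0})\ne0$ and, by hypothesis, $\beta-\Delta u\ne0$, so $W$ behaves like $\mathrm{const}/(y-y_{i_0})$, which lies in no $L^p_{\mathrm{loc}}$ with $p>1$; the unique continuation theorems of \cite{Schechter-Simon1980,Amrein-Berthier-Georgescu1981,Jerison-Kenig1985} therefore do not apply. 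A one-dimensional reduction is tempting — fixing $X_0$ and studying the ODE in $y$ produces an uncontrollable inhomogeneous term $v_{XX}(X_0,\cdot)$, while Fourier series in $X$ couple the modes into an intractable system — so instead I would argue in two dimensions. The crucial point is that $v$ vanishes on the singular line, so Hardy's inequality in the normal variable gives $\int\frac{|v|^2}{(y-y_{i_0})^2}\lesssim\int|\partial_y v|^2$ locally, whence $\|Wv\|_{L^2}\lesssim\|\nabla v\|_{L^2}$ near the layer: the borderline singular term is controlled by the gradient. I would pair this with a Carleman estimate for $\Delta$ on a boundary-aligned ball $B$ around a branch of $\{u=c\}$ (Fig.~\ref{fig-uni-conti}), with a suitable weight $\varphi$,
\[
\tau^{3}\!\int_B e^{2\tau\varphi}|w|^{2}+\tau\!\int_B e^{2\tau\varphi}|\nabla w|^{2}\ \le\ C\!\int_B e^{2\tau\varphi}|\Delta w|^{2},\qquad \tau\ge\tau_0,
\]
valid for $w\in C^2$ that vanish near the centre of $B$ (where $v$ is already known to vanish) and outside $B$. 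Applying this with $w=\chi v$ for an appropriate cutoff $\chi$, substituting $\Delta v=-Wv$, absorbing $\int_B e^{2\tau\varphi}|Wv|^2$ into the gradient term on the left via the Hardy bound for $\tau$ large, and using that the commutator terms $[\Delta,\chi]v$ are supported where $u>c$ strictly and where the weight is comparatively small, one lets $\tau\to\infty$ to get $v\equiv0$ in a one-sided neighbourhood of the branch. Since $W$ is bounded on each region $\{u\ge c+\varepsilon\}$, classical unique continuation then propagates $v\equiv0$ across the interior of the strip, and doing this on every supercritical strip gives $v\equiv0$ on $\{u>c\}$.

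\textbf{Conclusion and the main obstacle.} Steps~1 and~2 together force $v\equiv0$ on $D_L$, so the flow is a shear flow. The case $\beta=0$ of hypothesis~(ii) is handled in the same way, the one-sided vanishing of Step~1 now resting on the structure of steady Euler flows (in the spirit of \cite{Hamel2017}) rather than on the sign of $\beta$, and with a little extra care at any point of $\{u=c\}$ where $\nabla u$ degenerates; Step~2 carries over. I expect Step~2 to be the crux: constructing the Carleman weight on the boundary-aligned ball and making precise the mechanism by which the normal-direction Hardy inequality absorbs the borderline $1/(y-y_{i_0})$ potential — a regime outside the scope of off-the-shelf unique continuation theory — is where the real work lies, whereas the reduction and Step~1 are comparatively soft.
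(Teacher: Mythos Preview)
Your treatment of case~(i) ($\beta>0$) is essentially the paper's own argument: vanishing on $\{u<c\}$ via the $F$-formulation and the sign of $\beta$, level-set geometry from the implicit function theorem, and then a Carleman/Hardy unique-continuation argument across each horizontal branch of $\{u=c\}$. The paper uses the explicit weight $e^{r^{-b}}$ (after Protter) on a ball whose centre is slightly displaced into the region where $v$ is already known to vanish; your sketch with a generic weight $\varphi$ and parameter $\tau$ captures the same mechanism.

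Case~(ii) ($\beta=0$), however, has a real gap. You propose to rerun Steps~1--2, getting one-sided vanishing ``in the spirit of \cite{Hamel2017}'' and then continuing by Carleman. This runs into two obstacles. First, Hamel--Nadirashvili's rigidity requires the absence of stagnation points, but here $(u-c,v)$ necessarily stagnates on $\{u=c\}$ (since $\beta-\Delta u\ne0$ there forces $v=0$); so that result does not furnish the one-sided vanishing you need. Second, hypothesis~(ii) does \emph{not} assume $\nabla u\ne0$ on $\{u=c\}$, so the level set need not be a finite union of horizontal circles, and your Step~2 geometry breaks down. The paper avoids all of this by exploiting that $\beta=0$ kills the right-hand side of the $F$-equation: one gets directly
\[
\int_{S^{\pm}}(u-c)^2|\nabla F|^2\,dx\,dy=0
\]
on \emph{both} $S^+$ and $S^-$, hence $F$ is constant on each connected component of $\{u\ne c\}$, and a short directional-derivative argument then forces $v\equiv0$ there. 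No Carleman estimate and no control on the geometry of $\{u=c\}$ are needed in this case; that is precisely why the $\nabla u\ne0$ hypothesis can be dropped when $\beta=0$.
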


\if0
\begin{Remark}\label{rem-Theorem-beta-plane}
  Let us give some explanations of the conditions in Theorem \ref{wave-speed-inside-range} $(i)$.
\begin{itemize}
\item The condition ``$\beta-\Delta u\neq0$ whenever $u=c$" means that $c$ is not a generalized inflection value of $u$.

\item
The condition ``$\nabla u\neq0$ whenever $u=c$" means that $c$ is not a critical value of $u$.
 \item
 The condition ``$c\in (u_{\min}, u_{\max})$" means that   $c$ is not a maximum or minimum  of $u$.
\end{itemize}
\end{Remark}
\fi

Theorem \ref{wave-speed-inside-range}, together with Theorem 1.1 in \cite{Hamel2017}, Theorems 3.2 and 3.11 in \cite{csz2024a}, provides a complete classification of the genuine traveling-wave solutions.

\begin{Theorem}[=Theorem \ref{classification-of-wave-speed-for-a-genuinely-travelling-wave-beta-plane-intro}]\label{classification-of-wave-speed-for-a-genuinely-travelling-wave-beta-plane}
Let $(u(x-ct,y),v(x-ct,y))\in C^2(D_L)$
 be a genuine traveling-wave solution to the $\beta$-plane equation \eqref{Euler equation}-\eqref{boundary condition for euler}.

{\rm(i)} If $\beta>0$, then the wave speed $c$ must fall into one of the following four categories:

\begin{itemize}
\item  $c$ is a generalized inflection value of $u$, i.e. $\{\beta-\Delta u=0\}\cap \{u=c\}\neq\emptyset$,

\item
 $c$ is a critical value of $u$,
 \item
    $c$ is  a maximum or minimum  of $u$,
    \item
    $c\in [c_\beta^+, u_{\min})$, with $c_\beta^{+}$  defined in \eqref{def-c+} for $d_\pm=\pm d$.
\end{itemize}

{\rm(ii)} If $\beta=0$, then the wave speed $c$ must be a generalized inflection value of $u$.
\end{Theorem}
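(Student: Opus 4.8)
\emph{Proof proposal.} The classification is obtained by assembling three ingredients: the ``inside the range'' dichotomy of Theorem~\ref{wave-speed-inside-range}, the a priori speed bounds outside $Ran(u)$ supplied by Theorems~3.2 and~3.11 of \cite{csz2024a} (for $\beta>0$), and the fact --- Theorem~1.1 of \cite{Hamel2017} --- that for $\beta=0$ a genuine traveling wave necessarily has $c\in Ran(u)$. The plan is therefore a case split, first on the sign of $\beta$ and then on whether $c$ lies in $Ran(u)=[u_{\min},u_{\max}]$; in the range, each time we will invoke the contrapositive of Theorem~\ref{wave-speed-inside-range} together with the genuineness of the wave (i.e. $v\not\equiv0$, so the flow is not a shear flow).

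\textbf{Case $\beta>0$.} Suppose first $c\notin Ran(u)$. The upper bound in Theorems~3.2 and~3.11 of \cite{csz2024a} rules out $c>u_{\max}$, hence $c<u_{\min}$, and the matching lower bound then forces $c\ge c_\beta^{+}$; thus $c\in[c_\beta^{+},u_{\min})$, which is type~(i4). Suppose next $c\in Ran(u)$. If $c\in\{u_{\min},u_{\max}\}$ we are in type~(i3) and there is nothing to prove. Otherwise $c\in(u_{\min},u_{\max})$, and we argue by contradiction: were $c$ neither a generalized inflection value nor a critical value of $u$ --- that is, $\beta-\Delta u\neq0$ whenever $u=c$ and $\nabla u\neq0$ whenever $u=c$ --- then all hypotheses of Theorem~\ref{wave-speed-inside-range}(i) would hold, forcing $(u,v)$ to be a shear flow, contradicting genuineness. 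Hence $c$ is a generalized inflection value (type~(i1)) or a critical value (type~(i2)). Combining the two subcases yields part~(i).

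\textbf{Case $\beta=0$.} By Theorem~1.1 of \cite{Hamel2017}, a genuine traveling wave with $\beta=0$ satisfies $c\in Ran(u)$. Apply Theorem~\ref{wave-speed-inside-range}(ii): if $\beta-\Delta u\neq0$ at every point of $\{u=c\}$, the wave would be a shear flow, contradicting genuineness. Therefore $\{\beta-\Delta u=0\}\cap\{u=c\}\neq\emptyset$, i.e. $c$ is a generalized inflection value of $u$, which proves part~(ii). (Note that the boundary values $c=u_{\min}$ and $c=u_{\max}$ require no separate treatment here, since they still satisfy $c\in Ran(u)$ and the same argument applies.)

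\emph{Main obstacle.} The argument above is essentially organizational: all of the analytic weight sits inside Theorem~\ref{wave-speed-inside-range} (whose proof uses the $F$-formulation, a boundary-aligned Carleman estimate, and a Hardy inequality in the normal direction to tame the borderline $\frac{1}{y-y_{i_0}}$-type potential) and inside the cited speed bounds of \cite{csz2024a,Hamel2017}. The only point demanding care here is exhaustiveness of the case split --- namely that ``$c\in Ran(u)$ and $c\notin\{u_{\min},u_{\max}\}$'' coincides exactly with the regime $c\in(u_{\min},u_{\max})$ covered by Theorem~\ref{wave-speed-inside-range}(i), and that the endpoints $u_{\min},u_{\max}$ are swept into type~(i3) --- so that no admissible wave speed escapes one of the listed categories.
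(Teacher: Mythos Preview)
Your proposal is correct and follows essentially the same approach as the paper's own proof: a case split on whether $c$ lies in $Ran(u)$, invoking Theorems~3.2 and~3.11 of \cite{csz2024a} (or Theorem~1.1 of \cite{Hamel2017} when $\beta=0$) for the outside-range case and the contrapositive of Theorem~\ref{wave-speed-inside-range} for the inside-range case. The paper's proof is slightly terser but the logical structure is identical.
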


We now  present the proof of Theorem \ref{wave-speed-inside-range} and of Theorem \ref{classification-of-wave-speed-for-a-genuinely-travelling-wave-beta-plane}.
To this end, we use the $F$-formulation of the $\beta$-plane equation introduced in \cite{csz2024a}.
\begin{Lemma}\label{F-formulation-lem}
Let $(u(x-ct,y),v(x-ct,y))\in C^2(D_L)$
 be a traveling-wave solution to the $\beta$-plane equation \eqref{Euler equation}-\eqref{boundary condition for euler}. Define
 \begin{align}\label{def-F-beta-plane}
F={v\over u-c}
\end{align}
on $\{u\neq c\}$. Then $F$ solves
\begin{align}\label{F-equation}
\partial_{x}((u-c)^2\partial_{x}F)+\partial_{y}((u-c)^2\partial_{y}F)=-\beta(u-c)F
\end{align}
on $\{u\neq c\}$.
\end{Lemma}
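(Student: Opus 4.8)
The plan is to obtain \eqref{F-equation} as a direct consequence of the vorticity equation \eqref{vore}, deferring the singular denominator $u-c$ to the very last step. Write $w=u-c$. Since $u$ and $v$ are functions of $x-ct$ and $y$ only, so is the scalar vorticity $\gamma=\partial_x v-\partial_y u$, and hence $\partial_t\gamma=-c\,\partial_x\gamma$. Substituting this into \eqref{vore} and using $\{\psi,\gamma\}=(\vec{u}\cdot\nabla)\gamma=u\,\partial_x\gamma+v\,\partial_y\gamma$ yields the traveling-wave vorticity identity
\[
(u-c)\,\partial_x\gamma+v\,\partial_y\gamma+\beta v=0\qquad\text{on }D_L .
\]
Conceptually this is just the statement that the total vorticity $\gamma+\beta y$ is transported by the velocity in the moving frame, but the computational form above is all that is needed.

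The next step is to convert the first-order expression $(u-c)\,\partial_x\gamma+v\,\partial_y\gamma$ into a second-order one. From incompressibility, $u_x=-v_y$, and differentiating gives $u_{xx}=-v_{xy}$ and $u_{xy}=-v_{yy}$. Inserting $\gamma=v_x-u_y$ into $(u-c)\gamma_x+v\gamma_y$, expanding, and using these two relations, a short computation establishes the pointwise algebraic identity
\[
(u-c)\,\partial_x\gamma+v\,\partial_y\gamma=(u-c)\,\Delta v-v\,\Delta u=w\,\Delta v-v\,\Delta w ,
\]
the last equality because $\Delta w=\Delta u$. Thus the traveling-wave vorticity identity becomes the clean relation
\[
w\,\Delta v-v\,\Delta w+\beta v=0\qquad\text{on }D_L .
\]

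Finally, I would bring in $F=v/w$, which is defined and of class $C^2$ on the open set $\{u\neq c\}$. There one has
\[
w^2\nabla F=w^2\nabla\!\Big(\frac{v}{w}\Big)=w\,\nabla v-v\,\nabla w ,
\]
so that $w^2\nabla F$ is in fact a $C^1$ vector field on $\{u\neq c\}$; taking its divergence, the cross terms $\nabla w\cdot\nabla v$ cancel and
\[
\nabla\cdot\big(w^2\nabla F\big)=w\,\Delta v-v\,\Delta w=-\beta v=-\beta w F=-\beta(u-c)F ,
\]
which is precisely \eqref{F-equation}, namely $\partial_x\big((u-c)^2\partial_x F\big)+\partial_y\big((u-c)^2\partial_y F\big)=-\beta(u-c)F$.

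I do not anticipate a genuine obstacle here; the argument is elementary calculus together with incompressibility. The only points requiring a little care are (a) justifying $\partial_t\gamma=-c\,\partial_x\gamma$ from the traveling-wave ansatz, so that the time derivative is absorbed into the $x$-derivative, and (b) noticing that although $\nabla F$ may blow up as $w\to 0$, the combination $w^2\nabla F=w\,\nabla v-v\,\nabla w$ is manifestly $C^1$ on $\{u\neq c\}$, so that the divergence in \eqref{F-equation} is a classical one there. The real value of the lemma is structural rather than technical: \eqref{F-equation} is a divergence-form elliptic equation for $F$ whose leading coefficient $(u-c)^2$ degenerates exactly on $\{u=c\}$, thereby trading the singularity $1/(u-c)$ of the naive formulation for a controlled degeneracy, which is what makes the subsequent unique-continuation and rigidity analysis possible.
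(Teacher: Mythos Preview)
Your proof is correct and complete. The paper itself does not give a proof of this lemma; it simply records the statement and cites the $F$-formulation from \cite{csz2024a}, so there is nothing to compare beyond noting that your derivation---traveling-wave vorticity identity, the rewriting $(u-c)\gamma_x+v\gamma_y=(u-c)\Delta v-v\Delta u$ via incompressibility (which the paper also uses as \eqref{un-vn-eq}--\eqref{un-vn-eq-u-v}), and the divergence identity $\nabla\cdot(w^2\nabla F)=w\Delta v-v\Delta w$---is exactly the natural route and matches what one would expect from the cited reference.
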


\begin{proof} [Proof of Theorem \ref{wave-speed-inside-range}]
The idea of the proof for $\beta>0$ is as follows. On the side
$\{u<c\}$ of the level set
$\{u=c\}$, we make use of the
$F$-formulation of the governing equation in Lemma \ref{F-formulation-lem} and take the advantage of the sign of $\beta$  to show that
$v=0$. This in turn implies that each connected component of
$\{u=c\}$ must be a straight segment parallel to the
zonal direction and the number of such segments is finite. On the other side
$\{u>c\}$, we employ a boundary-adapted Carleman estimate combined with a Hardy inequality to establish
unique continuation for an elliptic equation with a borderline ${1\over y-y_{i}}$-type potential near a branch $\{y=y_{i}\}$ of the level set $\{u=c\}$. This yields  continuation of $v=0$ across
$\{u=c\}$ and hence throughout the entire domain.
In the $f$-plane setting ($\beta=0$), the key feature that $\beta=0$ renders equation \eqref{F-equation}, and hence the subsequent analysis, considerably simpler than those in the $\beta$-plane setting.

Since
$(u(x-ct,y),v(x-ct,y))$ is a traveling-wave solution to the $\beta$-plane equation \eqref{Euler equation}-\eqref{boundary condition for euler},
we have
\begin{align}\label{un-vn-eq}
{(u-c)}\pa_x\gamma+v(\pa_y\gamma+\beta)=0.
\end{align}
By the incompressibility condition we have $\partial_x\gamma=\Delta v$ and $\partial_y\gamma=-\Delta u$, and thus \eqref{un-vn-eq} can be written as
\begin{align}\label{un-vn-eq-u-v}
{(u-c)}\Delta v+v(-\Delta u+\beta)=0.
\end{align}
We  divide $D_L$ into three parts:
\begin{align}\label{def-S+-0}
S^+=\{(x,y):u>c\}, \quad S^0=\{(x,y):u=c\}, \quad S^-=\{(x,y):u<c\}.
\end{align}

Since $\beta-\Delta u=\beta+\pa_y\gamma\neq0$ whenever $u=c$,
we infer from  \eqref{un-vn-eq} that
\begin{align}\label{v-u-c0}
v=0 \quad \text{on }\quad  S^0\cup\{y=\pm d\}.
\end{align}
Define $F$ by \eqref{def-F-beta-plane} on $S^{\pm}$.
 By Lemma \ref{F-formulation-lem}, $F$ solves the formulation \eqref{F-equation} of the $\beta$-plane equation on $S^{\pm}$.
Any $(x_0,y_0)\in S^0$ is a boundary point of $S^{\pm}$, and thus,
 since  $\beta-(\Delta u)(x_0,y_0)\neq0$, by the continuity of $\Delta u$ there exists a neighborhood $U_{(x_0,y_0)}$ of $(x_0,y_0)$ such that $\beta-\Delta u\neq0$ on $U_{(x_0,y_0)}$.
 Then
 by \eqref{un-vn-eq-u-v}, \eqref{v-u-c0} and $(u,v)\in C^2(D_L)$   we have
\begin{align}\label{boundary-term1}
&\lim_{U_{(x_0,y_0)}\cap S^{\pm}\ni(x,y)\to(x_0,y_0)}((u-c)^2F\partial F)(x,y)\\\nonumber
=&\lim_{U_{(x_0,y_0)}\cap S^{\pm}\ni(x,y)\to(x_0,y_0)}v\left(\partial v-\partial u{\Delta v\over\Delta u-\beta }\right)(x,y)=0,
\end{align}
where $\partial=\partial_x$ or $\partial_y$. Any $(x_0,y_0)\in \{y=\pm d\}\setminus S^0$ is a boundary point of $S^{\pm}$ and thus
$u(x_0,y_0)-c\neq0$ ensures, by the continuity of $ u$, the existence of a neighborhood $V_{(x_0,y_0)}$ of $(x_0,y_0)$ such that $u-c\neq0$ on $V_{(x_0,y_0)}$. By $v(x_0,y_0)=0$ and $(u,v)\in C^1(D_L)$, we have
\begin{align}\label{boundary-term2}
&\lim_{V_{(x_0,y_0)}\cap S^{\pm}\ni(x,y)\to(x_0,y_0)}((u-c)^2F\partial F)(x,y)\\\nonumber
=&\lim_{V_{(x_0,y_0)}\cap S^{\pm}\ni(x,y)\to(x_0,y_0)}\left(v\partial v-{v^2\partial u\over u-c}\right)(x,y)=0.
\end{align}

Let us first prove the statement under the assumption (i). Note that
\begin{align}\label{S-pm-0-T-times-d}
S^0\cap(\mathbb{T}_L\times (-d,d))\neq\emptyset\quad\text{and}\quad S^{\pm}\cap(\mathbb{T}_L\times (-d,d))\neq\emptyset.
\end{align}
In fact,
we choose a curve $\varrho$ such that $\varrho\setminus\{(\tilde x_i,\tilde y_i)\}_{i=1}^2\subset \mathbb{T}_L\times (-d,d)$ and $\varrho$ connects the points $(\tilde x_1,\tilde y_1)$ and $(\tilde x_2, \tilde y_2)$ at which
$u$ attains its minimum and maximum in $D_L$, respectively.
 Since $c\in (u_{\min},u_{\max})$, there exists $(\tilde x_3,\tilde y_3)\in \varrho\setminus\{(\tilde x_i,\tilde y_i)\}_{i=1}^2\subset \mathbb{T}_L\times (-d,d)$ such that $u(\tilde x_3,\tilde y_3)=c$. Thus $S^0\cap(\mathbb{T}_L\times (-d,d))\neq\emptyset.$
 Then $\nabla u(\tilde x_3,\tilde y_3)\neq0$. This implies that $\partial_{x}u(\tilde x_3,\tilde y_3)\neq0$ or $\partial_{y}u(\tilde x_3,\tilde y_3)\neq0$.
 If $\partial_y u(\tilde x_3,\tilde y_3)\neq0$ (resp. $\partial_x u(\tilde x_3,\tilde y_3)\neq0$), there exists $\tilde \delta>0$ small enough such that one of the line segments $\{(\tilde x_3, y):y\in (\tilde y_3-\tilde \delta,\tilde y_3)\}$ and $\{(\tilde x_3, y):y\in (\tilde y_3,\tilde y_3+\tilde \delta)\}$ (resp. $\{(x, \tilde y_3):x\in (\tilde x_3-\tilde \delta,\tilde x_3)\}$ and $\{(x, \tilde y_3):x\in (\tilde x_3,\tilde x_3+\tilde \delta)\}$)  lies in $S^+$ and the other is in $S^-$. This validates \eqref{S-pm-0-T-times-d}.

 Based on the assumption that $c$ is not a critical value of $u$ (i.e. $\nabla u\neq0$ whenever $u=c$), we claim that
  \begin{align}\label{gradient-partial-x-u}
  \partial_{x}u=0\quad\text{whenever}\quad u=c,
  \end{align}
  while
  \begin{align}\label{gradient-partial-y-u}
  \partial_{y}u\neq0\quad\text{whenever}\quad u=c.
  \end{align}
In fact, by \eqref{S-pm-0-T-times-d}, we have
$S^{\pm}\cap(\mathbb{T}_L\times (-d,d))\neq\emptyset$.
 Multiplying \eqref{F-equation} by $F$ and integrating by parts, we infer from $\beta>0$ and
   the definition of $S^-$   that
\begin{align}\label{beta+S-}
0\leq\int_{S^-}(u-c)^2|\nabla F|^2dxdy=\int_{S^-} \beta(u-c)|F|^2dxdy\leq0.
\end{align}
Here, the boundary of $S^-$ is contained in $S^0\cup\{y=\pm d\}$, and the boundary terms arising from integration by parts in \eqref{beta+S-} vanish due to \eqref{boundary-term1}-\eqref{boundary-term2}.
Then $F=0$ and
 \begin{align}\label{v=0S-beta+}
  v=0 \quad\text{on}\quad S^-.
  \end{align}
Since $S^{-}$ is relatively open in $D_L$ and $v=0$ on $S^-$, we have
$\nabla v=0$ on  $S^{-}$. By the continuity of  $\nabla v$,  this extends to $\nabla v=0$ on the boundary set $\{u=c\}$ (i.e. on $S^0$). Due to the incompressibility condition
we have $\partial_{x}u=-\partial_{y}v=0$ whenever $u=c$, which proves \eqref{gradient-partial-x-u}.
Under the assumption that $\nabla u\neq0$ whenever $u=c$, it follows that \eqref{gradient-partial-y-u} holds.

 Using \eqref{gradient-partial-x-u}-\eqref{gradient-partial-y-u} and the Implicit Function
Theorem, we see that  every branch of the level set $\{u=c\}$ is
a straight segment parallel to the zonal direction, spanning the full period.
 Since $\partial_yu$ is continuous and nonzero on the compact set $\{u = c\}$, it attains a
strictly positive minimum in absolute value. This implies that distinct segments
 have a uniform separation and only finitely many such segments can fit in the
interval $(-d, d)$.
 We denote $\{u=c\}\cap(\mathbb{T}_L\times (-d,d))=\{x\in\mathbb{T}_L, y=y_i\}_{i=1}^{k}$ (see Fig.\,\ref{fig-u-c=0}), where $y_i<y_{i+1}$ for $1\leq i\leq k-1$.
\begin{figure}[h]
\begin{center}
 \begin{tikzpicture}[scale=0.7]
 \draw  (-4, 0).. controls (-4, 0) and (4, 0)..(4, 0);
 \draw  (-4, 5).. controls (-4, 5) and (4, 5)..(4, 5);
 \draw  (-4, 0).. controls (-4, 0) and (-4, 5)..(-4, 5);
 \draw  (4, 0).. controls (4, 0) and (4, 5)..(4, 5);

       \node (a) at (-4,-0.5) {\tiny$0$};
       \node (a) at (4,-0.5) {\tiny$L$};
       \node (a) at (-4.5,0) {\tiny$-d$};
       \node (a) at (-4.5,5) {\tiny$d$};
       \node (a) at (-4.5,0.8) {\tiny$y_1$};
       \node (a) at (-4.5,2) {\tiny$y_2$};
       \node (a) at (-4.5,3.2) {\tiny$\vdots$};
       \node (a) at (-4.5,4) {\tiny$y_k$};
       \draw  (-4, 0.8).. controls (-2, 0.8) and (2, 0.8)..(4, 0.8);
       \draw  (-4, 2).. controls (-2, 2) and (2, 2)..(4, 2);
       \draw  (-4, 4).. controls (-2, 4) and (2, 4)..(4, 4);

 \end{tikzpicture}
\end{center}
	\caption{
Configuration of a branch of $\{u=c\}$.
}
\label{fig-u-c=0}
\end{figure}
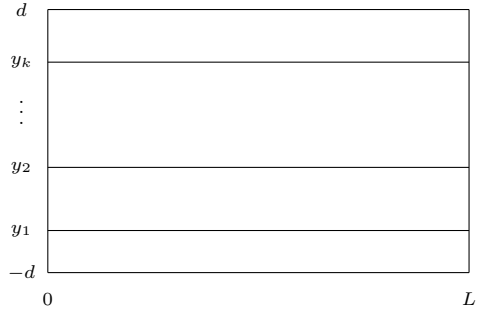

By \eqref{v=0S-beta+} we have  $v=0$ on $S^-$.
We now prove that $v=0$ on $S^+$. For this,
denote $y_0=-d$, let $y_{k+1}=d$ for convenience and
 fix $1\leq i_0\leq k$.
Since  $c$ is not a critical value of $u$, by \eqref{gradient-partial-y-u} we know that $u-c$  takes opposite signs on the two regions $\mathbb{T}_L\times(y_{i_0-1},y_{i_0})$ and $\mathbb{T}_L\times(y_{i_0},y_{i_0+1})$, indicating that one of the above two regions  lies in  $S^-$. Without loss of generality, we assume that $\mathbb{T}_L\times(y_{i_0-1},y_{i_0}) \subset S^-$. Then \begin{align}\label{v=0-on-yi0-1-yi0}
v=0\quad \text{on}\quad \mathbb{T}_L\times(y_{i_0-1},y_{i_0}).
\end{align}
Let $\varepsilon_0\in(0,\min\{1,{1\over2}(y_{i_0+1}-y_{i_0}),{1\over2}(y_{i_0}-y_{i_0-1})\})$. Fix any $x_0\in\mathbb{T}_L$. We denote by $B_{\varepsilon_0}(x_0,y_{i_0}-{\varepsilon_0\over2})$ the open ball centered at $(x_0,y_{i_0}-{\varepsilon_0\over2})$ with radius $\varepsilon_0$. Then
$B_{\varepsilon_0}(x_0,y_{i_0}-{\varepsilon_0\over2})\subset\mathbb{T}_L\times(y_{i_0-1},y_{i_0+1})$ while
$B_{\varepsilon_0\over2}(x_0,y_{i_0}-{\varepsilon_0\over2})\subset\mathbb{T}_L\times(y_{i_0-1},y_{i_0})$,
which implies
\begin{align*}
v=0\quad \text{on}\quad B_{\varepsilon_0\over2}\left(x_0,y_{i_0}-{\varepsilon_0\over2}\right).
\end{align*}
Through the approach of unique continuation for the elliptic equation
 \begin{align}\label{elliptic-equation-v}\Delta v+{\beta-\Delta u\over u-c}v=0
 \end{align}
with the singular potential ${\beta-\Delta u\over u-c}$ near $\{y=y_{i_0}\}$,  we will show that
\begin{align}\label{unique-continuation}
v=0\quad \text{on}\quad B_{\varepsilon_0}\left(x_0,y_{i_0}-{\varepsilon_0\over2}\right).
\end{align}
Note that since $\beta-\Delta u\neq0$ and $\partial_y u\neq0$ on $\{y=y_{i_0}\}$, the singularity of the potential near $\{y=y_{i_0}\}$, is ${\beta-\Delta u\over u-c}\approx{1\over y-y_{i_0}}\notin L^{>1}$, so that
the unique continuation results in \cite{Schechter-Simon1980, Amrein-Berthier-Georgescu1981, Jerison-Kenig1985} for the $2$-dimensional case do not apply -- they require a potential in $L^{>1}$.

\begin{figure}[h]
\begin{center}
 \begin{tikzpicture}[scale=0.58]
 \draw  (-4, 0).. controls (-4, 0) and (4, 0)..(4, 0);
 \draw  (-4, 5).. controls (-4, 5) and (4, 5)..(4, 5);
 \draw  (-4, 0).. controls (-4, 0) and (-4, 5)..(-4, 5);
 \draw  (4, 0).. controls (4, 0) and (4, 5)..(4, 5);

       \node (a) at (-4,-0.5) {\tiny$0$};
       \node (a) at (4,-0.5) {\tiny$L$};
       \node (a) at (-4.7,0) {\tiny$y_{i_0-1}$};
       \node (a) at (-4.7,5) {\tiny$y_{i_0+1}$};
       \node (a) at (-4.7,2.5) {\tiny$y_{i_0}$};
       \node (a) at (-5,1.8) {\tiny$y_{i_0}-{\varepsilon_0\over2}$};
       \node (a) at (-5,3.2) {\tiny$y_{i_0}+{\varepsilon_0\over2}$};
       \draw  (-4, 2.5).. controls (-2, 2.5) and (2, 2.5)..(4, 2.5);
        \draw[fill=green!70] (0,1.8) circle [radius=0.7cm];
        \draw  (-4, 1.8).. controls (-2, 1.8) and (2, 1.8)..(4, 1.8);

        \draw  (0, 0).. controls (0, 0) and (0, 1.8)..(0, 1.8);
         \draw  (-4, 3.2).. controls (-2, 3.2) and (2, 3.2)..(4, 3.2);

        \node (a) at (0,-0.6) {\tiny$x_0$};
    \draw (0,1.8) circle [radius=1.4cm];

    \begin{scope}
        \clip (0,1.8) circle [radius=1.4cm];
        \fill[red!70] (-1.4,3.2) rectangle (1.4,2.5);
    \end{scope}
      \node (a) at (6,2.5)    {\tiny$\Longrightarrow$};

 \end{tikzpicture}
\quad
  \begin{tikzpicture}[scale=0.58]
 \draw  (-4, 0).. controls (-4, 0) and (4, 0)..(4, 0);
 \draw  (-4, 5).. controls (-4, 5) and (4, 5)..(4, 5);
 \draw  (-4, 0).. controls (-4, 0) and (-4, 5)..(-4, 5);
 \draw  (4, 0).. controls (4, 0) and (4, 5)..(4, 5);

       \node (a) at (-4,-0.5) {\tiny$0$};
       \node (a) at (4,-0.5) {\tiny$L$};
       \node (a) at (-4.7,0) {\tiny$y_{i_0-1}$};
       \node (a) at (-4.7,5) {\tiny$y_{i_0+1}$};
       \node (a) at (-4.7,2.5) {\tiny$y_{i_0}$};
       \node (a) at (-5,3.2) {\tiny$y_{i_0}+{\varepsilon_0\over2}$};
       \draw  (-4, 2.5).. controls (-2, 2.5) and (2, 2.5)..(4, 2.5);

         \draw  (-4, 3.2).. controls (-2, 3.2) and (2, 3.2)..(4, 3.2);


    \fill[green!70] (-4, 2.5) rectangle (4,3.2);

 \end{tikzpicture}
\end{center}
	\caption{Unique continuation across the level set $\{u=c\}$, propagating vanishing from one side of $\{y=y_{i_0}\}$ to the other side.
}
\label{fig-uni-conti}
\end{figure}
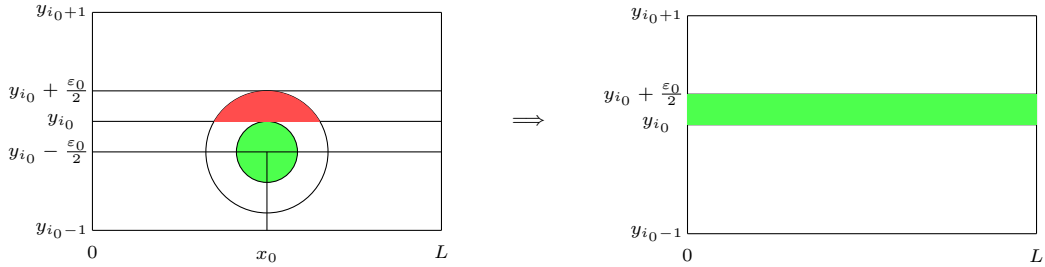

Note that $\varepsilon_0>0$ is independent of $x_0\in\mathbb{T}_L$.
If  \eqref{unique-continuation} holds true,
then by the arbitrary choice of $x_0\in\mathbb{T}_L$, we have  $v=0$ on $\mathbb{T}_L\times(y_{i_0},y_{i_0}+{\varepsilon_0\over2})$, see Fig.\,\ref{fig-uni-conti}.
Since ${\beta-\Delta u\over u-c}\in L_{loc}^\infty(\mathbb{T}_L\times(y_{i_0}+{\varepsilon\over2},y_{i_0+1}))$, by the classical unique continuation (see, for example, \cite{Carleman1939,Muller1954,Hormander1963}), we have $v=0$ on $\mathbb{T}_L\times(y_{i_0},y_{i_0+1})$, and thus on $S^+$ by the arbitrary choice of $1\leq i_0\leq k$.

The rest of the proof under the assumption (i) is devoted to establishing the validity of \eqref{unique-continuation}.
Without loss of generality, we assume that $x_0=0$ and $y_{i_0}-{\varepsilon_0\over2}=0$. Otherwise, we can perform the change of variables $(x,y)\mapsto(\tilde x,\tilde y)=(x-x_0,y-(y_{i_0}-{\varepsilon_0\over2}))$ and define $\tilde v(\tilde x,\tilde y)=v(x,y)$.
Then the analysis can be carried out  for $(0,0)$ and $\tilde v$ instead of $ (x_0,y_{i_0}-{\varepsilon_0\over2})$ and $v$.
We divide the proof of \eqref{unique-continuation} into the following two steps.
\bigskip

\noindent
{\bf{Step 1}.} The following Carleman estimate holds: for any $b>0$ and any $w\in C^2(\mathbb{T}_L\times[-d,d])$  vanishing outside $B_{\varepsilon_0}$ and on $B_{\varepsilon_0\over2}$, we have
\begin{align}\label{Carleman-estimate1}
\int_{B_{\varepsilon_0}}r^{-2b-2}e^{2r^{-b}}|w|^2dxdy\leq& {1\over2b^4}\int_{B_{\varepsilon_0}}r^{b+2}e^{2r^{-b}}|\Delta w|^2dxdy,
\end{align}
and
\begin{align}\label{Carleman-estimate2}
\int_{B_{\varepsilon_0}}e^{2r^{-b}}|\nabla w|^2dxdy\leq&
\left({1\over4b^4}+{1\over2}\varepsilon_0^b+{2\over b^2}\right)\int_{B_{\varepsilon_0}}r^{b+2}e^{2r^{-b}}|\Delta w|^2dxdy,
\end{align}
where  $B_{\varepsilon_0}=B_{\varepsilon_0}(0,0)$ and $r=\sqrt{x^2+y^2}$.\bigskip

The Carleman estimates \eqref{Carleman-estimate1}-\eqref{Carleman-estimate2} provide weighted
$L^2$
 control of
$w$ and $\nabla w$ in terms of the weighted
$L^2$
 norm of
$\Delta w$, and the coefficients on the right-hand side of \eqref{Carleman-estimate1}-\eqref{Carleman-estimate2} can be made arbitrarily small by choosing
$b>0$ sufficiently large. Moreover, the Carleman weight satisfies $e^{r^{-b}}\to \infty$ as $r\to0$, and thus enforces a strong amplification of any nonvanishing solution for $b>0$ sufficiently large. These ingredients
are essential for handling the Laplacian contribution in the quantitative unique continuation argument. The remaining obstacle to unique continuation comes from the singularity of the potential term and will be addressed in Step 2.

The condition that $w$  vanishes on $B_{\varepsilon_0\over2}$ guarantees the existence of the integrals in  \eqref{Carleman-estimate1}-\eqref{Carleman-estimate2}, while the condition that $w$
vanishes outside $B_{\varepsilon_0}$ ensures that the boundary terms in the  integration  by parts below vanish. The Carleman estimates \eqref{Carleman-estimate1}--\eqref{Carleman-estimate2} were derived in \cite{Protter1960}, a slight difference here being that
$b$
 can be chosen as an arbitrary positive number (while in \cite{Protter1960},
$b$ is required to be sufficiently large).  For completeness we provide the proof.

Let \begin{align*}
\eta=e^{r^{-b}}w
\end{align*}
on $\mathbb{T}_L\times[-d,d]$. Then $\eta$  vanishes outside $B_{\varepsilon_0}$ and on $B_{\varepsilon_0\over2}$. Since
\begin{align*}
r^{b+2}e^{r^{-b}}\left(\nabla e^{-r^{-b}}\cdot\nabla\eta\right)={b}(x\partial_x\eta+y\partial_y\eta)\quad\text{and}\quad r^{b+2}e^{r^{-b}}\Delta e^{-r^{-b}}={b^2}(r^{-b}-1),
\end{align*}
we have
\begin{align*}
r^{b+2}e^{2r^{-b}}|\Delta w|^2=&r^{b+2}\left(\Delta\eta+2e^{r^{-b}}\left(\nabla e^{-r^{-b}}\cdot\nabla\eta\right)+\eta e^{r^{-b}}\Delta e^{-r^{-b}}\right)^2\\
\geq&4r^{b+2}e^{r^{-b}}\left(\nabla e^{-r^{-b}}\cdot\nabla\eta\right)\left(\Delta\eta+\eta e^{r^{-b}}\Delta e^{-r^{-b}}\right)\\
=&4{b}(x\partial_x\eta+y\partial_y\eta)(\Delta\eta+{b^2}r^{-b-2}(r^{-b}-1)\eta ).
\end{align*}
Then
\begin{align}\label{Carleman1rightside}
\int_{B_{\varepsilon_0}}r^{b+2}e^{2r^{-b}}|\Delta w|^2dxdy\geq&4b\int_{B_{\varepsilon_0}}\Delta\eta(x\partial_x\eta+y\partial_y\eta)dxdy\\\nonumber
&+4b^3\int_{B_{\varepsilon_0}}r^{-b-2}(r^{-b}-1)(x\partial_x\eta+y\partial_y\eta)\eta  dxdy.
\end{align}
For the first term in the right-hand side of \eqref{Carleman1rightside} we have
\begin{align*}
\int_{B_{\varepsilon_0}}\Delta\eta(x\partial_x\eta+y\partial_y\eta)dxdy
=&{1\over2}\int_{B_{\varepsilon_0}}\left(x((\eta_x)^2)_x-x((\eta_y)^2)_x-y((\eta_x)^2)_y+y((\eta_y)^2)_y\right)dxdy\\
=&{1\over2}\int_{B_{\varepsilon_0}}\left(-(\eta_x)^2+(\eta_y)^2+(\eta_x)^2-(\eta_y)^2\right)dxdy=0.
\end{align*}
Thus \eqref{Carleman1rightside} becomes
\begin{align}\label{Carleman1rightside-new}
\int_{B_{\varepsilon_0}}r^{b+2}e^{2r^{-b}}|\Delta w|^2dxdy\geq&
4b^3\int_{B_{\varepsilon_0}}r^{-2b-2}(x\partial_x\eta+y\partial_y\eta)\eta  dxdy\\\nonumber
&-4b^3\int_{B_{\varepsilon_0}}r^{-b-2}(x\partial_x\eta+y\partial_y\eta)\eta  dxdy=I+II.
\end{align}
Since
\begin{align*}
\partial_x\left({x\over r^{2b+2}}\right)+\partial_y\left({y\over r^{2b+2}}\right)=&{r^2-(2b+2)x^2\over r^{2b+4}}+{r^2-(2b+2)y^2\over r^{2b+4}}
={-2b\over r^{2b+2}},
\end{align*}
we have
\begin{align*}
I=&2b^3\int_{B_{\varepsilon_0}}r^{-2b-2}(x(\eta^2)_x+y(\eta^2)_y)  dxdy\\
=&-2b^3\int_{B_{\varepsilon_0}}\left(\partial_x\left({x\over r^{2b+2}}\right)+\partial_y\left({y\over r^{2b+2}}\right) \right)|\eta|^2 dxdy\\
=&4b^4\int_{B_{\varepsilon_0}}{1\over r^{2b+2}}|\eta|^2 dxdy\geq0.
\end{align*}
Similarly,
\begin{align*}
\partial_x\left({x\over r^{b+2}}\right)+\partial_y\left({y\over r^{b+2}}\right)=&{r^2-(b+2)x^2\over r^{b+4}}+{r^2-(b+2)y^2\over r^{b+4}}
={-b\over r^{b+2}}
\end{align*}
implies
\begin{align*}
II=&-2b^3\int_{B_{\varepsilon_0}}r^{-b-2}(x(\eta^2)_x+y(\eta^2)_y)dxdy\\
=&2b^3\int_{B_{\varepsilon_0}}\left(\partial_x\left({x\over r^{b+2}}\right)+\partial_y\left({y\over r^{b+2}}\right)\right)|\eta|^2dxdy\\
=&-2b^4\int_{B_{\varepsilon_0}}{1\over r^{b+2}}|\eta|^2dxdy\\
\geq&-2b^4\int_{B_{\varepsilon_0}}{1\over r^{2b+2}}|\eta|^2dxdy=-{1\over2} I.
\end{align*}
This, along with \eqref{Carleman1rightside-new}, implies
\begin{align*}
\int_{B_{\varepsilon_0}}r^{b+2}e^{2r^{-b}}|\Delta w|^2dxdy\geq&  {1\over2}I+\left({1\over2} I+II\right)\geq{1\over2}I\\
=&2b^4\int_{B_{\varepsilon_0}}{1\over r^{2b+2}}|\eta|^2 dxdy=2b^4\int_{B_{\varepsilon_0}}{1\over r^{2b+2}}e^{2r^{-b}}|w|^2 dxdy,
\end{align*}
which proves \eqref{Carleman-estimate1}.

We show now the validity of \eqref{Carleman-estimate2}.
Since
\begin{align*}
\left|\Delta\left(e^{2r^{-b}}\right)\right|=e^{2r^{-b}}{4b^2\over r^{2b+2}}|1+r^b|\leq e^{2r^{-b}}{8b^2\over r^{2b+2}},
\end{align*}
we infer from
 integration by parts and \eqref{Carleman-estimate1} that
\begin{align*}
\int_{B_{\varepsilon_0}}e^{2r^{-b}}|\nabla w|^2dxdy=&-\int_{B_{\varepsilon_0}}e^{2r^{-b}} w\Delta wdxdy+{1\over2}\int_{B_{\varepsilon_0}}\Delta\left(e^{2r^{-b}}\right)| w|^2dxdy\\
\leq&\int_{B_{\varepsilon_0}}e^{2r^{-b}} |w||\Delta w|dxdy+{1\over2}\int_{B_{\varepsilon_0}}\left|\Delta\left(e^{2r^{-b}}\right)\right|| w|^2dxdy\\
\leq&\int_{B_{\varepsilon_0}} \left(e^{r^{-b}}r^{-b-1}|w|\right)\left(e^{r^{-b}}r^{b+1}|\Delta w|\right)dxdy+\int_{B_{\varepsilon_0}}e^{2r^{-b}}{4b^2\over r^{2b+2}}| w|^2dxdy\\
\leq&{1\over2}\int_{B_{\varepsilon_0}} e^{2r^{-b}}r^{-2b-2}|w|^2dxdy
+{1\over2}\int_{B_{\varepsilon_0}} e^{2r^{-b}}r^{2b+2}|\Delta w|^2dxdy\\
&+{2\over b^2}\int_{B_{\varepsilon_0}}r^{b+2}e^{2r^{-b}}| \Delta w|^2dxdy\\
\leq&{1\over 4b^4}\int_{B_{\varepsilon_0}}r^{b+2}e^{2r^{-b}}| \Delta w|^2dxdy
+{1\over2}\varepsilon_0^b\int_{B_{\varepsilon_0}} e^{2r^{-b}}r^{b+2}|\Delta w|^2dxdy\\
&+{2\over b^2}\int_{B_{\varepsilon_0}}r^{b+2}e^{2r^{-b}}| \Delta w|^2dxdy\\
=&\left({1\over 4b^4}+{1\over2}\varepsilon_0^b+{2\over b^2}\right)\int_{B_{\varepsilon_0}}r^{b+2}e^{2r^{-b}}| \Delta w|^2dxdy.
\end{align*}
This proves \eqref{Carleman-estimate2}.
\bigskip

\noindent
{\bf{Step 2.}} We establish the validity of \eqref{unique-continuation}.\bigskip

Let $a\in \left({\varepsilon_0\over2},\varepsilon_0\right)$.
Define $\chi\in C^\infty(\mathbb{T}_L\times[-d,d])$ to be a cutoff function such that $\chi\equiv1$ on $B_a$ and $\chi\equiv0$ outside $B_{\varepsilon_0}$. Then
\begin{align*}
\chi v\in C^2(\mathbb{T}_L\times[-d,d]) \text{ \; vanishes  \;on\;  }  B_{\varepsilon_0\over2} \text{\;  and \; outside\;  }B_{\varepsilon_0}.
\end{align*}
Since $\beta-\Delta u\neq0$ and $\partial_y u\neq0$ on $\{y=y_{i_0}\}$,
by shrinking $\varepsilon_0>0$ if necessary we have
\begin{align*}
\left|{\beta-(\Delta u)(x,y)\over u(x,y)-c}\right|=\left|{\beta-(\Delta u)(x,y)\over u(x,y)-u(x,y_{i_0})}\right|\leq {C\over|y-y_{i_0}|}
\end{align*}
 for some $C>0$, where  $(x,y)\in B_{\varepsilon_0}$.
By \eqref{v=0-on-yi0-1-yi0}, we know that $e^{r^{-b}}\chi v=0$ on $\left(\mathbb{T}_L\times(y_{i_0-1},y_{i_0})\right)\cap B_{\varepsilon_0}$.
For any $\phi\in H^{1}(a,b)$ that vanishes at some point $y_*\in[a,b]$,  the Hardy inequality \cite{Hardy-Littlewood-Polya1988,LWZZ} reads
\begin{align}\label{Hardy-inequality}
\left\|{\phi\over y-y_*}\right\|_{L^2(a,b)}^2\leq C\|\phi'\|_{L^2(a,b)}^2,
\end{align}
where $a, b\in\mathbb{R}$ with $a<b$.
Then by the Hardy inequality \eqref{Hardy-inequality} in the $y$-direction, we have
\begin{align}\label{singular-potential-estimate}
\left\|{\beta-\Delta u\over u-c}\left(e^{r^{-b}}\chi v\right)\right\|_{L^2(B_{\varepsilon_0})}^2\leq&\int_{B_{\varepsilon_0}}{C\over (y-y_{i_0})^2}\left(e^{r^{-b}}\chi v\right)^2dxdy\\\nonumber
=&C\int_{-{\sqrt{3}\over2}\varepsilon_0}^{{\sqrt{3}\over2}\varepsilon_0}
\int_{\varepsilon_0\over2}^{\sqrt{\varepsilon_0^2-x^2}}{1\over (y-{\varepsilon_0\over2})^2}\left(e^{r^{-b}}\chi v\right)^2dydx\\\nonumber
\leq&C\int_{-{\sqrt{3}\over2}\varepsilon_0}^{{\sqrt{3}\over2}\varepsilon_0}
\int_{\varepsilon_0\over2}^{\sqrt{\varepsilon_0^2-x^2}}\left|\partial_y\left(e^{r^{-b}}\chi v\right)\right|^2dydx\\\nonumber
=&C\left\|\partial_y\left(e^{r^{-b}}\chi v\right)\right\|_{L^2(B_{\varepsilon_0})}^2,
\end{align}
where we recall that $x_0=0$ and $y_{i_0}-{\varepsilon_0\over2}=0$. Note that
 \begin{align*}
 \left|\partial_y e^{r^{-b}}\right|^2=b^2r^{-2b-4}e^{2r^{-b}}y^2\leq b^2r^{-2b-2}e^{2r^{-b}}.
 \end{align*}
 By \eqref{elliptic-equation-v}, \eqref{singular-potential-estimate}, the Carleman estimate \eqref{Carleman-estimate1}-\eqref{Carleman-estimate2} and $v=\chi v$ on $B_a$, we have
\begin{align}\label{Ba-weighted-laplacian-estimate}
\int_{B_a}r^{b+2}e^{2r^{-b}}\left|\Delta v\right|^2dxdy=&\int_{B_a}r^{b+2}\left|e^{r^{-b}}{\beta-\Delta u\over u-c} v\right|^2dxdy\\\nonumber
\leq&\int_{B_{\varepsilon_0}}r^{b+2}\left|e^{r^{-b}}{\beta-\Delta u\over u-c}\chi v\right|^2dxdy\\\nonumber
\leq&\left\|{\beta-\Delta u\over u-c}\left(e^{r^{-b}}\chi v\right)\right\|_{L^2(B_{\varepsilon_0})}^2\\\nonumber
\leq&C\left\|\partial_y\left(e^{r^{-b}}\chi v\right)\right\|_{L^2(B_{\varepsilon_0})}^2\\\nonumber
\leq&C\int_{B_{\varepsilon_0}}\left(e^{2r^{-b}}|\partial_y(\chi v)|^2+\left|\partial_y e^{r^{-b}}\right|^2|\chi v|^2 \right)dxdy\\\nonumber
\leq&C\int_{B_{\varepsilon_0}}\left(e^{2r^{-b}}|\partial_y(\chi v)|^2+b^2r^{-2b-2}e^{2r^{-b}}|\chi v|^2 \right)dxdy\\\nonumber
\leq&C \left({1\over4b^4}+{1\over2}\varepsilon_0^b+{2\over b^2}\right)\int_{B_{\varepsilon_0}}r^{b+2}e^{2r^{-b}}|\Delta (\chi v)|^2dxdy\\\nonumber
&+{C\over2b^2}\int_{B_{\varepsilon_0}}r^{b+2}e^{2r^{-b}}|\Delta (\chi v)|^2dxdy\\\nonumber
=&C\left({1\over4b^4}+{1\over2}\varepsilon_0^b+{5\over 2b^2}\right)\int_{B_{\varepsilon_0}}r^{b+2}e^{2r^{-b}}|\Delta (\chi v)|^2dxdy.
\end{align}
Choose $b>0$ large enough such that $C\left({1\over4b^4}+{1\over2}\varepsilon_0^b+{5\over 2b^2}\right)<{1\over2}$. By \eqref{Ba-weighted-laplacian-estimate} we have
\begin{align*}
&\int_{B_a}r^{b+2}e^{2r^{-b}}\left|\Delta v\right|^2dxdy
\leq C\left({1\over4b^4}+{1\over2}\varepsilon_0^b+{5\over 2b^2}\right)\left(\int_{B_{a}}+\int_{B_{\varepsilon_0}\setminus B_a}\right)r^{b+2}e^{2r^{-b}}|\Delta (\chi v)|^2dxdy\\
\leq&{1\over2}\int_{B_a}r^{b+2}e^{2r^{-b}}\left|\Delta v\right|^2dxdy+
C\left({1\over4b^4}+{1\over2}\varepsilon_0^b+{5\over 2b^2}\right)\int_{B_{\varepsilon_0}\setminus B_a}r^{b+2}e^{2r^{-b}}|\Delta (\chi v)|^2dxdy,
\end{align*}
which implies
\begin{align}\label{Ba-weighted-laplacian-annulus-region-weighted-laplacian}
\int_{B_a}r^{b+2}e^{2r^{-b}}\left|\Delta v\right|^2dxdy\leq
C\left({1\over2b^4}+\varepsilon_0^b+{5\over b^2}\right)\int_{B_{\varepsilon_0}\setminus B_a}r^{b+2}e^{2r^{-b}}|\Delta (\chi v)|^2dxdy.
\end{align}
By \eqref{Carleman-estimate1} and \eqref{Ba-weighted-laplacian-annulus-region-weighted-laplacian}, we have
\begin{align}\label{Bv-zero-term-Bv-Ba-Laplacian}
&\int_{B_{\varepsilon_0}}r^{-2b-2}e^{2r^{-b}}|\chi v|^2dxdy\\\nonumber
\leq& {1\over2b^4}\int_{B_{\varepsilon_0}}r^{b+2}e^{2r^{-b}}|\Delta (\chi v)|^2dxdy\\\nonumber
=& {1\over2b^4}\left(\int_{B_{a}}+\int_{B_{\varepsilon_0}\setminus B_a}\right)r^{b+2}e^{2r^{-b}}|\Delta (\chi v)|^2dxdy\\\nonumber
\leq&{1\over2b^4}\left(C\left({1\over2b^4}+\varepsilon_0^b+{5\over b^2}\right)+1\right)\int_{B_{\varepsilon_0}\setminus B_a}r^{b+2}e^{2r^{-b}}|\Delta (\chi v)|^2dxdy.
\end{align}
Note that $C>0$ is independent of $b>0$. Using \eqref{Bv-zero-term-Bv-Ba-Laplacian} we obtain
\begin{align*}
\int_{B_a}|v|^2dxdy=&\int_{B_a}r^{2b+2}r^{-2b-2}|\chi v|^2dxdy\leq a^{2b+2}\int_{B_a}r^{-2b-2}|\chi v|^2dxdy\\
=&a^{2b+2}\int_{B_a}r^{-2b-2}e^{-2r^{-b}}\left|e^{r^{-b}}\chi v\right|^2dxdy\\
\leq&a^{2b+2}e^{-2a^{-b}}\int_{B_{\varepsilon_0}}r^{-2b-2}e^{2r^{-b}}\left|\chi v\right|^2dxdy\\
\leq &a^{2b+2}e^{-2a^{-b}}{1\over2b^4}\left(C\left({1\over2b^4}+\varepsilon_0^b+{5\over b^2}\right)+1\right)\int_{B_{\varepsilon_0}\setminus B_a}r^{b+2}e^{2r^{-b}}|\Delta (\chi v)|^2dxdy\\
\leq&a^{2b+2}{1\over2b^4}\left(C\left({1\over2b^4}+\varepsilon_0^b+{5\over b^2}\right)+1\right)\varepsilon_0^{b+2}\int_{B_{\varepsilon_0}\setminus B_a}|\Delta (\chi v)|^2dxdy\\
\to&0\quad \text{as}\quad b\to \infty,
\end{align*}
due to $a<\varepsilon_0<1$, $a^{2b+2}{1\over2b^4}\left(C\left({1\over2b^4}+\varepsilon_0^b+{5\over b^2}\right)+1\right)\varepsilon_0^{b+2}\to0$ as $b\to \infty$, and to the fact that
the integral $\int_{B_{\varepsilon_0}\setminus B_a}|\Delta (\chi v)|^2dxdy$ is independent of $b>0$.
Thus $v=0$ on $B_a$. Since $a\in({\varepsilon_0\over2},\varepsilon_0)$ is arbitrary we have $v=0$ on $B_{\varepsilon_0}$. This shows the validity of  \eqref{unique-continuation} under the assumption (i).

Now we prove the statement under the assumption (ii).
Recall the partition of the domain $D_L$ into $S^+, S^0$ and $S^-$ according to \eqref{def-S+-0},
\if0
Since $(u(x-ct,y),v(x-ct,y))$ is a traveling-wave solution to the equation
\eqref{Euler equation}-\eqref{boundary condition for euler} with $\beta=0$, we have
\begin{align*}
{(u-c)}\pa_x\gamma+v\pa_y\gamma=(u-c)\Delta v-v\Delta u=0.
\end{align*}
\fi
\if0
This, together with the fact that $\Delta u\neq0$ whenever $u=c$, implies
\begin{align}\label{v-u-c1}
v=0 \quad \text{ on }\quad \{y=\pm d\}\cup S^0.
\end{align}
\fi
and
define $F$ on $S^{\pm}$ as  in \eqref{def-F-beta-plane}.
Since $\beta=0$, by Lemma \ref{F-formulation-lem}  the $F$-formulation  of the $\beta$-plane equation simplifies to
\begin{align}\label{F-equation1}
\partial_{x}((u-c)^2\partial_{x}F)+\partial_{y}((u-c)^2\partial_{y}F)=0.
\end{align}
Multiplying \eqref{F-equation1} by $F$ and integrating over $S^{\pm}$, by \eqref{boundary-term1}-\eqref{boundary-term2}, we obtain
\begin{align*}
\int_{S^{\pm}}(u-c)^2|\nabla F|^2dxdy=0.
\end{align*}
Thus, $\nabla F=0$ on $S^{\pm}$ (i.e. $\nabla F=0$ on both sides of the level set $\{u=c\}$).
We show now that $v=0$ on $S^{\pm}$. Since the considerations for $S^{\pm}$ are similar, we only give the proof for $S^{-}$.  Let $S_1^{-}$ be a connected component of $S^{-}$. Then $\nabla F=0$ and thus $F={v\over u-c}=a_{1}$ on $S_1^{-}$, where $a_{1}\in\mathbb{R}$ is a constant. If $a_{1}=0$, then $F={v\over u-c}=0$ and $v=0$ on $S_1^{-}$. If $a_{1}\neq0$, by the incompressibility  condition we have $\partial_xu+\partial_y v={1\over a_{1}}\partial_x v+\partial_y v=0$ on $S_1^{-}$. Let $(x_1,y_1)\in S_1^{-}$ and $\{(x_1+{1\over a_1} s, y_1+s), s\in\mathbb{R}\}$ be a line. Then there exists $s_1>0$ such that $(x_1+{1\over a_1} s_1, y_1+s_1)\in S^0\cup\{y=\pm d\}$ and $\{(x_1+{1\over a_1} s, y_1+s), s\in[0,s_1)\}\subset S_1^-$. Since
 $$\partial_sv\left(x_1+{1\over a_1} s, y_1+s\right)=\left({1\over a_1}\partial_xv+\partial_yv\right)\left(x_1+{1\over a_1} s, y_1+s\right)=0$$
 on $s\in[0,s_1)$, $v\in C^1(D_L)$ and $v(x_1+{1\over a_1} s_1, y_1+s_1)=0$, we have $v(x_1,y_1)=0$. However, $v=a_1(u-c)\neq0$ on $S_1^-$, which is a contradiction. In conclusion, we have $
 v=0$ on  $S^-$.
\end{proof}

\begin{Remark}\label{supplement-to-Theorem-beta=0i}
  We provide a supplement to Theorem \ref{wave-speed-inside-range} $(i)$.
For a traveling-wave solution $(u(x-ct,y),v(x-ct,y))\in C^2(D_L)$ to the $\beta$-plane equation \eqref{Euler equation}-\eqref{boundary condition for euler},
if $\beta-\Delta u\neq0$ whenever $u=c$,
 $\beta>0$ and $c=u_{\max}$,
then it is a shear flow.

In fact, in the case  $\beta>0$ and $c=u_{\max}$, we have $S^+=\emptyset$ and
$v=0$ on $S^0$ due to \eqref{un-vn-eq-u-v} and $\beta-\Delta u\neq0$ whenever $u=c$. Moreover, by \eqref{beta+S-} we have
$v=0$ on $S^-$. Here, $S^{\pm}$ and $S^0$ are defined in \eqref{def-S+-0}.
\end{Remark}
\begin{proof}[Proof of Theorem \ref{classification-of-wave-speed-for-a-genuinely-travelling-wave-beta-plane}]
Let $\beta>0$. Since $(u(x-ct,y),v(x-ct,y))$ is a genuine traveling wave, by Theorems 3.2 and 3.11 in \cite{csz2024a} we have $c\in[c^{+}_{\beta},u_{\max}]$. If $c$ is outside $Ran (u)$, then $c\in [c^{+}_{\beta},u_{\min})$. Next, we consider $c\in Ran(u)$.
If $c\in(u_{\min}, u_{\max})$,
 by Theorem \ref{wave-speed-inside-range} (i),  we know that $c$ is either  a generalized inflection value of $u$ or a critical value of $u$.

For $\beta=0$, since $(u(x-ct,y),v(x-ct,y))$ is a genuine traveling wave, by Theorem 1.1 in \cite{Hamel2017} (or Theorem 1.1 in \cite{Kalisch12}) we know that $c\in Ran(u)$. By Theorem \ref{wave-speed-inside-range} (ii),   $c$ must be  a generalized inflection value of $u$.
\end{proof}

We now provide mathematical examples to demonstrate that genuine traveling waves in each of the classification types described in Theorem \ref{classification-of-wave-speed-for-a-genuinely-travelling-wave-beta-plane-intro} do exist.

\begin{Example}\label{ex-generalized-inflection-values}
In the  $\beta$-plane setting there are genuine traveling waves whose  wave speeds are  generalized inflection values of the zonal velocity. As in Example 3.1 from \cite{csz2024a}, we
consider the flow in the channel $\mathbb{T}_{2\pi}\times[-1,1]$ with stream function $\psi$ given by
\[\psi(x,y)={{A}\over{n}}\cos\left(\frac{(2k+1)\pi y}{2}\right)\sin(nx)+\tilde{A}\cos(\sqrt{-\lambda}y)+B\sin(\sqrt{-\lambda}y)
+\frac{\lambda c-\beta}{-\lambda}y+\frac{\xi}{-\lambda},\]
with integers $n, k\geq1$, $\lambda=-n^{2}-\frac{(2k+1)^2\pi^2}{4},$ and real constants $c$, $\xi$, $A$, $\tilde{A}$ and $B$. Since
\begin{align}\label{psi-equation-ex1}
\left\{
\begin{aligned}
\Delta\psi+\beta y=\lambda(\psi+cy)+\xi, \\
\partial_{x}\psi=0 \quad \text{on}\quad  y=\pm 1,
\end{aligned}
\right.
\end{align}
the corresponding velocity field  $(u(x-ct,y),v(x-ct,y))$ is a traveling-wave solution to \eqref{Euler equation}-\eqref{boundary condition for euler}.
It follows from \eqref{un-vn-eq-u-v} and \eqref{psi-equation-ex1} that
$
({\beta-\Delta u})/(u-c)=-\Delta v/v=-\lambda.
$
This yields that $\{\beta-\Delta u=0\}=\{u=c\}\neq\emptyset$ as long as $c\in Ran (u)$. Hence, $c$ is a generalized inflection value of $u$ provided that $c\in Ran (u)$.
\end{Example}

\begin{Example}\label{ex-maximum-minimum-critical-value}
In the  $\beta$-plane setting, there are  genuine traveling waves whose wave speeds are of the following types:
 (i)  a minimum  of the zonal velocity; (ii) a critical value of the zonal velocity,
(iii)  they lie outside the range of the zonal velocity.
To see this consider, as in Example 3.13 from \cite{csz2024a}, the genuine traveling-wave flow $(u(x-ct,y),v(x-ct,y))$ in $\mathbb{T}_{2\pi}\times[-1,1]$ with stream function
\begin{align*}
\psi(x,y)=\left(\frac{-\beta}{{\pi^2\over 4}+1}-c\right)y+\cos (x)\cos \left(\frac{\pi y}{2}\right).
\end{align*}
A direct computation yields
\begin{align*}
Ran(u)=\left[c+\frac{\beta}{{\pi^2\over 4}+1}-\frac{\pi}{2},c+\frac{\beta}{{\pi^2\over 4}+1}+\frac{\pi}{2}\right].
\end{align*}
Let $\beta_0={\pi\over 2}({\pi^2\over 4}+1)$.

{\rm(i)} If $\beta=\beta_0>0$, then
$u_{\min}=c+\frac{\beta}{{\pi^2\over 4}+1}-\frac{\pi}{2}=c$, so that $c$ is a
 minimum  of $u$.

{\rm(ii)} For $\beta=\beta_0$, we have $u(\pi, 1)=c$ and
$ (\nabla u)(\pi,1)=0$.
Thus, $c$ is a critical value of $u$.

{\rm(iii)}
It is shown in Example 3.13 of \cite{csz2024a} that for $\beta>\beta_0$, $c$ is outside $Ran (u)$, more specifically, $c\in(c^{+}_{\beta},u_{\min})$, where $c^{+}_{\beta}$ is defined in \eqref{def-c+} with $d_{\pm}=\pm d$.
\end{Example}

\begin{Example}\label{ex-beta=0-generalized-inflection-values}
In the  $f$-plane setting, there are  genuine traveling waves whose  wave speeds are  generalized inflection values of the zonal velocity. Indeed, for any $\varepsilon\in\mathbb{R}$, consider the genuine steady solution with velocity field
\begin{align}\label{steady-flow-Kolmogorov}
(u_\varepsilon,v_\varepsilon)=\left(\sin(y)+{\varepsilon\over2}\sin\left({y\over2}\right)\sin\left({\sqrt{3}\over2}x\right),{\sqrt{3}\varepsilon\over2}\cos\left({y\over2}\right)\cos\left({\sqrt{3}\over2}x\right)\right)
\end{align}
in the channel $\mathbb{T}_{L_0}\times [-\pi,\pi]$ with $L_0={4\pi\over \sqrt{3}}$. Since
$$-\Delta u_\varepsilon=u_\varepsilon=\sin(y)+{\varepsilon\over2}\sin\left({y\over2}\right)\sin\left({\sqrt{3}\over2}x\right)\,,$$
we see that $(x,0)$ is a generalized inflection point of $u_\varepsilon$ for any $x\in\mathbb{T}_{L_0}$ and $c=0=u_\varepsilon(x,0)$.
Thus $c$ is a generalized inflection value of $u_\varepsilon$.
\end{Example}

\subsection{Rigidity of traveling waves with arbitrary amplitudes}
We first establish a rigidity result for traveling waves with wave speeds lying in the red intervals  of Fig.\,\ref{fig-beta-plane}. The conclusions differ between the $f$-plane  and  $\beta$-plane settings.

\begin{Theorem}\label{beta=0-cor} Let $\beta\notin Ran(\Delta u)$ and $(u(x-ct,y),v(x-ct,y))\in C^2(D_L)$
be a traveling-wave solution to the $\beta$-plane equation \eqref{Euler equation}-\eqref{boundary condition for euler}. Assume that   one of the following conditions

{\rm(i)} $\beta>0$, $c\notin [c_\beta^+, u_{\min}]$ and $\nabla u\neq0$ on $D_L$,

{\rm(ii)}  $\beta=0$ and  $c\in\mathbb{R}$\\
holds.
  Then $(u(x-ct,y),v(x-ct,y))$ is a shear flow,
  where $c_\beta^{+}$ is defined in \eqref{def-c+} with $d_{\pm}=\pm d$.
\end{Theorem}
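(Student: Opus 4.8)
The plan is to obtain Theorem \ref{beta=0-cor} as an immediate consequence of the classification already established --- namely Theorem \ref{classification-of-wave-speed-for-a-genuinely-travelling-wave-beta-plane}, Theorem \ref{wave-speed-inside-range} and Remark \ref{supplement-to-Theorem-beta=0i}. No fresh analytic ingredient is needed; the Carleman estimate, the Hardy inequality and the unique continuation step have already been packaged into those statements, so the task is only to check that, under the stated hypotheses, none of the wave-speed types permitted for a genuine traveling wave can actually occur. Accordingly, I would argue by contradiction, assuming that $(u(x-ct,y),v(x-ct,y))$ is a \emph{genuine} traveling wave, and derive a contradiction in each case.

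Consider first case (ii), where $\beta=0$. The hypothesis $\beta\notin Ran(\Delta u)$ means $\Delta u\neq0$ everywhere on $D_L$, so the set $\{\beta-\Delta u=0\}$ of generalized inflection points is empty and no real number can be a generalized inflection value of $u$. But Theorem \ref{classification-of-wave-speed-for-a-genuinely-travelling-wave-beta-plane}(ii) forces the speed of a genuine traveling wave in the $f$-plane setting to be a generalized inflection value of $u$. This contradiction shows that $(u,v)$ is a shear flow.

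Now consider case (i), where $\beta>0$. Again $\beta\notin Ran(\Delta u)$ gives $\beta-\Delta u\neq0$ throughout $D_L$, hence in particular wherever $u=c$, while by hypothesis $\nabla u\neq0$ on $D_L$, hence wherever $u=c$. By Theorem \ref{classification-of-wave-speed-for-a-genuinely-travelling-wave-beta-plane}(i) the speed $c$ of the (assumed genuine) wave must be (1) a generalized inflection value of $u$, (2) a critical value of $u$, (3) a maximum or a minimum of $u$, or (4) an element of $[c_\beta^+,u_{\min})$. Option (1) is impossible because $\{\beta-\Delta u=0\}=\emptyset$; option (2) is impossible because $\nabla u\neq0$ on $D_L$; option (4) directly contradicts the hypothesis $c\notin[c_\beta^+,u_{\min}]$. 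For option (3): from \eqref{def-c+} one sees that $c_\beta^+<u_{\min}$, since both subtracted terms there are strictly positive when $\beta>0$ and $u_{\max}\ge u_{\min}$; hence $u_{\min}\in[c_\beta^+,u_{\min}]$ and $c=u_{\min}$ is excluded by hypothesis as well. If instead $c=u_{\max}$, then Remark \ref{supplement-to-Theorem-beta=0i} applies --- since $\beta-\Delta u\neq0$ wherever $u=c$ and $\beta>0$ --- and yields that $(u,v)$ is a shear flow, contradicting genuineness. Since every alternative has been ruled out, $(u,v)$ must be a shear flow.

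The argument is therefore purely organizational; the closest thing to a genuine obstacle is the endpoint $c=u_{\max}$, which lies outside the open range $(u_{\min},u_{\max})$ covered by Theorem \ref{wave-speed-inside-range}(i) and must be disposed of separately through Remark \ref{supplement-to-Theorem-beta=0i}. One should also record the elementary strict inequality $c_\beta^+<u_{\min}$, so that the hypothesis $c\notin[c_\beta^+,u_{\min}]$ genuinely excludes $c=u_{\min}$, and note that the stronger assumption $\nabla u\neq0$ on all of $D_L$ (rather than merely on $\{u=c\}$) is more than enough for the above, being used only to eliminate option (2).
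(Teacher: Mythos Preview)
Your proof is correct and follows essentially the same approach as the paper's: both argue by applying Theorem \ref{classification-of-wave-speed-for-a-genuinely-travelling-wave-beta-plane} to rule out all admissible wave-speed types under the stated hypotheses, with the endpoint $c=u_{\max}$ handled separately via Remark \ref{supplement-to-Theorem-beta=0i}. Your version is simply more explicit in recording why each option is excluded and in noting the strict inequality $c_\beta^+<u_{\min}$.
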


\begin{proof} Let $\beta>0$.  By  Theorem
\ref{classification-of-wave-speed-for-a-genuinely-travelling-wave-beta-plane} (i), if $c\notin [c_\beta^+, u_{\min}]\cup\{u_{\max}\}$, then $(u(x-ct,y),v(x-ct,y))$ is a shear flow. It follows from Remark \ref{supplement-to-Theorem-beta=0i} that if
$c=u_{\max}$, then  $(u(x-ct,y),v(x-ct,y))$ is a shear flow.

Let $\beta=0$.  By  Theorem
\ref{classification-of-wave-speed-for-a-genuinely-travelling-wave-beta-plane} (ii),   $(u(x-ct,y),v(x-ct,y))$ is a shear flow for all $c\in\mathbb{R}$.
\end{proof}

\begin{Remark} Recall that in the  $\beta$-plane setting  $[c_\beta^+, u_{\min}]$  is the blue interval of Fig.\,\ref{fig-beta-plane}.  Within this setting,  there may exist traveling waves with speeds in the blue interval   $[c_\beta^+, u_{\min}]$, see Example \ref{ex-maximum-minimum-critical-value}. In general, it is hard to determine whether the rigidity of traveling waves with speeds in the blue interval holds or not.
Nevertheless,
\begin{itemize}
\item for certain specific ranges of $\beta$, in
 Theorem \ref{thm-generalization1} we prove the rigidity of traveling waves with speeds in the blue interval (and thus for arbitrary wave speeds),
  \item for traveling waves near monotone shear flows under  the Rayleigh stability  condition,
 in Theorem \ref{rigidity-near-monotone-shear-flow-arbitrary-wave-speed-thm} we  determine for which $\beta$ the rigidity of traveling waves with speeds  in the blue interval
 holds for all zonal periods $L$, while for other values of $\beta$ there exist genuine traveling waves with a specific zonal  period  $L_0$.
\end{itemize}
\end{Remark}

Next, we consider the rigidity of traveling waves with wave speeds in the  red and blue  intervals  of Fig.\,\ref{fig-beta-plane}.

\begin{Theorem}\label{thm-generalization1}
Assume that one of the following conditions

{\rm(i)} $(\Delta u)_{\min}>0$ and $\beta\in(0,(\Delta u)_{\min})$,

{\rm(ii)} $\Delta u\neq0$ on $D_L$ and $\beta=0$\\
holds. If $(u(x-ct,y),v(x-ct,y))\in C^2(D_L)$ is a traveling-wave
solution to the $\beta$-plane equation \eqref{Euler equation}-\eqref{boundary condition for euler} with $c\in \mathbb{R}$,
then $(u(x-ct,y),v(x-ct,y))$ is a shear flow.
\end{Theorem}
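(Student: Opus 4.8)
The plan is to handle the two cases separately. Case (ii) is immediate: if $\Delta u\neq0$ on $D_L$ then $\{\Delta u=0\}\cap\{u=c\}=\emptyset$, so $c$ is not a generalized inflection value of $u$, and by the contrapositive of Theorem \ref{classification-of-wave-speed-for-a-genuinely-travelling-wave-beta-plane}(ii) the solution cannot be a genuine traveling wave; hence it is a shear flow. (Alternatively, for $c\in Ran(u)$ invoke Theorem \ref{wave-speed-inside-range}(ii) directly, and for $c\notin Ran(u)$ the $S^0=\emptyset$ case of its proof applies verbatim, the boundary terms vanishing by \eqref{boundary-term2}.) So the substance is case (i), on which I will concentrate.

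For case (i), I would use that $\Delta u-\beta\geq(\Delta u)_{\min}-\beta>0$ on all of $D_L$, partition $D_L=S^+\cup S^0\cup S^-$ as in \eqref{def-S+-0}, and prove $v\equiv0$ by treating the three pieces in turn, working from the identity \eqref{un-vn-eq-u-v}, i.e.\ $(u-c)\Delta v+(\beta-\Delta u)v=0$ on $D_L$. On $S^0$ one has $u=c$, hence $(\beta-\Delta u)v=0$, and since $\beta-\Delta u<0$ this forces $v=0$ on $S^0$; together with the non-permeable boundary condition \eqref{boundary condition for euler} this gives $v=0$ on $S^0\cup\{y=\pm d\}$. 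On $S^-$ I would run the $F$-formulation argument of Theorem \ref{wave-speed-inside-range}: with $F=v/(u-c)$ solving \eqref{F-equation} by Lemma \ref{F-formulation-lem}, and the boundary identities \eqref{boundary-term1}--\eqref{boundary-term2} available because $\beta-\Delta u$ is (strictly) nonzero on $\{u=c\}$, multiplying \eqref{F-equation} by $F$ and integrating over $S^-$ yields $\int_{S^-}(u-c)^2|\nabla F|^2\,dxdy=\int_{S^-}\beta(u-c)|F|^2\,dxdy$; on $S^-$ the right side is $\leq0$ (as $\beta>0$ and $u-c<0$) while the left side is $\geq0$, so both vanish, and $\beta(u-c)$ being strictly negative there we get $F\equiv0$, i.e.\ $v\equiv0$ on $S^-$.

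The crux is $S^+$, and here the argument must depart from Theorem \ref{wave-speed-inside-range}: on $S^+$ the same energy identity reads $\int_{S^+}(u-c)^2|\nabla F|^2=\int_{S^+}\beta(u-c)|F|^2$ with a \emph{nonnegative} right-hand side, so it is non-coercive and yields nothing — this is exactly why Theorem \ref{wave-speed-inside-range} had to bring in the boundary-adapted Carleman estimate \eqref{Carleman-estimate1}--\eqref{Carleman-estimate2} and the Hardy inequality \eqref{Hardy-inequality}. Under the present hypothesis I would instead exploit the sign of $\beta-\Delta u$ directly. On $S^+$, where $u-c>0$, equation \eqref{un-vn-eq-u-v} reads $\Delta v=\tfrac{\Delta u-\beta}{u-c}\,v$ with strictly positive zeroth-order coefficient, so the maximum principle applies: if $M:=\max_{D_L}v>0$ then, since by the previous steps $v$ vanishes on $\{y=\pm d\}\cup S^0\cup S^-$, the maximum is attained at an interior point $p\in S^+\cap(\mathbb{T}_L\times(-d,d))$, where $\Delta v(p)\leq0$, contradicting $\Delta v(p)=\tfrac{(\Delta u)(p)-\beta}{u(p)-c}\,M>0$. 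Hence $v\leq0$ on $D_L$, and applying the same to $-v$ gives $v\geq0$; therefore $v\equiv0$, i.e.\ the flow is a shear flow. (Note that on $S^-$ the coefficient $\tfrac{\Delta u-\beta}{u-c}$ has the wrong sign for the maximum principle, which is precisely why the $F$-energy must be used there instead.)

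I expect the main difficulty to be conceptual rather than computational: one must recognize that the $F$-energy identity is coercive on the lower side $\{u<c\}$ but not on the upper side $\{u>c\}$, so the two sides of $\{u=c\}$ have to be killed by different mechanisms — the sign of $\beta$ below and the sign of $\beta-\Delta u$ above — and that it is precisely the narrow range $0<\beta<(\Delta u)_{\min}$ that makes the second mechanism available, collapsing the delicate unique-continuation step of Theorem \ref{wave-speed-inside-range} into a one-line maximum-principle argument. The only remaining care needed is routine: verifying that the boundary terms vanish when integrating the $F$-equation over the (possibly irregular) open set $S^-$, which is inherited from \eqref{boundary-term1}--\eqref{boundary-term2} thanks to $\beta-\Delta u$ being strictly nonzero on $\{u=c\}$ in this regime.
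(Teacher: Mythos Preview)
Your proposal is correct and follows essentially the same route as the paper: case (ii) is reduced to the classification theorem (the paper invokes Theorem \ref{beta=0-cor}(ii), which is the same content), and for case (i) the paper likewise uses the $F$-energy identity on $S^-$ (exactly your argument via \eqref{boundary-term1}--\eqref{boundary-term2}) and the weak maximum principle on $S^+$ (citing Han--Lin, where you instead give the direct interior-maximum contradiction). Your observation that the sign of $\beta-\Delta u$ replaces the Carleman/Hardy machinery of Theorem \ref{wave-speed-inside-range} on $S^+$ is exactly the point.
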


\if0
Then we have the following consequence of Theorems \ref{beta=0-cor} and \ref{thm-generalization1}.

\begin{Corollary}\label{thm-generalization1-cor}
Let $\beta\notin Ran(\Delta u)$ and $(u(x-ct,y),v(x-ct,y))\in C^2(D_L)$
be a traveling-wave solution to the $\beta$-plane equation \eqref{Euler equation}-\eqref{boundary condition for euler}. Assume that $\partial_y u\neq0$ on $D_L$ if $\beta\neq0$, and  one of the following conditions

{\rm(i)} $\beta>(\Delta u)_{\max}$, $c\notin [c_\beta^+, u_{\min}]$,

{\rm(ii)} $\beta<(\Delta u)_{\min}$, $c\notin [u_{\max},c_\beta^-]$\\
holds.
  Then $(u(x-ct,y),v(x-ct,y))$ is a shear flow,
  where $c_\beta^{\pm}$ are defined in \eqref{def-c+}-\eqref{def-c-}.
\end{Corollary}
\fi
\begin{Remark}
In a bounded periodic channel,
  Theorem $\ref{thm-generalization1}$  offers a generalization of Corollary  $1.6\; ({\rm i})$ in \cite{gxx2024} through a different approach, improving it in two  aspects:
  one extension is from the $f$-plane to the $\beta$-plane setting,
and the other is that the amplitudes of the traveling waves  are arbitrary, with no smallness constraint. In Subsection \ref{cp-b} we present its application to the rigidity of traveling waves near a class of shear flows which
include the Couette-Poiseuille flow and the Bickley jet.
\if0
$(2)$ For the traveling wave $(u(x-ct,y),v(x-ct,y))$ in Theorem $\ref{thm-generalization1}$, by Lemma $3.6$ in \cite{csz2024a} we know that $(u-c,v)$ is a steady solution  to the $\beta$-plane equation \eqref{Euler equation}-\eqref{boundary condition for euler}. By \eqref{v-u-c0}, the steady solution $(u-c,v)$ has a stagnation point. So even for $\beta=0$, the rigidity result  in \cite{Hamel2017} by Hamel-Nadirashvili can not be applied to this case.
\fi
\end{Remark}

\begin{proof}
Since
$(u(x-ct,y),v(x-ct,y))$ is a traveling-wave
solution to the $\beta$-plane equation \eqref{Euler equation}-\eqref{boundary condition for euler}, we know that \eqref{un-vn-eq}-\eqref{un-vn-eq-u-v} hold.
\if0
we have
\begin{align}\label{un-vn-eq-arbitrary-speed}
{(u-c)}\pa_x\gamma+v(\pa_y\gamma+\beta)={(u-c)}\Delta v+v(-\Delta u+\beta)=0.
\end{align}
\fi
We partition the domain $D_L$ into $S^+, S^0$ and $S^-$ according to \eqref{def-S+-0}.

We first prove the statement under the assumption (i).
Since $\beta\in(0,(\Delta u)_{\min})$, we have
\begin{align}\label{beta-Delta-u-min-arbitrary-speed}
\beta+\pa_y \gamma=\beta-\Delta u\leq\beta-(\Delta u)_{\min}<0
\end{align}
for $(x,y)\in D_L$.
This, along with \eqref{un-vn-eq-u-v}, implies that
\begin{align}\label{v-u-c0-arbitrary-speed}
v=0 \quad \text{on }\quad \{y=\pm d\}\cup S^0.
\end{align}

We now prove that  $v=0$ on $S^-$. We define $F$ on $S^-$ as in \eqref{def-F-beta-plane} and then use the  $F$-formulation  of the $\beta$-plane equation \eqref{F-equation} on $S^-$.
Any $(x_0,y_0)\in S^0\cup\{y=\pm d\}$ being a boundary point of $S^-$, by \eqref{un-vn-eq-u-v}, \eqref{beta-Delta-u-min-arbitrary-speed}-\eqref{v-u-c0-arbitrary-speed} and $(u,v)\in C^2(D_L)$   we have
\begin{align}\label{boundary-term-beta-plane22}
&\lim_{S^-\ni(x,y)\to(x_0,y_0)}((u-c)^2F\partial F)(x,y)=\lim_{S^-\ni(x,y)\to(x_0,y_0)}\left(v\partial v-{v^2\partial u\over u-c}\right)(x,y)\\\nonumber
=&\lim_{S^-\ni(x,y)\to(x_0,y_0)}v\left(\partial v-\partial u{\Delta v\over\Delta u-\beta }\right)(x,y)=0,
\end{align}
where $\partial=\partial_x$ or $\partial_y$.
 Thus, multiplying \eqref{F-equation} by $F$ and integrating by parts, we infer from
   the definition of $S^-$ and $ \beta>0$ that
\begin{align*}
0\leq\int_{S^-}(u-c)^2|\nabla F|^2dxdy=\int_{S^-} \beta(u-c)|F|^2dxdy\leq0.
\end{align*}
Therefore, $F=0$ and $v=0$ on $S^-$.

 \if0
 we have $\partial_xu+\partial_y v={1\over a_1}\partial_x v+\partial_y v=0$ on $S_1^-$. Let $(x_1,y_1)\in S_1^-$ and $\{(x_1+{1\over a_1} s, y_1+s), s\in\mathbb{R}\}$ be a line. Then there exists $s_1>0$ such that $(x_1+{1\over a_1} s_1, y_1+s_1)\in S^0\cup\{y=\pm d\}$ and $\{(x_1+{1\over a_1} s, y_1+s), s\in[0,s_1)\}\subset S_1^-$. Since
 $\partial_sv(x_1+{1\over a_1} s, y_1+s)=({1\over a_1}\partial_xv+\partial_yv)(x_1+{1\over a_1} s, y_1+s)=0$ on $s\in[0,s_1)$, $v\in C^1(D_L)$ and $v(x_1+{1\over a_1} s_1, y_1+s_1)=0$, we have $v(x_1,y_1)=0$. However, $v=a_1(u-c)\neq0$ on $S_1^-$, which is a contradiction. In sum, we have \begin{align}\label{directional-derivative-argument}
 v=0\quad \text{on}\quad S^-.
 \end{align}

 Let $S_1^{\pm}$ be a connected component of $S^{\pm}$. Then $\nabla F=0$ and thus $F={v\over u-c}=a_{\pm}$ on $S_1^{\pm}$, where $a_{\pm}\in\mathbb{R}$ are  constants. If $a_{\pm}=0$, then $F={v\over u-c}=0$ and $v=0$ on $S_1^{\pm}$. If $a_{\pm}\neq0$, by the incompressible condition and $v=0$ on $S^0\cup\{y=\pm d\}$,
\fi

We now prove that $v=0$ on $S^+$. Unlike the proof of Theorem \ref{wave-speed-inside-range}, the present argument takes advantage of the sign of $\partial_y\gamma +\beta$ in \eqref{beta-Delta-u-min-arbitrary-speed}.
 By \eqref{un-vn-eq-u-v}  the original governing equation of the traveling wave $(u(x-ct,y),v(x-ct,y))$ is
\begin{align}\label{v-formulation-beta-plane-2}
\Delta  v+\frac{ \pa_y\gamma+\beta}{u-c}v=0\quad\text{on}\quad S^+.
\end{align}
\if0
Multiplying \eqref{vn-eq} by $\Delta v$ and and integrating over $S^+$,
 by \eqref{v-u-c0} we have
\begin{align*}
0=&\int_{S^+}  \left(\frac{u-c}{ \pa_y\gamma+\beta}|\Delta  v|^2+v\Delta v\right) dxdy
=\int_{S^+}   \left(\frac{u-c}{ \pa_y\gamma+\beta}|\Delta  v|^2-|\nabla v|^2\right)dxdy.
\end{align*}
\fi
 Since $ \pa_y \gamma+\beta<0$ and $u-c>0$ on $S^+$, it follows that $\frac{ \pa_y\gamma+\beta}{u-c}<0$ on $S^+$. Using \eqref{v-u-c0-arbitrary-speed} and the fact that $v\in C^2(D_L)$, we can apply the weak maximum principle for  elliptic  equations (see, for example, Theorem 2.3 in \cite{han-lin}) to both $v$ and $-v$, which immediately yields
\begin{align*}
v= 0
\end{align*}
on any connected component of $S^+$.

By Theorem \ref{beta=0-cor}, $(u(x-ct,y),v(x-ct,y))$ is a shear flow under the assumption (ii).
\end{proof}

\section{Results for an unbounded channel}

We study the fluid flow in the unbounded channel
\[D_L^{\infty}=\{(x,y): x\in\bbT_L,\;y\in(-\infty,+\infty)\}\]
with the asymptotic behavior
\begin{align}\label{7211}
v\to0\quad \text{as}\quad  y\to\pm\infty
\end{align}
for $x\in\mathbb{T}_L$.
Denote
 $$F_{\sup}=\sup_{(x,y)\in D_L^{\infty}}\{F(x,y)\}\,,\quad F_{\inf}=\inf_{(x,y)\in D_L^{\infty}}\{F(x,y)\}\,$$
for a function $F \in C(D_L^{\infty})$.

 We study the propagation speeds and structural properties of traveling waves, extending prior results to an unbounded channel. First, we
 extend Theorems 3.2 and 3.11 in \cite{csz2024a} to an unbounded channel, thus addressing the rigidity issue for wave speeds located outside the range of the zonal velocity.
 \begin{Theorem}\label{beta-plane-unbounded-thm-wave-speeds-outside-range-zonal-velocity}
 Let $(u(x-ct,y),v(x-ct,y))\in C^2(D_L^{\infty})$ be a traveling-wave solution
to the $\beta$-plane equation \eqref{Euler equation}-\eqref{boundary condition for euler}. Assume that $v\partial_yv\to0$ and $v^2\partial_y u\to0$ as $y\to\pm\infty$ for $x\in \mathbb{T}_L$, and one of the following conditions

{\rm(i)}  $\beta>0$ and $c>u_{\sup}$;

{\rm(ii)} $\beta=0$ and $c\notin \overline{Ran(u)}$\\
holds. Then $(u(x-ct,y),v(x-ct,y))$ is a shear flow.
\end{Theorem}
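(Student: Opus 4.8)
The plan is to transport to the unbounded strip the $F$-formulation argument that underlies the bounded-channel statements (Theorems 3.2 and 3.11 of \cite{csz2024a}), the only genuinely new ingredient being the control of the boundary contributions as $y\to\pm\infty$. First I would observe that under either hypothesis $c\notin\overline{Ran(u)}$, so $u-c$ has a fixed sign and $|u-c|\ge\delta:=\mathrm{dist}(c,\overline{Ran(u)})>0$ on all of $D_L^\infty$; in particular there is no critical layer, $F=v/(u-c)$ is globally defined, lies in $C^2(D_L^\infty)$, and tends to $0$ as $y\to\pm\infty$ (since $v$ does and $|u-c|$ is bounded below). By Lemma \ref{F-formulation-lem}, $F$ solves \eqref{F-equation} throughout $D_L^\infty$.

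Next I would run the energy estimate on truncated domains. Multiplying \eqref{F-equation} by $F$ and integrating over the compact set $\mathbb{T}_L\times[-R,R]$ (periodicity in $x$ killing the lateral boundary terms) gives
$$\int_{\mathbb{T}_L\times[-R,R]}(u-c)^2|\nabla F|^2\,dx\,dy=\beta\int_{\mathbb{T}_L\times[-R,R]}(u-c)F^2\,dx\,dy+\mathcal B(R),$$
where, using the identity $(u-c)^2F\,\partial_yF=v\,\partial_yv-\dfrac{v^2\,\partial_yu}{u-c}$,
$$\mathcal B(R)=\int_{\mathbb{T}_L}\left[\,v\,\partial_yv-\frac{v^2\,\partial_yu}{u-c}\,\right]_{y=-R}^{y=R}\,dx .$$
The hypotheses $v\,\partial_yv\to0$ and $v^2\,\partial_yu\to0$ as $y\to\pm\infty$, combined with $|u-c|\ge\delta$, yield $\mathcal B(R)\to0$ as $R\to\infty$. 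In case (ii), $\beta=0$ and the right-hand side is exactly $\mathcal B(R)$; in case (i), $\beta>0$ and $u-c<0$, so the first term on the right is nonpositive and the left-hand side is $\le\mathcal B(R)$. In both cases, for any fixed $R_0$ and every $R\ge R_0$ one has $\int_{\mathbb{T}_L\times[-R_0,R_0]}(u-c)^2|\nabla F|^2\le\mathcal B(R)$; letting $R\to\infty$ forces this integral to vanish for all $R_0$, so $\nabla F\equiv0$ on the connected set $D_L^\infty$, i.e.\ $F\equiv k$ for some constant $k$.

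Finally, if $k\ne0$ then $|v|=|k|\,|u-c|\ge|k|\,\delta>0$ on $D_L^\infty$, contradicting $v\to0$ as $y\to\pm\infty$; hence $k=0$, so $v\equiv0$ and the flow is a shear flow.

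The hard part will be the passage to the limit $R\to\infty$ on a non-compact domain: unlike the bounded channel, where $v$ vanishes identically on $y=\pm d$ and the boundary terms are trivially zero, here one must show $\mathcal B(R)\to0$ — which is precisely where the decay hypotheses $v\,\partial_yv\to0$ and $v^2\,\partial_yu\to0$ enter — and then extract $\nabla F\equiv0$ through the monotonicity of the truncated integrals rather than by presupposing global integrability of $(u-c)^2|\nabla F|^2$. By contrast, since $c$ lies strictly outside $\overline{Ran(u)}$ the potential in \eqref{elliptic-equation-v} is nonsingular, so none of the Carleman and Hardy machinery required in Theorem \ref{wave-speed-inside-range} is needed here; the argument is comparatively soft.
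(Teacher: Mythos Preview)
Your proposal is correct and follows essentially the same approach as the paper: define $F=v/(u-c)$ globally (since $c\notin\overline{Ran(u)}$), multiply the $F$-equation by $F$, integrate by parts over truncated strips $\mathbb{T}_L\times[-R,R]$, use the decay hypotheses to kill the boundary term $\mathcal B(R)$ as $R\to\infty$, and exploit $\beta(u-c)\le0$ to force $\nabla F\equiv0$, hence $v\equiv0$. The only cosmetic difference is that the paper packages the limit step as a short contradiction argument (assume $\int_{|y|\le d_0}(u-c)^2|\nabla F|^2>0$ and pick $d_1>d_0$ with $|\mathcal B(d_1)|$ less than half of it), whereas you pass to the limit directly via monotonicity of the truncated integrals; both are equivalent.
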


\begin{Remark}
The condition ``$v\partial_yv\to0$ and $v^2\partial_y u\to0$ as $y\to\pm\infty$ for $x\in \mathbb{T}_L$" is mild. In fact,
by \eqref{7211} and the incompressibility condition,
  $u\in\dot{W}^{1,\infty}(D_L^{\infty})=\{u:\partial_xu,\partial_y u\in L^\infty(D_L^\infty)\}$ is sufficient to guarantee this.
\end{Remark}

 Note that in the unbounded case, we do not obtain a lower  bound for the wave speeds of genuine traveling waves in the  $\beta$-plane setting. This is because the unbounded domain prevents the Poincar\'{e} inequality (see (3.29) in \cite{csz2024a})  from being carried out. This can also be seen from $c_\beta^+\to-\infty$ as $d_{\pm}\to\pm \infty$, where $c_\beta^+$ is defined in \eqref{def-c+}.

\begin{proof} Since $c$ does not touch $\overline{Ran(u)}$, we can define $F={v\over u-c}$ on the whole domain $D_{L}^\infty$.
By Lemma \ref{F-formulation-lem}, the $F$-formulation of the $\beta$-plane equation is
\begin{align*}
\partial_{x}((u-c)^2\partial_{x}F)+\partial_{y}((u-c)^2\partial_{y}F)=-\beta(u-c)F\quad \text{on}\quad D_L^\infty.
\end{align*}
Since $v\partial_yv\to0$ and $v^2\partial_y u\to0$ as $y\to\pm\infty$, we have
\begin{align}\label{limit-unbound-channel}
\lim_{y\to\pm\infty}((u-c)^2F\partial_y F)(x,y)
=\lim_{y\to\pm\infty}v\left(\partial_y v-{v\partial_y u\over u-c}\right)(x,y)=0
\end{align}
for $x\in \mathbb{T}_L$. For any $d>0$,  we have
\begin{align}\label{beta-whole-domain-D-T-infty}
&\int_{-d}^d\int_{\mathbb{T}_L}(u-c)^2|\nabla F|^2dxdy-\int_{\mathbb{T}_L}((u-c)^2\partial_yFF)\Big|_{y=-d}^ddx\\\nonumber
=&\int_{-d}^d\int_{\mathbb{T}_L} \beta(u-c)|F|^2dxdy.
\end{align}
If one of (i) and (ii)  holds, then  $\beta(u-c)\leq0$ on $D_L^\infty$. We claim that
$\nabla F=0$ on $D_L^\infty$, which implies $v=0$ on $D_L^\infty$ by \eqref{7211}. Otherwise, there exists $d_0>0$ such that
\begin{align}\label{d0}
&\int_{-d_0}^{d_0}\int_{\mathbb{T}_L}(u-c)^2|\nabla F|^2dxdy>0.
 \end{align}
By \eqref{limit-unbound-channel} we can choose $d_1>d_0$ such that
\begin{align*}
&\int_{\mathbb{T}_L}((u-c)^2\partial_y F F)|_{y=-d_1}^{d_1}dx<{1\over2}\int_{-d_0}^{d_0}\int_{\mathbb{T}_L}(u-c)^2|\nabla F|^2dxdy.
 \end{align*}
Using now \eqref{beta-whole-domain-D-T-infty}-\eqref{d0}, $\beta(u-c)\leq0$ on $D_L^\infty$ and $d_1>d_0$, we obtain
\begin{align}\label{d1}
0\geq&\int_{-d_1}^{d_1}\int_{\mathbb{T}_L} \beta(u-c)|F|^2dxdy\\\nonumber
>&\int_{-d_1}^{d_1}\int_{\mathbb{T}_L}(u-c)^2|\nabla F|^2dxdy-{1\over2}\int_{-d_0}^{d_0}\int_{\mathbb{T}_L}(u-c)^2|\nabla F|^2dxdy\\\nonumber
\geq&{1\over2}\int_{-d_0}^{d_0}\int_{\mathbb{T}_L}(u-c)^2|\nabla F|^2dxdy>0,
\end{align}
which is a contradiction.
\end{proof}

We now extend the classification of the traveling waves to an unbounded channel. We do not intend to get the generalized version as in Theorem \ref{classification-of-wave-speed-for-a-genuinely-travelling-wave-beta-plane}, but focus on some special case, which is applicable to the traveling waves near monotone shear flows in the next section.
\begin{Theorem}\label{beta-plane-unbounded-channel-thm}
Let $(u(x-ct,y),v(x-ct,y))\in C^2(D_L^{\infty})$ be a genuine traveling-wave solution
to the $\beta$-plane equation \eqref{Euler equation}-\eqref{boundary condition for euler}. Assume that $|\partial_{y}u|\geq C_0$ on $D_L^{\infty}$ for some $C_0>0$,
$v\in \dot{W}^{1,\infty}(D_L^{\infty})$,  and ${v^2\partial_y u\over u}\to0$ as $y\to \pm\infty$ for $x\in\mathbb{T}_L$. Then the wave speed $c$ must be a generalized inflection value of $u$.
\end{Theorem}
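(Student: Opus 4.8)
The plan is to argue by contradiction: suppose $c$ is \emph{not} a generalized inflection value of $u$, so that $\beta-\Delta u\neq0$ on the level set $\{u=c\}$, and deduce $v\equiv0$ on $D_L^{\infty}$, contradicting the hypothesis that the flow is a genuine traveling wave. First I would extract the structural consequences of $|\partial_{y}u|\geq C_0$: since $\partial_y u$ is continuous and never vanishes it has a fixed sign, say $\partial_y u\geq C_0>0$ (the opposite sign being handled symmetrically), so for each $x\in\mathbb{T}_L$ the map $y\mapsto u(x,y)$ is a strictly increasing bijection of $\mathbb{R}$ onto $\mathbb{R}$; in particular $u$ attains the value $c$ and $\{u=c\}=\{y=g(x)\}$ for some $g\in C^2(\mathbb{T}_L)$ by the Implicit Function Theorem, with $S^-=\{u<c\}=\{y<g(x)\}$ and $S^+=\{u>c\}=\{y>g(x)\}$. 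Exactly as in the proof of Theorem~\ref{wave-speed-inside-range}, equation \eqref{un-vn-eq} together with $\beta-\Delta u\neq0$ on $\{u=c\}$ forces $v=0$ on $\{u=c\}$; define $F=v/(u-c)$ on $S^\pm$ as in \eqref{def-F-beta-plane}, so that $F$ solves \eqref{F-equation} there by Lemma~\ref{F-formulation-lem}.

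The core of Step~1 is to show $v=0$ on $S^-$. I would multiply \eqref{F-equation} by $F$ and integrate over $S^-$: the boundary contribution along $\{u=c\}$ vanishes since $v=0$ and $(u,v)\in C^2$ there (as in \eqref{boundary-term1}), and the contribution at $y\to-\infty$ vanishes because $v\partial_y v\to0$ (from $v\to0$ and $v\in\dot W^{1,\infty}$) and $\frac{v^2\partial_y u}{u-c}=\frac{v^2\partial_y u}{u}\cdot\frac{u}{u-c}\to0$ (using $\frac{v^2\partial_y u}{u}\to0$ and $u\to-\infty$, so $\frac{u}{u-c}\to1$). This gives $\int_{S^-}(u-c)^2|\nabla F|^2\,dxdy=\int_{S^-}\beta(u-c)|F|^2\,dxdy$. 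When $\beta>0$ the right-hand side is $\leq0$ while the left-hand side is $\geq0$, so both vanish and $F\equiv0$, hence $v=0$ on $S^-$; when $\beta=0$ the right-hand side vanishes identically, giving $\nabla F=0$ on $S^-$, and then $v=0$ on $S^-$ follows from the characteristic-line argument of the $\beta=0$ part of the proof of Theorem~\ref{wave-speed-inside-range} (on a connected component $F\equiv a_1$ is constant; if $a_1\neq0$ then the line of slope $dy/dx=a_1$ through an interior point must meet $\{u=c\}$, because $g$ is bounded on $\mathbb{T}_L$ while the line escapes to $y=\pm\infty$, and $v$ is constant along it, forcing $v\equiv0$ there and contradicting $v=a_1(u-c)\neq0$). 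Once $v=0$ on the open set $S^-$, continuity gives $\nabla v=0$ on $\{u=c\}$, hence $\partial_x u=-\partial_y v=0$ on $\{u=c\}$ by incompressibility, so $g'\equiv0$ and $\{u=c\}=\mathbb{T}_L\times\{y_{i_0}\}$ is a single horizontal line.

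Step~2 is to propagate $v=0$ across $\{y=y_{i_0}\}$ into $S^+$. If $\beta=0$, integrating \eqref{F-equation} against $F$ over $S^+$ (with the boundary terms controlled as above, now at $y\to+\infty$) again yields $\nabla F=0$ on $S^+$, and the characteristic-line argument gives $v=0$ on $S^+$. If $\beta>0$, the sign of $\beta(u-c)$ on $S^+$ is the unfavorable one for this energy argument, so instead I would invoke the quantitative unique continuation already established in Step~2 of the proof of Theorem~\ref{wave-speed-inside-range}: since $\beta-\Delta u\neq0$ and $\partial_y u\neq0$ on $\{y=y_{i_0}\}$, the potential in $\Delta v+\frac{\beta-\Delta u}{u-c}v=0$ behaves like $\frac{1}{y-y_{i_0}}$, and the boundary-aligned Carleman estimate \eqref{Carleman-estimate1}-\eqref{Carleman-estimate2} combined with the Hardy inequality \eqref{Hardy-inequality} in the normal direction --- available precisely because $v$ vanishes on $\{y=y_{i_0}\}$ --- propagates the vanishing of $v$ from the balls $B_{\varepsilon_0/2}(x_0,y_{i_0}-\varepsilon_0/2)\subset S^-$ to $B_{\varepsilon_0}(x_0,y_{i_0}-\varepsilon_0/2)$; letting $x_0$ range over $\mathbb{T}_L$ gives $v=0$ on $\mathbb{T}_L\times(y_{i_0},y_{i_0}+\frac{\varepsilon_0}{2})$, and classical unique continuation (the potential being locally bounded on $S^+$) extends this to all of $S^+$. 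In either case $v\equiv0$ on $D_L^{\infty}=S^-\cup\{u=c\}\cup S^+$, contradicting the genuineness of the traveling wave, so $c$ must be a generalized inflection value of $u$.

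I expect the main work to lie in the unbounded-domain bookkeeping of Step~1 --- confirming that $c$ is attained, that $\{u=c\}$ is a single horizontal line, and that the hypotheses $v\in\dot W^{1,\infty}$ and $\frac{v^2\partial_y u}{u}\to0$ genuinely kill the boundary terms at $y=\pm\infty$ (the blow-up of $u$ converting $\frac{v^2\partial_y u}{u}$ into the $\frac{v^2\partial_y u}{u-c}$ that actually appears in the integration by parts). Step~2 for $\beta>0$ is essentially a transcription of the Carleman/Hardy argument already carried out in Theorem~\ref{wave-speed-inside-range}, so the genuinely hard point --- the borderline $\frac{1}{y-y_{i_0}}$ singularity of the potential on the nodal set --- has in effect already been dispatched there, and the unbounded geometry introduces no new difficulty in that step.
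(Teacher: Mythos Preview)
Your proposal is correct and follows essentially the same approach as the paper: argue by contradiction, use the $F$-formulation together with the asymptotic hypotheses to get $v=0$ on $S^-$ (the paper carries this out via the truncate-and-pass-to-the-limit argument of \eqref{beta-whole-domain-D-T-infty}--\eqref{d1} rather than integrating directly over the unbounded $S^-$, which you should mimic for rigor), deduce that $\{u=c\}$ is a single horizontal line, and then transplant the Carleman/Hardy unique continuation from Theorem~\ref{wave-speed-inside-range} verbatim to propagate $v=0$ into $S^+$. The paper's own proof is in fact just a two-paragraph sketch pointing back to Theorem~\ref{wave-speed-inside-range} and noting exactly the boundary-at-infinity replacement you identified.
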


\begin{Remark} $({\rm i})$
Note that $\lim_{y\to\pm\infty}|u(x,y)|=\infty$   and thus $Ran(u(x,\cdot))=Ran(u)$ spans $\mathbb{R}$ for $x\in\mathbb{T}_L$ in Theorem \ref{beta-plane-unbounded-channel-thm}, a situation that differs
from the case of a bounded channel (for which $Ran(u)$ is compact).

${\rm (ii)}$
By \eqref{7211}, a sufficient condition for the asymptotic behavior  ``${v^2\partial_y u\over u}\to0$  as $y\to \pm\infty$ for $x\in\mathbb{T}_L$" is ${\partial_{y}u\over u}$ is bounded for $|y|$ sufficiently large.
\end{Remark}
\begin{proof}
The proof essentially follows the same approach as in Theorem \ref{wave-speed-inside-range}. We highlight only a  difference, primarily concerning the asymptotic behavior as $y\to\pm\infty$.

Assume that $c$ is not a generalized inflection value of $u$. We will show that $(u(x-ct,y),v(x-ct,y))$ is a shear flow.
 Note that $Ran (u)=Ran(u(x,\cdot))=\mathbb{R}$  for $x\in\mathbb{T}_L$, due to $|\partial_{y}u|\geq C_0$ on $D_L^{\infty}$ for some $C_0>0$.
 Then there exists a unique $y_x\in \mathbb{R}$ such that $u(x,y_x)=c$ for  $x\in\mathbb{T}_L$. This, combined with \eqref{gradient-partial-x-u}, implies that
 $$
 S^0=\{(x,y_x):x\in\mathbb{T}_L\}
 $$
  is a straight segment parallel to the zonal direction.
  The subset $S^-$ lies on one side of the curve $S^0$, while  $S^+$ occupies the other side.
The difference is that the calculation of the boundary term in \eqref{boundary-term2} should be replaced by the asymptotic behavior
\begin{align*}
\lim_{y\to\pm\infty}((u-c)^2F\partial_y F)(x,y)
=\lim_{y\to\pm\infty}v\left(\partial_y v-{v\partial_y u\over u-c}\right)(x,y)=0
\end{align*}
for $x\in \mathbb{T}_L$, due to \eqref{7211}, $\partial_y v\in L^{\infty}(D_L^\infty)$ and ${v^2\partial_y u\over u}\to0$ as $y\to \pm\infty$.
Now $v=0$ on $S^-$ follows from an argument similar to that in \eqref{beta-whole-domain-D-T-infty}-\eqref{d1}.
\end{proof}
 Finally, we
 extend Theorem \ref{thm-generalization1} to the unbounded channel, providing sufficient conditions for the rigidity of traveling waves with arbitrary wave speeds.

\begin{Theorem}\label{beta-plane-unbounded-channel-thm-rigidity}
Let $(u(x-ct,y),v(x-ct,y))\in C^2(D_L^{\infty})$ be a traveling-wave solution
to the $\beta$-plane equation \eqref{Euler equation}-\eqref{boundary condition for euler}. Assume that
$v\in \dot{W}^{2,\infty}(D_L^{\infty})=\{v:\partial_xv,\partial_y v, \Delta v\in L^\infty(D_L^\infty)\}$, $v\partial_yu\to0$ as $y\to \pm\infty$ for $x\in\mathbb{T}_L$,  and one of the
following conditions

{\rm(i)} $(\Delta u)_{\inf}>0$, $\beta\in(0,(\Delta u)_{\inf})$;

{\rm(ii)} $0=\beta\notin \overline{Ran(\Delta u)}$\\
holds. Then $(u(x-ct,y),v(x-ct,y))$ is a shear flow for an arbitrary $c\in\mathbb{R}$.
\end{Theorem}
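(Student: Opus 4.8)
The plan is to follow the scheme of the proof of Theorem~\ref{thm-generalization1}, replacing the role of the rigid walls $y=\pm d$ by the behaviour at $y\to\pm\infty$ and using the decay hypotheses $v\to0$ (from \eqref{7211}) and $v\partial_yu\to0$ to annihilate the boundary terms at infinity. As in \eqref{un-vn-eq-u-v}, the governing equation of the traveling wave reads $(u-c)\Delta v+(\beta-\Delta u)v=0$, and I partition $D_L^{\infty}=S^{+}\cup S^{0}\cup S^{-}$ as in \eqref{def-S+-0}. Two structural facts hold under either hypothesis. First, both (i) and (ii) force $\beta\notin\overline{Ran(\Delta u)}$ --- in case (i) because $\Delta u\ge(\Delta u)_{\inf}>\beta>0$ --- so $\beta-\Delta u$ is bounded away from $0$ on $D_L^{\infty}$; evaluating the governing equation on $S^{0}$ then gives $v=0$ on $S^{0}$. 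Second, on $S^{\pm}$ the same equation yields the identity $F=\dfrac{v}{u-c}=\dfrac{\Delta v}{\Delta u-\beta}$, so $F$ is \emph{bounded} on $S^{\pm}$, since $\Delta v\in L^{\infty}(D_L^{\infty})$ (because $v\in\dot W^{2,\infty}(D_L^{\infty})$) while $|\Delta u-\beta|$ is bounded below.

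I would next prove $v=0$ on $S^{-}$. By Lemma~\ref{F-formulation-lem}, $F$ solves \eqref{F-equation} on $S^{-}$. Multiplying \eqref{F-equation} by $F$ and integrating over $S^{-}\cap\{|y|<d\}$, the boundary contribution along $S^{0}$ vanishes because $(u-c)^{2}F\,\partial F=v\,\partial v-\frac{v^{2}\partial u}{u-c}$ vanishes where $v=0$, and the contribution along $\{|y|=d\}$ is, up to sign, $\int_{\mathbb{T}_L}\big(v\,\partial_yv-F\,(v\partial_yu)\big)(x,\pm d)\,dx$, which tends to $0$ as $d\to\infty$ by the decay hypotheses together with $\partial_yv\in L^{\infty}$ and the boundedness of $F$. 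Arguing by contradiction exactly as in the proof of Theorem~\ref{beta-plane-unbounded-thm-wave-speeds-outside-range-zonal-velocity} (suppose $\int_{S^{-}\cap\{|y|<d_0\}}(u-c)^{2}|\nabla F|^{2}>0$ and choose $d_1\gg d_0$ so the boundary term at $\pm d_1$ is dominated), one gets $\int_{S^{-}}(u-c)^{2}|\nabla F|^{2}=\int_{S^{-}}\beta(u-c)|F|^{2}\le0$, since $\beta\ge0$ and $u-c<0$ on $S^{-}$; hence $\nabla F\equiv0$ and $F\equiv a$ (a constant) on each connected component, i.e. $v=a(u-c)$ there. If $\beta>0$, then $\Delta v=a\Delta u$ on that component and the governing equation gives $a\beta(u-c)\equiv0$, so $a=0$; if $\beta=0$ and $a\ne0$ on some component, then incompressibility forces $\tfrac1a\partial_xv+\partial_yv=0$ there, so $v$ is constant along the line $s\mapsto(x_1+\tfrac sa,\,y_1+s)$, which leaves every compact set and runs to $|y|=\infty$ where $v\to0$, forcing $v(x_1,y_1)=0$ --- a contradiction. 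In either case $v\equiv0$ on $S^{-}$.

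Finally I would show $v=0$ on $S^{+}$. In case (i), on $S^{+}$ the field $v$ solves $\Delta v+\frac{\beta-\Delta u}{u-c}v=0$ with $\frac{\beta-\Delta u}{u-c}<0$ (as $\beta-\Delta u<0$ and $u-c>0$ there), with $v=0$ on $\partial S^{+}\subset S^{0}$ and $v\to0$ as $y\to\pm\infty$; since the uniform Lipschitz bound $\partial_xv\in L^{\infty}$ promotes the pointwise decay of $v$ to $\sup_{x}|v(x,y)|\to0$, the superlevel sets $\{v\ge\varepsilon\}$ are compact, and the weak maximum principle applied to $v$ and to $-v$ (cf. Theorem~2.3 in \cite{han-lin}) yields $v\equiv0$ on each component of $S^{+}$. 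In case (ii), where the zeroth-order coefficient $-\frac{\Delta u}{u-c}$ need not be signed on $S^{+}$, I would instead rerun the $F$-formulation argument of the previous paragraph on $S^{+}$ (now with $\beta=0$): the boundary terms again vanish in the limit, so $\nabla F\equiv0$ on $S^{+}$, $v=a(u-c)$ on each component, and the directional-derivative-and-decay argument again forces $a=0$. Combining the three regions gives $v\equiv0$ on $D_L^{\infty}$, whence $u_x=-v_y=0$ and $(u(x-ct,y),v(x-ct,y))$ is a shear flow. I expect the main obstacle to be the control at $y\to\pm\infty$: on the infinite strip one has neither a Poincar\'{e} inequality nor any a priori bound on $\int_{S^{\pm}}(u-c)^{2}|\nabla F|^{2}$, and $\nabla u$ is not assumed bounded; what resolves this is the structural identity $F=\Delta v/(\Delta u-\beta)$, which converts the $L^{\infty}$ information $v\in\dot W^{2,\infty}$ together with the gap condition $\beta\notin\overline{Ran(\Delta u)}$ into boundedness of $F$, making the boundary terms at infinity controllable and the truncate-and-contradict scheme viable.
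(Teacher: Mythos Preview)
Your proposal is correct and follows the paper's scheme closely: the $F$-formulation on $S^{-}$, the identification $F=\Delta v/(\Delta u-\beta)$ to make the boundary terms at infinity tractable, the truncate-and-contradict device borrowed from Theorem~\ref{beta-plane-unbounded-thm-wave-speeds-outside-range-zonal-velocity}, and the directional-derivative-plus-decay argument for $\beta=0$ are all exactly what the paper does. The one genuine divergence is in establishing $v=0$ on $S^{+}$ under hypothesis~(i). The paper abandons the maximum-principle route used in the bounded case and instead multiplies $\Delta v+\frac{\beta-\Delta u}{u-c}v=0$ by $v$, integrates by parts over $S^{+}\cap\{|y|<d\}$, and sends $d\to\infty$ (the boundary terms again vanish since $v\partial_y v\to0$); the sign of the potential then forces $\nabla v=0$ and $v=0$ directly. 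Your approach instead keeps the maximum principle by first upgrading pointwise decay $v(x,y)\to0$ to uniform decay $\sup_x|v(x,y)|\to0$ via equicontinuity in $x$ (from $\partial_x v\in L^{\infty}$ on the compact $\mathbb{T}_L$), so that superlevel sets are compact and an interior positive maximum would contradict the equation. Both work; the paper's energy argument is marginally more economical because it bypasses the uniform-decay step, while yours has the virtue of paralleling the bounded-channel proof of Theorem~\ref{thm-generalization1} more closely.
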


\begin{Remark}
By \eqref{7211},
$\partial_{y}u\in L^{\infty}(D_L^{\infty})$ is sufficient to ensure $v\partial_yu\to0$   as $y\to \pm\infty$ for $x\in\mathbb{T}_L$.
\end{Remark}
\begin{proof}
Under the assumption (i), we follow the approach used in  Theorem \ref{thm-generalization1} (i). We only discuss the differences.
For
$(x_0,y_0)\in S^0$ a boundary point of $S^-$, we still have the calculation of the boundary term \eqref{boundary-term-beta-plane22}.
As $y\to\pm\infty$, instead of \eqref{boundary-term-beta-plane22} for the boundary points in $\{y=\pm d\}$, we need the following asymptotic behavior
\begin{align}\label{asymptotic-behavior-unbounded-case2}
\lim_{y\to\pm\infty}((u-c)^2F\partial_y F)(x,y)
=\lim_{y\to\pm\infty}v\left(\partial_y v-\partial_y u{\Delta v\over\Delta u-\beta }\right)(x,y)=0
\end{align}
for  $x\in \mathbb{T}_L$, due to \eqref{7211}, $\Delta u-\beta\geq (\Delta u)_{\inf}-\beta>0$ for $(x,y)\in D_L^\infty$, $v\in \dot{W}^{2,\infty}(D_L^\infty)$, and $v\partial_yu\to0$ as $y\to \pm\infty$. A second difference is that in the proof of $v=0$ on $S^+$, we multiply \eqref{v-formulation-beta-plane-2} by $v$ and integrate by parts to get
\begin{align*}
\lim_{d\to\infty}\int_{S^+\cap[-d,d]}\left(-|\nabla v|^2+{\partial_y \gamma +\beta\over u-c} |v|^2\right)dxdy=0.
\end{align*}
Here, we use $\lim\limits_{S^+\ni(x,y)\to(x_0,y_0)} (v\partial v)(x,y)=0$ for $(x_0,y_0)\in S^0$ a boundary point of $S^+$, and $\lim\limits_{y\to\pm\infty} (v\partial_y v)(x,y)=0$ for $x\in\mathbb{T}_L$, due to \eqref{7211} and  $v\in\dot{W}^{2,\infty}(D_L^\infty)$. This, along with $\partial_y\gamma +\beta<0$ and $u>c$ on $S^+$, implies $v=0$ on $S^+$.

Under the assumption (ii), we use the method in Theorem \ref{wave-speed-inside-range} (ii). The main difference is that, instead of \eqref{boundary-term2}, we use the asymptotic behavior
\eqref{asymptotic-behavior-unbounded-case2} as $y\to\pm\infty$ for $x\in\mathbb{T}_L$.
\end{proof}

\section{Applications to  monotone shear flows}\label{monotone-shear-flows}
In this section, based on the theoretical framework developed in Sections 3-4, we present a precise classification of the traveling waves near a monotone shear flow.
 In an  unbounded channel,
 we get a simple description of the propagation speeds of nearby traveling waves in Theorem \ref{Monotone-shear-flows-on-an-unbounded-channel-thm}.
In the bounded channel setting, the situation becomes complicated and we present a comprehensive analysis of nearby traveling waves. First,
we obtain the rigidity of nearby traveling waves with
 wave speeds lying inside the range of the monotone shear flow.
 Under the Rayleigh stability condition, we provide a complete characterization of the
$(\beta,L)$-parameter regimes that result in either the rigidity of traveling waves for arbitrary wave speeds, or the emergence of genuine unidirectional traveling waves that are close to the monotone shear flow.
 In the absence of Rayleigh stability condition, we identify a parameter regime for $(\beta,L)$ that allows for the existence of genuine unidirectional traveling waves in a neighborhood of the monotone shear flow. Outside this parameter regime, the wave speed of any genuine nearby traveling wave must be a generalized inflection value of the zonal velocity,
 see Theorem \ref{rigidity-near-monotone-shear-flow-arbitrary-wave-speed-thm} and Remark \ref{rigidity-near-monotone-shear-flow-arbitrary-wave-speed-thm-rem}. Our results hold for general monotone shear flows and in the special case
 of the Couette flow we improve Theorem 1.1 and Theorem 1.3 in \cite{WZZ}; see Remark \ref{monotone-shear-flow-bounded-channel-thm-rem} and Corollary \ref{8181}.

Under the Rayleigh stability condition, while monotone shear flows are isolated in an unbounded channel, in a bounded channel genuine  traveling waves arise near such shear flows for a large $(\beta, L)$-parameter regime.  Moreover, for $(\beta,L)$ in this regime,
the monotone shear flow is nonlinearly Lyapunov stable yet  asymptotically  unstable.

\subsection{Monotone shear flows in an unbounded channel} We consider
monotone shear flows $(u_0,0)$ satisfying $|u_0'|\geq C_0$ on $\mathbb{R}$ for some $C_0>0$, a typical example being the Couette flow.
A
characteristic of such shear flows is that
the  range of zonal velocity spans the entire real line, which allows us to deal with  arbitrary wave speeds.

\begin{Theorem}\label{Monotone-shear-flows-on-an-unbounded-channel-thm}
Assume that $u_0\in C^2(\mathbb{R})$, and there exist $C_0, C_1>0$ such that  $|u_0'|\geq C_0$ on $\mathbb{R}$
and $\left|{u_0'\over u_0}\right|\leq C_1$ for $|y|$ sufficiently large. Let $\varepsilon_0\in(0,C_0)$. Then
the wave speed of any genuine traveling-wave solution $(u(x-ct,y),v(x-ct,y))\in C^2(D_L^{\infty})$ to the $\beta$-plane equation
\eqref{Euler equation}-\eqref{boundary condition for euler} satisfying $v\in\dot{W}^{1,\infty}(D_L^{\infty})$ and
\begin{align*}
\|u-u_{0}\|_{W^{1,\infty}(D_L^{\infty})}<\varepsilon_0
\end{align*}
must be  a generalized inflection value of $u$.
\end{Theorem}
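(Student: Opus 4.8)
The plan is to reduce the assertion to Theorem \ref{beta-plane-unbounded-channel-thm}, so that the task is merely to verify its three hypotheses for the traveling wave $(u(x-ct,y),v(x-ct,y))$ under the present, more concrete, assumptions on $u_0$. First I would bound $\partial_y u$ from below: since $\|u-u_0\|_{W^{1,\infty}(D_L^\infty)}<\varepsilon_0<C_0$ and $|u_0'|\geq C_0$ on $\mathbb{R}$, the triangle inequality gives
\[
|\partial_y u|\ \geq\ |u_0'|-|\partial_y u-u_0'|\ \geq\ C_0-\varepsilon_0\ >\ 0\qquad\text{on }D_L^\infty,
\]
so $|\partial_y u|\geq \widetilde C_0:=C_0-\varepsilon_0>0$ throughout $D_L^\infty$. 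The second hypothesis, $v\in\dot W^{1,\infty}(D_L^\infty)$, is assumed outright.

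It remains to check the decay $\dfrac{v^2\partial_y u}{u}\to 0$ as $y\to\pm\infty$; by the remark following Theorem \ref{beta-plane-unbounded-channel-thm} it suffices to show that $\dfrac{\partial_y u}{u}$ is bounded for $|y|$ large, the factor $v^2\to 0$ being provided by \eqref{7211}. Here I would use that $|u_0'|\geq C_0$ forces $u_0$ to be strictly monotone on $\mathbb{R}$ with range all of $\mathbb{R}$, hence with a unique zero $y_0$ and $|u_0(y)|\geq C_0|y-y_0|$; in particular $|u_0(y)|\to\infty$, so for $|y|$ large enough $|u(x,y)|\geq |u_0(y)|-\varepsilon_0\geq \tfrac12|u_0(y)|$. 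Combining this with $|u_0'|\leq C_1|u_0|$ for $|y|$ large (from $|u_0'/u_0|\leq C_1$) and with $|\partial_y u-u_0'|<\varepsilon_0$ yields
\[
\left|\frac{\partial_y u}{u}\right|\ \leq\ \frac{|u_0'|+\varepsilon_0}{|u_0|-\varepsilon_0}\ \leq\ \frac{C_1|u_0|+\varepsilon_0}{\tfrac12|u_0|}\ \longrightarrow\ 2C_1\quad(|y|\to\infty),
\]
so $\partial_y u/u$ is bounded for $|y|$ large. Since $v$ is bounded near infinity and $v\to 0$ there by \eqref{7211}, we get $v^2\partial_y u/u\to 0$ as $y\to\pm\infty$ for every $x\in\mathbb{T}_L$.

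With all three hypotheses verified, Theorem \ref{beta-plane-unbounded-channel-thm} applies and gives that the wave speed $c$ is a generalized inflection value of $u$, which is exactly the claim. The only mildly delicate point --- and the step I would be most careful with --- is the passage from the pointwise slope bound $|u_0'|\geq C_0$ to the linear lower bound $|u_0(y)|\geq C_0|y-y_0|$ and the subsequent absorption of the $W^{1,\infty}$-perturbation; once $u$ is pinned away from $0$ for large $|y|$, everything else is routine. No new idea beyond Theorem \ref{beta-plane-unbounded-channel-thm} is needed: the hypothesis $|u_0'|\geq C_0$, transferred to $u$, already guarantees that $Ran(u(x,\cdot))=\mathbb{R}$ for each $x$ and that $\{u=c\}$ is a single straight segment parallel to the zonal direction, which is the geometric configuration exploited in the proof of that theorem.
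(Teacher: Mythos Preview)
Your proposal is correct and follows essentially the same approach as the paper: verify the three hypotheses of Theorem \ref{beta-plane-unbounded-channel-thm} (the lower bound on $|\partial_y u|$, the assumption $v\in\dot W^{1,\infty}$, and the boundedness of $\partial_y u/u$ for large $|y|$) and then invoke that theorem. The only cosmetic difference is in the estimate for $|\partial_y u/u|$: the paper writes $\left|\frac{\partial_y u}{u}\right|\leq\left|\frac{u_0'-\partial_y u}{u}\right|+\left|\frac{u_0'}{u_0}\cdot\frac{u-u_0}{u}\right|+\left|\frac{u_0'}{u_0}\right|\leq \varepsilon_0+C_1\varepsilon_0+C_1$ once $|u|>1$, whereas you bound numerator and denominator separately via $|u|\geq|u_0|-\varepsilon_0\geq\tfrac12|u_0|$; both arguments are equally valid.
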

\begin{Remark}\label{rem-rigidity-unbounded-channel}
We point out an interesting consequence of Theorem \ref{Monotone-shear-flows-on-an-unbounded-channel-thm}:
under the
assumption of Theorem \ref{Monotone-shear-flows-on-an-unbounded-channel-thm}, if $\beta\notin \overline{Ran(u_0'')}$ and $\varepsilon_0$ is taken smaller such that
\begin{align*}
0<\varepsilon_0<\min\left\{C_0,\left| (u_0'')_{\inf}-\beta\right|,\left| (u_0'')_{\sup}-\beta\right|\right\},
\end{align*}
then any traveling-wave solution $(u(x-ct,y),v(x-ct,y))\in C^2(D_L^{\infty})$ to the $\beta$-plane equation
\eqref{Euler equation}-\eqref{boundary condition for euler} satisfying $v\in\dot{W}^{1,\infty}(D_L^{\infty})$ and
\begin{align*}
\|u-u_{0}\|_{W^{2,\infty}(D_L^{\infty})}<\varepsilon_0
\end{align*}
must be  a shear flow for arbitrary $c\in\mathbb{R}$.

In fact, since $\beta\notin \overline{Ran(u_0'')}$, we have
\begin{align*}
|\Delta u-\beta|\geq|u_0''-\beta|-|\Delta u-u_0''|\geq\min\left\{\left| (u_0'')_{\inf}-\beta\right|,\left|\beta- (u_0'')_{\sup}\right|\right\}-\varepsilon_0>0
\end{align*}
for $(x,y)\in D_L^\infty$,
which implies that $\beta\notin Ran(\Delta u)$.
By Theorem \ref{Monotone-shear-flows-on-an-unbounded-channel-thm}, $(u(x-ct,y),v(x-ct,y))$ is a shear flow.

This shows that for a general monotone shear flow $(u_0,0)$ in an unbounded channel, if $\beta\notin \overline{Ran(u_0'')}$, genuine nearby traveling waves are absent.
This is
  a necessary condition for asymptotic stability (i.e. nonlinear inviscid damping) near a shear flow  in a specific space.
  Taking for example the Couette flow,
it is shown in \cite{Bedrossian-Masmoudi2015} that nonlinear inviscid damping near it holds  in a certain Gevrey space for $\beta=0$.
This result was extended to the case $\beta>0$ in \cite{WZZ}. Applying  the above consequence to the Couette flow,  we conclude that
there are no  genuine  traveling waves $W^{2,\infty}(D_L^{\infty})$-close (in velocity) to $(y,0)$ for $\beta>0$. This is consistent with Theorem 1.7 in \cite{WZZ}.
\end{Remark}
\begin{proof}
Since $\varepsilon_0\in(0,C_0)$ and $|u_0'|\geq C_0$ on $\mathbb{R}$, we have
\begin{align}\label{7223}
|\partial_{y}u|\geq|u_0'|-|u_0'-\partial_{y}u|>C_0-\varepsilon_0>0.
\end{align}
Moreover,
\begin{align}\label{7224}
\left|{\partial_y u\over u}\right|\leq\left|{u_0'\over u_0}-{\partial_y u\over u}\right|+\left|{u_0'\over u_0}\right|\leq\left|{ u_0'-\partial_y u\over u}\right|+\left|{u_0'\over u_0}\cdot{ u-u_0\over u}\right|+\left|{u_0'\over u_0}\right|
\leq \varepsilon_0+ C_1\varepsilon_0+C_1
\end{align}
for $x\in\mathbb{T}_L$ and $|y|$ sufficiently large, in which case $|u|>1$.
By \eqref{7211}, \eqref{7223} and \eqref{7224},
all the assumptions in Theorem \ref{beta-plane-unbounded-channel-thm}  hold for $(u,v)$.
Then $c$ is a generalized inflection value of $u$.
\end{proof}

\subsection{Monotone shear flows in a bounded channel}
We first establish a rigidity result for traveling waves near a monotone shear flow $(u_0,0)$ with wave speeds inside $Ran (u_0)$

\begin{Theorem}\label{monotone-shear-flow-bounded-channel-thm}
Let $u_0\in C^2([-d,d])$  and $\delta\in(0,{u_{0,\max}-u_{0,\min}\over2})$.
Assume that $|u_0'|\geq C_0$ on $[-d,d]$ for some $C_0>0$,
$\beta\notin Ran(u_0'')$, and $c\in[u_{0,\min}+\delta,u_{0,\max}-\delta]$.
Let
\begin{align*}
0<\varepsilon_\delta<\min\{\delta,C_0,|(u_0'')_{\min}-\beta|,|(u_0'')_{\max}-\beta|\}.
\end{align*}
Then any traveling-wave solution $(u(x-ct,y),v(x-ct,y))\in C^2(D_L)$ to the $\beta$-plane equation
\eqref{Euler equation}-\eqref{boundary condition for euler} satisfying
\begin{align}\label{7222-beta-plane-bounded}
\|(u,v)-(u_{0},0)\|_{C^{2}(D_L)}<\varepsilon_\delta
\end{align}
must be a shear flow.
\end{Theorem}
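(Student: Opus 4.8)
The plan is to use the $C^2$-closeness \eqref{7222-beta-plane-bounded} to transfer to the perturbed flow $(u,v)$ the nondegeneracy features of the monotone profile $u_0$ that are required by Theorem \ref{wave-speed-inside-range}, and then simply to quote that theorem; no analysis beyond it is needed. From \eqref{7222-beta-plane-bounded} I would first extract three facts. \emph{(a) $\nabla u$ never vanishes.} Since $\varepsilon_\delta<C_0$ and $|u_0'|\ge C_0$ on $[-d,d]$, one has $|\partial_y u|\ge|u_0'|-|\partial_y u-u_0'|>C_0-\varepsilon_\delta>0$ throughout $D_L$, so in particular $\nabla u\neq0$ on $\{u=c\}$. \emph{(b) $\beta-\Delta u$ never vanishes.} As $u_0$ depends on $y$ only, $\partial_{xx}u_0=0$, and incompressibility gives $\partial_{xx}u=\partial_x(-\partial_y v)=-\partial_{xy}v$, of modulus $<\varepsilon_\delta$; hence $\Delta u=\partial_{xx}u+\partial_{yy}u$ lies within a fixed multiple of $\varepsilon_\delta$ of $u_0''$ uniformly on $D_L$. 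Since $\beta\notin Ran(u_0'')=[(u_0'')_{\min},(u_0'')_{\max}]$, the number $\min\{|(u_0'')_{\min}-\beta|,|(u_0'')_{\max}-\beta|\}$ is precisely the positive distance from $\beta$ to $Ran(u_0'')$, and as $\varepsilon_\delta$ is chosen below it (after shrinking $\varepsilon_\delta$ by the harmless numerical factor coming from the previous estimate, if necessary) we conclude that $\beta-\Delta u$ keeps a fixed sign and never vanishes on $D_L$, in particular $\beta-\Delta u\neq0$ whenever $u=c$. \emph{(c) $c$ is strictly interior to $Ran(u)$.} From $|u-u_0|<\varepsilon_\delta<\delta$ on $D_L$ and $c\in[u_{0,\min}+\delta,u_{0,\max}-\delta]$ we get $u_{\min}\le u_{0,\min}+\varepsilon_\delta<u_{0,\min}+\delta\le c$ and, symmetrically, $c\le u_{0,\max}-\delta<u_{0,\max}-\varepsilon_\delta\le u_{\max}$, so $c\in(u_{\min},u_{\max})$.

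With (a)--(c) established the conclusion is immediate. If $\beta>0$, the hypotheses of Theorem \ref{wave-speed-inside-range}(i) hold — $\beta-\Delta u\neq0$ on $\{u=c\}$ by (b), $c\in(u_{\min},u_{\max})$ by (c), and $\nabla u\neq0$ on $\{u=c\}$ by (a) — so $(u(x-ct,y),v(x-ct,y))$ must be a shear flow. If $\beta=0$, then $0\notin Ran(u_0'')$ already forces $\Delta u\neq0$ on $D_L$, whence $\beta-\Delta u\neq0$ on $\{u=c\}$ and $c\in Ran(u)$, and Theorem \ref{wave-speed-inside-range}(ii) yields the same conclusion. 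Alternatively one may invoke Theorem \ref{beta=0-cor} directly: for $\beta>0$, (c) gives $c>u_{\min}$, so $c\notin[c_\beta^+,u_{\min}]$, while $\beta\notin Ran(\Delta u)$ and $\nabla u\neq0$ on $D_L$ have just been verified, and the case $\beta=0$ is covered by part (ii) of that theorem.

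I do not anticipate a genuine obstacle: all the analytic content — the boundary-aligned Carleman estimate \eqref{Carleman-estimate1}--\eqref{Carleman-estimate2} and the normal-direction Hardy inequality that drive the unique-continuation step across the level set $\{u=c\}$, together with the $F$-formulation treating the side $\{u<c\}$ — already lives in the proof of Theorem \ref{wave-speed-inside-range}, and the present statement is just its perturbative repackaging for monotone profiles. The one point that deserves a moment's attention is step (b): to pass from $\beta\notin Ran(u_0'')$ to $\beta\notin Ran(\Delta u)$ one must control the tangential second derivative $\partial_{xx}u$ not through the regularity of some limiting profile but through the incompressibility identity $\partial_{xx}u=-\partial_{xy}v$ and the $C^2$-smallness of $v$; once this is noted, everything reduces to the already-proved classification.
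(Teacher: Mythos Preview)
Your proposal is correct and follows essentially the same approach as the paper: verify that the $C^2$-closeness transfers the three nondegeneracy conditions (nonvanishing $\partial_y u$, $\beta\notin Ran(\Delta u)$, and $c\in(u_{\min},u_{\max})$) from $u_0$ to $u$, then invoke Theorem~\ref{wave-speed-inside-range}. One minor remark: in step (b) the detour through incompressibility ($\partial_{xx}u=-\partial_{xy}v$) is unnecessary, since the $C^2$-bound on $u-u_0$ already controls $\partial_{xx}(u-u_0)=\partial_{xx}u$ directly; the paper's proof simply asserts $\beta\notin Ran(\Delta u)$ from the $C^2$-closeness without further comment.
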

\begin{Remark}\label{monotone-shear-flow-bounded-channel-thm-rem}
Theorem \ref{monotone-shear-flow-bounded-channel-thm} extends Theorem 1.1 in \cite{WZZ} from the Couette flow setting (in which case $Ran(u_0'')=\{0\}$) to  general monotone shear flows, while simultaneously improving the regularity assumption from $H^{\geq 5}$ to $C^2$.
\end{Remark}
\begin{proof}
Since $\beta\notin Ran(u_0'')$, by \eqref{7222-beta-plane-bounded} and the choice of $\varepsilon_\delta$
we have $\beta\notin Ran(\Delta u)$.
Moreover,
$
|\partial_{y}u|\geq C_0-\varepsilon_\delta>0$ for $(x,y)\in D_L.
$
Let $(x_1,y_1),(x_2,y_2)\in D_L$ such that
\[u_{0}(x_1,y_1)=u_{0,\min}\quad\text{and}\quad u_{0}(x_2,y_2)=u_{0,\max}.\]
Since $c\in[u_{0,\min}+\delta,u_{0,\max}-\delta]$, by \eqref{7222-beta-plane-bounded} we have
\begin{align*}
c-u_{\min}\geq c-u(x_1,y_1)= c-u_0(x_1,y_1)+u_0(x_1,y_1)-u(x_1,y_1)\geq\delta-\varepsilon_\delta>0,
\end{align*}
and
\begin{align*}
c-u_{\max}\leq c-u(x_2,y_2)= c-u_0(x_2,y_2)+u_0(x_2,y_2)-u(x_2,y_2)\leq-\delta+\varepsilon_\delta<0.
\end{align*}
Thus $c\in(u_{\min},u_{\max})$ and all assumptions in Theorem \ref{wave-speed-inside-range} hold for $(u,v)$.
Therefore $(u(x-ct,y),v(x-ct,y))$ is a shear flow.
\end{proof}
\if0
For   a monotone shear flow $(u_0,0)$ with  $u_0'\neq0$ on $D_L$, if $\beta\notin Ran(u_0'')$, rigidity of nearby traveling waves with  speeds inside the range of the shear flow is established in Theorem \ref{monotone-shear-flow-bounded-channel-thm}.
 \fi

Regarding Theorem \ref{monotone-shear-flow-bounded-channel-thm},
 two natural questions arise: the first is whether there are genuine nearby traveling waves outside the range $[u_{0,\min}+\delta,u_{0,\max}-\delta]$ for $\beta\notin Ran(u_0'')$, the other regards specific features of traveling waves for $\beta\in Ran(u_0'')$.  We will
 provide answers in Theorem \ref{rigidity-near-monotone-shear-flow-arbitrary-wave-speed-thm}. To proceed, we first discuss some relevant spectral aspects.
 The principal eigenvalue of the singular Rayleigh-Kuo  boundary value problem
\begin{align}\label{sturm-Liouville}
-\phi''-{\beta-u_0''\over u_0-u_{0,\min}}\phi=\lambda\phi, \;\;\;\;\phi(\pm d)=0
\end{align}
with $\phi\in H_0^1\cap H^2(-d,d)$, denoted by $\lambda_1(\beta,u_{0,\min})$, is
\begin{align}\label{variation2}
\lambda_1(\beta,u_{0,\min})
=&\inf_{\phi\in H_0^1, \|\phi\|_{L^2}=1}\int_{-d}^{d}\left(|\phi'|^2-{\beta-u_0''\over u_0-u_{0,\min}}|\phi|^2\right)dy.
\end{align}
By the compactness of the embedding $H_0^1(-d,d)\hookrightarrow L^2(-d,d)$ and since $|u_0'|\geq C_0$ on $[-d,d]$ for some $C_0>0$,
$\lambda_1(\beta,u_{0,\min})$ is well-defined and  the
infimum in \eqref{variation2} is attained by a corresponding eigenfunction.
The variational characterization \eqref{variation2} shows that
\begin{align}\label{property-lambda-1}
\lambda_1(\cdot,u_{0,\min})\text{ is decreasing  on }\mathbb{R} \text{ and }\lim_{\beta\to\infty}\lambda_1(\beta,u_{0,\min})=-\infty.
 \end{align}
 For $c<u_{0,\min}$ we denote by $\lambda_1(\beta,c)$ the principal eigenvalue of the regular Rayleigh-Kuo  boundary value problem \eqref{sturm-Liouville} with $u_{0,\min}$ replaced by $c$.
 When we need to specify the dependence of $\lambda_1(\beta,c)$ on the shear-flow profile and on the zonal-band width, we denote it by $\lambda_1(\beta,c;u_0)$ or $\lambda_1(\beta,c;u_0,d)$, for $c\leq u_{0,\min}$.

We first prove the continuity of $\lambda_1(\cdot,u_{0,\min})$.

\begin{Lemma}\label{lambda1continuous}
Assume that $u_0\in C^2([-d,d])$ and $u_0'\neq0$ on $[-d,d]$. Then $\lambda_1(\cdot,u_{0,\min})$ is continuous on $\mathbb{R}$.
\end{Lemma}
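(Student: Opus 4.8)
The plan is to deduce the continuity directly from the variational formula \eqref{variation2}, by recognizing $\lambda_1(\cdot,u_{0,\min})$ as a finite \emph{concave} function on $\mathbb{R}$ and invoking the fact that a real-valued concave function on $\mathbb{R}$ is automatically continuous.

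First I would record the affine dependence of the Rayleigh quotient on $\beta$. Since $u_0\in C^2([-d,d])$ and $u_0'\neq0$ on the compact interval $[-d,d]$, the profile $u_0$ is strictly monotone, so $u_{0,\min}$ is attained at a single endpoint $y_*\in\{-d,d\}$ and $u_0(y)-u_{0,\min}\geq C_0|y-y_*|$ with $C_0:=\min_{[-d,d]}|u_0'|>0$. Every $\phi\in H_0^1(-d,d)$ vanishes at $y_*$, so the Hardy inequality \eqref{Hardy-inequality} (equivalently the pointwise bound $|\phi(y)|^2\leq|y-y_*|\,\|\phi'\|_{L^2}^2$) gives
\begin{align*}
\int_{-d}^{d}\frac{|\phi|^2}{u_0-u_{0,\min}}\,dy\leq\frac{2d}{C_0}\|\phi'\|_{L^2(-d,d)}^2<\infty,
\end{align*}
and, since $u_0''$ is bounded, $\int_{-d}^{d}\frac{|u_0''|\,|\phi|^2}{u_0-u_{0,\min}}\,dy<\infty$ as well. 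Hence for each fixed admissible $\phi$ (that is, $\phi\in H_0^1(-d,d)$ with $\|\phi\|_{L^2}=1$) the Rayleigh quotient
\begin{align*}
Q_\beta(\phi):=\int_{-d}^{d}|\phi'|^2\,dy+\int_{-d}^{d}\frac{u_0''}{u_0-u_{0,\min}}|\phi|^2\,dy-\beta\int_{-d}^{d}\frac{|\phi|^2}{u_0-u_{0,\min}}\,dy
\end{align*}
is a genuine, finite-valued affine function of $\beta\in\mathbb{R}$, and \eqref{variation2} reads $\lambda_1(\beta,u_{0,\min})=\inf_{\phi}Q_\beta(\phi)$.

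Next I would observe that a pointwise infimum of a family of affine functions is concave, so $\beta\mapsto\lambda_1(\beta,u_{0,\min})$ is concave on $\mathbb{R}$; it is bounded above by $Q_\beta(\phi_0)<\infty$ for any fixed test function $\phi_0$, and bounded below by the coercivity estimate $Q_\beta(\phi)\geq\frac12\|\phi'\|_{L^2}^2-C(\beta)\|\phi\|_{L^2}^2$ (obtained by writing $Q_\beta(\phi)\geq\|\phi'\|_{L^2}^2-(|\beta|+\|u_0''\|_\infty)\int\frac{|\phi|^2}{u_0-u_{0,\min}}\,dy$, splitting this singular integral near $y_*$ and exploiting $\tfrac{\epsilon}{C_0}$-smallness on a short interval), with $C(\beta)$ locally bounded in $\beta$ — this is exactly the well-definedness recorded after \eqref{variation2}. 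A finite concave function on $\mathbb{R}$ is continuous, which proves the lemma; in fact this argument yields local Lipschitz continuity of $\lambda_1(\cdot,u_{0,\min})$.

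The only delicate ingredient is the finiteness of the singular integral $\int\frac{|\phi|^2}{u_0-u_{0,\min}}\,dy$ for $\phi\in H_0^1$, which rests on the non-degenerate (linear) vanishing of $u_0-u_{0,\min}$ at the endpoint $y_*$ — hence on the hypothesis $u_0'\neq0$ — and is supplied by the Hardy inequality; once the affine-in-$\beta$ structure is in place the continuity is free. As an alternative, more computational route I could bound the normalized minimizers $\phi_\beta$ of $Q_\beta$ uniformly in $H_0^1$ for $\beta$ in a compact set (using the same coercivity estimate) and then compare $Q_\beta(\phi_{\beta'})$ with $Q_{\beta'}(\phi_{\beta'})$ to obtain $|\lambda_1(\beta,u_{0,\min})-\lambda_1(\beta',u_{0,\min})|\leq|\beta-\beta'|\sup_{\beta'}\int\frac{|\phi_{\beta'}|^2}{u_0-u_{0,\min}}\,dy$; this also gives local Lipschitz continuity but is longer than the concavity argument.
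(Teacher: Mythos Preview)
Your concavity argument is correct and genuinely different from the paper's proof. The paper does essentially the ``alternative, more computational route'' you sketch at the end: it fixes $\beta_0$, treats the limits $\beta\to\beta_0^-$ and $\beta\to\beta_0^+$ separately, and in each case compares $\lambda_1(\beta,u_{0,\min})$ with $\lambda_1(\beta_0,u_{0,\min})$ by plugging the minimizer of one into the Rayleigh quotient of the other and using the Hardy inequality \eqref{Hardy-inequality}; for the right limit the paper first establishes a uniform $H_0^1$ bound on the minimizers $\phi_\beta$ for $\beta\in[\beta_0,\beta_0+1]$, exactly as you outline. Your main route is shorter and more structural: once you observe that $\beta\mapsto Q_\beta(\phi)$ is affine for each fixed admissible $\phi$ (finiteness of the slope being the Hardy step), the infimum is automatically concave and real-valued, hence continuous --- indeed locally Lipschitz --- with no case analysis. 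The paper's computation, on the other hand, makes the Hardy-based Lipschitz constant explicit and mirrors the estimates used later in Lemmas~\ref{lambda1continuous-profile-lem}--\ref{c-tou0min-limit-lem}, where the dependence is on the profile $u_0$ or on $c$ rather than on the scalar $\beta$ and the concavity shortcut is not available.
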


\begin{proof}
Without loss of generality, we may assume that $u_{0}'\geq C_0$ on $[-d,d]$ for some $C_0>0$.
Fix $\beta_0\in\mathbb{R}$.
First, we prove that
\begin{align}\label{8131}
\lim_{\beta\to\beta_{0}^{-}}\lambda_1(\beta,u_{0,\min})=\lambda_1(\beta_{0},u_{0,\min}).
 \end{align}
Let $\beta<\beta_0$ and $\phi_{\beta_{0}}\in H_0^1(-d,d)$ with $\|\phi_{\beta_{0}}\|_{L^2(-d,d)}=1$ such that
\begin{align}\label{8132}
\lambda_1(\beta_{0},u_{0,\min})
=&\int_{-d}^{d}\left(|\phi'_{\beta_{0}}|^2-{\beta_{0}-u_0''\over u_0-u_{0,\min}}|\phi_{\beta_{0}}|^2\right)dy.
\end{align}
By \eqref{variation2} we have
\begin{align}\label{8133}
\lambda_1(\beta,u_{0,\min})
\leq&\int_{-d}^{d}\left(|\phi'_{\beta_{0}}|^2-{\beta-u_0''\over u_0-u_{0,\min}}|\phi_{\beta_{0}}|^2\right)dy.
\end{align}
Since $u_{0}'\geq C_0>0$, we get
\begin{align}\label{8134}
u_{0}(y)-u_{0,\min}=u_{0}(y)-u_{0}(-d)\geq C_0(y+d)\geq0
\end{align}
for $y\in[-d,d]$.
By \eqref{property-lambda-1}, \eqref{8132}-\eqref{8134} and the Hardy inequality \eqref{Hardy-inequality}, we obtain
\begin{align*}
0&<\lambda_1(\beta,u_{0,\min})-\lambda_1(\beta_{0},u_{0,\min})
\leq\int_{-d}^{d}{\beta_0-\beta\over u_0-u_{0,\min}}|\phi_{\beta_{0}}|^2dy\\\nonumber
&\leq\int_{-d}^{d}{\beta_0-\beta\over C_0(y+d)}|\phi_{\beta_{0}}(y)|^2dy
\leq{\beta_0-\beta\over C_0}\left(\int_{-d}^{d}\left|{\phi_{\beta_{0}}(y)\over y+d}\right|^{2}dy\right)^{\frac{1}{2}}\|\phi_{\beta_{0}}\|_{L^2(-d,d)}\\\nonumber
&\leq C(\beta_0-\beta)\|\phi'_{\beta_{0}}\|_{L^2(-d,d)}.
\end{align*}
This implies that $\lambda_1(\beta,u_{0,\min})\to\lambda_1(\beta_{0},u_{0,\min})$ as $\beta\to\beta_{0}^{-}$.

Next, we show that
\begin{align}\label{8135}
\lim_{\beta\to\beta_{0}^{+}}\lambda_1(\beta,u_{0,\min})=\lambda_1(\beta_{0},u_{0,\min}).
 \end{align}
Let $\beta>\beta_0$ and  $\phi_{\beta}\in H_0^1(-d,d)$ with $\|\phi_{\beta}\|_{L^2(-d,d)}=1$ such that
\begin{align}\label{8136}
\lambda_1(\beta,u_{0,\min})
=&\int_{-d}^{d}\left(|\phi'_{\beta}|^2-{\beta-u_0''\over u_0-u_{0,\min}}|\phi_{\beta}|^2\right)dy.
\end{align}
Then by \eqref{variation2} we have
\begin{align*}
\lambda_1(\beta_0,u_{0,\min})
\leq&\int_{-d}^{d}\left(|\phi'_{\beta}|^2-{\beta_0-u_0''\over u_0-u_{0,\min}}|\phi_{\beta}|^2\right)dy.
\end{align*}
Thus
\begin{align}\label{8138}
0&>\lambda_1(\beta,u_{0,\min})-\lambda_1(\beta_{0},u_{0,\min})
\geq\int_{-d}^{d}{\beta_0-\beta\over u_0-u_{0,\min}}|\phi_{\beta}|^2dy\\\nonumber
&\geq\int_{-d}^{d}{\beta_0-\beta\over C_0(y+d)}|\phi_{\beta}(y)|^2dy
\geq{\beta_0-\beta\over C_0}\left(\int_{-d}^{d}\left|{\phi_{\beta}(y)\over y+d}\right|^{2}dy\right)^{\frac{1}{2}}\|\phi_{\beta}\|_{L^2(-d,d)}\\\nonumber
&\geq C(\beta_0-\beta)\|\phi'_{\beta}\|_{L^2(-d,d)}.
\end{align}
We claim that $\|\phi'_{\beta}\|_{L^2(-d,d)}, \beta\in[\beta_{0},\beta_0+1]$ is uniformly bounded.
In fact, by \eqref{property-lambda-1} we have
\begin{align}\label{8139}
\lambda_1(\beta_0+1,u_{0,\min})\leq\lambda_1(\beta,u_{0,\min})\leq\lambda_1(\beta_0,u_{0,\min})),\qquad \beta\in[\beta_{0},\beta_0+1].
\end{align}
From \eqref{8136}, \eqref{8139}, \eqref{8134} and the fact that $\|\phi_{\beta}\|_{L^2(-d,d)}=1$ we get
\begin{align*}
\|\phi'_{\beta}\|_{L^2(-d,d)}^2&=\lambda_1(\beta,u_{0,\min})\|\phi_{\beta}\|_{L^2(-d,d)}^{2}
+\int_{-d}^{d}{\beta-u''_{0}\over u_0-u_{0,\min}}|\phi_{\beta}|^2dy\\\nonumber
&\leq C\lambda_1(\beta_0,u_{0,\min})\|\phi_{\beta}\|_{L^2(-d,d)}\|\phi'_{\beta}\|_{L^2(-d,d)}
+C\int_{-d}^{d}{1\over C_0(y+d)}|\phi_{\beta}(y)|^2dy\\\nonumber
&\leq C\lambda_1(\beta_0,u_{0,\min})\|\phi'_{\beta}\|_{L^2(-d,d)}
+C\left(\int_{-d}^{d}\left|{\phi_{\beta}(y)\over y+d}\right|^2dy\right)^{{1\over2}}\\\nonumber
&\leq C\lambda_1(\beta_0,u_{0,\min})\|\phi'_{\beta}\|_{L^2(-d,d)}
+C\|\phi'_{\beta}\|_{L^2(-d,d)}.\nonumber
\end{align*}
This yields
\begin{align}\label{8141}
\|\phi'_{\beta}\|_{L^2(-d,d)}\leq C\lambda_1(\beta_0,u_{0,\min})+C.
\end{align}
It follows from \eqref{8138} and \eqref{8141} that
\begin{align*}
0>\lambda_1(\beta,u_{0,\min})-\lambda_1(\beta_{0},u_{0,\min})&\geq C(\beta_0-\beta)\|\phi'_{\beta}\|_{L^2(-d,d)}\\\nonumber
&\geq C(\beta_0-\beta)(C\lambda_1(\beta_0,u_{0,\min})+C)\,,
\end{align*}
which implies $\lambda_1(\beta,u_{0,\min})\to\lambda_1(\beta_{0},u_{0,\min})$ as $\beta\to\beta_{0}^{+}$.

Combining \eqref{8131} and \eqref{8135} proves that $\lambda_1(\cdot,u_{0,\min})$ is continuous on $\mathbb{R}$.
\end{proof}

We now establish the continuous dependence of the principal eigenvalue $\lambda_1(\beta,u_{0,\min}; u_0)$ on the shear-flow profile in the $C^2$-topology.

\begin{Lemma}\label{lambda1continuous-profile-lem}
Assume that $u_0\in C^2([-d,d])$ and $u_0'\neq0$ on $[-d,d]$. Then
\begin{align}\label{lambda1continuous-profile}
\lambda_1(\beta,\tilde u_{0,\min}; \tilde u_0)\to\lambda_1(\beta,u_{0,\min}; u_0)\qquad \text{as}\quad\tilde{u}_0 \to u_{0}\;\; \text{in}\;\; C^2([-d,d]).
\end{align}
\end{Lemma}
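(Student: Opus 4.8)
The plan is to imitate the two-sided Rayleigh-quotient comparison used in the proof of Lemma~\ref{lambda1continuous}, with the perturbation now carried by the profile rather than by $\beta$. Since $[-d,d]$ is compact and $u_0'$ is continuous and nowhere zero, we have $|u_0'|\geq C_0>0$ for some $C_0$, and after possibly replacing $y$ by $-y$ we may assume $u_0'\geq C_0>0$, so that $u_{0,\min}=u_0(-d)$. For $\tilde u_0$ sufficiently $C^1$-close to $u_0$ (in particular for $\tilde u_0$ $C^2$-close) we then also have $\tilde u_0'\geq C_0/2>0$ and $\tilde u_{0,\min}=\tilde u_0(-d)$, hence $u_0(y)-u_{0,\min}\geq C_0(y+d)$ and $\tilde u_0(y)-\tilde u_{0,\min}\geq \frac{C_0}{2}(y+d)$ on $[-d,d]$.

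The crux is to estimate the difference of the two singular potentials. Writing $\varepsilon=\|\tilde u_0-u_0\|_{C^2([-d,d])}$ and using the identity
\begin{align*}
\frac{\beta-\tilde u_0''}{\tilde u_0-\tilde u_{0,\min}}-\frac{\beta-u_0''}{u_0-u_{0,\min}}
=\frac{(\beta-\tilde u_0'')\,g+(\tilde u_0-\tilde u_{0,\min})(u_0''-\tilde u_0'')}{(\tilde u_0-\tilde u_{0,\min})(u_0-u_{0,\min})}\,,
\end{align*}
where $g(y)=\big(u_0(y)-\tilde u_0(y)\big)-\big(u_{0,\min}-\tilde u_{0,\min}\big)$, I would observe that $g(-d)=0$ and $|g'|=|u_0'-\tilde u_0'|\leq\varepsilon$, so $|g(y)|\leq\varepsilon(y+d)$. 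Since $|\beta-\tilde u_0''|\leq|\beta|+\|u_0''\|_\infty+\varepsilon$ is bounded, $0\leq\tilde u_0-\tilde u_{0,\min}\leq C(y+d)$, and the denominator is $\geq\frac{C_0^2}{2}(y+d)^2$, the numerator is $O\big(\varepsilon(y+d)\big)$, giving
\begin{align*}
\left|\frac{\beta-\tilde u_0''}{\tilde u_0-\tilde u_{0,\min}}-\frac{\beta-u_0''}{u_0-u_{0,\min}}\right|\leq\frac{C\varepsilon}{y+d}\,,\qquad y\in(-d,d]\,,
\end{align*}
with $C$ depending only on $u_0$, $\beta$, $d$. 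The point is that the unavoidable $\frac1{y+d}$-singularity comes with a coefficient that is small with $\varepsilon$.

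Next I would feed this into the Hardy inequality \eqref{Hardy-inequality} with $y_\ast=-d$: for $\phi\in H_0^1(-d,d)$,
\begin{align*}
\int_{-d}^{d}\frac{|\phi|^2}{y+d}\,dy\leq\left(\int_{-d}^{d}\frac{|\phi|^2}{(y+d)^2}\,dy\right)^{1/2}\|\phi\|_{L^2}\leq C\|\phi'\|_{L^2}\|\phi\|_{L^2}\,,
\end{align*}
so that, denoting by $Q$ and $\tilde Q$ the quadratic forms from \eqref{variation2} associated with $u_0$ and $\tilde u_0$, one gets $|\tilde Q(\phi)-Q(\phi)|\leq C\varepsilon\|\phi'\|_{L^2}$ for $\|\phi\|_{L^2}=1$. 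The same Hardy bound applied to each potential separately gives $\|\phi'\|_{L^2}^2\leq Q(\phi)+C\|\phi'\|_{L^2}$ and $\|\psi'\|_{L^2}^2\leq\tilde Q(\psi)+C\|\psi'\|_{L^2}$ for normalized $\phi,\psi$; in particular $Q,\tilde Q\geq-C^2/4$ on the unit $L^2$-sphere, whence $\lambda_1(\beta,\tilde u_{0,\min};\tilde u_0)\geq-C^2/4$, and testing $\tilde Q$ against the $u_0$-minimizer also gives $\lambda_1(\beta,\tilde u_{0,\min};\tilde u_0)\leq\lambda_1(\beta,u_{0,\min};u_0)+C\varepsilon$, uniformly over a small $C^2$-ball around $u_0$. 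These uniform bounds yield a uniform bound $\|\phi_{u_0}'\|_{L^2},\,\|\phi_{\tilde u_0}'\|_{L^2}\leq M$ on the corresponding normalized eigenfunctions. Testing $\tilde Q$ against $\phi_{u_0}$ and $Q$ against $\phi_{\tilde u_0}$ then gives both $\lambda_1(\beta,\tilde u_{0,\min};\tilde u_0)\leq\lambda_1(\beta,u_{0,\min};u_0)+CM\varepsilon$ and the reverse inequality, hence $|\lambda_1(\beta,\tilde u_{0,\min};\tilde u_0)-\lambda_1(\beta,u_{0,\min};u_0)|\leq C\varepsilon\to0$, which is \eqref{lambda1continuous-profile}.

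The main obstacle, and the only non-routine step, is the potential estimate above: one must notice that the numerator of the difference of the potentials vanishes to first order at the minimizing endpoint $y=-d$, producing the extra factor $(y+d)$ that downgrades the naive $\frac1{(y+d)^2}$ blow-up to the borderline $\frac1{y+d}$, which the Hardy inequality can still absorb. Everything else is the same variational bookkeeping as in Lemma~\ref{lambda1continuous}, and one should also record that the uniform upper and lower bounds on $\lambda_1(\beta,\tilde u_{0,\min};\tilde u_0)$ obtained along the way will be needed again in the existence part of Theorem~\ref{rigidity-near-monotone-shear-flow-arbitrary-wave-speed-thm}.
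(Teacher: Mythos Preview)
Your proposal is correct and follows essentially the same approach as the paper: a two-sided variational comparison of the Rayleigh quotients, combined with the Hardy inequality at the endpoint $y=-d$ and a uniform $H^1$-bound on the normalized principal eigenfunctions over a $C^2$-neighborhood of $u_0$. The only cosmetic difference is in the pointwise potential estimate: you exploit the first-order vanishing of $g$ at $-d$ to obtain $\big|\tfrac{\beta-\tilde u_0''}{\tilde u_0-\tilde u_{0,\min}}-\tfrac{\beta-u_0''}{u_0-u_{0,\min}}\big|\leq\tfrac{C\varepsilon}{y+d}$ and then apply Cauchy--Schwarz plus Hardy, whereas the paper uses the cruder bound $\tfrac{C\varepsilon}{(y+d)^2}$ (bounding the numerator simply by $C\varepsilon$) and applies the full Hardy inequality $\int\frac{|\phi|^2}{(y+d)^2}\leq C\|\phi'\|^2$ directly; both routes yield the same $O(\varepsilon)$ control and the argument is otherwise identical.
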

\begin{proof}
Without loss of generality, we may assume that $u_0'>0$ on $[-d,d]$. Then there exist $C_2>C_1>0$ such that
\begin{align*}
C_1(y+d)\leq \tilde u_0(y)-\tilde u_{0,\min}=\tilde u_0(y)-\tilde u_0(-d)\leq C_2 (y+d),\qquad y\in[-d,d]
\end{align*}
for $\tilde u_0$ sufficiently close to $u_0$ in $C^2([-d,d])$,
which implies \begin{align*}
\left|{\beta-\tilde u_0''\over \tilde u_0-\tilde u_{0,\min}}\right|\leq {C\over y+d},\qquad y\in[-d,d].
\end{align*}
Here, the constants $C_1, C_2, C$  are independent of the choice of $\tilde u$.
For any $\phi\in H_0^1(-d,d)$ with $\|\phi\|_{L^2(-d,d)}=1$, we have
\begin{align}\label{lambda1continuous-profile-eigenvalue-bound}
\int_{-d}^{d}\left(|\phi'|^2-{C\over y+d}|\phi|^2\right)dy\leq&\int_{-d}^{d}\left(|\phi'|^2-{\beta-\tilde u_0''\over\tilde u_0-\tilde u_{0,\min}}|\phi|^2\right)dy\\\nonumber
\leq& \int_{-d}^{d}\left(|\phi'|^2+{C\over y+d}|\phi|^2\right)dy
\end{align}
provided that $\tilde u_0$ is sufficiently close to $u_0$ in $C^2([-d,d])$.
Taking the infimum over all $\phi\in H_0^1(-d,d)$ with $\|\phi\|_{L^2(-d,d)}=1$, we obtain
\begin{align*}
\lambda_1(C,-d;y)\leq\lambda_1(\beta,\tilde u_{0,\min}; \tilde u_0)\leq\lambda_1(-C,-d;y)
\end{align*}
uniformly for $\tilde u_0$ sufficiently close to $u_0$ in $C^2([-d,d])$. Let $\tilde \phi_0\in H_0^1(-d,d)$  be an eigenfunction of $\lambda_1(\beta,\tilde u_{0,\min}; \tilde u_0)$ with $\|\tilde \phi_0\|_{L^2(-d,d)}=1$. By the
variational characterization of  $\lambda_1(\beta,\tilde u_{0,\min}; \tilde u_0)$ and by the Poincar\'{e} and Hardy inequalities, we have
\begin{align*}
\|\tilde \phi'_{0}\|_{L^2(-d,d)}^2
&\leq C
\max\{|\lambda_1(C,-d;y)|,|\lambda_1(-C,-d;y)|\}
\|\tilde\phi'_{0}\|_{L^2(-d,d)}
+\int_{-d}^{d}{C\over y+d}|\tilde \phi_{0}|^2dy\\\nonumber
&\leq C(
\max\{|\lambda_1(C,-d;y)|,|\lambda_1(-C,-d;y)|\}+1)
\|\tilde\phi'_{0}\|_{L^2(-d,d)},
\end{align*}
which yields
\begin{align}\label{uni-bound-tilde-phi-prime}
\|\tilde \phi'_{0}\|_{L^2(-d,d)}
&\leq C(
\max\{|\lambda_1(C,-d;y)|,|\lambda_1(-C,-d;y)|\}+1)\triangleq M_0
\end{align}
uniformly for $\tilde u_0$ sufficiently close to $u_0$ in $C^2([-d,d])$. If $ \phi_0\in H_0^1(-d,d)$ is an eigenfunction of $\lambda_1(\beta, u_{0,\min}; u_0)$ with $\| \phi_0\|_{L^2(-d,d)}=1$, then, by \eqref{uni-bound-tilde-phi-prime}, we have
\begin{align*}
0\longleftarrow&-C \|\tilde u_0-u_0\|_{C^2([-d,d])}M_0^2\\
\leq&-C \|\tilde u_0-u_0\|_{C^2([-d,d])}\|\tilde\phi_0'\|_{L^2(-d,d)}^2\\
\leq&\int_{-d}^d{(\tilde u_0''-u_0'')(u_0-u_{0,\min})+(\beta-u_0'')(\tilde u_0-u_0-(\tilde u_{0,\min}-u_{0,\min}))\over (\tilde u_0-\tilde u_{0,\min})( u_0-u_{0,\min})}|\tilde\phi_0|^2 dy\\
=&\int_{-d}^d\left(|\tilde\phi_0'|^2-{\beta-\tilde u_0''\over\tilde u_0-\tilde u_{0,\min}}|\tilde\phi_0|^2\right)dy-\int_{-d}^d\left(|\tilde\phi_0'|^2-{\beta- u_0''\over u_0- u_{0,\min}}|\tilde\phi_0|^2\right)dy\\
\leq&\lambda_1(\beta,\tilde u_{0,\min}; \tilde u_0)-\lambda_1(\beta,u_{0,\min}; u_0)\\
\leq&\int_{-d}^d\left(|\phi_0'|^2-{\beta-\tilde u_0''\over\tilde u_0-\tilde u_{0,\min}}|\phi_0|^2\right)dy-\int_{-d}^d\left(|\phi_0'|^2-{\beta- u_0''\over u_0- u_{0,\min}}|\phi_0|^2\right)dy\\
=&\int_{-d}^d{-(\beta-\tilde u_0'')(u_0-u_{0,\min})+(\beta-u_0'')(\tilde u_0-\tilde u_{0,\min})\over (\tilde u_0-\tilde u_{0,\min})( u_0-u_{0,\min})}|\phi_0|^2 dy\\
=&\int_{-d}^d{(\tilde u_0''-u_0'')(u_0-u_{0,\min})+(\beta-u_0'')(\tilde u_0-u_0-(\tilde u_{0,\min}-u_{0,\min}))\over (\tilde u_0-\tilde u_{0,\min})( u_0-u_{0,\min})}|\phi_0|^2 dy\\
\leq&C \|\tilde u_0-u_0\|_{C^2([-d,d])}\int_{-d}^d{1\over (y+d)^2}|\phi_0|^2 dy\\
\leq&C \|\tilde u_0-u_0\|_{C^2([-d,d])}\|\phi_0'\|_{L^2(-d,d)}^2\longrightarrow0
\end{align*}
since $\tilde u_0\to u_0$ in $C^2([-d,d])$. This proves \eqref{lambda1continuous-profile}.
\end{proof}

It is useful to determine the limits of the principal eigenvalue $\lambda_1(\beta,c)$ as $c\to -\infty$ and as $c$ approaches the left endpoint of the range of $u_0$.

\begin{Lemma}\label{c-tou0min-limit-lem}
Assume that $u_0\in C^2([-d,d])$ and $u_0'\neq0$ on $[-d,d]$. Then
\begin{align}\label{c-tou0min-limit}
\lim_{c\to u_{0,\min}^-}\lambda_1(\beta,c)=\lambda_1(\beta,u_{0,\min})\quad\text{and}\quad \lim_{c\to -\infty}\lambda_1(\beta,c)={\pi^2\over4d^2}>0.
\end{align}
\end{Lemma}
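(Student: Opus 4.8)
The plan is to handle the two limits separately, in each case working with the variational characterization \eqref{variation2} (with $u_{0,\min}$ replaced by $c$ when $c<u_{0,\min}$). As in the proof of Lemma~\ref{lambda1continuous}, I may assume without loss of generality that $u_0'\ge C_0>0$ on $[-d,d]$, so that $u_{0,\min}=u_0(-d)$ and $u_0(y)-u_{0,\min}\ge C_0(y+d)$ for $y\in[-d,d]$. Setting $M=\|\beta-u_0''\|_{L^\infty(-d,d)}$, this yields the bound
\[
\left|\frac{\beta-u_0''}{u_0-c}\right|\le\frac{M}{C_0(y+d)},\qquad y\in[-d,d],
\]
uniformly in $c\le u_{0,\min}$ (since $u_0-c\ge u_0-u_{0,\min}$ there). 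This uniform $\frac1{y+d}$-majorant of the singular potential, combined with the Hardy inequality \eqref{Hardy-inequality}, is the workhorse of the argument.

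For the limit $c\to-\infty$ the situation is easy: here $u_0(y)-c\ge (-c)-\|u_0\|_{L^\infty}\to\infty$ uniformly in $y$, so $\sup_{[-d,d]}\bigl|\tfrac{\beta-u_0''}{u_0-c}\bigr|\to0$. Hence for every $\phi\in H_0^1(-d,d)$ with $\|\phi\|_{L^2}=1$ the potential term in \eqref{variation2} is $o(1)$ uniformly in $\phi$, and therefore $\lambda_1(\beta,c)$ converges to $\inf\{\int_{-d}^d|\phi'|^2\,dy:\phi\in H_0^1(-d,d),\ \|\phi\|_{L^2}=1\}$, the first Dirichlet eigenvalue of $-\partial_y^2$ on an interval of length $2d$, namely $\frac{\pi^2}{4d^2}>0$.

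For the limit $c\to u_{0,\min}^-$ I first establish a uniform $H_0^1$ bound on the eigenfunctions. Testing \eqref{variation2} with a fixed $\psi_0\in H_0^1$, $\|\psi_0\|_{L^2}=1$, and using the majorant above together with Cauchy--Schwarz and \eqref{Hardy-inequality}, produces a uniform upper bound $\lambda_1(\beta,c)\le\Lambda_+$ for $c\le u_{0,\min}$; then, writing $\phi_c$ for the normalized eigenfunction,
\[
\|\phi_c'\|_{L^2}^2=\lambda_1(\beta,c)+\int_{-d}^d\frac{\beta-u_0''}{u_0-c}|\phi_c|^2\,dy\le\Lambda_++\frac{M}{C_0}\int_{-d}^d\frac{|\phi_c|^2}{y+d}\,dy\le\Lambda_++C\|\phi_c'\|_{L^2},
\]
and this quadratic inequality forces $\|\phi_c'\|_{L^2}\le C$ uniformly in $c\le u_{0,\min}$. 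The same Hardy bound makes $\|\phi_*'\|_{L^2}$ finite, where $\phi_*$ is the normalized eigenfunction attaining the singular value $\lambda_1(\beta,u_{0,\min})$.

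Finally I compare the eigenvalues: using $\phi_c$ as a competitor for $\lambda_1(\beta,u_{0,\min})$ and $\phi_*$ as a competitor for $\lambda_1(\beta,c)$, together with the identity $\tfrac1{u_0-c}-\tfrac1{u_0-u_{0,\min}}=\tfrac{c-u_{0,\min}}{(u_0-c)(u_0-u_{0,\min})}$ and the estimate $(u_0-c)(u_0-u_{0,\min})\ge C_0^2(y+d)^2$, I obtain
\[
|\lambda_1(\beta,c)-\lambda_1(\beta,u_{0,\min})|\le\frac{M(u_{0,\min}-c)}{C_0^2}\max_{\varphi\in\{\phi_c,\phi_*\}}\int_{-d}^d\frac{|\varphi|^2}{(y+d)^2}\,dy\le C(u_{0,\min}-c),
\]
the last inequality using \eqref{Hardy-inequality} and the uniform $H_0^1$ bounds just obtained. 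Letting $c\to u_{0,\min}^-$ yields $\lambda_1(\beta,c)\to\lambda_1(\beta,u_{0,\min})$. I expect the only genuine obstacle to be this uniform control of $\phi_c$ near the singular endpoint $y=-d$ — verifying that \eqref{Hardy-inequality} can be applied uniformly in $c$ to absorb the singular potential into $\|\phi_c'\|_{L^2}^2$, exactly mirroring the $\beta$-monotonicity argument of Lemma~\ref{lambda1continuous}; once that bound is secured, the remainder is routine bookkeeping with the variational characterization.
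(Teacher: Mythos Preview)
Your proposal is correct and follows essentially the same approach as the paper's proof: both rely on the uniform $\tfrac{1}{y+d}$-majorant of the singular potential, the Hardy inequality \eqref{Hardy-inequality} to obtain a uniform $H_0^1$ bound on the normalized eigenfunctions $\phi_c$, and then the variational characterization with $\phi_c$ and $\phi_{u_{0,\min}}$ as cross-competitors to squeeze $\lambda_1(\beta,c)-\lambda_1(\beta,u_{0,\min})$ between two quantities of order $O(u_{0,\min}-c)$. The only cosmetic difference is that the paper first sandwiches $\lambda_1(\beta,c)$ between the principal eigenvalues of the auxiliary potentials $\pm C/(y+d)$ before extracting the $H_0^1$ bound, whereas you get the upper bound on $\lambda_1(\beta,c)$ by testing with a single fixed competitor; your route is marginally more direct.
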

\begin{proof}
Since the second limit in \eqref{c-tou0min-limit} is straightforward, we only provide a proof for the first.
Assume that $u_0'>0$ on $[-d,d]$.
For $c\in(-\infty,u_{0,\min}]$, we  have
$$u_0(y)-c=u_0(y)-u_{0,\min}+u_{0,\min}-c\geq u_0(y)-u_{0,\min}= u_0(y)-u_{0}(-d)\geq C(y+d)\,,\quad y\in[-d,d]\,.$$
It follows that
\begin{align*}
\left|{\beta- u_0''\over  u_0-c}\right|\leq {C\over y+d},\qquad y\in[-d,d],
\end{align*}
where $C>0$ is independent of $c\in(-\infty,u_{0,\min}]$.
By the same argument as in \eqref{lambda1continuous-profile-eigenvalue-bound}, we obtain the uniform eigenvalue-bound
\begin{align*}
\lambda_1(C,-d;y)\leq\lambda_1(\beta,c)\leq\lambda_1(-C,-d;y),\qquad c\in(-\infty,u_{0,\min}].
\end{align*}
This, in turn,  yields a uniform eigenfunction-bound
\begin{align*}
\| \phi'_{c}\|_{L^2(-d,d)}
&\leq M_0,\qquad c\in (-\infty,u_{0,\min}],
\end{align*}
where $\phi_c\in H_0^1(-d,d)$  is an eigenfunction of $\lambda_1(\beta,c)$ normalized by $\| \phi_c\|_{L^2(-d,d)}=1$, and $M_0$ is defined in \eqref{uni-bound-tilde-phi-prime}.
Then
\begin{align*}
0\longleftarrow&C(c-u_{0,\min})M_0^2\\
\leq&C(c-u_{0,\min})\|\phi_{c}'\|_{L^2(-d,d)}^2\\
\leq&C(c-u_{0,\min})\int_{-d}^d{1\over(y+d)^2}|\phi_{c}|^2 dy\\
\leq&\int_{-d}^d\left( |\phi_{c}'|^2-{\beta-u_0''\over u_0-c}|\phi_{c}|^2\right)dy
-\int_{-d}^d\left( |\phi_{c}'|^2-{\beta-u_0''\over u_0-u_{0,\min}}|\phi_{c}|^2\right)dy\\
\leq&\lambda_1(\beta,c)-\lambda_1(\beta,u_{0,\min})\\
\leq&\int_{-d}^d\left( |\phi_{u_{0,\min}}'|^2-{\beta-u_0''\over u_0-c}|\phi_{u_{0,\min}}|^2\right)dy
-\int_{-d}^d\left( |\phi_{u_{0,\min}}'|^2-{\beta-u_0''\over u_0-u_{0,\min}}|\phi_{u_{0,\min}}|^2\right)dy\\
=&(u_{0,\min}-c)\int_{-d}^d{\beta-u_0''\over (u_0-c)(u_0-u_{0,\min})}|\phi_{u_{0,\min}}|^2 dy\\
\leq&C(u_{0,\min}-c)\int_{-d}^d{1\over(y+d)^2}|\phi_{u_{0,\min}}|^2 dy\\
\leq&C(u_{0,\min}-c)\|\phi_{u_{0,\min}}'\|_{L^2(-d,d)}^2\longrightarrow0
\end{align*}
since $c\to u_{0,\min}^-$. This establishes the first limit in \eqref{c-tou0min-limit}.
\end{proof}

Due to \eqref{variation2}, we have $\lambda_1(\beta,u_{0,\min})>0$ for $\beta<(u_0'')_{\min}$.
 This, together with \eqref{property-lambda-1} and Lemma \ref{lambda1continuous}, implies that the equation $\lambda_1(\cdot,u_{0,\min})=0$ has a unique solution in $[(u_0'')_{\min},\infty)$, denoted by $\lambda_1^{-1}(0,u_{0,\min})$.
We  now prove the positiveness of $\lambda_1(0,u_{0,\min})$.
\begin{Lemma}\label{non-negativeness-beta0}
Assume that $u_0\in C^2([-d,d])$ and $u_0'\neq0$ on $[-d,d]$. Then $\lambda_1(0,u_{0,\min})>0$, and consequently, $\lambda_1^{-1}(0,u_{0,\min})>0$.
\end{Lemma}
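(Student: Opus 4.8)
The plan is to exploit an explicit positive ``zero--energy'' solution of the Rayleigh--Kuo operator at $\beta=0$ and to perform the associated ground--state substitution. Without loss of generality assume $u_0'>0$ on $[-d,d]$ (otherwise replace $u_0(y)$ by $u_0(-y)$, which leaves the eigenvalue problem \eqref{sturm-Liouville} and $u_{0,\min}$ unchanged), so that $u_{0,\min}=u_0(-d)$ and
$$w:=u_0-u_{0,\min}$$
is strictly positive on $(-d,d]$, vanishes at $y=-d$, and satisfies $w(y)\sim u_0'(-d)(y+d)$ as $y\to-d^+$, with $u_{0,\max}>u_{0,\min}$. The key observation is that, since $w''=u_0''$, the function $w$ solves the homogeneous equation attached to \eqref{sturm-Liouville} at $\beta=0$, $\lambda=0$:
$$-w''+\frac{u_0''}{u_0-u_{0,\min}}\,w=-u_0''+u_0''=0\qquad\text{on }(-d,d].$$

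First I would take a normalized eigenfunction $\phi_1\in H_0^1\cap H^2(-d,d)$ realizing the infimum in \eqref{variation2} at $\beta=0$ (its existence is recorded after \eqref{variation2}) and write $\phi_1=w\,\psi_1$ with $\psi_1:=\phi_1/w$. Since $\phi_1\in H^2(-d,d)\hookrightarrow C^1([-d,d])$, $\phi_1(-d)=w(-d)=0$ and $w'(-d)=u_0'(-d)>0$, the quotient $\psi_1$ extends continuously to $[-d,d]$ with $\psi_1(d)=0$, and $w\psi_1'=\phi_1'-w'\psi_1\in L^2(-d,d)$. Using $(ww')'=(w')^2+ww''$ one has the pointwise identity
$$|\phi_1'|^2+\frac{u_0''}{u_0-u_{0,\min}}|\phi_1|^2=\frac{d}{dy}\!\left(w w'\psi_1^2\right)+w^2|\psi_1'|^2,$$
so integrating on $[-d+\varepsilon,d]$ gives
$$\int_{-d+\varepsilon}^{d}\!\left(|\phi_1'|^2+\frac{u_0''}{u_0-u_{0,\min}}|\phi_1|^2\right)dy=\Big[w w'\psi_1^2\Big]_{-d+\varepsilon}^{d}+\int_{-d+\varepsilon}^{d}w^2|\psi_1'|^2\,dy.$$
The boundary term at $y=d$ vanishes because $\psi_1(d)=0$, and the one at $y=-d+\varepsilon$ tends to $0$ as $\varepsilon\downarrow0$ since $w(-d+\varepsilon)\to0$ while $w'$ and $\psi_1$ stay bounded. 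Letting $\varepsilon\downarrow0$ yields
$$\lambda_1(0,u_{0,\min})=\int_{-d}^{d}\!\left(|\phi_1'|^2+\frac{u_0''}{u_0-u_{0,\min}}|\phi_1|^2\right)dy=\int_{-d}^{d}(u_0-u_{0,\min})^2\,|\psi_1'|^2\,dy\ \geq\ 0.$$

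Next I would upgrade this to strict positivity. If $\lambda_1(0,u_{0,\min})=0$, the last identity forces $\psi_1'\equiv0$ on $(-d,d)$, hence $\psi_1\equiv k$ is constant and $\phi_1=k(u_0-u_{0,\min})$; but then $0=\phi_1(d)=k(u_{0,\max}-u_{0,\min})$ with $u_{0,\max}>u_{0,\min}$ forces $k=0$, contradicting $\|\phi_1\|_{L^2}=1$. Therefore $\lambda_1(0,u_{0,\min})>0$. Finally, since $\lambda_1(\cdot,u_{0,\min})$ is decreasing on $\mathbb{R}$ by \eqref{property-lambda-1} and $\lambda_1^{-1}(0,u_{0,\min})$ is by definition the point at which it vanishes, we conclude $\lambda_1^{-1}(0,u_{0,\min})>0$.

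The hard part will be the analysis near the singular endpoint $y=-d$: the potential $\tfrac{u_0''}{u_0-u_{0,\min}}$ blows up like $\tfrac{1}{y+d}\notin L^{>1}$ and $w$ degenerates linearly there, so one must verify carefully that $\psi_1=\phi_1/w$ is bounded, that $w\psi_1'\in L^2$, and that the boundary term $ww'\psi_1^2$ genuinely vanishes in the limit. All of these follow from $\phi_1\in H^2$ together with $\phi_1(-d)=0$ (which makes $\phi_1$ vanish linearly at $-d$, matching the linear vanishing of $w$), and the finiteness of $\int_{-d}^d \tfrac{u_0''}{u_0-u_{0,\min}}|\phi_1|^2\,dy$ can also be read off from the Hardy inequality \eqref{Hardy-inequality}. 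Apart from this endpoint bookkeeping, the argument is an elementary substitution.
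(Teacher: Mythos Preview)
Your proof is correct and is essentially the same argument as the paper's: the ``ground-state substitution'' $\phi_1=w\psi_1$ with $w=u_0-u_{0,\min}$ yields $\lambda_1(0,u_{0,\min})=\int_{-d}^{d}w^2|\psi_1'|^2\,dy$, which is identical to the paper's perfect-square expression $\int_{-d}^{d}\bigl(\phi_0'-\tfrac{\phi_0 u_0'}{u_0-u_{0,\min}}\bigr)^2\,dy$ since $w\psi_1'=\phi_1'-\tfrac{w'}{w}\phi_1$. The endpoint analysis and the strict-positivity step are likewise the same in both versions.
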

\begin{proof}
Without loss of generality, we may assume that $u_0'\geq C_0$ on $[-d,d]$ for some $C_0>0$.
Let $\phi_0\in H_0^1(-d,d)$ with $\|\phi_0\|_{L^2(-d,d)}=1$ such that \eqref{8136} holds with $\beta=0$. Then
\begin{align*}
\lambda_1(0,u_{0,\min})=&\int_{-d}^{d}\left(|\phi'_{0}|^2+{u_0''\over u_0-u_{0,\min}}|\phi_{0}|^2\right)dy\\
=&\int_{-d}^{d}|\phi'_{0}|^2dy+{u_0'\phi_0^2\over u_0-u_0(-d)}\bigg|_{y=-d}^d-\int_{-d}^du_0'\left({\phi_0^2\over u_0-u_{0,\min}}\right)'dy.
\end{align*}
Since
\begin{align*}
&\left|{u_0'(y)\phi_0(y)^2\over u_0(y)-u_0(-d)}\right|\leq{|u_0'(y)|\|\phi_0'\|_{L^2(y,-d)}^2(y+d)\over C_0(y+d)}\to 0\quad \text{as}\quad y\to-d, \end{align*}
and
\begin{align*}
&\int_{-d}^d{\phi_0\phi_0'u_0'\over u_0-u_{0,\min}}dy\leq C\|\phi_0'\|_{L^2(-d,d)}^2,\;\;\int_{-d}^d{\phi_0^2u_0'^2\over (u_0-u_{0,\min})^2}dy\leq C\|\phi_0'\|_{L^2(-d,d)}^2,
\end{align*}
we have
\begin{align*}
\lambda_1(0,u_{0,\min})=&\int_{-d}^{d}\left(|\phi'_{0}|^2-{2\phi_0\phi_0'u_0'\over u_0-u_{0,\min}}+{\phi_0^2u_0'^2\over (u_0-u_{0,\min})^2}\right)dy\\
=&\int_{-d}^{d}\left(\phi'_{0}-{\phi_0u_0'\over u_0-u_{0,\min}}\right)^2dy\geq0.
\end{align*}

Suppose that $\lambda_1(0,u_{0,\min})=0$. Then $\phi'_{0}={\phi_0u_0'\over u_0-u_{0,\min}}$ for $y\in(-d,d)$. Thus $|\phi_0|=C(u_0-u_{0,\min})$ on $(-d,d)$ and $0=|\phi_0(d)|=C(u_0(d)-u_0(-d))$. This implies that $C=0$ and $\phi_0\equiv0$ on $(-d,d)$, which contradicts $\|\phi_0\|_{L^2(-d,d)}=1$. Therefore, $\lambda_1(0,u_{0,\min})>0$.
\end{proof}

\begin{Remark}
In Lemma \ref{non-negativeness-beta0}, if we replace the monotone shear flow $u_0$ by a non-monotone shear flow $u_0$ such that $u_0$ attains its minimum only at the endpoints $\pm d$ and $u_0'(\pm d)\neq0$, then $\lambda_1(0,u_{0,\min})\geq0$. Moreover, $\lambda_1(0,u_{0,\min})=0$ can be achieved by considering the shear flow $u_0(y)=\cos\left({\pi\over2} y\right)$ on $[-1,1]$. Indeed,  it can be directly verified that $\cos\left({\pi\over2} y\right)$ is an eigenfunction of the principal eigenvalue $\lambda_1(0,u_{0,\min})=0$ for the eigenvalue problem \eqref{sturm-Liouville}.
\end{Remark}

Now we are ready to completely characterize  traveling waves near a monotone shear flow in a bounded channel.
\begin{Theorem}\label{rigidity-near-monotone-shear-flow-arbitrary-wave-speed-thm}
 Let $u_0\in C^2([-d,d])$ and $u_0'\neq0$ on $[-d,d]$.
\begin{itemize}
\item Let $\beta>0$.\\
 For
$\beta\notin Ran(u_0'')$,
\begin{itemize}
\item[(i)] if $L\in\left(0,{2\pi\over \sqrt{\max\{-\lambda_1(\beta,u_{0,\min}),0\}}}\right)$, then
there exists
$\varepsilon_0>0$
such that any traveling-wave solution $(u(x-ct,y),v(x-ct,y))\in C^2(D_L)$ to the $\beta$-plane equation
\eqref{Euler equation}-\eqref{boundary condition for euler} satisfying
$
\|(u,v)-(u_{0},0)\|_{C^{2}(D_L)}$ $<\varepsilon_0
$
must be a shear flow for arbitrary wave speed $c\in\mathbb{R}$;

\item[(ii)]
if $L\in\left[{2\pi\over \sqrt{\max\{-\lambda_1(\beta,u_{0,\min}),0\}}},\infty\right)$,
 then for any $\varepsilon>0$ there exists a genuine traveling-wave solution $(u_\varepsilon(x-c_\varepsilon t,y),v_\varepsilon(x-c_\varepsilon t,y))\in C^2(D_{L})$ to the $\beta$-plane equation
\eqref{Euler equation}-\eqref{boundary condition for euler} satisfying
$
\|(u_\varepsilon,v_\varepsilon)-(u_{0},0)\|_{C^{2}(D_{L})}<\varepsilon
$ and $c_\varepsilon<(u_\varepsilon)_{\min}.$
\end{itemize}
For $\beta\in Ran(u_0'')$,
\begin{itemize}
\item[(iii)] if $L\in\left(0,{2\pi\over \sqrt{\max\left\{-\inf_{\tilde c\leq u_{0,\min}}\lambda_1(\beta,\tilde c),0\right\}}}\right)$, then
there exists
$\varepsilon_0>0$
such that the wave speed of any genuine  traveling-wave solution $(u(x-ct,y),v(x-ct,y))\in C^2(D_L)$ to the $\beta$-plane equation
\eqref{Euler equation}-\eqref{boundary condition for euler} satisfying
$
\|(u,v)-(u_{0},0)\|_{C^{2}(D_L)}$ $<\varepsilon_0
$
must be a generalized inflection value of $u$;

\item[(iv)]
if $L\in\left[{2\pi\over \sqrt{\max\left\{-\inf_{\tilde c\leq u_{0,\min}}\lambda_1(\beta,\tilde c),0\right\}}},\infty\right)$,
 then the same conclusion in $(\rm{ii})$ holds, with the convention that $[a,b)=\emptyset$ if $a\geq b$.
\end{itemize}

\item Let $\beta=0$.\\
If
$0\notin Ran(u_0'')$, then the same conclusion as in (i) holds.\\
If $0\in Ran(u_0'')$, then the same conclusion as in (iii) holds.
\end{itemize}
\end{Theorem}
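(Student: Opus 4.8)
The plan is to run everything through the classification in Theorem~\ref{classification-of-wave-speed-for-a-genuinely-travelling-wave-beta-plane} and then through the spectral theory of the singular Rayleigh--Kuo problem~\eqref{sturm-Liouville}. The case $\beta=0$ is immediate: if $0\notin Ran(u_0'')$ then $0\notin Ran(\Delta u)$ for every $(u,v)$ sufficiently $C^2$-close to $(u_0,0)$, so Theorem~\ref{thm-generalization1}(ii) gives a shear flow for all $c$; if $0\in Ran(u_0'')$, then integrating by parts exactly as in the proof of Lemma~\ref{non-negativeness-beta0} shows $\lambda_1(0,\tilde c)\geq0$ for every $\tilde c\leq u_{0,\min}$, so both $L$-thresholds equal $+\infty$ and Theorem~\ref{classification-of-wave-speed-for-a-genuinely-travelling-wave-beta-plane}(ii) already forces the wave speed of any genuine nearby traveling wave to be a generalized inflection value. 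So assume $\beta>0$; after replacing $y$ by $-y$ we may take $u_0'>0$ on $[-d,d]$, so that $u_{0,\min}=u_0(-d)$ and $u_0-u_{0,\min}\geq C_0(y+d)$ with $C_0=\min_{[-d,d]}u_0'>0$, and we record that~\eqref{Euler equation} is invariant under $(u,c)\mapsto(u+a,c+a)$ for a constant $a$ (the Coriolis residual $a(f_0+\beta y)$ depends on $y$ alone and is absorbed into the pressure).

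\emph{Rigidity (parts (i) and (iii)).} Let $(u(x-ct,y),v(x-ct,y))$ satisfy $\|(u,v)-(u_0,0)\|_{C^2}<\varepsilon$ with $v\not\equiv0$. For $\varepsilon$ small, $|\partial_yu|\geq C_0-\varepsilon>0$, so $c$ is not a critical value of $u$; Remark~\ref{supplement-to-Theorem-beta=0i} handles $c=u_{\max}$ (then either $v\equiv0$ or $c$ is a generalized inflection value), and Theorem~\ref{wave-speed-inside-range}(i) rules out $c\in(u_{\min},u_{\max})$ unless $c$ is a critical or generalized inflection value. So, apart from the case ``$c$ is a generalized inflection value'' --- excluded in~(i) since $\beta\notin Ran(\Delta u)$ for $\varepsilon$ small, and being exactly the conclusion of~(iii) --- we may assume $c\leq u_{\min}$, and, using the shift invariance above (which enlarges $\varepsilon$ only by a bounded factor), we normalize so that $u_{\min}=u_{0,\min}$, hence $c\leq u_{0,\min}$. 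Then $u-c\geq0$ vanishes only on $\{y=-d\}$, and~\eqref{un-vn-eq-u-v} gives $\Delta v+Pv=0$ on $\{u>c\}$ with $P=\frac{\beta-\Delta u}{u-c}$ and $v=0$ on $\partial\{u>c\}\subset\{y=\pm d\}$; testing against $v$ (the boundary terms vanish because $v=0$ there, $(u,v)\in C^2$, and $|P|\lesssim(y+d)^{-1}$ near $y=-d$) yields $\int_{D_L}|\nabla v|^2=\int_{D_L}Pv^2$. Expand $v=\sum_{n\neq0}v_n(y)e^{2\pi inx/L}$ (the $x$-mean of $v$ vanishes by incompressibility together with $v|_{y=\pm d}=0$) and compare $P$ with the $x$-independent potential $P_0(y)=\frac{\beta-u_0''}{u_0-c}$; a routine estimate gives $|P-P_0|\lesssim\varepsilon(y+d)^{-2}$, so by the Hardy inequality~\eqref{Hardy-inequality} in $y$ one gets $\bigl|\int(P-P_0)v^2\bigr|\leq C\varepsilon\int|\nabla v|^2$. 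Since $P_0=P_0(y)$ there is no mode coupling, $\int P_0v^2=L\sum_{n\neq0}\int P_0|v_n|^2\,dy$, and the variational characterization~\eqref{variation2} of $\lambda_1(\beta,c;u_0)$ gives $\int\bigl(|v_n'|^2+(2\pi/L)^2|v_n|^2-P_0|v_n|^2\bigr)\geq\bigl(\lambda_1(\beta,c)+(2\pi/L)^2\bigr)\int|v_n|^2$. Combining this with the Hardy-type bound $\int Pv^2\leq\frac12\int|\nabla v|^2+C'\int v^2$ (split the $(y+d)^{-1}$ contribution near $y=-d$, using that $v$ vanishes on the nodal set $\{y=-d\}$) leads to $\bigl(\lambda_1(\beta,c)+(2\pi/L)^2\bigr)\int_{D_L}v^2\leq C''\varepsilon\int_{D_L}v^2$. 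Finally, by Lemma~\ref{c-tou0min-limit-lem} together with~\eqref{property-lambda-1} --- which in case~(i) identifies $\inf_{c\leq u_{0,\min}}\lambda_1(\beta,c)$ with $\lambda_1(\beta,u_{0,\min})$ up to the sign of the latter, while in case~(iii) this infimum is the very quantity in the hypothesis --- the bracket $\lambda_1(\beta,c)+(2\pi/L)^2$ is bounded below by a fixed positive constant whenever $L$ lies in the stated range, so choosing $\varepsilon$ small forces $v\equiv0$, a contradiction. This proves~(i) and~(iii).

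\emph{Existence (parts (ii) and (iv)).} Now the relevant principal eigenvalue is negative, so the $L$-threshold is finite and $(2\pi/L)^2\leq-\lambda_1(\beta,u_{0,\min})$ (respectively $(2\pi/L)^2\leq-\inf_{\tilde c\leq u_{0,\min}}\lambda_1(\beta,\tilde c)$). By Lemma~\ref{c-tou0min-limit-lem} and the continuity of $c\mapsto\lambda_1(\beta,c)$ on $(-\infty,u_{0,\min}]$ (with limits $\lambda_1(\beta,u_{0,\min})$ at $u_{0,\min}^-$ and $\pi^2/4d^2>0$ at $-\infty$), the intermediate value theorem produces $c^*\leq u_{0,\min}$ with $\lambda_1(\beta,c^*;u_0)=-(2\pi/L)^2$, and $c^*<u_{0,\min}$ whenever $L$ is strictly above the threshold. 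Since $u_0$ is only $C^2$, approximate it by a smooth shear profile $\tilde u_0$ with $\|\tilde u_0-u_0\|_{C^2}$ arbitrarily small; by Lemma~\ref{lambda1continuous-profile-lem} the eigenvalue $\lambda_1(\beta,\cdot;\tilde u_0)$ is close to $\lambda_1(\beta,\cdot;u_0)$, so the corresponding $\tilde c^*<\tilde u_{0,\min}$ persists, the principal eigenfunction $\phi_1>0$ of~\eqref{sturm-Liouville} (for the profile $\tilde u_0$, with $\tilde c^*$ in place of $u_{0,\min}$) exists, and $\phi_1(y)\cos(2\pi x/L)$ spans the kernel of the linearization of~\eqref{Euler equation} about $(\tilde u_0,0)$ at speed $\tilde c^*$. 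The bifurcation lemma (Lemma~2.5 of~\cite{LWZZ}) then yields a one-parameter family of genuine unidirectional traveling waves $(u_s(x-c_st,y),v_s(x-c_st,y))\in C^2(D_L)$ with $(u_0,v_0)=(\tilde u_0,0)$, $c_s\to\tilde c^*$ and $\|(u_s,v_s)-(\tilde u_0,0)\|_{C^2}\to0$ as $s\to0$. For $s$ and $\|\tilde u_0-u_0\|_{C^2}$ small enough, $\|(u_s,v_s)-(u_0,0)\|_{C^2}<\varepsilon$ and, since $\tilde c^*$ lies strictly below $\tilde u_{0,\min}$, $c_s<(u_s)_{\min}$. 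This gives~(ii); for~(iv) one repeats the construction with $\inf_{\tilde c\leq u_{0,\min}}\lambda_1(\beta,\tilde c)$ replacing $\lambda_1(\beta,u_{0,\min})$, the only new point being that $-(2\pi/L)^2$ still belongs to the range of $\lambda_1(\beta,\cdot)$ on $(-\infty,u_{0,\min}]$.

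The principal difficulty is the rigidity step. The potential $P=(\beta-\Delta u)/(u-c)$ is only borderline integrable across the nodal line $\{y=-d\}$ of $v$ (it behaves like $(y+d)^{-1}$ there, worsening as $c\uparrow u_{0,\min}$), so neither the perturbative comparison with $P_0$ nor the absorption of $\int Pv^2$ by a fraction of $\int|\nabla v|^2$ can be done with Sobolev embeddings --- both must be carried out with the Hardy inequality doing all the work near the boundary, the same mechanism exploited in the unique-continuation argument of Theorem~\ref{wave-speed-inside-range}. Keeping every constant uniform over the compact range $c\in[c_\beta^+,u_{\min}]$, and handling the degenerate configurations $c=u_{\min}$ (where $u-c$ may also vanish at isolated points of $\{y=-d\}$) and $L$ exactly at the threshold, are the delicate parts. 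In the existence step the only subtlety is that the low regularity of $u_0$ forces the detour through a smooth approximant, which is precisely why Lemma~\ref{lambda1continuous-profile-lem} is needed to transfer the spectral condition.
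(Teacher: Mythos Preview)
Your rigidity argument matches the paper's in structure (energy identity on $\{u>c\}$, decomposition into a background $P_0$-term controlled by the principal eigenvalue and a perturbative term controlled via Hardy, combined with the reverse-Poincar\'e bound $\|\nabla v\|_{L^2}\le C\|v\|_{L^2}$), and your shift normalization $u_{\min}=u_{0,\min}$ is a genuine simplification: the paper instead splits into $c\le u_{0,\min}$ and $c\in(u_{0,\min},u_{\min}]$ and carries out two separate $I$--$II$ computations, whereas your shift reduces everything to the first case. One small correction: the identification $\inf_{c\le u_{0,\min}}\lambda_1(\beta,c)=\lambda_1(\beta,u_{0,\min})$ in case~(i) does not follow from~\eqref{property-lambda-1} (which concerns monotonicity in $\beta$, not in $c$) but from the monotonicity of $c\mapsto\lambda_1(\beta,c)$ when $\beta>(u_0'')_{\max}$, itself a consequence of $P_0=(\beta-u_0'')/(u_0-c)>0$ being pointwise increasing in $c$; for $\beta<(u_0'')_{\min}$ the potential is negative and $\lambda_1(\beta,c)\ge\pi^2/4d^2>0$ directly.

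The existence argument has a genuine gap at the threshold $L=2\pi/\sqrt{-\lambda_1(\beta,u_{0,\min};u_0)}$ (and the analogous one in~(iv)). There $c^*=u_{0,\min}$, and after passing to a $C^4$ approximant $\tilde u_0$ you only know from Lemma~\ref{lambda1continuous-profile-lem} that $\lambda_1(\beta,\tilde u_{0,\min};\tilde u_0)$ is \emph{close} to $-(2\pi/L)^2$, not that it is strictly below; if it lands on or above, no $\tilde c^*<\tilde u_{0,\min}$ with $\lambda_1(\beta,\tilde c^*;\tilde u_0)=-(2\pi/L)^2$ exists and the bifurcation lemma from~\cite{LWZZ} does not apply. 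The paper fixes this by first scaling $u_0\mapsto au_0$ with $a\in(0,1)$ close to $1$: since $\lambda_1(\beta,au_{0,\min};au_0)=\lambda_1(\beta/a,u_{0,\min};u_0)<\lambda_1(\beta,u_{0,\min};u_0)=-(2\pi/L)^2$ by the strict decrease in~\eqref{property-lambda-1}, one recovers an isolated real eigenvalue $ac_{L,a}<au_{0,\min}$ of $\mathcal{R}_{2\pi/L,\beta,au_0}$, and only then approximates $au_0$ by a $C^4$ profile $\hat u_0$ (with Lemma~\ref{lambda1continuous-profile-lem} now transferring a \emph{strict} inequality) before bifurcating. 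You flag the threshold as ``delicate'' but do not supply this step.
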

\begin{Remark}\label{rigidity-near-monotone-shear-flow-arbitrary-wave-speed-thm-rem}
(i) In Theorem \ref{rigidity-near-monotone-shear-flow-arbitrary-wave-speed-thm} (iii), it is natural to ask whether there  exists a monotone shear flow with genuine nearby  traveling waves having wave speeds which are
generalized inflection values of the zonal velocity.  A concrete example is the monotone shear flow $(U_{b,a},0)$ constructed in (5.6) of \cite{WZZ}, which satisfies $U_{b,a}'>0$ on $[-1,1]$, $U_{b,a}(0)=0$ and $\beta=U_{b,a}''(0)$, with $b>0$ sufficiently small and $a>0$. By Theorem 1.5 in \cite{WZZ},  there  exists a genuine   traveling wave $(u(x,y),v(x,y))$ with zero wave speed (i.e. a steady flow) near the shear flow $(U_{b,a},0)$ for some $b, a>0$ sufficiently small. Since $(u,v)$ is $C^2$ close enough to the monotone shear flow $(U_{b,a},0)$, $c=0\in Ran(u)$ and $c$ is neither a critical value nor a maximum/minimum of $u$. By Theorem \ref{wave-speed-inside-range}, $c=0$ must be a generalized inflection value of $u$.

(ii) Let us describe more precisely what Theorem \ref{rigidity-near-monotone-shear-flow-arbitrary-wave-speed-thm} (i)-(ii) reveals for $\beta\notin Ran(u_0'')$.
Assume that $u_0'\neq0$ on $D_L$.  Let $L$ be the zonal period and $c$ be the wave speed of a traveling wave.  We distinguish the following three cases.\\
{\bf Case 1.} $0<(u_0'')_{\min}$.

In the subcase $\lambda_1((u_0'')_{\max},u_{0,\min})>0$, we have $\lambda_1^{-1}(0,u_{0,\min})>(u_0'')_{\max}>0$, and
\begin{itemize}
\item if $\beta\in[0,(u_0'')_{\min})\cup((u_0'')_{\max},\lambda_1^{-1}(0,u_{0,\min})]$, then $\lambda_1(\beta,u_{0,\min})\geq0$ and
 the rigidity of traveling waves with $c\in\mathbb{R}$ holds for $L>0$,
  \item
  if $\beta\in(\lambda_1^{-1}(0,u_{0,\min}),\infty)$, then $\lambda_1(\beta,u_{0,\min})<0$ and
  \begin{itemize}
\item the rigidity of traveling waves with $c\in\mathbb{R}$  holds for  $L\in\left(0,{2\pi\over \sqrt{-\lambda_1(\beta,u_{0,\min})}}\right)$,
\item
   genuine traveling waves exist for $L\in\left[{2\pi\over \sqrt{-\lambda_1(\beta,u_{0,\min})}},\infty\right)$,
   \end{itemize}
   \end{itemize}
 near  $(u_0,0)$ in $C^2$.

In the other subcase, $\lambda_1((u_0'')_{\max},u_{0,\min})\leq0$, we have $(u_0'')_{\max}\geq\lambda_1^{-1}(0,u_{0,\min})>0$ (due to $\lambda_1(0,u_{0,\min})>\lambda_1((u_0'')_{\min},u_{0,\min})\geq0$), and
\begin{itemize}
\item if $\beta\in[0,(u_0'')_{\min})$, then $\lambda_1(\beta,u_{0,\min})>0$ and
 the rigidity of traveling waves with $c\in\mathbb{R}$ holds for $L>0$,
  \item
  if $\beta\in((u_0'')_{\max},\infty)$, then $\lambda_1(\beta,u_{0,\min})<0$ and
  \begin{itemize}
\item the rigidity of traveling waves with $c\in\mathbb{R}$  holds for  $L\in\left(0,{2\pi\over \sqrt{-\lambda_1(\beta,u_{0,\min})}}\right)$,
\item
   genuine traveling waves exist for $L\in\left[{2\pi\over \sqrt{-\lambda_1(\beta,u_{0,\min})}},\infty\right)$,
   \end{itemize}
   \end{itemize}
 near  $(u_0,0)$ in $C^2$. \\
 {\bf Case 2.} $0\in Ran (u_0'')$.

 In the subcase $\lambda_1((u_0'')_{\max},u_{0,\min})>0$, we have $\lambda_1^{-1}(0,u_{0,\min})>(u_0'')_{\max}\geq0$, and
\begin{itemize}
\item if $\beta\in((u_0'')_{\max},\lambda_1^{-1}(0,u_{0,\min})]$, then $\lambda_1(\beta,u_{0,\min})\geq0$ and
 the rigidity of traveling waves with $c\in\mathbb{R}$ holds for $L>0$,
  \item
  if $\beta\in(\lambda_1^{-1}(0,u_{0,\min}),\infty)$, then $\lambda_1(\beta,u_{0,\min})<0$ and
  \begin{itemize}
\item the rigidity of traveling waves with $c\in\mathbb{R}$  holds for $L\in\left(0,{2\pi\over \sqrt{-\lambda_1(\beta,u_{0,\min})}}\right)$,
\item
   genuine traveling waves exist for  $L\in\left[{2\pi\over \sqrt{-\lambda_1(\beta,u_{0,\min})}},\infty\right)$,
   \end{itemize}
   \end{itemize}
  near  $(u_0,0)$ in $C^2$, see Fig.\,\ref{rigidity-existence-tw} (a).

In the other subcase, $\lambda_1((u_0'')_{\max},u_{0,\min})\leq0$, by Lemma \ref{non-negativeness-beta0} we have $(u_0'')_{\max}\geq\lambda_1^{-1}(0,u_{0,\min})>0$, and
  for every $\beta\in((u_0'')_{\max},\infty)$, $\lambda_1(\beta,u_{0,\min})<0$ and
  \begin{itemize}
\item the rigidity of traveling waves with $c\in\mathbb{R}$  holds for  $L\in\left(0,{2\pi\over \sqrt{-\lambda_1(\beta,u_{0,\min})}}\right)$,
\item
   genuine traveling waves exist for  $L\in\left[{2\pi\over \sqrt{-\lambda_1(\beta,u_{0,\min})}},\infty\right)$,
   \end{itemize}
 near  $(u_0,0)$ in $C^2$,  see Fig.\,\ref{rigidity-existence-tw} (b).

 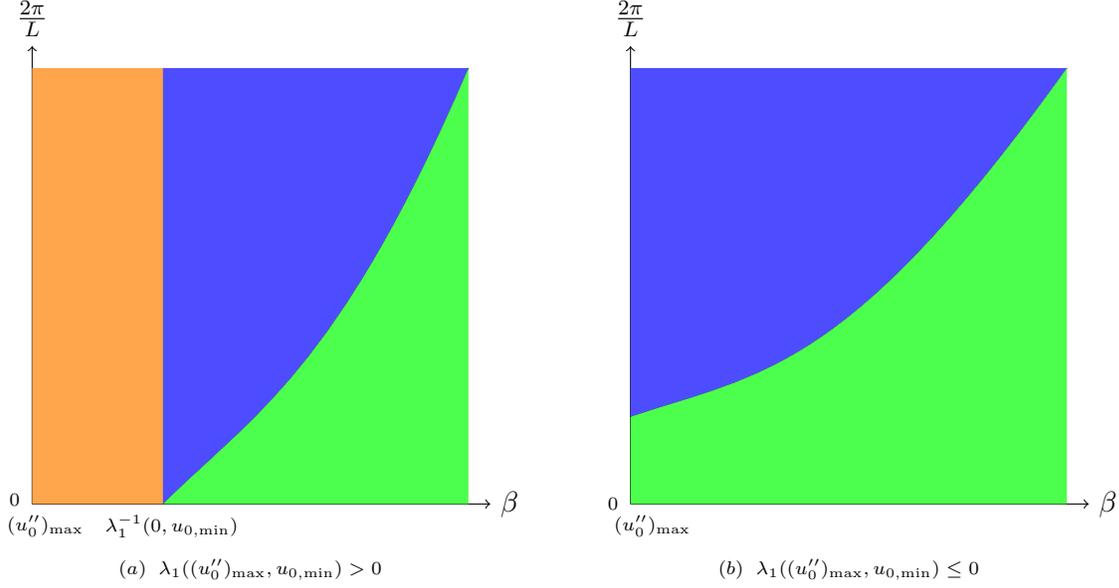
\begin{figure}[h]
\begin{center}
 \begin{tikzpicture}[scale=0.58]
  \draw [->](0, 0)--(10.5, 0)node[right]{$\beta$};
 \draw [->](0, 0)--(0, 10.5)node[above]{${2\pi\over L}$};

       \node (a) at (0.3,-0.5) {\tiny$(u_0'')_{\max}$};
       \node (a) at (-0.4,0.1) {\tiny$0$};
       \node (a) at (3.2,-0.5) {\tiny$\lambda_1^{-1}(0,u_{0,\min})$};
       \draw  (3, 0).. controls (5, 2) and (7, 3)..(10, 10);
       \draw  (3, 0).. controls (3, 5) and (3, 7)..(3, 10);

        \node (a) at (5,-1.5) {\tiny$(a) \;\;\lambda_1((u_0'')_{\max},u_{0,\min})>0$};
        \fill[orange!70] (3, 0).. controls (3, 5) and (3, 7)..(3, 10) -- (0,10) -- (0,0) -- cycle;

\fill[blue!70] (3, 0).. controls (5, 2) and (7, 3)..(10, 10) -- (3,10) -- (3,5) -- cycle;
\fill[green!70] (3, 0).. controls (5, 2) and (7, 3)..(10, 10) -- (10,5) -- (10,0) -- cycle;

 \end{tikzpicture}
 \quad\quad
   \begin{tikzpicture}[scale=0.58]
  \draw [->](0, 0)--(10.5, 0)node[right]{$\beta$};
 \draw [->](0, 0)--(0, 10.5)node[above]{${2\pi\over L}$};

       \node (a) at (0.5,-0.5) {\tiny$(u_0'')_{\max}$};
       \node (a) at (-0.4,0) {\tiny$0$};
       \draw  (0, 2).. controls (3, 3) and (5, 3)..(10, 10);

        \fill[blue!70] (0, 2).. controls (3, 3) and (5, 3)..(10, 10) -- (0,10) -- (0,2) -- cycle;
\fill[green!70] (0, 2).. controls (3, 3) and (5, 3)..(10, 10) -- (10,0) -- (0,0) -- cycle;
 \node (a) at (5,-1.5) {\tiny$(b) \;\;\lambda_1((u_0'')_{\max},u_{0,\min})\leq0$};
 \end{tikzpicture}
\end{center}
	\caption{The illustration for Case 2 in Remark \ref{rigidity-near-monotone-shear-flow-arbitrary-wave-speed-thm-rem} (ii):
 near a monotone shear flow, the rigidity of traveling waves with arbitrary wave speeds holds for $(\beta,{2\pi\over L})$ in the orange and blue regions, while
   genuine traveling waves exist for $(\beta,{2\pi\over L})$ in the green regions.
   The boundary between the blue and green regions is the curve $ {2\pi/L}=\sqrt{-\lambda_1(\beta,u_{0,\min})}$.
}
\label{rigidity-existence-tw}
\end{figure}

\noindent
 {\bf Case 3.} $0>(u_0'')_{\max}$.

 In this case, we have $\lambda_1((u_0'')_{\max},u_{0,\min})>0$, since otherwise,  $0>(u_0'')_{\max}\geq\lambda_1^{-1}(0,u_{0,\min})$, and thus $\lambda_1(0,u_{0,\min})<0$, which contradicts $\lambda_1(0,u_{0,\min})>0$ (see Lemma \ref{non-negativeness-beta0}).
  Since $\lambda_1((u_0'')_{\max},u_{0,\min})>0$, by Lemma \ref{non-negativeness-beta0} we have $\lambda_1^{-1}(0,u_{0,\min})>0>(u_0'')_{\max}$,
 and
\begin{itemize}
\item if $\beta\in[0,\lambda_1^{-1}(0,u_{0,\min})]$, then $\lambda_1(\beta,u_{0,\min})\geq0$ and
 the rigidity of traveling waves with $c\in\mathbb{R}$ holds for $L>0$,
  \item
  if $\beta\in(\lambda_1^{-1}(0,u_{0,\min}),\infty)$, then $\lambda_1(\beta,u_{0,\min})<0$ and
  \begin{itemize}
\item the rigidity of traveling waves with $c\in\mathbb{R}$  holds for  $L\in\left(0,{2\pi\over \sqrt{-\lambda_1(\beta,u_{0,\min})}}\right)$,
\item
   genuine traveling waves exist for  $L\in\left[{2\pi\over \sqrt{-\lambda_1(\beta,u_{0,\min})}},\infty\right)$,
   \end{itemize}
   \end{itemize}
  near  $(u_0,0)$ in $C^2$.
\end{Remark}

\begin{Remark}
Let us compare the rigidity property of traveling waves with arbitrary wave speeds near monotone shear flows in bounded versus unbounded channels.
Assume that the Rayleigh stability condition (i.e. $\beta\notin \overline{Ran(u_0'')}$) holds.
In an unbounded channel, the rigidity  near general monotone shear flows with appropriate asymptotic behavior is proved for arbitrary zonal periods (see Remark \ref{rem-rigidity-unbounded-channel}). In a bounded channel, however, it depends on $\beta$ and on the zonal period $L$, see Theorem \ref{rigidity-near-monotone-shear-flow-arbitrary-wave-speed-thm} and Remark \ref{rigidity-near-monotone-shear-flow-arbitrary-wave-speed-thm-rem}. In particular, we provide critical regions of $(\beta,L)$ such that the rigidity near monotone shear flows does not hold in Theorem \ref{rigidity-near-monotone-shear-flow-arbitrary-wave-speed-thm}, see the green regions of Fig.\,\ref{rigidity-existence-tw} for Case 2 in Remark \ref{rigidity-near-monotone-shear-flow-arbitrary-wave-speed-thm-rem}.
\end{Remark}

\begin{proof} Let $\beta>0$.
We  prove the following equivalent statements for $\beta\notin Ran(u_0'')$.
\\
(1) Assume that $0<(u_0'')_{\min}$. Let $\beta\in(0,(u_0'')_{\min})$ and $L>0$ be arbitrary. Then there exists
$\varepsilon_0>0$
such that any traveling-wave solution $(u(x-ct,y),v(x-ct,y))\in C^2(D_L)$ to the $\beta$-plane equation
\eqref{Euler equation}-\eqref{boundary condition for euler} satisfying
$
\|(u,v)-(u_{0},0)\|_{C^{2}(D_L)}$ $<\varepsilon_0
$
must be a shear flow for an arbitrary wave speed $c\in\mathbb{R}$.
\\
(2) Assume that $\lambda_1((u_0'')_{\max},u_{0,\min})>0$.
 \begin{itemize}
\item[${\rm(2a)}$]
Let  $\beta\in((u_0'')_{\max},\lambda_1^{-1}(0,u_{0,\min})]\cap(0,\infty)$ and $L>0$ be arbitrary.
Then the same conclusion as in (1) holds.
\item[${\rm(2b)}$]
Let  $\beta\in(\lambda_1^{-1}(0,u_{0,\min}),\infty)$. (Note that $0< \lambda_1^{-1}(0,u_{0,\min})$.)
\begin{itemize}
\item[${\rm(2b_1)}$]
If $L\in\left(0,{2\pi\over \sqrt{-\lambda_1(\beta,u_{0,\min})}}\right)$,
 then the same conclusion as in ${\rm (1)}$ holds.
\item[${\rm(2b_2)}$]
If $L\in\left[{2\pi\over \sqrt{-\lambda_1(\beta,u_{0,\min})}},\infty\right)$,
 then for any $\varepsilon>0$ there exists a genuine traveling-wave solution $(u_\varepsilon(x-c_\varepsilon t,y),v_\varepsilon(x-c_\varepsilon t,y))\in C^2(D_{L})$ to the $\beta$-plane equation
\eqref{Euler equation}-\eqref{boundary condition for euler} satisfying
$
\|(u_\varepsilon,v_\varepsilon)-(u_{0},0)\|_{C^{2}(D_{L})}<\varepsilon
$ and $c_\varepsilon<(u_\varepsilon)_{\min}.$
\end{itemize}
\end{itemize}
(3)
Assume that $\lambda_1((u_0'')_{\max},u_{0,\min})\leq0$. Let $\beta\in((u_0'')_{\max},\infty)$. (Note that $0< \lambda_1^{-1}(0,u_{0,\min})\leq (u_0'')_{\max}$.)
\begin{itemize}
\item[${\rm(3a)}$]
If $L\in\left(0,{2\pi\over \sqrt{-\lambda_1(\beta,u_{0,\min})}}\right)$,
 then the same conclusion as in ${\rm (1)}$ holds.
\item[${\rm(3b)}$]
If $L\in\left[{2\pi\over \sqrt{-\lambda_1(\beta,u_{0,\min})}},\infty\right)$,
 then  the same conclusion as in ${\rm (2b_2)}$ holds.
\end{itemize}

We  prove the following equivalent statements for $\beta\in Ran(u_0'')$.\\
(4) Assume that $\inf_{\tilde c\leq u_{0,\min}}\lambda_1(\beta,\tilde c)\geq0$. Let $L>0$ be arbitrary. Then there exists
$\varepsilon_0>0$
such that the wave speed of any genuine  traveling-wave solution $(u(x-ct,y),v(x-ct,y))\in C^2(D_L)$ to the $\beta$-plane equation
\eqref{Euler equation}-\eqref{boundary condition for euler} satisfying
$
\|(u,v)-(u_{0},0)\|_{C^{2}(D_L)}$ $<\varepsilon_0
$
must be a generalized inflection value of $u$.\\
(5) Assume that $\inf_{\tilde c\leq u_{0,\min}}\lambda_1(\beta,\tilde c)<0$.
\begin{itemize}
\item[${\rm(5a)}$]
If $L\in\left(0,{2\pi\over \sqrt{-\inf_{\tilde c\leq u_{0,\min}}\lambda_1(\beta,\tilde c)}}\right)$,
 then the same conclusion as in ${\rm (4)}$ holds.
\item[${\rm(5b)}$]
If $L\in\left[{2\pi\over \sqrt{-\inf_{\tilde c\leq u_{0,\min}}\lambda_1(\beta,\tilde c)}},\infty\right)$,
 then  the same conclusion as in ${\rm (2b_2)}$ holds.
\end{itemize}

First, we prove the statement (1).
Since $\beta\in(0,(u_0'')_{\min})$, we can choose $\varepsilon_0\in(0,1)$ small enough such that $\beta<(\Delta u)_{\min}$ on $D_L$. By Theorem \ref{thm-generalization1}, $(u(x-ct,y),v(x-ct,y))$ is
 a shear flow for all $c\in \mathbb{R}$.

We prove now the statement (2).
By \eqref{property-lambda-1}, we have
\begin{align}\label{principal-eigenvalue-beta+}
&\lambda_1(\beta,u_{0,\min})>0\quad \text{for} \quad\beta\in((u_0'')_{\max},\lambda_1^{-1}(0,u_{0,\min})), \\\nonumber &\lambda_1(\beta,u_{0,\min})<0\quad\text{for} \quad\beta\in(\lambda_1^{-1}(0,u_{0,\min}),\infty).
\end{align}
To prove ${\rm(2a)}$,
without loss of generality, we assume that $u_0'>0$ on $[-d,d]$.
Since $\beta>(u_0'')_{\max}$ and $u_0'>0$ on $[-d,d]$, we can choose $\varepsilon_0\in(0,1)$ small enough such that
$\beta> (\Delta u)_{\max}$ and
$
\partial_{y}u> 0$ on $D_L.
$

Let  $\beta\in((u_0'')_{\max},\lambda_1^{-1}(0,u_{0,\min})]\cap(0,\infty)$ and $L>0$ be arbitrary.
\if0
If $\beta=0$, then $(u(x-ct,y),v(x-ct,y))$ is
 a shear flow for all $c\in\mathbb{R}$ by Theorem \ref{thm-generalization1}. Thus, we only need to consider
 \fi
Since $\beta>0$,
by Theorem \ref{beta=0-cor} (i) we know that  $(u(x-ct,y),v(x-ct,y))$ is
 a shear flow for $c\notin[c_\beta^+,u_{\min}]$, where $c_\beta^+$ is defined in \eqref{def-c+} with $d_{\pm}=\pm d$. Now, let $(u(x-ct,y),v(x-ct,y))$ be
 a traveling wave  with $c\in[c_\beta^+,u_{\min}]$. We will show that it has to be a shear flow.
 Since $\partial_y u\geq C_0$ on $D_L$ for some $C_0>0$, we have
 \begin{align}\label{estimate-uxy-c}
 u(x,y)-c\geq u(x,y)-u_{\min}\geq u(x,y)-u(x,-d)\geq C_0(y+d)\end{align}
  for $(x,y)\in D_L$. Moreover,
  $$\|\beta-\Delta u\|_{C^0(D_L)}\leq \|\beta-u_0''\|_{C^0(D_L)}+\|u_0''-\Delta u\|_{C^0(D_L)}\leq C+\varepsilon_0\leq C\,.$$
 By \eqref{un-vn-eq-u-v}, \eqref{estimate-uxy-c},  $v(\cdot,- d)=0$ and the Hardy inequality \eqref{Hardy-inequality},  we have
 \begin{align*}
 &\|\nabla v\|_{L^2(D_L)}^2=\int_{D_L}|\nabla v|^2 dxdy \leq C\int_{D_L}{1\over u-c}v^2dxdy\\\leq& C\int_{D_L}{v^2\over y+d} dxdy
 \leq C\|v\|_{L^2(D_L)}\left(\int_{\mathbb{T}_L}\int_{-d}^d{v^2\over (y+d)^2} dydx\right)^{1\over2}\\
 \leq& C_1\|v\|_{L^2(D_L)}\left(\int_{\mathbb{T}_L}\int_{-d}^d|\partial_y v(x,y)|^2 dydx\right)^{1\over2}
 \leq C_1\|v\|_{L^2(D_L)}\|\nabla v\|_{L^2(D_L)},
 \end{align*}
 which implies
 \begin{align}\label{uniformH1bound}
 \|\nabla v\|_{L^2(D_L)}
 \leq C_1\|v\|_{L^2(D_L)}
 \end{align}
 for some $C_1>0$.
 Since $\beta>(\Delta u)_{\max}$, by \eqref{estimate-uxy-c} we have
 \begin{align}\label{decomposition-I-II}
 0=&\int_{D_L}\left(|\nabla v|^2 -{\beta-\Delta u\over u-c}v^2\right)dxdy\\\nonumber
 \geq&\int_{D_L}\left(|\nabla v|^2 -{\beta-\Delta u\over u-u(x,-d)}v^2\right)dxdy\\\nonumber
 =&\int_{D_L}\left(|\nabla v|^2-{\beta- u_0''\over u_0-u_0(-d)}v^2\right)dxdy\\\nonumber
 &+\int_{D_L}\left( {\beta- u_0''\over u_0-u_0(-d)}v^2 -{\beta-\Delta u\over u-u(x,-d)}v^2\right)dxdy\\\nonumber
 =&I+II.
 \end{align}
 Since $\beta\in((u_0'')_{\max},\lambda_1^{-1}(0,u_{0,\min})]$, by \eqref{principal-eigenvalue-beta+} we have $\lambda_1(\beta,u_{0,\min})\geq0$ and
 \begin{align*}
 \int_{-d}^d\left(|\phi'|^2-{\beta-u_0''\over u_0-u_{0,\min}}|\phi|^2\right)dy\geq \lambda_1(\beta,u_{0,\min})\int_{-d}^d|\phi|^2dy\geq0
 \end{align*}
 for $0\neq \phi\in H_0^1(-d,d)$.
 Then $v(x,y)=\sum_{k\neq0} e^{i{2k\pi\over L}x} v_k(y)$ and
 \begin{align*}
 I=&L\sum_{k\neq0}\int_{-d}^d\left(|v_k'|^2+k^2\left({2\pi\over L}\right)^2|v_k|^2-{\beta-u_0''\over u_0-u_{0,\min}}|v_k|^2\right)dy\\
 \geq&L\sum_{k\neq0}\int_{-d}^d\left(\lambda_1(\beta,u_{0,\min})+k^2\left({2\pi\over L}\right)^2\right)|v_k|^2dy\\
 \geq&\left({2\pi\over L}\right)^2\|v\|_{L^2(D_L)}^2\geq{\left({2\pi/ L}\right)^2\over C_1^2}\|\nabla v\|_{L^2(D_L)}^2,
 \end{align*}
where we have used \eqref{uniformH1bound} and $\int_{\mathbb{T}_L} v dx=0$. On the other hand, by \eqref{estimate-uxy-c}
and
$u_0(y)-u_0(-d)\geq\tilde C_0(y+d)$ for some $\tilde C_0>0$, we have
\begin{align}\label{II-estimate}
-II=&\int_{D_L}\bigg(  {\beta-\Delta u\over u-u(x,-d)}v^2-{\beta- u_0''\over u-u(x,-d)}v^2+{\beta- u_0''\over u-u(x,-d)}v^2\\&
-{\beta- u_0''\over u_0-u_0(-d)}v^2\bigg)dxdy\nonumber\\
=&\int_{D_L}\left(  {u_0''-\Delta u\over u-u(x,-d)}v^2+{(\beta- u_0'')(u_0-u-u_0(-d)+u(x,-d))\over (u-u(x,-d))(u_0-u_0(-d))}v^2\right)dxdy\nonumber\\
\leq&C\|u_0- u\|_{C^2(D_L)}\int_{D_L}{1\over (y+d)}v^2 dxdy+C\|u_0- u\|_{C^0(D_L)}\int_{D_L}{1\over (y+d)^2}v^2 dxdy\nonumber\\
\leq&C\|u_0-u\|_{C^2(D_L)}\|v\|_{L^2(D_L)}\|\nabla v\|_{L^2(D_L)}+C\|u_0- u\|_{C^0(D_L)}\|\nabla v\|_{L^2(D_L)}^2\nonumber\\
\leq&C_2\|u_0-u\|_{C^2(D_L)}\|\nabla v\|_{L^2(D_L)}^2\nonumber
\end{align}
for some $C_2>0$,
where we have used $v(\cdot, -d)=0$ and the Hardy and Poincar\'{e} inequalities in the last two estimates. Combining the estimates for $I$ and $II$, we obtain
\begin{align*}
{\left({2\pi/ L}\right)^2\over C_1^2}\|\nabla v\|_{L^2(D_L)}^2\leq I\leq-II\leq C_2\|u_0-u\|_{C^2(D_L)}\|\nabla v\|_{L^2(D_L)}^2.
\end{align*}
Taking $\varepsilon_0>0$ such that $\varepsilon_0<{\left({2\pi/ L}\right)^2\over C_1^2C_2}$, we get $\nabla v=0$ and $v=0$ on $D_L$,  proving $(\rm{2a})$.

We prove now $(\rm{2b_1})$.
Let $\beta\in(\lambda_1^{-1}(0,u_{0,\min}),\infty)$ and $L\in\left(0,{2\pi\over \sqrt{-\lambda_1(\beta,u_{0,\min})}}\right)$. Then $\beta>\lambda_1^{-1}(0,u_{0,\min})>0$ by Lemma \ref{non-negativeness-beta0}.
By Theorem  \ref{beta=0-cor} (i),  we only need to  show that if $(u(x-ct,y),v(x-ct,y))$ is
 a traveling wave with $c\in[c_\beta^+,u_{\min}]$, then it must be a shear flow.
We still have \eqref{uniformH1bound}, and define $I$ and $II$ by \eqref{decomposition-I-II}.
Since $\beta\in(\lambda_1^{-1}(0,u_{0,\min}),\infty)$, by \eqref{principal-eigenvalue-beta+} and $L\in\left(0,{2\pi\over \sqrt{-\lambda_1(\beta,u_{0,\min})}}\right)$, we have
\begin{align}\label{lambda1T0}
0>\lambda_1(\beta,u_{0,\min})>-\left(2\pi\over L\right)^2
\end{align}
and
\begin{align}\label{lambda1variation}
 \int_{-d}^d\left(|\phi'|^2-{\beta-u_0''\over u_0-u_{0,\min}}|\phi|^2\right)dy\geq \lambda_1(\beta,u_{0,\min})\int_{-d}^d|\phi|^2dy
\end{align}
 for $0\neq \phi\in H_0^1(-d,d)$.
 Then by \eqref{lambda1variation} we obtain
 \begin{align}\label{estimate-I-ib1}
 I \geq&L\sum_{k\neq0}\int_{-d}^d\left(\lambda_1(\beta,u_{0,\min})+\left({2\pi\over L}\right)^2\right)|v_k|^2dy\\\nonumber
 &+L\sum_{k\neq0}\int_{-d}^d\left(-\left({2\pi\over L}\right)^2+k^2\left({2\pi\over L}\right)^2\right)|v_k|^2dy\\\nonumber
 \geq&L\left(\lambda_1(\beta,u_{0,\min})+\left({2\pi\over L}\right)^2\right)\sum_{k\neq0}\int_{-d}^d|v_k|^2dy\\\nonumber
 =&\left(\lambda_1(\beta,u_{0,\min})+\left({2\pi\over L}\right)^2\right)\|v\|_{L^2(D_L)}^2
 \geq{\lambda_1(\beta,u_{0,\min})+\left({2\pi\over L}\right)^2\over C_1^2}\|\nabla v\|_{L^2(D_L)}^2,
 \end{align}
where $v(x,y)=\sum_{k\neq0} e^{i{2k\pi\over L}x} v_k(y)$ and we have used \eqref{uniformH1bound}. Note that
$
{\lambda_1(\beta,u_{0,\min})+\left({2\pi\over L}\right)^2\over C_1^2}>0
$ due to \eqref{lambda1T0}.
The estimates \eqref{II-estimate} for $II$ still hold. By \eqref{estimate-I-ib1} and \eqref{II-estimate} we have
\begin{align*}
{\lambda_1(\beta,u_{0,\min})+\left({2\pi\over L}\right)^2\over C_1^2}\|\nabla v\|_{L^2(D_L)}^2\leq I\leq-II\leq C_2\|u_0-u\|_{C^2(D_L)}\|\nabla v\|_{L^2(D_L)}^2.
\end{align*}
Taking  $\varepsilon_0<{\lambda_1(\beta,u_{0,\min})+\left({2\pi\over L}\right)^2\over C_1^2C_2}$, we conclude that $v=0$ on $D_L$. This proves  $(\rm{2b_1})$.

We now prove $(\rm{2b_2})$.
Let $\beta\in(\lambda_1^{-1}(0,u_{0,\min}),\infty)$.
We first consider the case $L > {2\pi\over \sqrt{-\lambda_1(\beta,u_{0,\min};u_0)}}$, where $\lambda_1(\beta,u_{0,\min};u_0)<0$ by \eqref{principal-eigenvalue-beta+}. Then
$$\lambda_1(\beta, u_{0,\min};u_0)<-\left({2\pi\over L}\right)^2<0\,$$
 This, combined with Lemma \ref{c-tou0min-limit-lem}, implies that
there exists $c_{L}<u_{0,\min}$ such that $\lambda_1(\beta,c_{L};u_0)=-\left({2\pi\over L}\right)^2$. Thus $c_{L}$ is an isolated real eigenvalue of the operator
\begin{align*}
\mathcal{R}_{{2\pi\over L},\beta,u_0}:=-\left(\partial^2_y-\left({2\pi\over L}\right)^2\right)^{-1}\left((u_0''-\beta)-u_0\left(\partial^2_y-\left({2\pi\over L}\right)^2\right)\right).
\end{align*}
Although the spectral condition  $c_{L}\in\sigma_d(\mathcal{R}_{{2\pi\over L},\beta,u_0})\cap\mathbb{R}$ is satisfied, we can not directly apply the bifurcation lemma
(Lemma  2.5  in \cite{LWZZ})  to the shear flow $(u_0,0)$ since it requires the shear flow to possess $H^4$ regularity, whereas we only have $u_0\in C^2.$ Our approach is to perturb $(u_0,0)$ to a nearby shear flow $(\tilde u_0,0)$ with $C^4$ regularity. The only subtlety that remains is then to ensure that the corresponding operator $\mathcal{R}_{{2\pi\over L},\beta,\tilde u_0}$  retains an isolated real eigenvalue. As we will see, this obstacle is readily resolved by Lemmas \ref{lambda1continuous-profile-lem}-\ref{c-tou0min-limit-lem}.
Indeed, for any $\varepsilon>0$, by Lemma \ref{lambda1continuous-profile-lem} and the density of $C^4([-d,d])$ in $C^2([-d,d])$, we can choose $\tilde u_0\in C^4([-d,d])$ such that $\tilde u_0'>0$ on $[-d,d]$,
\begin{align}\label{tilde-u0-u0}
\|\tilde u_0-u_0\|_{C^2([-d,d])}<{\varepsilon\over2}
\end{align}
and
\begin{align*}
|\lambda_1(\beta,\tilde u_{0,\min};\tilde u_0)-\lambda_1(\beta, u_{0,\min}; u_0)|<{-\left(2\pi\over L\right)^2-\lambda_1(\beta,u_{0,\min};u_0)\over 2}.
\end{align*}
Then
\begin{align}\nonumber
-\left(2\pi\over L\right)^2-\lambda_1(\beta,\tilde u_{0,\min};\tilde u_0)
>&-\left(2\pi\over L\right)^2-\lambda_1(\beta,u_{0,\min};u_0)-{-\left(2\pi\over L\right)^2-\lambda_1(\beta,u_{0,\min};u_0)\over 2}\\\label{2pi-L-lambda-tilde}
=&{-\left(2\pi\over L\right)^2-\lambda_1(\beta,u_{0,\min};u_0)\over 2}>0,
\end{align}
which implies
\begin{align}\label{wave-number-mid}\lambda_1(\beta,\tilde u_{0,\min};\tilde u_0)<-\left({2\pi\over L}\right)^2<0.\end{align}
By applying Lemma \ref{c-tou0min-limit-lem} to $\tilde u_0$, we have
\begin{align}\label{c-tou0min-limit-tilde-u0}
\lim_{c\to\tilde u_{0,\min}^-}\lambda_1(\beta,c;\tilde u_0)=\lambda_1(\beta,\tilde u_{0,\min};\tilde u_0)<0\quad\text{and}\quad \lim_{c\to -\infty}\lambda_1(\beta,c;\tilde u_0)={\pi^2\over4d^2}>0.
\end{align}
It follows from \eqref{wave-number-mid}-\eqref{c-tou0min-limit-tilde-u0} that
there exists $\tilde c_L<\tilde u_{0,\min}$ such that $\lambda_1(\beta,\tilde c_L;\tilde u_0)=-\left({2\pi\over L}\right)^2$. Then $\tilde c_L$ is an isolated real eigenvalue of the operator $\mathcal{R}_{{2\pi\over L},\beta,\tilde u_0}$. We now can apply Lemma 2.5 in \cite{LWZZ} to the shear flow $(\tilde u_0,0)$, and obtain the existence of a genuine traveling-wave solution $(u_\varepsilon(x-c_\varepsilon t,y),v_\varepsilon(x-c_\varepsilon t,y))\in C^2(D_{L})$ to the $\beta$-plane equation
\eqref{Euler equation}-\eqref{boundary condition for euler} satisfying $c_\varepsilon<(u_\varepsilon)_{\min}$ and
\begin{align}\label{u-varepsilon-u0}
\|(u_\varepsilon,v_\varepsilon)-(\tilde u_{0},0)\|_{C^{2}(D_{L})}\leq \|(u_\varepsilon,v_\varepsilon)-(\tilde u_{0},0)\|_{H^{3}(D_{L})}<{\varepsilon\over 2} \,.
\end{align}
Combining \eqref{tilde-u0-u0} and \eqref{u-varepsilon-u0}, we conclude that $\|(u_\varepsilon,v_\varepsilon)-(u_{0},0)\|_{C^{2}(D_{L})}<\varepsilon$.

We next consider the case $L={2\pi\over \sqrt{-\lambda_1(\beta,u_{0,\min};u_0)}}$. In this case, the operator
$
\mathcal{R}_{{2\pi\over L},\beta,u_0}$ has a non-isolated eigenvalue $u_{0,\min}$.
The strategy is as follows. We first perturb the shear flow $(u_0,0)$ to a scaled shear flow $(au_0,0)$ with $0<a<1$ so that the corresponding operator $\mathcal{R}_{{2\pi\over L},\beta,a u_0}$ admits an isolated real eigenvalue. Then, as in the previous case, we further perturb $(au_0,0)$ to a shear flow $(\hat u_0,0)$ with $C^4$ regularity and apply the bifurcation lemma to $(\hat u_0,0)$ in order to obtain a nearby genuine traveling-wave solution.
More precisely,
for the shear flow $(au_0,0)$ with $a\in(0,1)$,
 the corresponding
Rayleigh-Kuo  boundary value problem is
\begin{align*}
-\phi''-{\beta-au_0''\over au_0-c}\phi=-\phi''-{{\beta\over a}-u_0''\over u_0-{c\over a}}\phi=\lambda\phi, \;\;\;\;\phi(\pm d)=0
\end{align*}
with $\phi\in H_0^1\cap H^2(-d,d)$, where $c\leq au_{0,\min}$. Then
\begin{align}\label{scaled-eigenvalue}
\lambda_{1}(\beta,c;au_0)=\lambda_{1}\left({\beta\over a},{c\over a};u_0\right)
\end{align}
 for $c\leq a u_{0,\min}$. By
\eqref{property-lambda-1} and ${\beta\over a}>\beta$, we have
 \begin{align}\label{eigenvalue-u0min-beta}
-\left(2\pi\over L\right)^2=\lambda_1(\beta,u_{0,\min};u_0)>\lambda_1\left({\beta\over a},u_{0,\min};u_0\right)=\lambda_1(\beta,au_{0,\min};au_0).\end{align}
By  Lemma \ref{c-tou0min-limit-lem} and \eqref{eigenvalue-u0min-beta}, there exists $c_{L,a}<u_{0,\min}$ such that \begin{align}\label{eigenvalue-u0min-beta-a}
\lambda_1\left({\beta\over a},c_{L,a};u_0\right)=-\left(2\pi\over L\right)^2.\end{align}
Combining \eqref{scaled-eigenvalue} and \eqref{eigenvalue-u0min-beta-a}, we have
\begin{align}\label{2-pi-L-mid-hat-u0}
\lambda_1(\beta,ac_{L,a};au_0)=-\left(2\pi\over L\right)^2.
\end{align}
 Since $ac_{L,a}<au_{0,\min}$, $ac_{L,a}$ is an isolated real eigenvalue of the operator $\mathcal{R}_{{2\pi\over L},\beta,au_0}$. For any $\varepsilon>0$, we take $a\in(0,1)$ close to $1$ such that \begin{align}\label{scaled-varepsilon-3}\|au_0-u_0\|_{C^2([-d,d])}<{\varepsilon\over3}.\end{align}
  By \eqref{2-pi-L-mid-hat-u0}, Lemma \ref{lambda1continuous-profile-lem} and the density of $C^4([-d,d])$ in $C^2([-d,d])$, we can choose $\hat u_0\in C^4([-d,d])$ such that $\hat u_0'>0$ on $[-d,d]$,
\begin{align}\label{hat-scaled-3}
\|\hat u_0-au_0\|_{C^2([-d,d])}<{\varepsilon\over3}
\end{align}
and
\begin{align*}
|\lambda_1(\beta,\hat u_{0,\min};\hat u_0)-\lambda_1(\beta, au_{0,\min}; au_0)|<{-\left(2\pi\over L\right)^2-\lambda_1(\beta,au_{0,\min};au_0)\over 2}.
\end{align*}
  By an argument similar to that in \eqref{2pi-L-lambda-tilde}, we have
  \begin{align}\label{wave-number-mid-hat}
  \lambda_1(\beta,\hat u_{0,\min};\hat u_0)<-\left({2\pi\over L}\right)^2<0.
  \end{align}
  Applying Lemma \ref{c-tou0min-limit-lem} to $\hat u_0$ and using  \eqref{wave-number-mid-hat},  there exists $\hat c_L<\hat u_{0,\min}$ such that $\lambda_1(\beta,\hat c_L;\hat u_0)=-\left({2\pi\over L}\right)^2$. Thus $\hat c_{L}$ is an isolated real eigenvalue of the operator $\mathcal{R}_{{2\pi\over L},\beta,\hat u_0}$.
  Applying Lemma 2.5 in \cite{LWZZ} to the shear flow $(\hat u_0,0)$, we  obtain the existence of a genuine traveling-wave solution $(u_\varepsilon(x-c_\varepsilon t,y),v_\varepsilon(x-c_\varepsilon t,y))\in C^2(D_L)$ such that
  $c_\varepsilon<(u_\varepsilon)_{\min}$ and
   \begin{align}\label{u-varepsilon-hat-u0}
   \|(u_\varepsilon,v_\varepsilon)-(\hat u_0,0)\|_{C^2(D_L)}<{\varepsilon\over3}\,.\end{align}
    Combining \eqref{scaled-varepsilon-3}-\eqref{hat-scaled-3} and \eqref{u-varepsilon-hat-u0}, we conclude that $\|(u_\varepsilon,v_\varepsilon)-( u_0,0)\|_{C^2(D_L)}$ $<\varepsilon$.
    This proves $(\rm{2b_2})$.

Next, we prove the statement (3).
By \eqref{property-lambda-1} we have
\begin{align*}
\lambda_1(\beta,u_{0,\min})<0\quad\text{for} \quad\beta\in((u_0'')_{\max},\infty).
\end{align*}
Let $\beta\in((u_0'')_{\max},\infty)$. If
$L\in\left(0,{2\pi\over \sqrt{-\lambda_1(\beta,u_{0,\min})}}\right)$,
 then $\lambda_1(\beta,c)\geq\lambda_1(\beta,u_{0,\min})>-(2\pi/L)^2$ for all $c\leq u_{0,\min}$. If $L\in\left[{2\pi\over \sqrt{-\lambda_1(\beta,u_{0,\min})}},\infty\right)$, then there exists $c_{L}\leq u_{0,\min}$ such that $\lambda_1(\beta,c_{L})=-(2\pi/L)^2$. Based on the above spectral properties, the proofs of $({\rm 3a})$ and $({\rm 3b})$  are similar to those of $({\rm 2b_1})$ and $({\rm 2b_2})$, respectively.

 Let us now prove $(4)$ and $(5a)$.
 Suppose that $c$ is not a generalized inflection value of $u$. We show that $(u(x-ct,y),v(x-ct,y))$ is a shear flow when $\varepsilon_0>0$ is small enough.
 \if0
 If $\beta=0$, then by Theorem \ref{classification-of-wave-speed-for-a-genuinely-travelling-wave-beta-plane} (ii) we know that  $(u(x-ct,y),v(x-ct,y))$ is a shear flow.
  \fi
  Note that $c$ is not a critical value of $u$ since $u$ is $C^2$ close to the monotone shear flow $u_0$.
  If  $c=u_{\max}$, by Remark \ref{supplement-to-Theorem-beta=0i} $(u(x-ct,y),v(x-ct,y))$ is a shear flow. If  $c\notin[c_\beta^+,u_{\min}]$, by Theorem \ref{classification-of-wave-speed-for-a-genuinely-travelling-wave-beta-plane} (i) $(u(x-ct,y),v(x-ct,y))$ is a shear flow.
 Now let $(u(x-ct,y),v(x-ct,y))$ be a traveling wave with $c\in[c_\beta^+,u_{\min}]$. We will show that it has to be a shear flow. We still have \eqref{uniformH1bound} for some $C_1>0$.
  We divide the discussions into two cases.

  In the case $c\leq u_{0,\min}$, we have
 \begin{align*}
 0=\int_{D_L}\left(|\nabla v|^2-{\beta-u_0''\over u_0-c}v^2\right)dxdy+\int_{D_L}\left({\beta-u_0''\over u_0-c}v^2-{\beta-\Delta u\over u-c}v^2\right)dxdy=I_0+II_0.
 \end{align*}
 For (4), we have $\lambda_1(\beta,c)\geq\inf_{\tilde c\leq u_{0,\min}}\lambda_1(\beta,\tilde c)\geq0>-\left({2\pi\over L}\right)^2$. For $(5a)$, we have
  $\lambda_1(\beta,c)\geq\inf_{\tilde c\leq u_{0,\min}}\lambda_1(\beta,\tilde c)>-\left({2\pi\over L}\right)^2$.
  Using \eqref{uniformH1bound}, we get
  \begin{align*}
  I_0=&L\sum_{k\neq0}\int_{-d}^d\left(|v_k'|^2+k^2\left({2\pi\over L}\right)^2|v_k|^2-{\beta-u_0''\over u_0-c}|v_k|^2\right)dy\\
  \geq&\left(\lambda_1(\beta,c)+\left({2\pi\over L}\right)^2\right)\|v\|_{L^2(D_L)}^2
  \geq{\inf_{\tilde c\leq u_{0,\min}}\lambda_1(\beta,\tilde c)+\left({2\pi\over L}\right)^2\over C_1^2}\|\nabla v\|_{L^2(D_L)}^2.
  \end{align*}
  Since $c\leq u_{\min}$ and $c\leq u_{0,\min}$,
  similarly to \eqref{II-estimate} we obtain
  \begin{align*}
-II_0=&\int_{D_L}\left(  {u_0''-\Delta u\over u-c}v^2+{(\beta- u_0'')(u_0-u)\over (u-c)(u_0-c)}v^2\right)dxdy
\leq C_2\|u_0-u\|_{C^2(D_L)}\|\nabla v\|_{L^2(D_L)}^2
\end{align*}
for some $C_2>0$. By the estimates for $I_0$ and $II_0$, we have
\begin{align*}
{\inf_{\tilde c\leq u_{\min}}\lambda_1(\beta,\tilde c)+\left({2\pi\over L}\right)^2\over C_1^2}\|\nabla v\|_{L^2(D_L)}^2\leq I_0=-II_0\leq C_2\|u_0-u\|_{C^2(D_L)}\|\nabla v\|_{L^2(D_L)}^2.
\end{align*}
Then $v=0$ if $\varepsilon_0< {\inf_{\tilde c\leq u_{\min}}\lambda_1(\beta,\tilde c)+\left({2\pi\over L}\right)^2\over C_1^2C_2}$.

In the other case, $c>u_{0,\min}$, we have $u_0(-d)=u_{0,\min}<c\leq u_{\min}=u(x_0,-d)$ for some $x_0\in\mathbb{T}_L$. Then
\begin{align}\label{c-u0min}
|c-u_{0,\min}|+|c-u_{\min}|=u_{\min}-u_{0,\min}=u(x_0,-d)-u_0(-d)\leq \|u-u_0\|_{C^0(D_L)}
\end{align}
and
 \begin{align*}
 0=\int_{D_L}\left(|\nabla v|^2-{\beta-u_0''\over u_0-u_{0,\min}}v^2\right)dxdy+\int_{D_L}\left({\beta-u_0''\over u_0-u_{0,\min}}v^2-{\beta-\Delta u\over u-c}v^2\right)dxdy=I_1+II_1.
 \end{align*}
 For both $(4)$ and $(5a)$, we have $\lambda_1(\beta,u_{0,\min})\geq\inf_{\tilde c\leq u_{0,\min}}\lambda_1(\beta,\tilde c)>-\left({2\pi\over L}\right)^2$.
 Then
 \begin{align*}
  I_1=&L\sum_{k\neq0}\int_{-d}^d\left(|v_k'|^2+k^2\left({2\pi\over L}\right)^2|v_k|^2-{\beta-u_0''\over u_0-u_{0,\min}}|v_k|^2\right)dy\\
  \geq&\left(\lambda_1(\beta,u_{0,\min})+\left({2\pi\over L}\right)^2\right)\|v\|_{L^2(D_L)}^2
  \geq{\inf_{\tilde c\leq u_{0,\min}}\lambda_1(\beta,\tilde c)+\left({2\pi\over L}\right)^2\over C_1^2}\|\nabla v\|_{L^2(D_L)}^2.
  \end{align*}
Using \eqref{estimate-uxy-c} and \eqref{c-u0min}, we obtain
\begin{align*}
&-II_1=\int_{D_L}{(\beta-\Delta u)(u_0-u_{0,\min})-(\beta-u_0'')(u-c)\over (u_0-u_{0,\min})(u-c)}v^2dxdy\\
\leq&\int_{D_L}{|((\beta-\Delta u)-(\beta-u_0''))(u_0-u_{0,\min})+(\beta-u_0'')((u_0-u_{0,\min})-(u-c))|\over (u_0-u_{0,\min})(u-c)}v^2dxdy\\
\leq&\int_{D_L}{|u_0''-\Delta u||u_0-u_{0,\min}|+|\beta-u_0''|(|u-u_0|+|c-u_{0,\min}|)\over (u_0-u_{0,\min})(u-c)}v^2dxdy\\
\leq&C\|u-u_0\|_{C^2(D_L)}\int_{D_L}{1\over y+d}v^2dxdy+C\|u-u_0\|_{C^0(D_L)}\int_{D_L}{1\over (y+d)^2}v^2dxdy\\
\leq&C\|u-u_0\|_{C^2(D_L)}\|v\|_{L^2(D_L)}\|\nabla v\|_{L^2(D_L)}+C\|u-u_0\|_{C^0(D_L)}\|\nabla v\|_{L^2(D_L)}^2\\
\leq&C_3\|u-u_0\|_{C^2(D_L)}\|\nabla v\|_{L^2(D_L)}^2
\end{align*}
for some $C_3>0$. By the estimates for $I_1$ and $II_1$, and choosing $\varepsilon_0< {\inf_{\tilde c\leq u_{\min}}\lambda_1(\beta,\tilde c)+\left({2\pi\over L}\right)^2\over C_1^2C_3}$,
we get $v=0$. This proves $(4)$ and $(5a)$.

Finally, we prove $(5b)$. By Lemma \ref{c-tou0min-limit-lem}, there exists $c_0\in(-\infty,u_{0,\min}]$ such that
$$\lambda_1(\beta,c_0;u_0)=\inf_{\tilde c\leq u_{0,\min}}\lambda_1(\beta,\tilde c;u_0)\,.$$ In the case
$L > {2\pi\over \sqrt{-\inf_{\tilde c\leq u_{0,\min}}\lambda_1(\beta,\tilde c;u_0)}}$, we have
$$\inf_{\tilde c\leq u_{0,\min}}\lambda_1(\beta,\tilde c;u_0)< -\left({2\pi\over L}\right)^2<0\,.$$
It then follows again from Lemma \ref{c-tou0min-limit-lem} that there exists $c_L\in(-\infty, c_0)$ such that $\lambda_1(\beta,c_L;u_0)=-\left({2\pi\over L}\right)^2$.
Thus $c_{L}$ is an isolated real eigenvalue of the operator $\mathcal{R}_{{2\pi\over L},\beta, u_0}$.
Since $u_0\in C^2([-d,d])$ lacks the regularity required for the bifurcation, by an argument similar to the first case of $(2b_2)$ we can perturb $(u_0,0)$ to a nearby shear flow $(\tilde u_0,0)$ with $C^4$ regularity, and then construct a genuine traveling-wave solution by bifurcating at $(\tilde u_0,0)$.

In the remaining case $L={2\pi\over \sqrt{-\inf_{\tilde c\leq u_{0,\min}}\lambda_1(\beta,\tilde c;u_0)}}$,
we have $-\left({2\pi\over L}\right)^2=\lambda_1(\beta,c_0;u_0)$. Let $a\in(0,1)$.  Then
$\lambda_1(\beta,c;au_0)=\lambda_1\left({\beta\over a},{c\over a};u_0\right)$ for $c\leq a u_{0,\min}$ and
$$-\left({2\pi\over L}\right)^2=\lambda_1(\beta,c_0;u_0)>\lambda_1\left({\beta\over a},c_0;u_0\right)=\lambda_1(\beta, ac_0;au_0)$$
due to \eqref{property-lambda-1}. This, along with Lemma \ref{c-tou0min-limit-lem}, implies that there exists $c_{L,a}<c_0$ such that
$$\lambda_1(\beta,ac_{L,a};au_0)=\lambda_1\left({\beta\over a},c_{L,a};u_0\right)=-\left({2\pi\over L}\right)^2\,.$$
Since $ac_{L,a}<ac_0\leq au_{0,\min}$, $ac_{L,a}$ is an isolated real eigenvalue of $\mathcal{R}_{{2\pi\over L},\beta, au_0}$. With these preparations, arguing as in the second case of $(2b_2)$, we can first perturb $(u_0,0)$ to the scaled shear flow $(au_0,0)$, then further perturb $(au_0,0)$ to $(\hat u_0,0)$ with $C^4$ regularity, to finally generate a  genuine traveling-wave solution by bifurcating from $(\hat u_0,0)$.

Let $\beta=0$. If $0\notin Ran(u_0'')$, then by taking $\varepsilon_0>0$ small enough, $\Delta u\neq0$ on $D_L$. By Theorem \ref{thm-generalization1} (ii), the same conclusion as in Theorem \ref{rigidity-near-monotone-shear-flow-arbitrary-wave-speed-thm} (i) holds. If $0\in Ran(u_0'')$,
then $0\in Ran(\Delta u)$ is possible and by Theorem \ref{classification-of-wave-speed-for-a-genuinely-travelling-wave-beta-plane}  the same conclusion as in Theorem \ref{rigidity-near-monotone-shear-flow-arbitrary-wave-speed-thm} (iii) holds.
\end{proof}

\begin{Example}
As an illustration of the physical relevance of Theorem \ref{rigidity-near-monotone-shear-flow-arbitrary-wave-speed-thm}, we consider Case 3 of Remark \ref{rigidity-near-monotone-shear-flow-arbitrary-wave-speed-thm-rem}. The data provided in Fig.\,12.5 of \cite{sa} shows the presence of steady waves (with zero speed) in the zonal band between 72$^\circ$N and 78$^\circ$N on Saturn, with an underlying shear flow $u_0(y)$ that increases poleward and has a strictly concave shape. For this zonal band we have $\beta>0>(u_0'')_{\max}$, and Theorem \ref{rigidity-near-monotone-shear-flow-arbitrary-wave-speed-thm} ensures that only  waves with sufficiently large wavelengths can arise as perturbations of this shear flow. The wavelength of the observed waves, which form Saturn's hexagon, is indeed about 14500 km (see \cite{cj2}).
\end{Example}

Applying Theorem \ref{rigidity-near-monotone-shear-flow-arbitrary-wave-speed-thm} to the Couette flow (i.e. $u_0(y)=y$, $y\in[-1,1]$) and using the fact that $\lambda_{1}^{-1}(0,-1)>0$ and $Ran(u_0'')=\{0\}$,
we get the following result.

\begin{Corollary}\label{8181}
Consider the Couette flow $u_0(y)=y$, $y\in[-1,1]$.
\begin{itemize}
\item[(i)] Let $\beta\in(0,\lambda_{1}^{-1}(0,-1)]$ and $L>0$ be arbitrary. Then the rigidity of traveling waves
with $c\in\mathbb{R}$ holds near the Couette flow in $C^2$, as described in Theorem \ref{rigidity-near-monotone-shear-flow-arbitrary-wave-speed-thm} {\rm(i)}.

\item[(ii)] Let $\beta\in(\lambda_{1}^{-1}(0,-1),\infty)$.

\begin{itemize}
\item[{$\rm(ii_{a})$}] If $L\in\left(0,\frac{2\pi}{\sqrt{-\lambda_{1}(\beta,-1)}}\right)$, then the same conclusion
as in Corollary \ref{8181} {\rm(i)} holds.

\item[{$\rm(ii_{b})$}] If $L\in\left[\frac{2\pi}{\sqrt{-\lambda_{1}(\beta,-1)}},\infty\right)$, then
genuine traveling waves exist near the Couette flow in $C^2$, as described in Theorem \ref{rigidity-near-monotone-shear-flow-arbitrary-wave-speed-thm} {$\rm(ii)$}.
\end{itemize}
\end{itemize}
\end{Corollary}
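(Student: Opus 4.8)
The plan is to obtain the Corollary as a direct specialization of Theorem \ref{rigidity-near-monotone-shear-flow-arbitrary-wave-speed-thm}. First I would record the elementary facts about the Couette profile $u_0(y)=y$ on $[-1,1]$: it lies in $C^2([-1,1])$, it satisfies $u_0'\equiv 1\neq 0$ on $[-1,1]$ (so the standing hypothesis of Theorem \ref{rigidity-near-monotone-shear-flow-arbitrary-wave-speed-thm} holds), and $u_0''\equiv 0$, whence $Ran(u_0'')=\{0\}$ and $u_{0,\min}=u_0(-1)=-1$. Since $\beta>0$ throughout the statement, we always have $\beta\notin Ran(u_0'')$, so the branch ``$\beta\notin Ran(u_0'')$'' of Theorem \ref{rigidity-near-monotone-shear-flow-arbitrary-wave-speed-thm}, i.e. its parts (i)--(ii), is the relevant one; in the terminology of Remark \ref{rigidity-near-monotone-shear-flow-arbitrary-wave-speed-thm-rem} this is Case 2 (with $(u_0'')_{\max}=0\in Ran(u_0'')$ and the subcase $\lambda_1((u_0'')_{\max},u_{0,\min})=\lambda_1(0,-1)>0$), illustrated in Fig.\,\ref{rigidity-existence-tw}(a).

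Next I would pin down the threshold wavenumber. By Lemma \ref{non-negativeness-beta0}, $\lambda_1(0,u_{0,\min})=\lambda_1(0,-1)>0$, so the root $\lambda_1^{-1}(0,-1)$ of $\lambda_1(\cdot,-1)=0$ -- which exists and is unique by \eqref{property-lambda-1} together with Lemma \ref{lambda1continuous} -- is strictly positive. The monotonicity in \eqref{property-lambda-1} then yields $\lambda_1(\beta,-1)\geq 0$ for $\beta\in(0,\lambda_1^{-1}(0,-1)]$ and $\lambda_1(\beta,-1)<0$ for $\beta>\lambda_1^{-1}(0,-1)$. Consequently, for $\beta\in(0,\lambda_1^{-1}(0,-1)]$ we have $\max\{-\lambda_1(\beta,-1),0\}=0$, so the interval in Theorem \ref{rigidity-near-monotone-shear-flow-arbitrary-wave-speed-thm}(i) is all of $(0,\infty)$ and we obtain rigidity for every $L>0$ and every $c\in\mathbb{R}$; this is part (i) of the Corollary. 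For $\beta>\lambda_1^{-1}(0,-1)$ the threshold equals $2\pi/\sqrt{-\lambda_1(\beta,-1)}$, and applying Theorem \ref{rigidity-near-monotone-shear-flow-arbitrary-wave-speed-thm}(i) for $L\in(0,2\pi/\sqrt{-\lambda_1(\beta,-1)})$ gives part $(\mathrm{ii}_{a})$, while Theorem \ref{rigidity-near-monotone-shear-flow-arbitrary-wave-speed-thm}(ii) for $L\in[2\pi/\sqrt{-\lambda_1(\beta,-1)},\infty)$ gives part $(\mathrm{ii}_{b})$.

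The argument is therefore pure bookkeeping, with no genuine obstacle. The only two points needing a moment's care are: (a) confirming that $\beta>0$ places us in the $\beta\notin Ran(u_0'')$ branch of Theorem \ref{rigidity-near-monotone-shear-flow-arbitrary-wave-speed-thm} rather than the $\beta\in Ran(u_0'')$ branch, which is immediate from $Ran(u_0'')=\{0\}$; and (b) tracking the convention $2\pi/\sqrt{0}=+\infty$, so that the wavelength threshold degenerates precisely on the range $\beta\leq\lambda_1^{-1}(0,-1)$ where $\lambda_1(\beta,-1)\geq 0$ by Lemma \ref{non-negativeness-beta0} and \eqref{property-lambda-1}. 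Everything else is a verbatim transcription of Theorem \ref{rigidity-near-monotone-shear-flow-arbitrary-wave-speed-thm}.
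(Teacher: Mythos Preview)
Your proposal is correct and follows essentially the same approach as the paper, which simply states that the corollary follows by applying Theorem \ref{rigidity-near-monotone-shear-flow-arbitrary-wave-speed-thm} to the Couette flow together with the facts $\lambda_1^{-1}(0,-1)>0$ and $Ran(u_0'')=\{0\}$. Your write-up just fills in the bookkeeping details (identifying the relevant branch of the theorem, invoking Lemma \ref{non-negativeness-beta0} and \eqref{property-lambda-1} to locate the threshold, and unpacking the convention $2\pi/\sqrt{0}=+\infty$) that the paper leaves implicit.
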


\begin{Remark}
In the  $\beta$-plane setting, compared to Theorem 1.3 in \cite{WZZ}, Corollary \ref{8181} improves the regularity requirement, reducing
it from $H^{\geq5}$ to $C^2$. In the  $f$-plane setting, we mention that the rigidity of traveling waves
with $c\in\mathbb{R}$ holds for $L>0$ near the Couette flow in $H^{>{5\over2}}$ \cite{LZ}.
\end{Remark}

\begin{Remark}\label{numerical-computation-transtional-beta-value}
The cases 1-3 in Remark \ref{rigidity-near-monotone-shear-flow-arbitrary-wave-speed-thm-rem} show that when $\lambda_1((u_0'')_{\max},u_{0,\min})>0$, then the value $\lambda_1^{-1}(0,u_{0,\min})$
serves as the transitional $\beta$-parameter for the onset of genuine traveling waves near the monotone shear flow $(u_0,0)$ in $C^2$.
More precisely, if $\beta\leq\lambda_1^{-1}(0,u_{0,\min})$,
the rigidity of  nearby  traveling waves  holds for all wave speeds  and for all zonal periods,  while if $\beta>\lambda_1^{-1}(0,u_{0,\min})$, there exist genuine nearby traveling waves for some large wavelengths.

We use Matlab to numerically compute the transitional $\beta$-parameter $\lambda_1^{-1}(0,u_{0,\min})$  for monotone shear flows with either linear profiles or concave parabolic profiles.
These computations illustrate how the transitional
$\beta$-value varies as the zonal-band width increases or as the background shear intensifies.
To emphasize the dependence on both the shear profile and the zonal-band width, we rewrite the transitional parameter as $\beta_{\rm{crit}}(u_0, d)=\lambda_1^{-1}(0,u_{0,\min})$.
\bigskip

\noindent
{\bf{Linearly sheared profiles:}}
We first compute the transitional $\beta$-parameter for the Couette flow $u_0(y)=y$ confined to the band $[-1,1]$.  The corresponding singular
Rayleigh-Kuo  boundary value problem \eqref{sturm-Liouville} is
\begin{align}\label{sturm-Liouville-Couette}
-\phi''-{\beta\over y+1}\phi=\lambda\phi, \;\;\;\;\phi(\pm 1)=0.
\end{align}
By numerical computation, we have $\lambda_1(1.8352,-1;y, 1)\approx0$ and thus \begin{align*}
\beta_{\rm{crit}}(y, 1)\approx1.8352.\end{align*}

For a general linearly sheared flow $u_0(y)=ay+b$ with $\pm a>0$ confined to a band $[-d,d]$, the corresponding singular
Rayleigh-Kuo  boundary value problem \eqref{sturm-Liouville} is
\begin{align}\label{sturm-Liouville-linear}
-\phi''-{\beta\over ay\pm ad}\phi=\lambda\phi, \;\;\;\;\phi(\pm d)=0.
\end{align}
By performing the scaling $\phi(y)=\varphi(z)$ with $y=\pm dz$, \eqref{sturm-Liouville-linear} is transformed to
\begin{align}\label{sturm-Liouville-linear-tran}
-\varphi''-{\pm{d\over a}\beta\over z+1}\varphi=d^2\lambda\varphi, \;\;\;\;\varphi(\pm 1)=0.
\end{align}
By comparing \eqref{sturm-Liouville-linear-tran} with \eqref{sturm-Liouville-Couette}, we obtain
\begin{align}\label{crit-beta-linear}
\beta_{\rm{crit}}(ay+b, d)=\pm{a\over d}\beta_{\rm{crit}}(y, 1)\approx\pm{a\over d}\cdot 1.8352\quad {\text for} \quad \pm a>0.
\end{align}
That is, the transitional $\beta$-parameter for the  linearly sheared flow $u_0(y)=ay+b$ $(\pm a>0)$ on $[-d,d]$ is  a scaled version of that for the Couette flow on $[-1,1]$. The relation \eqref{crit-beta-linear} explains the following trends for  linear profiles:
\begin{itemize}
\item[(i)] within a fixed zonal band, the stronger the shear effect is, the larger is the range of $\beta$ for which the rigidity of traveling waves persists  for all wave speeds  and for all zonal periods;
    \item[(ii)] for a fixed linearly sheared profile, the wider the zonal band is, the larger is the admissible range of $\beta$ that allows the existence of genuine traveling waves.
\end{itemize}

Note that for a general linearly sheared flow $u_0(y)=ay+b$ ($a\neq0$) we have $Ran(u_0'')=\{0\}$ and $\lambda_1((u_0'')_{\max},u_{0,\min})=\lambda_1(0,u_{0,\min})>0$ by Lemma \ref{non-negativeness-beta0}. Consequently, the first subcase of Case 2 in Remark \ref{rigidity-near-monotone-shear-flow-arbitrary-wave-speed-thm-rem} always holds.
As $\beta$ exceeds the transitional value, a boundary curve emerges in the $(\beta,L)$-plane, given by
${2\pi\over L}=\sqrt{-\lambda_1(\beta,u_{0,\min})}$, as depicted in Fig.\,\ref{rigidity-existence-tw} (a).
 This curve marks the critical wavelength for any fixed $\beta$ that exceeds the transitional value:
  absence (existence) of genuine traveling waves holds for smaller (larger) wavelength, near $(u_0,0)$ in $C^2$.
 For a specific linearly sheared profile $u_0(y)=2y+3$ confined to the zonal band $[-d,d]$, we numerically compute the critical wavelength  ${2\pi\over L}=\sqrt{-\lambda_1(\beta,u_{0,\min})}, \beta>\beta_{\rm{crit}}(u_0, d)$ and simulate the boundary curves for $d=1, 2, 3$. The resulting computations and observations are summarized in Fig.\,\ref{boundary-curve-linear}.
 \begin{figure}[h]
    \centering
	\includegraphics[scale = 0.27]{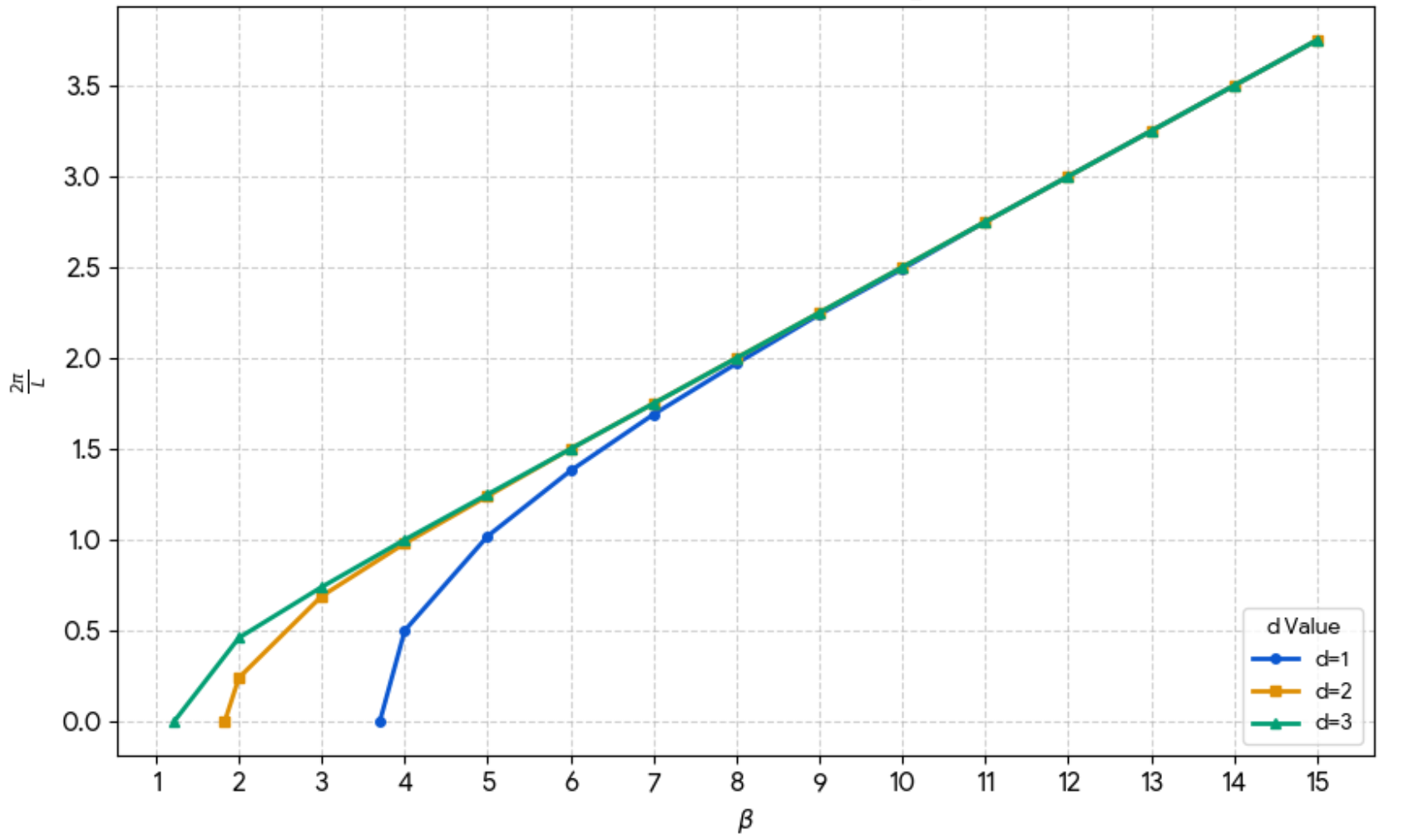}
	\caption{{\footnotesize For values of
$\beta$ slightly above the transitional threshold, the critical wavelength depends sensitively on the zonal-band width, leading to noticeable variation among different widths. As
 $\beta$ increases further, these critical wavelengths become gradually close to one another.}}
	\label{boundary-curve-linear}
\end{figure}
\bigskip

\noindent
{\bf{Concave parabolic profiles:}}
For the concave parabolic profiles $u_0(y)=-y^2+by+e$ confined to the zonal band $[-d,d]$, we impose the condition $d<{b\over 2}$ to ensure that $u_0$ is  monotone  on this band.
The corresponding
Rayleigh-Kuo  boundary value problem \eqref{sturm-Liouville} is
\begin{align}\label{sturm-Liouville-Concave-parabolic}
-\phi''+{\beta+2\over (y+d)(y-d-b)}\phi=\lambda\phi, \;\;\;\;\phi(\pm d)=0.
\end{align}
Note that in a fixed zonal band, increasing
$b$ displaces the parabola horizontally to the right while maintaining its shape, resulting in a stronger shear effect. Numerical computation of the eigenvalues of \eqref{sturm-Liouville-Concave-parabolic}
 yields the transitional $\beta$-values listed in Table  \ref{numerical-computation-concave-parabolic}, which exhibit trends (i)-(ii) comparable to those observed for the linear profiles.

\begin{table}[ht]
  \centering
\begin{tabular}{|c|c|c|c|}
\hline
\diagbox{$b$}{$d$} & $1$  & $2$ & $3 $ \\
\hline
 $7$& $13.2496$ & $6.7922$& $4.6236$\\
\hline
 $8$& $15.0898$ & $7.7176$ &$5.2450$\\
\hline
$9$ &
$16.9289$
 & $8.6416$ &$5.8648$\\
\hline
\end{tabular}
\vspace{0.2cm}
\caption{Numerical computation of  $\beta_{crit}(-y^2+by+e,d)$.}
  \label{numerical-computation-concave-parabolic}
\end{table}
\end{Remark}
\begin{Example}
We use the numerical computation of the transitional $\beta$-value in Remark \ref{numerical-computation-transtional-beta-value} to explain the lack of waves in Jupiter's zonal band between 37$^\circ$S and 39$^\circ$S that was noted in \cite{or}. Indeed, the profile $u_0$ of the zonal flow in this band is linearly sheared, featuring an eastward jet with speed 45 $m/s$ along the northern boundary and a westward jet with speed 5 $m/s$ along the southern boundary (see Fig.\,4 in \cite{Hueso2023} for accurate data). By \eqref{def-f0-beta},
a bandwidth of two degrees of latitude  corresponds to $2d={\pi\over 90}\approx 0.035$.
Using the values listed in Table \ref{v-par-JS} together with the dimensionless velocity expression (2.6) in \cite{csz2024a}, we obtain the boundary values $u_0(-d)=-{1\over30}$ and $u_0(d)={3\over10}$, yielding the linear profile $u_0(y)=ay+b$ with $a={1\over 6d}$ and $b={2\over 15}$, which describes the observed flow pattern.  By   \eqref{def-f0-beta} and \eqref{crit-beta-linear} we have
\begin{align*}
\beta={2\times 1.76\times 10^{-4}\times 69911\times 10^3\over 150} \cos(38^\circ)\approx 129<
\beta_{\rm{crit}}(ay+b, d)
\approx1004.
\end{align*}
\end{Example}
Under the Rayleigh stability condition, $C^2$ shear flows (not necessarily monotone) turn out to be nonlinearly Lyapunov stable (see Theorem \ref{nonlinear-Lyapunov-stable-not-asymptotically-stable-monotone-shear-flow} below). However, due to the existence of  genuine nearby traveling waves, asymptotic stability (i.e. nonlinear inviscid damping) fails near a monotone shear flow $(u_0(y),0)$ for $(\beta,L)\in\bigg(\max\bigg\{(u_0'')_{\max},$ $ \lambda_1^{-1}(0,u_{0,\min})\bigg\},\infty\bigg)\times\left[{2\pi\over \sqrt{-\lambda_1(\beta,u_{0,\min})}},\infty\right)$. This means that in such case,  the long-time dynamics near the monotone shear flow is considerably richer.
The rigidity near the monotone shear flow suggests that asymptotic stability may potentially be proved in certain function spaces for other values of $(\beta,L)$. In the  $f$-plane setting, it is worth noting that nonlinear inviscid damping in certain Gevrey spaces has been proved near a class of monotone shear flows in \cite{Ionescu-Jia,MZ}, where the Rayleigh stability condition does not hold since the shear flow $u_0$ satisfies  $u_0''=0$  near the boundary and thus $0=\beta\in Ran(u_0'')$.

\begin{Theorem}\label{nonlinear-Lyapunov-stable-not-asymptotically-stable-monotone-shear-flow}
Assume that $u_0\in C^2([-d,d])$, $u_0'\neq0$ on $[-d,d]$,
and $(\beta,L)\in(\max\{(u_0'')_{\max}, $ $ \lambda_1^{-1}(0,u_{0,\min})\},\infty)\times\left[{2\pi\over \sqrt{-\lambda_1(\beta,u_{0,\min})}},\infty\right)$.
Then $(u_0, 0)$ is nonlinearly Lyapunov stable in (vorticity) $L^2$, but nonlinear inviscid damping fails near $(u_0, 0)$ in  $C^2$.
\end{Theorem}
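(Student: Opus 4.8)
The plan is to prove the two assertions by separate arguments. The failure of nonlinear inviscid damping is read off directly from the existence part of Theorem~\ref{rigidity-near-monotone-shear-flow-arbitrary-wave-speed-thm}, while the Lyapunov stability is a classical energy--Casimir estimate exploiting the Rayleigh--Kuo sign condition $\beta-u_0''>0$, which holds here because $\beta>(u_0'')_{\max}$.

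\emph{Failure of inviscid damping.} Under the hypotheses we have $\beta\notin Ran(u_0'')$ (since $\beta>(u_0'')_{\max}$) and $\lambda_1(\beta,u_{0,\min})<0$ (since $\beta>\lambda_1^{-1}(0,u_{0,\min})$ and $\lambda_1(\cdot,u_{0,\min})$ is decreasing by \eqref{property-lambda-1}); combined with $L\geq 2\pi/\sqrt{-\lambda_1(\beta,u_{0,\min})}$, this places $(\beta,L)$ in the regime of Theorem~\ref{rigidity-near-monotone-shear-flow-arbitrary-wave-speed-thm}(ii). Hence for every $\varepsilon>0$ there is a genuine traveling-wave solution $(u_\varepsilon(x-c_\varepsilon t,y),v_\varepsilon(x-c_\varepsilon t,y))\in C^2(D_L)$ with $\|(u_\varepsilon,v_\varepsilon)-(u_0,0)\|_{C^2(D_L)}<\varepsilon$ and $v_\varepsilon\not\equiv0$. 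Taking it as initial datum, the (unique, by well-posedness for bounded vorticity) solution stays equal to the rigid translate $(u_\varepsilon(x-c_\varepsilon t,y),v_\varepsilon(x-c_\varepsilon t,y))$ for all $t$, so it remains inside the $\varepsilon$-ball around $(u_0,0)$ in $C^2$, while $\|v(t)\|_{L^2(D_L)}=\|v_\varepsilon\|_{L^2(D_L)}>0$ is constant in $t$ and, since the wave merely drifts around $\mathbb{T}_L$, $v(t)$ tends to $0$ neither strongly nor weakly as $t\to+\infty$. As inviscid damping near $(u_0,0)$ in $C^2$ would require a fixed $C^2$-neighbourhood all of whose solutions damp, and $\varepsilon>0$ is arbitrary, inviscid damping fails.

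\emph{Lyapunov stability.} I would use that the dynamics \eqref{tvore} conserves the kinetic energy $E[\psi]=\tfrac12\int_{D_L}|\nabla\psi|^2\,dxdy$, every Casimir $\mathcal C_f[\Gamma]=\int_{D_L}f(\Gamma)\,dxdy$ (with $\Gamma=\Delta\psi+\beta y$ materially transported), and the impulse $\mathcal I[\Gamma]=\int_{D_L}y\Gamma\,dxdy$ (conserved since $\int_{D_L}\gamma v\,dxdy=\int_{D_L}yv\,dxdy=0$). Because $u_0'\neq0$ and $\Gamma_0'=\beta-u_0''\geq\beta-(u_0'')_{\max}>0$, the map $y\mapsto\Gamma_0(y)=\beta y-u_0'(y)$ is a $C^1$-diffeomorphism onto its range, so there exist $G\in C^2(Ran(\Gamma_0))$ and constants $\mu,k$ for which $\Gamma_0$ is a critical point of $\mathcal A:=E+\mathcal C_G+\mu\,\mathcal I$ (modulo the usual boundary constraints), with $G''(\Gamma_0(y))=-(u_0(y)+\mu)/(\beta-u_0''(y))$; choosing $\mu\leq-u_{0,\max}$ makes $G''\geq0$ on $Ran(\Gamma_0)$, and $G$ is extended convexly to $\mathbb{R}$. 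The Taylor expansion $\mathcal A[\Gamma]-\mathcal A[\Gamma_0]=\tfrac12\|\nabla\delta\psi\|_{L^2}^2+\int_{D_L}\big(G(\Gamma)-G(\Gamma_0)-G'(\Gamma_0)\delta\Gamma\big)\,dxdy$, where $\delta\psi=\psi-\psi_0$ and $\delta\Gamma=\Gamma-\Gamma_0$ and where the linear term cancels by criticality (its residual constant multiple of $\int_{D_L}\delta\Gamma$ drops out because $\int_{D_L}\Gamma$ is itself conserved), together with convexity gives $\mathcal A[\Gamma]-\mathcal A[\Gamma_0]\geq\tfrac12\|\nabla\delta\psi\|_{L^2}^2$. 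Conservation of $\mathcal A$ along the flow, the elliptic bound $\|\nabla\delta\psi\|_{L^2}\lesssim\|\delta\Gamma\|_{L^2}$, and conservation of the wall circulations $\int_{\mathbb{T}_L}u(x,\pm d)\,dx$ (used to control the boundary data of $\delta\psi$) then yield $\|(\delta u,\delta v)(t)\|_{L^2}\lesssim\|\delta\Gamma(0)\|_{L^2}$ for all $t$, i.e.\ Lyapunov stability in velocity $L^2$. To reach vorticity $L^2$ I would bring in conservation of the enstrophy $\|\Gamma(t)\|_{L^2}=\|\Gamma(0)\|_{L^2}$ and expand $\|\delta\Gamma(t)\|_{L^2}^2=\big(\|\Gamma(0)\|_{L^2}^2-\|\Gamma_0\|_{L^2}^2\big)-2\langle\delta\Gamma(t),\Gamma_0\rangle_{L^2}$, estimating $\langle\delta\Gamma(t),\Gamma_0\rangle=\int_{D_L}\delta u(t)\,(\beta-u_0'')\,dxdy+(\text{boundary traces})$ by the already-established velocity bound together with trace inequalities.

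\emph{Main obstacle.} The delicate step is this last upgrade. Since $u_0$ is only $C^2$, one cannot integrate $\langle\Delta\delta\psi,\Gamma_0\rangle$ by parts twice, so the boundary-trace contributions must be controlled carefully; moreover the enstrophy gap $\|\Gamma(0)\|_{L^2}^2-\|\Gamma_0\|_{L^2}^2$ is only first order in $\|\delta\Gamma(0)\|_{L^2}$, which forces a square-root-type modulus of continuity for the vorticity-$L^2$ bound --- adequate for Lyapunov stability, which tolerates any modulus of continuity. (The auxiliary term $\mu\,\mathcal I$ is indispensable precisely when $u_0$ changes sign, since then $-u_0/(\beta-u_0'')$ alone need not be of a fixed sign.)
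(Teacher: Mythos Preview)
Your treatment of the failure of inviscid damping is exactly what the paper does: invoke Theorem~\ref{rigidity-near-monotone-shear-flow-arbitrary-wave-speed-thm}(ii) to produce genuine nearby traveling waves and observe that they obstruct damping.

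For Lyapunov stability your argument is correct in outline but takes an unnecessary detour compared with the paper. The paper builds the Arnold functional with \emph{negative} kinetic energy,
\[
H(\gamma)=\int_{D_L}\Big(-\tfrac12|\vec u|^2-F(\gamma+\beta y)+cy(\gamma+\beta y)\Big)\,dxdy,
\]
chooses a free parameter $c<u_{0,\min}$, and uses that $-F''=-f'=(u_0-c)/(\beta-u_0'')$ is then uniformly bounded \emph{below and above} by positive constants. The second-order expansion $H(\gamma)-H(\gamma_0)=I+II$ has $II\in[c_1\|\gamma-\gamma_0\|_{L^2}^2,\,c_2\|\gamma-\gamma_0\|_{L^2}^2]$ and $I=-\tfrac12\|\nabla\delta\psi\|_{L^2}^2\geq -\tfrac12C_0^2\|\gamma-\gamma_0\|_{L^2}^2$; by pushing $c$ sufficiently negative one forces $c_1>\tfrac12C_0^2$ and obtains the clean linear bound $\|\gamma(t)-\gamma_0\|_{L^2}^2\leq C\|\gamma(0)-\gamma_0\|_{L^2}^2$ in one stroke. (Circulation matching is handled exactly as you indicate.)

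Your Arnold-I functional $\mathcal A=E+\mathcal C_G+\mu\,\mathcal I$ is an equally valid starting point, but by only recording $G''\geq 0$ you throw away the term that would give direct vorticity control. If you take $\mu<-u_{0,\max}$ \emph{strictly}, then $G''(\Gamma_0)=(-u_0-\mu)/(\beta-u_0'')\geq c_1>0$ uniformly, and after a uniformly convex extension the Casimir remainder alone already yields $\mathcal A[\Gamma]-\mathcal A[\Gamma_0]\geq c_1\|\delta\Gamma\|_{L^2}^2$, giving the same linear modulus as the paper without any enstrophy upgrade, boundary-trace estimates, or square-root loss. Your route via $\|\nabla\delta\psi\|_{L^2}$ followed by the enstrophy identity does work for Lyapunov stability (the square-root modulus suffices), but it is strictly more laborious and delivers a weaker quantitative bound than the paper's one-step argument.
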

\begin{proof}
First, we prove that $(u_0, 0)$ is nonlinearly Lyapunov stable in (vorticity) $L^2$ as long as $\beta\notin Ran(u_0'')$. The proof of  nonlinear Lyapunov stability is valid for general $C^2$ shear flows, not necessarily the monotone ones. We use the standard method introduced by Arnold \cite{Arnold1965,Arnold1969}. Without loss of generality, we assume that $\beta>(u_0'')_{\max}$.
Let $c<u_{0,\min}$. We define $f_0=(\psi_0+cy)\circ(\gamma_0+\beta y)^{-1}$ on $Ran(\gamma_0+\beta y)$, where $\psi_0$ and $\gamma_0$ are the stream function and vorticity of $(u_0,0)$. Then $f_0(\gamma_0+\beta y)=\psi_0+cy$ and $-f_0'(\gamma_0+\beta y)$ has uniform upper and lower positive bounds, that is,
\begin{align*}
0<{u_{0,\min}-c\over \beta-(u_0'')_{\min}}\leq-f_0'(\gamma_0+\beta y)={u_0-c\over \beta-u_0''}\leq{u_{0,\max}-c\over \beta-(u_0'')_{\max}}.
\end{align*}
We now extend $f_0$ to $f\in C^1(\mathbb{R})$ such that $f=f_0$ on $Ran(\gamma_0+\beta y)$ and
\begin{align}\label{f-derivative-bound}
0<{1\over2}{u_{0,\min}-c\over \beta-(u_0'')_{\min}}\leq-f'\leq{2(u_{0,\max}-c)\over \beta-(u_0'')_{\max}}.
\end{align}
We choose $F$ such that $F'=f$. Define the energy-Casimir-momentum functional by
\begin{align*}
H(\gamma)=\int_{D_L}\left(-{1\over2}|\vec{u}|^2-F(\gamma+\beta y)+cy(\gamma+\beta y)\right) dxdy
\end{align*}
along a perturbed solution $\vec{u}=(u,v)$ of the $\beta$-plane equation.
Here, $\gamma=\partial_x v-\partial_y u=\Delta\psi$ and the boundary conditions are $\psi|_{y=\pm d}= B_{\pm}$ (with constants only  depending on time). Without loss of generality, we can take the circulation (which is conserved) of the perturbed solution $\psi$ satisfying
\begin{align}\label{circulation}
\int_{\mathbb{T}_L}\partial_y\psi(x,\pm d)dx=\int_{\mathbb{T}_L}\psi_0'(\pm d)dx=-Lu_0(\pm d)
\end{align}
(i.e. the circulation of the perturbed solution is the same as that of the background shear flow).
Indeed, if the circulation of the perturbed solution is different from that of the shear flow, then we can first modify the shear flow to a nearby shear flow such that the circulation of the modified shear flow is the same as that of the perturbed solution.
Then, by an argument analogous to the one used below for nonlinear stability, we can show that the perturbed solution remains
$L^2$-close (in vorticity) to the modified shear flow for all $t>0$. Since the modified shear flow is $L^2$-close (in vorticity) to the background shear flow, we conclude that the perturbed solution remains
$L^2$-close (in vorticity) to the background shear flow for all times $t>0$.

Note that the functional
$H(\gamma)$ is conserved
along the solution $\vec{u}=(u,v)$.
In fact, by  \eqref{Euler equation} we have
\begin{align*}
{d\over dt}{1\over2}\int_{D_L}|\vec u|^2dxdy=&-\int_{D_L}\vec u\cdot((\vec{u}\cdot\nabla)\vec{u}+\nabla P+\beta yJ\vec
{u})dxdy\\
=&-\int_{D_L}\left(\vec u\cdot\nabla\left({|\vec{u}|^2\over2}\right)+\vec u\cdot\nabla P+\beta y\vec u\cdot J\vec
{u}\right)dxdy\\
=&\int_{D_L}\left((\nabla\cdot\vec u){|\vec{u}|^2\over2}+(\nabla\cdot\vec u) P\right)dxdy=0,
\end{align*}
where we have used $\nabla\cdot\vec u=0$ and $\vec u\cdot J\vec
{u}=0$. Thus, the kinetic energy ${1\over2}\int_{D_L}|\vec u|^2dxdy$ is conserved along the solution $\vec{u}$.

Let $(X(s;x,y), Y(s;x,y))$ be the particle trajectory along the velocity $\vec u$, that is, $(X(s;x,y),$ $ Y(s;x,y))$ solves the equation
\begin{align*}
\begin{cases}
\dot{X}(s)=u(X(s), Y(s)),\\
\dot{Y}(s)=v(X(s), Y(s)),
\end{cases}
\end{align*}
with the initial data $X(0)=x,Y(0)=y$. Note that \eqref{tvore} expresses the conservation of total vorticity along a fluid trajectory and the flow map $(X, Y)$ is area preserving, that is,
\begin{align*}
{d\over dt}(\gamma(t,X(t),Y(t))+\beta Y(t))=0\quad\text{and} \quad {\partial{(X,Y)}\over \partial{(x,y)}}=1.
\end{align*}
Then
\begin{align*}
&{d\over dt}\int_{D_L}F(\gamma+\beta y)dxdy\\
=&\int_{D_L}F'(\gamma(t,X(t),Y(t))+\beta Y(t)){d\over dt}(\gamma(t,X(t),Y(t))+\beta Y(t))dXdY=0\,,
\end{align*}
showing that
the Casimir functional
$\int_{D_L}F(\gamma+\beta y) dxdy$
is conserved along the solution $\vec{u}$.

By \eqref{vore} we have
\begin{align}\label{4231}
&{d\over dt}\int_{D_L}y(\gamma+\beta y)dxdy\\
\nonumber=&\int_{D_L}y\partial_{t}(\gamma+\beta y)dxdy\\
\nonumber=&\int_{D_L}-\left(y(\vec{u}\cdot\nabla)\gamma+y\beta\partial_{x}\psi\right) dxdy\\
\nonumber=&\int_{D_L}-y\nabla\cdot(\vec{u}\gamma) dxdy+\int_{-d}^dy\beta\psi\bigg|_{x=0}^{L}dy\\
\nonumber=&\int_{D_L}-y\partial_{y}(v\gamma)dxdy,
\end{align}
where we have used the fact that both $\gamma$ and $\psi$ are periodic in $x$.
Then \eqref{4231} becomes
\begin{align*}
&{d\over dt}\int_{D_L}y(\gamma+\beta y)dxdy=-\int_{\mathbb{T}_L}y(v\gamma)\bigg|_{y=-d}^{d}dx+\int_{\mathbb{T}_L}\int_{-d}^dv\gamma dydx
\\=&\int_{D_L}\partial_{x}\psi(\partial_{xx}\psi+\partial_{yy}\psi)dydx\\
=&\int_{D_L}\frac{1}{2}\partial_{x}(\partial_{x}\psi)^2dydx
-\int_{D_L}\frac{1}{2}\partial_{x}(\partial_{y}\psi)^2dydx=0,
\end{align*}
where we have used  $v(x,d)=v(x,-d)=0$. Thus, the momentum functional
$\int_{D_L}cy(\gamma+\beta y) dxdy
$
is conserved along the solution $\vec{u}$. This proves that
$H(\gamma)$
is conserved along the solution $\vec{u}$.

 Moreover, $H'(\gamma_0)=\psi_0+cy-f(\gamma_0+\beta y)=0$. By $\psi|_{y=\pm d}= B_{\pm}$ and \eqref{circulation}, we have
 \begin{align*}
 &\int_{D_L}\left(-{1\over2}|\vec{u}|^2+{1\over2}|\vec{u}_0|^2\right)dxdy\\
 =&{1\over2}\int_{D_L}\left(\psi\gamma-\psi_0\gamma_0\right)dxdy-{1\over2}\left(\psi|_{y= d}\int_{\mathbb{T}_L}\psi_y(x,d)
 dx-\psi|_{y=-d}\int_{\mathbb{T}_L}\psi_y(x,-d)dx\right)\\
 &+{1\over2}\left(\psi_0(d)\int_{\mathbb{T}_L}\psi_{0y}(d)dx
 -\psi_0(-d)\int_{\mathbb{T}_L}\psi_{0y}(-d)dx\right)\\
 =&{1\over2}\int_{D_L}\left(\psi\gamma-\psi_0\gamma_0\right)dxdy
 -{1\over2}\int_{D_L}\psi\gamma_0dxdy+\bigg({1\over2}\int_{D_L}\psi\gamma_0dxdy\\
 &-{1\over2}\left(\psi|_{y= d}\int_{\mathbb{T}_L}\psi_{0y}(d)
 dx-\psi|_{y=-d}\int_{\mathbb{T}_L}\psi_{0y}(-d)dx\right)\\
 &+{1\over2}\left(\psi_0(d)\int_{\mathbb{T}_L}\psi_{y}(x,d)dx
 -\psi_0(-d)\int_{\mathbb{T}_L}\psi_{y}(x,-d)dx\right)\bigg)\\
 =&{1\over2}\int_{D_L}\left(\psi\gamma-\psi_0\gamma_0\right)dxdy
 -{1\over2}\int_{D_L}\psi\gamma_0dxdy+{1\over2}\int_{D_L}\psi_0\gamma dxdy\\
 =&\int_{D_L}\left({1\over2}(\psi-\psi_0)(\gamma-\gamma_0)+
 \psi_0(\gamma-\gamma_0)\right)dxdy
 \end{align*}
 and thus
\begin{align*}
&H(\gamma)-H(\gamma_0)\\
=&\int_{D_L}\left(-{1\over2}|\vec{u}|^2+{1\over2}|\vec{u}_0|^2-(F(\gamma+\beta y)-F(\gamma_0+\beta y))+cy(\gamma-\gamma_0)\right)dxdy\\
=&\int_{D_L}\bigg({1\over2}(\psi-\psi_0)(\gamma-\gamma_0)+(\psi_0+cy)(\gamma-\gamma_0)
-(F(\gamma+\beta y)-F(\gamma_0+\beta y))\bigg)dxdy\\
=&\int_{D_L}-{1\over2}|\nabla(\psi-\psi_0)|^2dxdy\\
&+\int_{D_L}
\bigg(-(F(\gamma+\beta y)-F(\gamma_0+\beta y)-F'(\gamma_0+\beta y)(\gamma-\gamma_0))\bigg)dxdy\\
=&I+II.
\end{align*}
By $-F''=-f'$ and \eqref{f-derivative-bound}, we have
\begin{align*}
{1\over4}{u_{0,\min}-c\over \beta-(u_0'')_{\min}}\|\gamma-\gamma_0\|_{L^2(D_L)}^2\leq II\leq {u_{0,\max}-c\over \beta-(u_0'')_{\max}}\|\gamma-\gamma_0\|_{L^2(D_L)}^2.
\end{align*}
By the Poincar\'{e} inequality, we have $\|\nabla(\psi-\psi_0)\|_{L^2(D_L)}^2=-\int_{D_L}(\psi-\psi_0)(\gamma-\gamma_0)dxdy\leq C_0 \|\nabla(\psi-\psi_0)\|_{L^2(D_L)}\|\gamma-\gamma_0\|_{L^2(D_L)}$, and thus $\|\nabla(\psi-\psi_0)\|_{L^2(D_L)}\leq C_0 \|\gamma-\gamma_0\|_{L^2(D_L)}$. Then
\begin{align*}
I\geq -{1\over2} C_0^2\|\gamma-\gamma_0\|_{L^2(D_L)}^2.
\end{align*}
Choose $c<u_{0,\min}$ such that
\begin{align*}
{u_{0,\min}-c\over 2(\beta-(u_0'')_{\min})}>C_0^2.
\end{align*}
The estimates for $I$ and $II$ yield
\begin{align*}
{u_{0,\max}-c\over \beta-(u_0'')_{\max}}\|\gamma(0)-\gamma_0\|_{L^2(D_L)}^2\geq&
H(\gamma(0))-H(\gamma_0)=H(\gamma(t))-H(\gamma_0)\\
\geq&{1\over2}\left({u_{0,\min}-c\over 2(\beta-(u_0'')_{\min})}- C_0^2\right)\|\gamma(t)-\gamma_0\|_{L^2(D_L)}^2.
\end{align*}
This proves that
\begin{align*}
\|\gamma(t)-\gamma_0\|_{L^2(D_L)}^2\leq
{{2(u_{0,\max}-c)/ (\beta-(u_0'')_{\max})}
\over{(u_{0,\min}-c)/ (2\beta-2(u_0'')_{\min})}- C_0^2}\|\gamma(0)-\gamma_0\|_{L^2(D_L)}^2\,,\qquad t >0\,.
\end{align*}

By Theorem \ref{rigidity-near-monotone-shear-flow-arbitrary-wave-speed-thm}, genuine traveling waves exist near the monotone shear flow $(u_0,0)$ in $C^2$, and thus
 nonlinear inviscid damping near $(u_0,0)$ in  $C^2$ fails.
\end{proof}

\section{Applications to the Couette-Poiseuille flow, to the Bickley jet and to the Kolmogorov flow}

In this section, building on the results of Section 3, we investigate the rigidity of traveling waves  near non-monotone shear flows which include the Couette-Poiseuille flow and the Bickley jet in a bounded channel. Moreover, we study
the rigidity of traveling waves with a fixed wave speed near a class of shear flows and describe the inviscid dynamical structures near the Kolmogorov flow in the  $f$-plane setting.

\subsection{Rigidity near the Couette-Poiseuille flow and near the Bickley jet}\label{cp-b}
Based on Theorem  \ref{thm-generalization1}, we present some rigidity results for geophysical  traveling waves with arbitrary speeds near the  Couette-Poiseuille flow and near the Bickley jet. We conclude with a generalization to a  class of shear flows.

\subsubsection{Application to the Couette-Poiseuille flow}
We consider the generalized Couette-Poiseuille flow, given for $\Gamma\in\mathbb{R}$ by
\begin{equation}\label{gcp}
u_{\rm cp}(y)=\Gamma y+(1-\Gamma)y^2, \quad y\in[-1,1]\,.
\end{equation}

\begin{Proposition}\label{Couette-Poiseuille-beta-plane}
 Assume that $\Gamma<1$ and $\beta\in\left[0,2-2\Gamma\right)$.
If $(u(x-ct,y),v(x-ct,y))\in C^2(D_L)$ is a traveling-wave
solution to the $\beta$-plane equation \eqref{Euler equation}-\eqref{boundary condition for euler} with $c\in \mathbb{R}$ and such that
\begin{align}\label{711}
\|\Delta u-u_{\rm cp}''\|_{C^0(D_L)}<2-2\Gamma-\beta,
\end{align}
then $(u(x-ct,y),v(x-ct,y))$ is  a shear flow in $D_L=\mathbb{T}_L\times [-d,d]$ with $d=1$.
\end{Proposition}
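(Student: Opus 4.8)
The plan is to reduce the statement to Theorem \ref{thm-generalization1} by exploiting the fact that the generalized Couette--Poiseuille profile has constant second derivative. Differentiating \eqref{gcp} twice gives $u_{\rm cp}''(y)\equiv 2(1-\Gamma)$, so that the hypothesis $\Gamma<1$ forces $u_{\rm cp}''>0$ and $Ran(u_{\rm cp}'')=\{2-2\Gamma\}$ is a single positive point. This is really the only structural input; everything else is bookkeeping.

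First I would use the closeness assumption \eqref{711} to pin down the sign of $\Delta u-\beta$. Writing $\Delta u=u_{\rm cp}''+(\Delta u-u_{\rm cp}'')$ and applying \eqref{711} pointwise on $D_L$ yields
\[
\Delta u\ \ge\ 2(1-\Gamma)-\|\Delta u-u_{\rm cp}''\|_{C^0(D_L)}\ >\ 2(1-\Gamma)-(2-2\Gamma-\beta)\ =\ \beta
\]
on all of $D_L$. In particular $(\Delta u)_{\min}>\beta\ge 0$, hence $(\Delta u)_{\min}>0$ and $\beta\in[0,(\Delta u)_{\min})$, with $d=(d_+-d_-)/2=1$ for the band $[-1,1]$.

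With this in hand the conclusion is immediate from Theorem \ref{thm-generalization1}: if $\beta>0$ we are in case (i), since $(\Delta u)_{\min}>0$ and $0<\beta<(\Delta u)_{\min}$; if $\beta=0$ the bound $\Delta u>0$ gives $\Delta u\neq 0$ on $D_L$, which is exactly case (ii). In either case Theorem \ref{thm-generalization1} shows that $(u(x-ct,y),v(x-ct,y))$ is a shear flow on $D_L=\mathbb{T}_L\times[-1,1]$.

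There is no genuine obstacle in this argument: in contrast to the classification theorems of Section 3, the whole point is that the second derivative of $u_{\rm cp}$ is a constant, which converts the $C^0$-smallness hypothesis \eqref{711} directly into the sign condition $(\Delta u)_{\min}>\beta$ needed to invoke Theorem \ref{thm-generalization1}. The only mild subtlety is to treat the two endpoints $\beta>0$ and $\beta=0$ separately, matching the split in the statement of Theorem \ref{thm-generalization1}.
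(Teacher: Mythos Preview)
Your proposal is correct and follows essentially the same approach as the paper: both compute $u_{\rm cp}''\equiv 2(1-\Gamma)$, use the closeness hypothesis \eqref{711} to deduce $(\Delta u)_{\min}>\beta$, and then invoke Theorem \ref{thm-generalization1}. Your explicit separation of the cases $\beta>0$ and $\beta=0$ to match the two parts of Theorem \ref{thm-generalization1} is a minor clarification that the paper leaves implicit.
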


\begin{proof}
 It follows from \eqref{711} that
\begin{align*}
|(\Delta u)(x,y)-(2-2\Gamma)|\leq\|\Delta u-u_{\rm cp}''\|_{C^0(D_L)}<2-2\Gamma-\beta,\quad (x,y)\in D_L.
\end{align*}
Then
\begin{align*}
\beta-(2-2\Gamma)<(\Delta u)(x,y)-(2-2\Gamma),\quad(x,y)\in D_L.
\end{align*}
Thus
\begin{align*}
0\leq\beta<(\Delta u)_{\min}.
\end{align*}
By Theorem \ref{thm-generalization1}, $(u(x-ct,y),v(x-ct,y))$ is a shear flow.
\end{proof}
\begin{Remark}
For the Poiseuille flow, corresponding to $\Gamma=0$ in \eqref{gcp}, in the $f$-plane setting,  Proposition \ref{Couette-Poiseuille-beta-plane} complements the results in \cite{Coti Zelati23,dn2024}. Indeed, the rigidity of traveling waves with arbitrary wave speeds  was obtained in \cite{Coti Zelati23} under the high regularity assumption (velocity) $H^{>6}$, and Proposition \ref{Couette-Poiseuille-beta-plane} lowers the regularity  to $C^2$.
In addition, although the rigidity of steady flows near the Poiseuille flow was proved in
 $C^2$ (see \cite{dn2024}), Proposition \ref{Couette-Poiseuille-beta-plane} further extends the conclusion to encompass all traveling wave speeds. See also the recent independent result for $\beta=0$ in Corollary 1.6(i) of \cite{gxx2024}.
\end{Remark}

\if0
\subsubsection{Application to the Bickley jet}
The profile of a bounded Bickley jet is
\begin{align*}
u_{\rm b}(y)=\operatorname{sech}^{2}(y),\quad y\in[-d,d].
\end{align*}
As another application of Theorem  \ref{thm-generalization1}, we study the rigidity of traveling waves with arbitrary speeds near the Bickley jet for the $\beta$-plane equation.

\begin{Proposition}\label{bickley-beta-plane}
Assume that $d\in\left(0,\frac{\ln(2+\sqrt{3})}{2}\right)$ and $\beta\in\left(4\operatorname{sech}^{2}(d)\operatorname{tanh}^{2}(d)-2\operatorname{sech}^{4}(d), 0\right]$.
If $(u(x-ct,y),v(x-ct,y))\in C^2(D_L)$ is a traveling wave
solution to the $\beta$-plane equation \eqref{Euler equation}-\eqref{boundary condition for euler} with $c\in \mathbb{R}$,
and satisfies
\begin{align}\label{771}
\|\Delta u-u_{\rm b}''\|_{C^0(D_L)}<\beta-4\operatorname{sech}^{2}(d)\operatorname{tanh}^{2}(d)+2\operatorname{sech}^{4}(d),
\end{align}
then $(u(x-ct,y),v(x-ct,y))$ is a shear flow.
\end{Proposition}
\begin{Remark}
 The real zeros of $u_{\rm b}''(y)=4\operatorname{sech}^{2}(y)\operatorname{tanh}^{2}(y)-2\operatorname{sech}^{4}(y)=0$ are $\frac{\ln(2+\sqrt{3})}{2}$ and $\frac{\ln(2-\sqrt{3})}{2}$. Moreover, $u_{\rm b}''<0$ on $\left(\frac{\ln(2-\sqrt{3})}{2},\frac{\ln(2+\sqrt{3})}{2}\right)$.
 If $d\in\left(0,\frac{\ln(2+\sqrt{3})}{2}\right)$, then
 \begin{align}\label{u-b-max}
 4\operatorname{sech}^{2}(d)\operatorname{tanh}^{2}(d)-2\operatorname{sech}^{4}(d)=u_{\rm b}''(\pm d)=(u_{\rm b}'')_{\max}<0.
 \end{align}
 Thus the range of $-\beta$ in Proposition $\ref{bickley-beta-plane}$ is non-null.
\end{Remark}
\begin{proof}
By \eqref{771} we have
\begin{align*}
|(\Delta u)(x,y)-u_{\rm b}''(y)|<\beta-4\operatorname{sech}^{2}(d)\operatorname{tanh}^{2}(d)+2\operatorname{sech}^{4}(d),\quad (x,y)\in D_L.
\end{align*}
Thus
\begin{align*}
4\operatorname{sech}^{2}(d)\operatorname{tanh}^{2}(d)-2\operatorname{sech}^{4}(d)-\beta<u_{\rm b}''(y)-(\Delta u)(x,y)\leq u_{\rm b,\max}''-(\Delta u)(x,y),\quad(x,y)\in D_L.
\end{align*}
By \eqref{u-b-max}, we have
\begin{align*}
(\Delta u)_{\max}<\beta\leq0.
\end{align*}
By Theorem \ref{thm-generalization1} (ii), $(u(x-ct,y),v(x-ct,y))$ is a shear flow.
\end{proof}

\subsubsection{Generalization to rigidity near a class of shear flows} By inspection to the proof of Proposition
\ref{Couette-Poiseuille-beta-plane} and of Proposition \ref{bickley-beta-plane}, one can similarly generalize the local rigidity to a class of shear flows for the $\beta$-plane equation.

\begin{Theorem} \label{beta-plane-general}
Assume that the shear flow $(u_0,0)$ satisfies  $(u_0'')_{\min}>0$, and $\beta\in\left[0,(u_0'')_{\min}\right)$.
If $(u(x-ct,y),v(x-ct,y))\in C^2(D_L)$ is a traveling wave
solution to the $\beta$-plane equation \eqref{Euler equation}-\eqref{boundary condition for euler} with $c\in \mathbb{R}$,
and satisfies
\begin{align*}
\|\Delta u-u_{ 0}''\|_{C^0(D_L)}<(u_0'')_{\min}-\beta,
\end{align*}
then $(u(x-ct,y),v(x-ct,y))$ is  a shear flow.
\end{Theorem}
\fi

\subsubsection{Application to the Bickley jet}
The profile of a bounded Bickley jet is
\begin{align*}
u_{\rm b}(y)=-\operatorname{sech}^{2}(y),\quad y\in[-d,d].
\end{align*}
As another application of Theorem  \ref{thm-generalization1}, we investigate the rigidity of traveling waves with arbitrary speeds near the Bickley jet.

\begin{Proposition}\label{bickley-beta-plane}
Assume that $d\in\left(0,\frac{\ln(2+\sqrt{3})}{2}\right)$ and $\beta\in\left[0,2\operatorname{sech}^{4}(d)-4\operatorname{sech}^{2}(d)\operatorname{tanh}^{2}(d)\right)$.
If $(u(x-ct,y),v(x-ct,y))\in C^2(D_L)$ is a traveling-wave
solution to the $\beta$-plane equation \eqref{Euler equation}-\eqref{boundary condition for euler} with $c\in \mathbb{R}$,
and satisfies
\begin{align}\label{771}
\|\Delta u-u_{\rm b}''\|_{C^0(D_L)}<2\operatorname{sech}^{4}(d)-4\operatorname{sech}^{2}(d)\operatorname{tanh}^{2}(d)-\beta,
\end{align}
then $(u(x-ct,y),v(x-ct,y))$ is a shear flow.
\end{Proposition}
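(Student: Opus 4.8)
The plan is to deduce Proposition~\ref{bickley-beta-plane} from Theorem~\ref{thm-generalization1} by showing that hypothesis~\eqref{771} forces $0\le\beta<(\Delta u)_{\min}$. The first step is to compute the curvature of the jet: a direct computation, using $\operatorname{sech}^2=1-\tanh^2$, gives
\begin{align*}
u_{\rm b}''(y)=2\operatorname{sech}^{4}(y)-4\operatorname{sech}^{2}(y)\tanh^{2}(y)=2\operatorname{sech}^{2}(y)\bigl(1-3\tanh^{2}(y)\bigr),
\end{align*}
so $u_{\rm b}''$ vanishes exactly at $y=\pm\tfrac12\ln(2+\sqrt3)$ (where $\tanh^2 y=\tfrac13$) and is strictly positive on the open interval between these two values.

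The second step is to identify $(u_{\rm b}'')_{\min}$ on $[-d,d]$ for $d\in\bigl(0,\tfrac12\ln(2+\sqrt3)\bigr)$. Since $u_{\rm b}''$ is even it suffices to work on $[0,d]$, where both factors $\operatorname{sech}^{2}(y)$ and $1-3\tanh^{2}(y)$ are positive and decreasing; hence $u_{\rm b}''$ is positive and decreasing on $[0,d]$, so its minimum over $[-d,d]$ is attained at the endpoints $\pm d$ and equals $(u_{\rm b}'')_{\min}=2\operatorname{sech}^{4}(d)-4\operatorname{sech}^{2}(d)\tanh^{2}(d)>0$, which is precisely the quantity appearing on the right-hand side of~\eqref{771}.

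The third step is the pointwise comparison. Rewriting~\eqref{771} as $\|\Delta u-u_{\rm b}''\|_{C^0(D_L)}<(u_{\rm b}'')_{\min}-\beta$, I obtain for every $(x,y)\in D_L$
\begin{align*}
(\Delta u)(x,y)>u_{\rm b}''(y)-\bigl((u_{\rm b}'')_{\min}-\beta\bigr)\ge(u_{\rm b}'')_{\min}-\bigl((u_{\rm b}'')_{\min}-\beta\bigr)=\beta\ge0,
\end{align*}
and since $\Delta u$ is continuous on the compact set $D_L$ this upgrades to $(\Delta u)_{\min}>\beta\ge0$, in particular $(\Delta u)_{\min}>0$. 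If $\beta>0$, this places $\beta\in(0,(\Delta u)_{\min})$ and Theorem~\ref{thm-generalization1}(i) gives that $(u(x-ct,y),v(x-ct,y))$ is a shear flow; if $\beta=0$, then $\Delta u\neq0$ on $D_L$ and Theorem~\ref{thm-generalization1}(ii) yields the same conclusion. I do not anticipate a genuine difficulty: the only point requiring a little care is the monotonicity argument pinning $(u_{\rm b}'')_{\min}$ to the endpoints $\pm d$, which is what makes the threshold in~\eqref{771} sharp for this method; it follows immediately from the factorization $u_{\rm b}''=2\operatorname{sech}^{2}(y)\bigl(1-3\tanh^{2}(y)\bigr)$, and everything else is bookkeeping feeding directly into Theorem~\ref{thm-generalization1}.
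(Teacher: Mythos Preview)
Your proof is correct and follows essentially the same route as the paper: identify $(u_{\rm b}'')_{\min}=u_{\rm b}''(\pm d)=2\operatorname{sech}^4(d)-4\operatorname{sech}^2(d)\tanh^2(d)$, use~\eqref{771} to deduce $(\Delta u)_{\min}>\beta\ge0$, and invoke Theorem~\ref{thm-generalization1}. Your factorization $u_{\rm b}''=2\operatorname{sech}^2(y)\bigl(1-3\tanh^2(y)\bigr)$ and the accompanying monotonicity argument make the location of the minimum more explicit than the paper's Remark, but the substance is identical.
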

\if0
Note that the Birkley profile is not fitted to a single specific planetary jet.
The Bickley-like jets represents a class of  jets with peaked or cusp profiles which was observed on the atmosphere of Jupiter. So the nonexistence of nearby genuine  traveling waves in Proposition 6.1 may be helpful to explain when such jets can not support long-lived meanders.
\fi
\begin{Remark}
 The real zeros of $u_{\rm b}''(y)=2\operatorname{sech}^{4}(y)-4\operatorname{sech}^{2}(y)\operatorname{tanh}^{2}(y)=0$ are $\frac{\ln(2+\sqrt{3})}{2}$ and $\frac{\ln(2-\sqrt{3})}{2}$. Moreover, $u_{\rm b}''>0$ on $\left(\frac{\ln(2-\sqrt{3})}{2},\frac{\ln(2+\sqrt{3})}{2}\right)$.
 If $d\in\left(0,\frac{\ln(2+\sqrt{3})}{2}\right)$, then
 \begin{align}\label{u-b-max}
 2\operatorname{sech}^{4}(d)-4\operatorname{sech}^{2}(d)\operatorname{tanh}^{2}(d)=u_{\rm b}''(\pm d)=(u_{\rm b}'')_{\min}>0.
 \end{align}
 Thus the range of $\beta$ in Proposition $\ref{bickley-beta-plane}$ is non-null.
\end{Remark}
\begin{proof}
By \eqref{771}  we have
\begin{align*}
|(\Delta u)(x,y)-u_{\rm b}''(y)|<2\operatorname{sech}^{4}(d)-4\operatorname{sech}^{2}(d)\operatorname{tanh}^{2}(d)-\beta,\quad (x,y)\in D_L.
\end{align*}
Thus
\begin{align*}
\beta-2\operatorname{sech}^{4}(d)+4\operatorname{sech}^{2}(d)\operatorname{tanh}^{2}(d)<(\Delta u)(x,y)-u_{\rm b}''(y)\leq (\Delta u)(x,y)-(u_{\rm b}'')_{\min}
\end{align*}
for $(x,y)\in D_{L}$. By \eqref{u-b-max} we have
\begin{align*}
0\leq\beta<(\Delta u)_{\min}.
\end{align*}
By Theorem \ref{thm-generalization1}, $(u(x-ct,y),v(x-ct,y))$ is a shear flow.
\end{proof}

\subsubsection{Generalization to rigidity near a class of shear flows} By inspection to the proof of Proposition
\ref{Couette-Poiseuille-beta-plane} and of Proposition \ref{bickley-beta-plane}, we can similarly generalize the rigidity results to a class of shear flows.

\begin{Proposition} \label{beta-plane-general}
Assume that $u_0\in C^2([-d,d])$,  $(u_0'')_{\min}>0$ and $\beta\in\left[0,(u_0'')_{\min}\right)$.
If $(u(x-ct,y),v(x-ct,y))\in C^2(D_L)$ is a traveling-wave
solution to the $\beta$-plane equation \eqref{Euler equation}-\eqref{boundary condition for euler} with $c\in \mathbb{R}$,
and satisfies
\begin{align}\label{861}
\|\Delta u-u_{ 0}''\|_{C^0(D_L)}<(u_0'')_{\min}-\beta,
\end{align}
then $(u(x-ct,y),v(x-ct,y))$ is  a shear flow.
\end{Proposition}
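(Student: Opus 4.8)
The plan is to read off the conclusion directly from Theorem~\ref{thm-generalization1}, exactly in the spirit of Propositions~\ref{Couette-Poiseuille-beta-plane} and~\ref{bickley-beta-plane}: the closeness hypothesis \eqref{861} is tailored so as to pin $\Delta u$ strictly above $\beta$ throughout $D_L$. Concretely, for every $(x,y)\in D_L$ the triangle inequality together with \eqref{861} gives
\[
u_0''(y)-(\Delta u)(x,y)\ \le\ |(\Delta u)(x,y)-u_0''(y)|\ \le\ \|\Delta u-u_0''\|_{C^0(D_L)}\ <\ (u_0'')_{\min}-\beta,
\]
and since $u_0''(y)\ge (u_0'')_{\min}$ by definition of the minimum, rearranging yields $(\Delta u)(x,y)>u_0''(y)-(u_0'')_{\min}+\beta\ge\beta$. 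Because $D_L$ is compact and $\Delta u\in C(D_L)$, the value $(\Delta u)_{\min}$ is attained, so in fact $(\Delta u)_{\min}>\beta\ge 0$.

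From here I would split according to the two cases of Theorem~\ref{thm-generalization1}. If $\beta>0$, the previous step gives $0<\beta<(\Delta u)_{\min}$, which is precisely hypothesis~(i) of Theorem~\ref{thm-generalization1}; applying it shows that $(u(x-ct,y),v(x-ct,y))$ is a shear flow. If $\beta=0$, then $(\Delta u)_{\min}>0$ forces $\Delta u\ne 0$ on all of $D_L$, which is hypothesis~(ii) of Theorem~\ref{thm-generalization1}, and the same conclusion follows. This exhausts all cases and completes the argument.

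There is no genuine obstacle here: the proposition is a direct corollary of Theorem~\ref{thm-generalization1}, and all the analytic substance --- the $F$-formulation argument establishing $v=0$ on $\{u<c\}$ via the sign of $\beta$, and the maximum-principle argument establishing $v=0$ on $\{u>c\}$ via the sign of $\partial_y\gamma+\beta=\beta-\Delta u<0$ in \eqref{v-formulation-beta-plane-2} --- has already been carried out there. The only points demanding a moment's care are the appeal to compactness needed to upgrade the pointwise strict inequality $(\Delta u)(x,y)>\beta$ to strictness of the infimum $(\Delta u)_{\min}>\beta$, and the routine bookkeeping of treating the borderline value $\beta=0$ through case~(ii) rather than case~(i) of Theorem~\ref{thm-generalization1}.
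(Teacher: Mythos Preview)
Your proof is correct and follows essentially the same approach as the paper: both arguments use the closeness hypothesis \eqref{861} to verify that $\beta<(\Delta u)_{\min}$, and then invoke Theorem~\ref{thm-generalization1}. The paper compresses the two cases $\beta>0$ and $\beta=0$ into a single appeal to Theorem~\ref{thm-generalization1}, whereas you spell out the case split explicitly, but the content is identical.
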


\begin{proof}
It follows from \eqref{861} that
\begin{align*}
|(\Delta u)(x,y)-u_{0}''(y)|
<(u_{0}'')_{\min}-\beta,\quad (x,y)\in D_L.
\end{align*}
Thus
\begin{align*}
\beta-(u_{0}'')_{\min}<(\Delta u)(x,y)-u_{0}''(y)\leq (\Delta u)(x,y)-(u_{0}'')_{\min},\quad(x,y)\in D_{L}.
\end{align*}
Hence $0\leq\beta<(\Delta u)_{\min}$. By Theorem \ref{thm-generalization1}, $(u(x-ct,y),v(x-ct,y))$ is a shear flow.
\end{proof}

\begin{Example}
Saturn's southern polar troposphere features at cloud level a strongly sheared  circumpolar jet flowing eastward in the zonal band between 65$^\circ$S and 70$^\circ$S.
 The  jet profile exhibits a nearly flat slope at 65$^\circ$S,
 with a poleward speed increasing from about $0\; m/s$ at 65$^\circ$S to a maximal speed of $100\; m/s$ at 70$^\circ$S (see Fig.\,12.5 in \cite{sa}). With the scaling parameters provided in Table \ref{v-par-JS}, we obtain the boundary values $u_0(-d)={2\over3}$, $u_0(d)=0$ and $u'(d)=0$. Consequently, we may model this sheared, purely zonal flow by the parabolic profile
\begin{align*}
u_0(y)={1\over 6d^2}y^2-{1\over 3d}y+{1\over6},\quad -d\leq y\leq d,
\end{align*}
in a band of width $2d={\pi\over 36}\approx0.0873$ corresponding to about $5$ degrees of latitude. Using the data in  Table \ref{v-par-JS}, we infer from  \eqref{def-f0-beta} that
\begin{align*}
\beta={2\times 1.62\times 10^{-4}\times 58232\times 10^3\over 150}\cos(68.5^\circ)\approx46<(u_0'')_{\min}={1\over 3d^2}\approx175
\end{align*}
so that Proposition \ref{beta-plane-general} predicts the absence of traveling waves near this zonal flow. This conclusion is validated by the field data (see \cite{sa}).
\end{Example}

\subsection{Application to the $f$-plane setting}

In this subsection, as an application of Theorem \ref{classification-of-wave-speed-for-a-genuinely-travelling-wave-beta-plane},
we study the rigidity of traveling waves in the $f$-plane setting. Moreover,
we prove some rigidity results for traveling waves with non-vanishing speeds near a Kolmogorov flow in the $f$-plane setting. On the other hand, we construct genuine analytic traveling waves with vanishing speed (i.e. steady flow) near the Kolmogorov flow.

\subsubsection{Rigidity of traveling waves in the $f$-plane setting}
The following rigidity result in the $f$-plane setting is a  consequence of Theorem \ref{classification-of-wave-speed-for-a-genuinely-travelling-wave-beta-plane}.

\begin{Corollary}\label{beta=0-cor-c-in-R}
Let $\beta=0$ and $c\in \mathbb{R}$. Assume that $\Delta u\neq0$ whenever $u=c$. If $(u(x-ct,y),v(x-ct,y))\in C^2(D_L)$
is a traveling-wave solution to the $\beta$-plane equation \eqref{Euler equation}-\eqref{boundary condition for euler}, then $(u(x-ct,y),v(x-ct,y))$ is a shear flow.
\end{Corollary}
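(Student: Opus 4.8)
The plan is to obtain the statement as an immediate specialization of the classification Theorem \ref{classification-of-wave-speed-for-a-genuinely-travelling-wave-beta-plane}, arguing by contradiction. Suppose that $(u(x-ct,y),v(x-ct,y))$ is \emph{not} a shear flow, i.e. that it is a genuine traveling wave (so $v\not\equiv 0$ on $D_L$). Since $\beta=0$, part (ii) of Theorem \ref{classification-of-wave-speed-for-a-genuinely-travelling-wave-beta-plane} forces the wave speed $c$ to be a generalized inflection value of $u$, that is, $\{\Delta u=0\}\cap\{u=c\}\neq\emptyset$ (for $\beta=0$ the defining condition $\{\beta-\Delta u=0\}\cap\{u=c\}\neq\emptyset$ reduces to $\{\Delta u=0\}\cap\{u=c\}\neq\emptyset$). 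This contradicts the hypothesis that $\Delta u\neq0$ whenever $u=c$. Hence $v\equiv 0$ on $D_L$, and the flow is a shear flow.

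Alternatively, one can bypass the full classification and proceed by a case distinction according to whether $c$ lies in the range of the zonal velocity. If $c\in Ran(u)$, then since $\beta=0$ we have $\beta-\Delta u=-\Delta u\neq0$ whenever $u=c$, so Theorem \ref{wave-speed-inside-range}(ii) applies directly and yields that $(u(x-ct,y),v(x-ct,y))$ is a shear flow. If $c\notin Ran(u)$, then the hypothesis on $\Delta u$ is vacuously satisfied, and we may invoke the classical rigidity result in \cite{Hamel2017} (or Theorem 1.1 in \cite{Kalisch12}), according to which any genuine $f$-plane traveling wave must have $c\in Ran(u)$; the contrapositive shows that our flow, having $c\notin Ran(u)$, must be a shear flow.

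Since the statement is a direct consequence of results already established in the excerpt, there is no substantive obstacle. The only points to verify are the routine identification of ``generalized inflection value'' with the condition ``$\Delta u=0$ at some point of $\{u=c\}$'' in the case $\beta=0$, and the observation that when $c\notin Ran(u)$ the hypothesis $\Delta u\neq0$ on $\{u=c\}$ holds vacuously, so that the conclusion follows from the pre-existing classification/rigidity results without any additional argument.
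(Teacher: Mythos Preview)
Your proposal is correct and matches the paper's approach: the corollary is stated as a direct consequence of Theorem~\ref{classification-of-wave-speed-for-a-genuinely-travelling-wave-beta-plane}(ii), which is exactly your first argument by contradiction. Your alternative case split via Theorem~\ref{wave-speed-inside-range}(ii) and \cite{Kalisch12,Hamel2017} simply unpacks how Theorem~\ref{classification-of-wave-speed-for-a-genuinely-travelling-wave-beta-plane}(ii) itself is proved, so it is not a genuinely different route.
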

\begin{Remark}\label{beta=0-cor-c-in-R-rem}
In the $f$-plane setting, by employing a different approach, Corollary \ref{beta=0-cor-c-in-R}  generalizes Corollary 1.6 $({\rm ii})$ in \cite{gxx2024} (for the case of a bounded periodic channel) and Theorem 1.2 in \cite{dn2024} in two aspects: one is
by generalizing steady flows to traveling waves with non-vanishing wave speeds,
and the other aspect is that the traveling waves  are not necessarily near a shear flow. See Proposition \ref{beta-plane-general-beta0} for the rigidity of traveling waves with a fixed wave speed near a shear flow.
\end{Remark}

As an application of Corollary  \ref{beta=0-cor-c-in-R}, we establish the rigidity of traveling waves with a fixed wave speed near a class of shear flows in the $f$-plane setting.

\begin{Proposition} \label{beta-plane-general-beta0}
Let $\beta=0$.
Assume that $u_0\in C^2([-d,d])$ and $c\in\mathbb{R}$ satisfy  $u_0''\neq0$ whenever $u_0-c=0$. Then there exists $\varepsilon_0>0$ such that
if $(u(x-ct,y),v(x-ct,y))\in C^2(D_L)$ is a traveling-wave solution to the $\beta$-plane equation \eqref{Euler equation}-\eqref{boundary condition for euler}
and satisfies
\begin{align*}
\|u-u_{0}\|_{C^2(D_L)}<\varepsilon_0,
\end{align*}
then $(u(x-ct,y),v(x-ct,y))$ has to be a shear flow.
\end{Proposition}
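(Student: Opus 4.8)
The plan is to reduce the statement to Corollary \ref{beta=0-cor-c-in-R}: it is enough to exhibit an $\varepsilon_0>0$ so small that every traveling-wave solution with $\|u-u_0\|_{C^2(D_L)}<\varepsilon_0$ satisfies $\Delta u\neq0$ whenever $u=c$, after which Corollary \ref{beta=0-cor-c-in-R} immediately yields that $(u(x-ct,y),v(x-ct,y))$ is a shear flow. The remaining work is a compactness-and-perturbation estimate, organised according to the position of the level set $\{u=c\}$ relative to $\{u_0=c\}$.

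First I would analyse the set $Z=\{y\in[-d,d]:u_0(y)=c\}$. If $Z=\emptyset$, compactness of $[-d,d]$ gives $|u_0-c|\geq\delta_0>0$ there, so for $\varepsilon_0\leq\delta_0/2$ one has $|u-c|\geq\delta_0/2>0$ on $D_L$, hence $\{u=c\}=\emptyset$ and the hypothesis of Corollary \ref{beta=0-cor-c-in-R} holds vacuously. Assume henceforth $Z\neq\emptyset$. By hypothesis $|u_0''|>0$ on the compact set $Z$, so $\kappa:=\tfrac12\min_{Z}|u_0''|>0$ and the relatively open set $W:=\{y\in[-d,d]:|u_0''(y)|>\kappa\}$ contains $Z$. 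Its complement $K_0:=\{y\in[-d,d]:|u_0''(y)|\leq\kappa\}$ is compact and disjoint from $Z=\{u_0=c\}$, so $|u_0-c|\geq\delta_0>0$ on $K_0$ for some $\delta_0>0$. (This dichotomy also rules out the degenerate possibility $u_0\equiv c$, which would force $u_0''\equiv0$ on $Z$, contradicting the hypothesis; so no separate case occurs.)

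Next I would split $D_L=\Omega\cup K$ with $\Omega:=\mathbb{T}_L\times W$ and $K:=\mathbb{T}_L\times K_0$, and exploit the $C^2$ closeness together with $\partial_{xx}u_0\equiv0$. On $\Omega$ one has $|\partial_{yy}u|\geq|u_0''|-\varepsilon_0>\kappa-\varepsilon_0$ and $|\partial_{xx}u|=|\partial_{xx}u-\partial_{xx}u_0|\leq\varepsilon_0$, hence $|\Delta u|\geq|\partial_{yy}u|-|\partial_{xx}u|>\kappa-2\varepsilon_0$; on $K$ one has $|u-c|\geq|u_0-c|-\varepsilon_0\geq\delta_0-\varepsilon_0$. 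Choosing $\varepsilon_0:=\min\{\kappa/4,\delta_0/2\}$ gives $|\Delta u|>\kappa/2>0$ on $\Omega$ and $|u-c|\geq\delta_0/2>0$ on $K$. Since $D_L=\Omega\cup K$ is a disjoint union, $\{u=c\}\cap K=\emptyset$ forces $\{u=c\}\subset\Omega$, and therefore $\Delta u\neq0$ on $\{u=c\}$. Corollary \ref{beta=0-cor-c-in-R} then applies and shows that $(u(x-ct,y),v(x-ct,y))$ is a shear flow.

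The argument is essentially routine once Corollary \ref{beta=0-cor-c-in-R} is available; the only point requiring a little care is the structural observation that a neighbourhood of $\{u_0=c\}$ can be chosen on which $|u_0''|$ is bounded below while on its complement $|u_0-c|$ is bounded below, together with the fact that both lower bounds survive a sufficiently small $C^2$ perturbation — this is exactly what confines the level set $\{u=c\}$ to the region where $\Delta u$ stays away from zero.
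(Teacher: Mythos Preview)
Your proof is correct and follows essentially the same strategy as the paper: reduce to Corollary \ref{beta=0-cor-c-in-R} by showing that a small $C^2$ perturbation preserves the condition ``$\Delta u\neq0$ whenever $u=c$'', via a decomposition of $[-d,d]$ into a neighbourhood of $\{u_0=c\}$ where $|u_0''|$ has a positive lower bound and a complement where $|u_0-c|$ has a positive lower bound. The only cosmetic difference is that the paper first observes that $\{u_0=c\}$ is a finite set (since $u_0''\neq0$ there) and uses small intervals around those points, whereas you work directly with the open set $W=\{|u_0''|>\kappa\}$; your version avoids the finiteness step and is marginally cleaner, but the substance is identical.
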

\begin{proof}
If $u=c$ has no zeros, then $(u(x-ct,y),v(x-ct,y))$ is a shear flow by Theorem 1.1 in \cite{Kalisch12}. If $u=c$ has zeros, we choose an arbitrary zero $(x_0,y_0)$.
 We will show that
 \begin{align}\label{laplace ux0y0neq0}
 (\Delta u)(x_0,y_0)\neq0.
 \end{align}
 Since $u_0\in C^2 ([-d,d])$ and $u_0''\neq0$ whenever $u_0-c=0$,
 the set $\{y:u_0-c=0\}$ is finite and is denoted by $\{y_i:1\leq i\leq n\}$.
 Choose $\delta_0>0$ such that $(y_i-\delta_0,y_i+\delta_0)\cap [-d,d]$, $1\leq i\leq n$, are disjoint, and \begin{align}\label{u0second-derivative-positive}
 |u_0''|>{\min_{1\leq i\leq n}|u_0''(y_i)|\over 2}>0\quad\text{on}\quad U_0=\cup_{1\leq i\leq n}((y_i-\delta_0,y_i+\delta_0)\cap[-d,d]).\end{align}
  Moreover,
 $\min_{y\in[-d,d]\setminus U_0}|u_0(y)-c|>0$. Let
 \begin{align*}
 \varepsilon_0=\min\left\{{\min_{1\leq i\leq n}|u_0''(y_i)|\over 2},{\min_{y\in[-d,d]\setminus U_0}|u_0(y)-c|\over 2}\right\}>0.
 \end{align*}
 Since $\|u-u_0\|_{C^2(D_L)}<\varepsilon_0$, we have
 \begin{align*}
 &|u(x,y)-c|\geq |u_0(y)-c|-|u_0(y)-u(x,y)|\\
 >&\min_{y\in[-d,d]\setminus U_0}|u_0(y)-c|-{\min_{y\in[-d,d]\setminus U_0}|u_0(y)-c|\over 2}={\min_{y\in[-d,d]\setminus U_0}|u_0(y)-c|\over 2}>0
 \end{align*}
 for $(x,y)\in\mathbb{T}_L\times([-d,d]\setminus U_0)$. Thus, $u-c\neq0$ on $\mathbb{T}_L\times([-d,d]\setminus U_0)$, which implies $(x_0,y_0)\in \mathbb{T}_L\times U_0$.
 Therefore, by \eqref{u0second-derivative-positive} and $\|u-u_0\|_{C^2(D_L)}<\varepsilon_0$ we have
 \begin{align*}
 |(\Delta u)(x_0,y_0)|\geq |u_0''(y_0)|-|u_0''(y_0)-(\Delta u)(x_0,y_0)|>{\min_{1\leq i\leq n}|u_0''(y_i)|\over 2}-\varepsilon_0\geq0.
 \end{align*}
 This proves \eqref{laplace ux0y0neq0}. Applying Corollary \ref{beta=0-cor-c-in-R}, $(u(x-ct,y),v(x-ct,y))$ is a shear flow.
\end{proof}

The following remark about the $f$-plane setting explains  how Theorem \ref{classification-of-wave-speed-for-a-genuinely-travelling-wave-beta-plane} (ii)
complements the rigidity result established  in \cite{Hamel2017}.
\begin{Remark}
In the $f$-plane setting,
let $(u(x-ct,y),v(x-ct,y))\in C^2(D_L)$
 be a  traveling-wave solution to  \eqref{Euler equation}-\eqref{boundary condition for euler}. Then $(u-c,v)$ is a steady solution of \eqref{Euler equation}-\eqref{boundary condition for euler}.
Theorem \ref{classification-of-wave-speed-for-a-genuinely-travelling-wave-beta-plane} (ii) can be viewed as a rigidity result in the presence of stagnation points of $(u-c,v)$, see \eqref{v-u-c0}. This stands in contrast to the rigidity result by Hamel-Nadirashvili \cite{Hamel2017}, which assumes that
$(u-c,v)$ has no stagnation points throughout the domain. Our result therefore complements Hamel-Nadirashvili's by addressing a regime that was previously excluded.
The combination of  Theorem \ref{classification-of-wave-speed-for-a-genuinely-travelling-wave-beta-plane} (ii) and Hamel-Nadirashvili's result
 can  be stated as
 ``if $c$ is not a generalized inflection value of $u$ or if $(u-c,v)$ has no stagnation points, then $(u(x-ct,y),v(x-ct,y))$ is a shear flow in the $f$-plane setting."
\end{Remark}

\subsubsection{Application to inviscid dynamical structures near the Kolmogorov flow in the $f$-plane setting}
In the finite channel $(x,y)\in D_{L_0}=\mathbb{T}_{L_0}\times [-d,d]$ with $L_0={4\pi\over \sqrt{3}}$ and $d=\pi$, we consider the Kolmogorov flow
\begin{align}\label{Kolmogorov flow}
u_{\rm k}(y)=\sin(y),\quad y\in[-\pi,\pi]\,,
\end{align}
and we study the inviscid dynamical structures near it in the $f$-plane setting.

\begin{Proposition}\label{inviscid-dynamical-structures-near-Kolmogorov-flow}
Let $\beta=0$.

{\rm (i)} For any $\delta>0$, if a traveling-wave solution $(u(x-ct,y),v(x-ct,y))\in C^2(D_{L_0})$ to the $\beta$-plane equation
\eqref{Euler equation}-\eqref{boundary condition for euler}  satisfies that
$c\in(-\infty,-\delta]\cup[\delta,\infty)$ and if
\begin{align}\label{7101}
\|u-u_{\rm k}\|_{C^2(D_{L_0})}<\delta,
\end{align}
then $(u(x-ct,y),v(x-ct,y))$ is a shear flow.

{\rm (ii)} For any $\varepsilon\in\mathbb{R}$, $(u_\varepsilon,v_\varepsilon)$ in \eqref{steady-flow-Kolmogorov} is a non-sheared  steady solution to the $\beta$-plane equation
\eqref{Euler equation}-\eqref{boundary condition for euler}. In particular, for $\varepsilon$  small enough, the non-sheared steady flow $(u_\varepsilon,v_\varepsilon)$ is sufficiently close in
analytic regularity to the Kolmogorov flow \eqref{Kolmogorov flow}.
\end{Proposition}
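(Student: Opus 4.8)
\emph{Part (i).} The plan is to invoke Corollary \ref{beta=0-cor-c-in-R} directly: since $\beta=0$ and $c\in\mathbb{R}$, it suffices to verify that $\Delta u\neq 0$ at every point of the level set $\{u=c\}$. The structural fact I would exploit is that the Kolmogorov profile is a Laplace eigenfunction, $u_{\rm k}''(y)=-\sin y=-u_{\rm k}(y)$, so that $u_{\rm k}+\Delta u_{\rm k}\equiv0$. Hence, if $u(x_0,y_0)=c$, then writing $u=u_{\rm k}+(u-u_{\rm k})$ and using $\Delta u_{\rm k}=u_{\rm k}''$ gives
\begin{align*}
\Delta u(x_0,y_0)=u_{\rm k}''(y_0)+\Delta(u-u_{\rm k})(x_0,y_0)=-u_{\rm k}(y_0)+\Delta(u-u_{\rm k})(x_0,y_0)\,,
\end{align*}
and substituting $u_{\rm k}(y_0)=c-(u-u_{\rm k})(x_0,y_0)$ yields
\begin{align*}
|\Delta u(x_0,y_0)|&\geq|c|-\|u-u_{\rm k}\|_{C^0(D_{L_0})}-\|\partial_{xx}(u-u_{\rm k})\|_{C^0(D_{L_0})}-\|\partial_{yy}(u-u_{\rm k})\|_{C^0(D_{L_0})}\\
&\geq|c|-\|u-u_{\rm k}\|_{C^2(D_{L_0})}>\delta-\delta=0\,,
\end{align*}
by $|c|\geq\delta$ and \eqref{7101} (with the convention $\|f\|_{C^2}=\sum_{|\alpha|\leq2}\|\partial^\alpha f\|_{C^0}$; for a different normalization one shrinks the threshold in \eqref{7101} by the corresponding absolute constant). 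Thus the hypothesis of Corollary \ref{beta=0-cor-c-in-R} holds, and $(u(x-ct,y),v(x-ct,y))$ must be a shear flow.

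\emph{Part (ii).} Here everything is explicit. First I would recover the stream function of $(u_\varepsilon,v_\varepsilon)$ by integrating $-\partial_y\psi_\varepsilon=u_\varepsilon$ and $\partial_x\psi_\varepsilon=v_\varepsilon$ (compatible since $\partial_xu_\varepsilon+\partial_yv_\varepsilon=0$), obtaining
\begin{align*}
\psi_\varepsilon(x,y)=\cos y+\varepsilon\cos\Big(\frac y2\Big)\sin\Big(\frac{\sqrt3}{2}x\Big)+\text{const}\,.
\end{align*}
Since $\Delta\cos y=-\cos y$ and $\Delta\big[\cos(\tfrac y2)\sin(\tfrac{\sqrt3}{2}x)\big]=-\cos(\tfrac y2)\sin(\tfrac{\sqrt3}{2}x)$ (because $(\tfrac12)^2+(\tfrac{\sqrt3}{2})^2=1$), one gets the semilinear elliptic relation $\Delta\psi_\varepsilon=-\psi_\varepsilon+\xi$ for a constant $\xi$ — equivalently $-\Delta u_\varepsilon=u_\varepsilon$, as already recorded in Example \ref{ex-beta=0-generalized-inflection-values}. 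Consequently $\{\psi_\varepsilon,\Delta\psi_\varepsilon\}=\{\psi_\varepsilon,-\psi_\varepsilon+\xi\}=0$, so the steady form of \eqref{vore} with $\beta=0$ holds; moreover $v_\varepsilon=\tfrac{\sqrt3\varepsilon}{2}\cos(\tfrac y2)\cos(\tfrac{\sqrt3}{2}x)$ vanishes at $y=\pm\pi=\pm d$ and has $x$-period $L_0=\tfrac{4\pi}{\sqrt3}$, so $(u_\varepsilon,v_\varepsilon)$ solves \eqref{Euler equation}-\eqref{boundary condition for euler}. Since $v_\varepsilon\not\equiv0$ for $\varepsilon\neq0$, the flow is genuinely non-sheared. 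Finally, $u_\varepsilon-u_{\rm k}=\tfrac\varepsilon2\sin(\tfrac y2)\sin(\tfrac{\sqrt3}{2}x)$ and $v_\varepsilon$ are fixed trigonometric functions of amplitude $O(\varepsilon)$ with all frequencies bounded by $\tfrac{\sqrt3}{2}$, so $\|(u_\varepsilon,v_\varepsilon)-(u_{\rm k},0)\|\to0$ as $\varepsilon\to0$ in every analytic norm (e.g. in a Fourier-weighted norm $\sum_k|\widehat f_k|e^{\rho|k|}$ with $\rho>0$ fixed), which is the last assertion.

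\emph{Where the difficulty lies.} Neither part is technically heavy. Part (ii) is a verification, whose only real content is the elliptic identity $\Delta\psi_\varepsilon+\psi_\varepsilon=\text{const}$: it automatically produces a steady Euler flow and, combined with $v_\varepsilon|_{y=\pm\pi}=0$ and the $L_0$-periodicity, it respects the channel geometry. For part (i) the single point requiring care is that the $C^2$-smallness of $u-u_{\rm k}$ genuinely keeps $\Delta u$ away from $0$ on $\{u=c\}$ when $c$ is bounded away from $0$; this is exactly where the identity $u_{\rm k}''=-u_{\rm k}$ is indispensable, since it converts $C^0$-closeness of $u$ and $\Delta u$ to $u_{\rm k}$ and $\Delta u_{\rm k}$ into the lower bound $|\Delta u|\geq|c|-\|u-u_{\rm k}\|_{C^2}$ on the level set — without the eigenfunction structure there would be no such cancellation.
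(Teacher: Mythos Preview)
Your proposal is correct and follows essentially the same approach as the paper. Both parts rely on the identical key identities: for (i), the eigenfunction relation $u_{\rm k}''=-u_{\rm k}$ is used to bound $|\Delta u|\geq|c|-\|u-u_{\rm k}\|_{C^2}$ on $\{u=c\}$ before invoking Corollary~\ref{beta=0-cor-c-in-R}; for (ii), the relation $\Delta\psi_\varepsilon=-\psi_\varepsilon$ (up to a constant) gives $\{\psi_\varepsilon,\gamma_\varepsilon\}=0$, and the boundary and periodicity checks are the same. The only minor difference is that the paper first disposes of the case $c\notin Ran(u)$ by citing \cite{Kalisch12}, whereas you (correctly) let Corollary~\ref{beta=0-cor-c-in-R} absorb that case vacuously.
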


\begin{proof} (i)
If $c\notin Ran(u)$, then $(u(x-ct,y),v(x-ct,y))$ is a shear flow by Theorem 1.1 in \cite{Kalisch12}.
Thus we only need to consider the case $c\in Ran(u)$.  For any $(x_c,y_c)\in D_{L_0}$ satisfying $u(x_c,y_c)=c$, by \eqref{7101} we get
\begin{align*}
|(\Delta u)(x_c,y_c)|&=|c-(c-u_{\rm k}(y_c))-\left(u_{\rm k}(y_c)+(\Delta u)(x_c,y_c)\right)|\\
&\geq|c|-|c-u_{\rm k}(y_c)|-|u_{\rm k}(y_c)+(\Delta u)(x_c,y_c)|\\
&\geq|c|-|u(x_c,y_c)-u_{\rm k}(y_c)|-|-u_{\rm k}''(y_c)+(\Delta u)(x_c,y_c)|\\
&\geq|c|-\|u-u_{\rm k}\|_{C^2(D_{L_0})}\\
&>\delta-\delta=0,
\end{align*}
where we have used $u_{\rm k}(y_c)=-u_{\rm k}''(y_c)$ and $c\in(-\infty,-\delta]\cup[\delta,\infty)$.
Then $(u(x-ct,y),v(x-ct,y))$ is a shear flow by Corollary \ref{beta=0-cor-c-in-R}.

(ii) The stream function and vorticity of $(u_\varepsilon,v_\varepsilon)$ in \eqref{steady-flow-Kolmogorov} are
\begin{align*}
\psi_\varepsilon(x,y)=\cos(y)+\varepsilon\cos\left({y\over2}\right)\sin\left({\sqrt{3}\over2}x\right)\quad\text{and}\quad
\gamma_\varepsilon(x,y)=-\cos(y)-\varepsilon\cos\left({y\over2}\right)\sin\left({\sqrt{3}\over2}x\right).
\end{align*}
Since $\{\psi_\varepsilon,\gamma_\varepsilon\}=0$ on $D_{L_0}$ and
\begin{align*}
 v_\varepsilon=0\quad \text{on}\quad  y=\pm\pi,
\end{align*}
$(u_\varepsilon,v_\varepsilon)$  is a non-sheared  steady solution to the $\beta$-plane equation
\eqref{Euler equation}-\eqref{boundary condition for euler} with $\beta=0$. Moreover, by its expression in \eqref{steady-flow-Kolmogorov}, $(u_\varepsilon,v_\varepsilon)$ is sufficiently close in
analytic regularity to the Kolmogorov flow \eqref{Kolmogorov flow}
as long as  $\varepsilon$ is taken   small enough.
\end{proof}

\noindent
{\bf Acknowledgement} This research was supported by the Austrian Science Fund (FWF) [grant number Z 387-N] and by the National Natural Science Foundation of China [grant
number 12494544 and grant number 12471229].

\end{CJK*}

\end{document}